\documentclass[11pt,reqno,oneside]{amsart}
\usepackage{comment}
\usepackage{enumerate}
\usepackage{geometry} 
\usepackage{amsmath}
\usepackage{amsthm}
\usepackage{thmtools}
\usepackage{etoolbox}
\usepackage{hyperref}
\usepackage[capitalize]{cleveref} 
\usepackage{amssymb}
\usepackage{mathrsfs}
\usepackage{lipsum}
\usepackage{graphicx}
\usepackage{subcaption}
\graphicspath{{./figure/}}
\DeclareGraphicsExtensions{.pdf,.jpeg,.png,.jpg}
\usepackage{xcolor}
\usepackage{tikz}
\usepackage{tkz-euclide}
\usepackage{changepage}

\usepackage{titletoc}
\usepackage{float}
\newtheoremstyle{noparen}
{3pt}{3pt}        
{\itshape}        
{}                
{\bfseries}       
{}               
{ }               
{\thmname{#1}\thmnumber{ #2} \thmnote{\normalfont #3}}

\theoremstyle{noparen}

\numberwithin{equation}{section}

\newtheorem{theorem}{Theorem}[section]
\newtheorem{lemma}[theorem]{Lemma}
\newtheorem{proposition}[theorem]{Proposition}
\newtheorem{corollary}[theorem]{Corollary}
\newtheorem{conjecture}{Conjecture}

\newtheoremstyle{nopunct}
{3pt}{3pt}        
{}                
{}                
{\bfseries}       
{}                
{ }               
{\thmname{#1}\thmnumber{ #2} \thmnote{ #3}}
\theoremstyle{nopunct}

\newtheorem{remark}[theorem]{Remark}

\newtheorem{definition}[theorem]{Definition}

\newtheoremstyle{withcitation}
{\topsep}{\topsep}{\itshape}{}{\bfseries}{.}{ }{\thmname{#1}\thmnumber{ #2}\thmnote{ (#3)}}
\theoremstyle{withcitation}

\crefname{lemma}{Lemma}{Lemma}

\newcommand{\condition}[2]{%
\phantomsection\label{cond:#1}
\text{(#2)}
}

\newcommand{\refcond}[2]{%
\hyperref[cond:#1]{(#2)}
}

\makeatletter
\def\l@subsection{\@tocline{2}{0pt}{2.5pc}{5pc}{}}
\makeatother
  
\makeatletter
\let\old@thm\thm
\def\thm{\RifM@\old@thm}
\makeatother

\makeatletter
\let\oldthebibliography\thebibliography
\renewcommand{\thebibliography}[1]{%
  \oldthebibliography{#1}%
  \renewcommand{\@biblabel}[1]{\makebox[\labelwidth][r]{[##1]}}%
}
\makeatother

\title{Rigidity Theorems for the Weyl Problem of Convex Surfaces in Hyperbolic 3-Space}

\author{Xinrong Zhao}
\thanks{Email: \texttt{xrzhao24@m.fudan.edu.cn}}

\address{School of Mathematical Sciences, Fudan University, Shanghai, 200433, P.R. China}
\email{xrzhao24@m.fudan.edu.cn}
\date{}
\begin{document}
	\begin{abstract}
		In this paper, we study the rigidity of noncompact convex sets in hyperbolic 3-space. We prove that any intrinsic isometry between the boundaries of two closed, noncompact convex subsets of hyperbolic 3-space of dimension at least two extends to a global isometry of the ambient space, provided that their ideal boundaries are circle-type closed sets with countably many connected components. Moreover, the same conclusion holds if the ideal boundaries consist of finitely many mutually disjoint disks together with a set of one-dimensional Hausdorff measure zero. This result generalizes a recent rigidity theorem of Luo, Luo, and Rao by allowing the ideal boundaries to contain disk components. As a direct consequence, we establish a uniqueness result concerning the Weyl problem for convex surfaces in hyperbolic 3-space, as proposed by Luo and Wu. In particular, our approach provides an alternative proof of the discrete Schwarz lemma. The proof uses Pogorelov's rigidity theorem for compact convex bodies in $\mathbb{R}^3$, the Pogorelov map, and properties of locally convex surfaces in $\mathbb{R}^3$.
	\end{abstract}
    \subjclass[2020]{53C45, 52A15, 30F25, 52C26}
	\maketitle

    \tableofcontents
    \vspace{1em} 

\section{Introduction}

    \subsection{Main results}
	
	Let $\mathbb{H}^3_P$ denote the Poincaré ball model of hyperbolic 3-space. A \emph{circle domain} is an open, connected subset of the Riemann sphere $\hat{\mathbb{C}} = \partial \mathbb{H}^3_P$ whose boundary components are either circles or points. Let $C_P(X)$ denote the hyperbolic convex hull of a closed set $X$ in $\mathbb{H}^3_P \cup \partial \mathbb{H}^3_P$. A compact subset $X \subset \partial \mathbb{H}^3_P$ is called a \emph{circle-type closed set} if its complement is a circle domain. Equivalently, every connected component of such a set $X$ is either a round disk or a point. The classical Weyl problem asks whether positively curved 2-spheres can be isometrically embedded into Euclidean 3-space. The Weyl problem in hyperbolic 3-space for genus-zero surfaces, as posed by Luo and Wu in \cite{LuoWu}, is the following:
	\begin{conjecture}[(\cite{LuoWu})]\label{conj: 1}
		Suppose that $(S, d)$ is a connected planar surface equipped with a complete path metric of curvature at least $-1$. Then $(S,d)$ is isometric to a complete convex surface $Y$ that bounds a convex set $X$ in $\mathbb{H}^3_P$ such that $\overline{X} \cap \partial \mathbb{H}^3_P$ is a circle-type closed set, where $\overline{X}$ denotes the closure of $X$ in $\mathbb{R}^3$. Furthermore, the convex surface $Y$ is unique up to isometries of $\mathbb{H}^3_P$.
	\end{conjecture}
    In this paper, following the framework developed in \cite{luo2026rigidity}, we establish the following rigidity results for convex surfaces in hyperbolic 3-space via the Pogorelov map. When $S$ possesses only countably many topological ends, the uniqueness part of \cref{conj: 1} follows as a direct consequence of \cref{thm: MainThm1}.
	
	\begin{theorem}\label{thm: MainThm1}
		Let $X_1$ and $X_2$ be two closed, noncompact convex subsets in $\mathbb{H}^3_P$ of dimension at least $2$. Suppose that their boundaries $\partial X_1$ and $\partial X_2$ in $\mathbb{H}^3_P$ are isometric with respect to their intrinsic path metrics. For each $i \in \{1, 2\}$, let $\overline{X}_i$ denote the closure of $X_i$ in $\mathbb{R}^3$, and define $X_i^P = \overline{X}_i \cap \partial \mathbb{H}^3_P$. If each $X_i^P$ is a circle-type closed set $\mathscr{D}_i$ with countably many connected components, then any intrinsic isometry between $\partial X_1$ and $\partial X_2$ extends to a global isometry of $\mathbb{H}^3_P$.
	\end{theorem}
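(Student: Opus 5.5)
The strategy is to transfer the problem to Euclidean space via the Pogorelov map and then apply Pogorelov's rigidity theorem for compact convex bodies in $\mathbb{R}^3$. I would work in the Klein (projective) model $\mathbb{H}^3_K$, the unit ball with hyperbolic geodesics as straight segments. Hyperbolic convex sets then coincide with Euclidean convex subsets of the ball. Moreover, the ideal boundary $X_i^P$ corresponds, under the standard identification of the sphere at infinity, to the set $\overline{X}_i\cap S^2$ in the Klein model. A round disk on $\partial\mathbb{H}^3_P$ corresponds to a spherical cap on $S^2$, that is, the intersection of $S^2$ with a closed half-space. So in the Klein picture each $\overline{X}_i$ is a compact convex body $K_i\subset \overline{B}^3$ with $K_i\cap S^2$ a countable union of points and closed caps.

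The first step is to extend the given isometry $f:\partial X_1\to\partial X_2$ to the boundaries of the compact bodies. I would show that $f$ extends continuously to a homeomorphism $\partial K_1\to\partial K_2$ sending $X_1^P$ to $X_2^P$. The idea is that the ends of $\partial X_i$ correspond bijectively to the components of $X_i^P$, with a cap component corresponding to a funnel-like end and a point component to a cusp-like end. The intrinsic metric detects which type occurs, so $f$ should induce a bijection between components respecting their type.

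The key analytic step uses the Pogorelov map $\Phi$, which sends pairs of points in $\mathbb{H}^3_K$ to pairs in $\mathbb{R}^3$. It has the property that an intrinsic isometry between convex surfaces in $\mathbb{H}^3$ induces an intrinsic isometry of the corresponding "Euclidean" surfaces obtained by the Pogorelov construction applied to the boundaries. Following \cite{luo2026rigidity}, I would apply Pogorelov's infinitesimal-to-global trick: consider the family of surfaces $\tfrac12(\partial K_1 + \partial K_2)$ via the correspondence $f$, and show that the associated Euclidean surfaces are isometric compact convex surfaces. Pogorelov's rigidity theorem for (possibly non-smooth) compact convex surfaces in $\mathbb{R}^3$ then gives that they are congruent, and this yields that $f$ is the restriction of a hyperbolic isometry.

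The difficulty is that the Euclidean surfaces are only defined on $\partial X_i\subset\partial K_i$, not on the ideal parts. On a cap component the surface $\partial K_i$ contains a flat spherical-cap region lying on $S^2$, where no hyperbolic metric exists. I would handle this as follows.
\begin{enumerate}[(i)]
\item Show that the Pogorelov-transported metric extends across each cap to a flat disk, so that the whole surface is a compact locally convex surface in $\mathbb{R}^3$. Properties of locally convex surfaces then give global convexity.
\item Show that the countably many point components, together with the boundary circles of the caps, form a set that is removable for the isometry. For this I would use that a countable set has zero length, together with continuity of intrinsic distance across such sets.
\end{enumerate}

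I expect the main obstacle to be exactly this extension across the ideal boundary, in particular the disk components. One must check that the isometry matches the boundary circles of corresponding caps isometrically in the induced Euclidean metric. My plan is to compute the limiting behaviour of the Pogorelov map near $S^2$, which should show that the metric degenerates in a controlled, cap-compatible way. I would then approximate $\partial K_i$ by truncations along horospheres or by hyperbolic planes near each cap, and pass to the limit using compactness of convex bodies and the stability built into Pogorelov's theorem.
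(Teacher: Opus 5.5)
Your overall architecture matches the paper's. You transport $F$ by the Pogorelov map to the radial surfaces $S_1=\{P_1(x,F(x))\}$ and $S_2=\{P_2(x,F(x))\}$, which are locally convex with respect to $0$ and intrinsically isometric via $G$ (\cref{thm: PogorelovII}; the Pogorelov map is not the averaging $\tfrac12(\partial K_1+\partial K_2)$ you describe). You then fill the holes coming from $X_i^P$, obtain two closed locally convex surfaces, and conclude with van Heijenoort, Pogorelov's theorem and \cref{prop: RnIsometry-to-HnIsometry}.

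The first gap is your item (i). It is the step that carries the proof, it is only asserted, and the route you sketch for it cannot work. To glue flat disks into both surfaces and extend $G$ across them, you must show two things: that the image curves of corresponding cap boundaries are planar, \emph{and} that the boundary correspondence induced by $G$ is the restriction of a Euclidean congruence between the planar regions they bound. The intrinsic isometry gives neither a priori. In the normalization of \cref{lem: L2-formula-via-L1} one only knows $\|L_2(\alpha)\|=2-\|L_1(\alpha)\|$, with directions related by an unknown homeomorphism $f$ of $\mathbb{S}^1$.

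This is exactly where roundness must enter. By Thurston's theorem, the convex hulls of a round closed disk and of a non-round closed Jordan domain (say one bounded by a square) both have boundary isometric to $\mathbb{H}^2$, yet they are not congruent. So a truncation, compactness or stability argument that never exploits roundness cannot succeed. Moreover, truncating near the caps destroys the isometry, which leaves Pogorelov's theorem nothing to act on.

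The paper's mechanism runs as follows.
\begin{enumerate}
\item Normalize the cap to $\overline{\mathbb{S}^2_-}$. The cone over its complement is then a half-space, and the image surface closes up with a planar convex set containing $0$ in its interior (\cref{lem: bottom-is-convex}, \cref{prop: Pi-contain-0-interior}).
\item Compose $F$ on either side with M\"obius isometries $\Gamma$ preserving the cap. These deform the boundary curve explicitly to $L_{1,\Gamma}(\alpha)=\frac{2f'(\alpha)}{\gamma'(\alpha)+f'(\alpha)}e^{i\gamma(\alpha)}$.
\item Convexity of all these curves, in the limit where $\Gamma$ degenerates, forces $\sin^2\bigl(\tfrac{f(\alpha)-f(\beta)}{2}\bigr)=f'(\alpha)f'(\beta)\sin^2\bigl(\tfrac{\alpha-\beta}{2}\bigr)$, so $f$ is M\"obius.
\item Composing with this M\"obius map makes both curves the unit circle with identity correspondence. \cref{lem: unit-circle-to-ellipse} and \cref{lem: boundary-on-plane} then give congruent flat elliptic disks that can be glued in for any admissible normalization.
\end{enumerate}

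The second gap is accumulation of components. Item (ii) treats the point components and the cap circles as one removable set. But the circles have positive length; they are handled by gluing, not by measure arguments. More seriously, points and disks can accumulate on caps and on one another. Point components that are limits of disks lie in the closure of the union of the caps, so they cannot be removed before the caps are filled. Conversely, a cap accumulated by other components has no collar, so the closing-up argument of \cref{lem: bottom-is-convex}, and with it the filling, is unavailable until those components are gone.

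The paper breaks this circularity by transfinite recursion along the Cantor--Bendixson derivatives of the component space. At successor stages it fills isolated points (\cref{prop: fill-isolated-points}, where invariance of domain shows that an isolated point cannot correspond to a disk) and isolated disks (\cref{prop: fill-isolated-disks}). At limit stages it takes unions. Since the component space is countable and compact metrizable, the recursion terminates at a countable ordinal. \cref{rmk: cantor-rank-infinite} shows that finitely many stages do not suffice in general. This is where countability of the components is really used; with only finitely many disks, zero length of the remaining set already suffices (\cref{thm: MainThm2}).

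Finally, your Step 1 is also only asserted, and each of its pieces needs its own argument in the paper:
\begin{itemize}
\item the type-preserving correspondence of components emerges from the filling propositions;
\item continuity at the cap circles comes from the Lipschitz estimate in \cref{lem: bottom-is-convex}(3) together with \cref{lem: continuous-extend-of-G};
\item the half-space and two-dimensional cases, which your ``compact convex body $K_i$'' framing omits, need the separate arguments of \cref{subsec:special-cases}.
\end{itemize}
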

	
	 \begin{theorem}\label{thm: MainThm2}
		Let $X_1$ and $X_2$ be two closed, noncompact convex subsets in $\mathbb{H}^3_P$ of dimension at least $2$. Suppose that for each $i \in \{1, 2\}$, $X_i^P$ is the disjoint union of finitely many closed disks together with a set $\mathscr{P}_i$ having zero one-dimensional Hausdorff measure. Then any intrinsic isometry between $\partial X_1$ and $\partial X_2$ extends to a global isometry of $\mathbb{H}^3_P$.
	\end{theorem}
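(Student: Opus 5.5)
We prove \cref{thm: MainThm2} along the lines of \cref{thm: MainThm1}, using the Pogorelov map to pass to Euclidean space. The idea is to make the two convex sets agree near their ideal boundaries, transfer to $\mathbb{R}^3$, and then invoke Pogorelov's rigidity theorem for compact convex bodies.

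\textbf{Step 1: the disk components.} Write $X_i^P=D_i^1\cup\dots\cup D_i^{n_i}\cup\mathscr{P}_i$. For each closed disk $D_i^k$, take the totally geodesic plane $H_i^k$ whose ideal boundary is $\partial D_i^k$. Near $D_i^k$, the set $X_i$ contains the half-space region cut off by $H_i^k$ facing $D_i^k$. The boundary $\partial X_i$ has a complete end that is asymptotic to a flaring neighbourhood. We would first show that the intrinsic isometry $f\colon\partial X_1\to\partial X_2$ matches these ends: the ends accumulating on disks are intrinsically distinguishable from the ends accumulating on $\mathscr{P}_i$. The former have exponential area growth along a whole circle's worth of directions, while the latter are cusp-like or thin, because $\mathscr{P}_i$ has $\mathcal{H}^1$-measure zero. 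This gives $n_1=n_2$ and a bijection of the disks.

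\textbf{Step 2: reduction to the $\mathscr{P}$-case.} Next we would normalize by a M\"obius transformation and compare the asymptotic geometry of matched ends, so that $f$ asymptotically respects the planes $H_i^k$. A cleaner alternative is a doubling or truncation trick. Replace each disk $D_i^k$ by the boundary circle only, i.e.\ consider $X_i'=C_P(\overline{X}_i\setminus \bigcup_k \mathrm{int} D_i^k)$. One then argues that the boundary piece of $\partial X_i$ facing $D_i^k$ is intrinsically determined. Failing that, one can work directly with the full circle-type picture. If $\mathscr{P}_i$ is countable, \cref{thm: MainThm1} applies directly. In general, $\mathcal{H}^1(\mathscr{P}_i)=0$ is exactly the hypothesis under which the rigidity of Luo--Luo--Rao type arguments holds. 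So we would redo their argument with the disks present: the boundary of $X_i$ near a disk is a complete convex surface whose ideal boundary is a circle.

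\textbf{Step 3: the Pogorelov map and compactification.} Apply the Pogorelov map $\Phi$ from the Klein model to $\mathbb{R}^3$. It sends convex sets to convex sets, and it sends pairs of isometric convex surfaces in $\mathbb{H}^3$ to pairs isometric in a suitable Euclidean sense, so $f$ induces an isometry between the bounded convex bodies $\Phi(X_1)$ and $\Phi(X_2)$. Their closures reach the unit sphere along the images of $\mathscr{D}_i$ and $\mathscr{P}_i$. The boundaries of the closures consist of $\partial\Phi(X_i)$ together with the flat or ideal pieces, namely disks on the sphere (flat caps after truncation) and a null set. We then need the isometry to extend to an intrinsic isometry of the full boundaries of these compact convex bodies. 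Because $\mathscr{P}_i$ has $\mathcal{H}^1$-measure zero, it does not affect the intrinsic metric (curves can avoid it). The disk pieces must be shown to correspond isometrically, using Step 1. Pogorelov's theorem then gives a Euclidean congruence, and undoing $\Phi$ gives a hyperbolic isometry extending $f$.

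\textbf{Main obstacle.} We expect the hardest part to be Step 1 together with the extension across the disks in Step 3. One has to show that $f$ extends continuously to the ideal boundary and maps the circle $\partial D_1^k$ onto $\partial D_2^{\sigma(k)}$, compatibly with the Euclidean metric on the Pogorelov image. Our first attempt would be to prove that near each disk the surface is asymptotically totally geodesic, so that horocyclic or equidistant curves in $\partial X_i$ are intrinsically detectable and determine the boundary circle parametrization. The measure-zero hypothesis is needed precisely so that $\mathscr{P}_i$ contributes nothing to the boundary length in this extension.
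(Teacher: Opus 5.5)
Your overall plan (apply the Pogorelov map, close up the holes, invoke Pogorelov's rigidity theorem) matches the paper. However, the step that carries the theorem, namely closing the holes over the disks so that the isometry survives, is not supplied, and the Euclidean set-up is misdescribed.

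\textbf{The Pogorelov set-up.} The Pogorelov map does not act on single convex sets; you appear to be conflating it with the Klein model. It is applied to pairs $(x,F(x))$, $x\in\partial X_1$. It produces two surfaces $S_1,S_2\subset B_2(0)$ that are only locally convex with respect to $0$, together with an intrinsic isometry $G\colon S_1\to S_2$. There are no convex bodies $\Phi(X_i)$. The boundaries over the ideal set do not lie on the unit sphere: the two radii there add up to $2$ (\cref{lem: SumIs2}). After normalizing a matched pair of disks to $\overline{\mathbb{S}^2_-}$, the holes are bounded by planar convex curves with complementary radii. If $\partial P_1$ is $r_1(\alpha)e^{i\alpha}$, then $\partial P_2$ is $(2-r_1(\alpha))e^{if(\alpha)}$ with $f'=r_1/(2-r_1)$.

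\textbf{The missing step: gluing flat caps.} To glue flat caps and keep an isometry, the boundary correspondence must be the restriction of a Euclidean congruence $P_1\to P_2$. Matching ends and continuity give only an arclength-preserving homeomorphism between two convex curves, and such a map is in general not induced by a congruence. So ``the disk pieces must be shown to correspond isometrically, using Step 1'' is precisely the unproved core. Your suggestion about asymptotically totally geodesic ends and detectable horocyclic curves does not provide a mechanism.

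The paper's key idea is to deform the Pogorelov map by M\"obius transformations $\Gamma$ preserving the disk, using both $\Phi(\Gamma(x),F(x))$ and $\Phi(x,\Gamma\circ F(x))$.
\begin{enumerate}
\item The resulting planar bottoms remain convex, with $0$ in their interiors (\cref{prop: Pi-contain-0-interior}), for every $\Gamma$.
\item Letting $\Gamma$ degenerate turns these convexity conditions into $\sin^2\bigl(\tfrac{f(\alpha)-f(\beta)}{2}\bigr)=f'(\alpha)f'(\beta)\sin^2\bigl(\tfrac{\alpha-\beta}{2}\bigr)$, which forces $f$ to be M\"obius.
\item Normalizing by this M\"obius map makes both curves the unit circle with the identity correspondence.
\item For a general admissible pair the curves become congruent planar ellipses (\cref{lem: unit-circle-to-ellipse}), and gluing in the flat elliptic disks preserves local convexity (\cref{lem: boundary-on-plane}).
\end{enumerate}

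\textbf{Other steps that fail as stated.}
\begin{itemize}
\item The truncation $C_P(\overline{X}_i\setminus\bigcup_k\operatorname{int}D_i^k)$ yields convex sets whose boundaries bear no intrinsic relation to $\partial X_i$. Their ideal boundaries contain circles of positive $\mathscr{H}^1$-measure, which is exactly Thurston's non-rigid regime.
\item Filling $\mathscr{P}_i$ needs more than ``curves can avoid it.'' The radial function must first be extended as a locally convex function across the $\mathscr{H}^1$-null set (\cref{thm: ExtendConvex}). You must then show that $G$ extends over the new points, i.e.\ that $G(x_n)$ cannot accumulate on a disk boundary curve when $x_n$ converges in $\tilde{S}_1$. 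The paper proves this with \cref{lem: Isometry-Extension} and invariance of domain.
\item The disk-to-disk matching comes from \cref{lem: continuous-extend-of-G} and \cref{lem:component-compactification}, not from area growth. Your area-growth argument is only a heuristic, and it is ill-defined when $\mathscr{P}_i$ is uncountable and its ends are not isolated.
\item The two-dimensional and half-space cases, where the normalized Pogorelov set-up with $0\in\operatorname{int}X_i$ is unavailable, require separate arguments. These are the extremal length argument of \cref{prop:point-end-extremal} and the extrinsic curvature argument of \cref{subsec:single-half-sapce}; your proposal does not address them.
\end{itemize}
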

	
	\begin{remark}\label{rmk: 1}
        If $X_i$ is two-dimensional in the above theorems, its ``boundary'' $\partial X_i$ is defined as the metric double of $X_i$ in the sense of Alexandrov \cite{Intrinsic-geometry-of-convex-surfaces}, i.e., $\partial X_i$ is the gluing of two copies $X_i^+$ and $X_i^{-}$ of $X_i$ by the identity map along their 1-dimensional boundaries. Note that if each $X_i^P$ is restricted to be a set of one-dimensional Hausdorff measure zero in $\mathbb{R}^3$, then \cref{thm: MainThm2} reduces to the main theorem established in \cite{luo2026rigidity}. 
	\end{remark}

    \subsection{Background and applications} 

    The study of global rigidity for convex surfaces traces back to Cauchy, who proved that an intrinsic isometry $f$ between the boundaries of two compact convex polyhedra extends to an isometry of $\mathbb{R}^3$, provided that $f$ preserves their combinatorial structure. Alexandrov \cite{MR13540,ConvexPolyhedron} later removed the combinatorial restriction of $f$, thereby establishing the rigidity of convex polyhedra from a purely metric perspective. The rigidity problem for general compact convex bodies was resolved by Pogorelov \cite{Pogorelov}.
	\begin{theorem}[(Pogorelov)]\label{thm: Pogorelov}
		If $P$ and $Q$ are two compact convex bodies in the Euclidean 3-space $\mathbb{R}^3$ whose boundaries are isometric with respect to their intrinsic path metrics, then any isometry between the boundaries of $P$ and $Q$ extends to an isometry of $\mathbb{R}^3$.
	\end{theorem}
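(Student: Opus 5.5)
The statement is Pogorelov's classical rigidity theorem, and the paper quotes it rather than proves it. If I had to prove it, I would deduce it from Alexandrov's rigidity of convex polyhedra together with a \emph{quantitative stability} estimate, passing to the limit by polyhedral approximation. Throughout, let $f\colon\partial P\to\partial Q$ be the given intrinsic isometry. The degenerate cases (bodies of dimension $\le 2$, handled with doubled metrics as in \cref{rmk: 1}) I would treat separately by the planar Cauchy-type argument.

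\textbf{Step 1: approximation.} Choose convex polyhedra $P_n\to P$ in the Hausdorff sense, with vertex sets $V_n\subset\partial P$ becoming dense. By Alexandrov's convergence theorem, the intrinsic metrics of $\partial P_n$ converge uniformly to that of $\partial P$, after composing with the nearest-point projections $\partial P_n\to\partial P$.

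Transport the vertices by $f$ to get $f(V_n)\subset\partial Q$, and let $Q_n=\mathrm{conv}\,f(V_n)$. Then $Q_n\to Q$ in the Hausdorff sense, and the intrinsic metrics of $\partial Q_n$ also converge to that of $\partial Q$. Because $f$ is an isometry, the vertex-to-vertex intrinsic distances on $\partial P_n$ and $\partial Q_n$ differ by $o(1)$, uniformly in $n$.

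The point of this construction is to avoid a circularity. One cannot simply realize polyhedral metrics $d_n\to d_P$ by Alexandrov's existence theorem and pass to a limit, because that limit is only known to be \emph{some} body isometric to $P$. That is exactly what we are trying to prove is congruent to $Q$.

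\textbf{Step 2: stability.} The core ingredient is a stability version of Alexandrov's uniqueness theorem, in the spirit of Volkov. Suppose two convex polyhedral metrics on $S^2$ of bounded diameter are related by a bijection that distorts distances by at most $\varepsilon$. Then their Alexandrov realizations are congruent up to an error $\omega(\varepsilon)$, where $\omega(\varepsilon)\to0$ as $\varepsilon\to0$ and $\omega$ depends only on diameter bounds and on a lower bound for the inradius.

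I would attempt this through the Cauchy–Alexandrov sign-counting argument. Compactness reduces it to non-degeneration of the realizations. The delicate point is that the metrics of $\partial P_n$ and $\partial Q_n$ are only \emph{approximately} related, since they have different edge graphs. So I would first perturb the metric of $\partial Q_n$ into an exact image of the metric of $\partial P_n$, with controlled error, and only then compare the two realizations.

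\textbf{Step 3: passage to the limit.} Step 2 gives isometries $A_n$ of $\mathbb{R}^3$ with $A_n(P_n)$ within $\omega(o(1))$ of $Q_n$, and with $A_n|_{V_n}$ close to $f|_{V_n}$. Normalizing by a base point and using compactness of the isometries preserving bounded sets, a subsequence converges to an isometry $A$. Then $A(P)=Q$, and by density of $\bigcup V_n$ and continuity, $A|_{\partial P}=f$.

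\textbf{Main obstacle.} I expect Step 2 to be the hard part. Alexandrov's rigidity is purely qualitative, and upgrading it to a uniform stability estimate that survives degeneration of the polyhedra (vanishing curvature at vertices, flat regions) is the analytic heart of Pogorelov's theorem. If the Volkov-type estimate resists, my fallback is Pogorelov's own route. First, on regions of positive curvature, prove regularity of convex surfaces with prescribed regular metric via Monge–Ampère estimates, and apply the Cohn-Vossen–Herglotz integral formula for infinitesimal rigidity there. Second, on zero-curvature regions, use the ruled structure and the flatness to show directly that the isometry is a congruence.
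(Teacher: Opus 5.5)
Your Step 2 is not a lemma on the way to the theorem; it is the theorem, and the route you sketch for it is circular. The paper gives no proof here: the statement is quoted from Pogorelov and used as a black box, so your outline has to stand on its own.

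Steps 1 and 3 are bookkeeping: Alexandrov's convergence theorem, Blaschke selection, and nested dense vertex sets. But a stability estimate that is uniform over all convex polyhedra with bounded diameter and inradius bounded below is essentially equivalent to Pogorelov's theorem. If it failed, Blaschke selection and Alexandrov's convergence theorem would produce two nondegenerate convex bodies and an intrinsic isometry of their boundaries not induced by a motion.

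This is exactly why ``compactness reduces it to non-degeneration'' cannot work. The limits in any such compactness argument are general convex surfaces, not polyhedra, and excluding the bad limit is the statement you are trying to prove.

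The Cauchy--Alexandrov sign count cannot replace this step. It only rules out certain sign patterns of dihedral-angle differences and gives no bound on their size in terms of the metric distortion. Here, moreover, the number of vertices tends to infinity and $P_n$, $Q_n$ have unrelated, $n$-dependent combinatorics.

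Perturbing the metric of $\partial Q_n$ into an exact image of that of $\partial P_n$ gains nothing. By Alexandrov's uniqueness theorem its realization is congruent to $P_n$, and you are back to comparing $P_n$ with $Q_n$, whose metrics are only $\varepsilon$-close.

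Step 3 also needs stability in pointwise form, namely a motion nearly realizing the correspondence $V_n\to f(V_n)$. Hausdorff closeness after a motion only gives $A(P)=Q$. Showing that the intrinsic self-isometry $A^{-1}\circ f$ of $\partial P$ is induced by a motion is again the full statement.

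Estimates of this kind do exist (Volkov). With one in pointwise form as a black box, your Steps 1 and 3 would yield the theorem, but that only trades Pogorelov's theorem for a citation of comparable depth.

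Your fallback fails for a different reason: rigidity is a global phenomenon. Small pieces of convex surfaces are not rigid: flat pieces can be rolled, and small positively curved caps can be bent. So no region-by-region scheme (``positive curvature here, ruled flat structure there'') can force a congruence. The Herglotz integral formula and Cohn-Vossen's index count are identities over the whole closed surface, not local tools.

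Moreover, a general convex surface does not decompose into regions with regular metric of positive curvature and flat regions. Its curvature measure may have a singular part: atoms at conical points, mass along curves, or mass on Cantor-like sets. There, Monge--Amp\`ere regularity theory, which needs a regular metric, says nothing.

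Handling precisely these non-regular points within a single global argument is the heart of the difficulty that Pogorelov's proof overcomes, and it is the idea your proposal is missing. In the present paper the theorem is, legitimately, simply cited.
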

    By introducing what is now known as the Pogorelov map, Pogorelov extended this rigidity to compact convex bodies in spherical 3-space $\mathbb{S}^3$ and hyperbolic 3-space $\mathbb{H}^3$. Since then, extensive efforts have been devoted to generalizing this theorem to noncompact settings in $\mathbb{H}^3$ (see, e.g., \cite{MR1280952,schlenker2002,schlenker2003,MR2208419,MR2399657,MR2469522}). More recently, based on the Pogorelov map, Pogorelov's rigidity theorem, and the Tabor–Tabor theorem on the extension of locally convex functions, Luo, Luo, and Rao \cite{luo2026rigidity} proved the following rigidity theorem:
    \begin{theorem}[(\cite{luo2026rigidity})]\label{thm: LuoLuoRao}
        Let $X_1$ and $X_2$ be two closed, noncompact convex subsets in $\mathbb{H}^3_P$ of dimension at least $2$. Suppose that for each $i \in \{1, 2\}$, the one-dimensional Hausdorff measure of $X_i^P$ is zero. Then any intrinsic isometry between $\partial X_1$ and $\partial X_2$ extends to a global isometry of $\mathbb{H}^3_P$.
    \end{theorem}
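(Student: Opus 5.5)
The plan is to reduce the statement to Pogorelov's rigidity theorem (\cref{thm: Pogorelov}) through the Pogorelov map. The Pogorelov map identifies the Klein model of $\mathbb{H}^3$ with the open unit ball in $\mathbb{R}^3$. It sends hyperbolic convex sets to Euclidean convex sets, and it turns a pair of isometric surfaces in $\mathbb{H}^3$ into a pair of surfaces in $\mathbb{R}^3$ that are isometric for the induced Euclidean path metrics. Fix an intrinsic isometry $f:\partial X_1\to\partial X_2$. After composing with hyperbolic isometries we may normalize $f$ at a base point, for instance by matching a point, a tangent frame and the local convexity side.

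\textbf{Step 1: exhaustion by compact bodies.} Pass to the Klein model, where $X_i$ becomes a convex set $K_i$ in the open unit ball $B$. Its closure $\overline K_i$ meets $\partial B$ exactly in $X_i^P$. Exhaust $\partial X_1$ by compact domains $\Omega_n$ of the form $\partial X_1\cap \overline{B_{R_n}}(p)$, and set $\Omega_n' = f(\Omega_n)$. Apply the Pogorelov map (Pogorelov's construction for compact convex bodies) to the pair $(\Omega_n,\Omega_n')$. This gives Euclidean locally convex surfaces in $\mathbb{R}^3$ with an intrinsic isometry between them. The key point is that the Pogorelov map is defined using both surfaces simultaneously.

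\textbf{Step 2: close up to compact convex bodies.} Extend the images of $\Omega_n$ and $\Omega_n'$ to boundaries of compact convex bodies in a way compatible with $f$, and apply \cref{thm: Pogorelov}. As in \cite{luo2026rigidity}, this uses the Tabor--Tabor extension of locally convex functions. The hypothesis that $X_i^P$ has one-dimensional Hausdorff measure zero is what makes the closure of $\partial X_i$ in $\overline{B}$ a convex surface whose added part is negligible for path lengths. As $n\to\infty$ the limiting surfaces $\overline{\partial K_1}$ and $\overline{\partial K_2}$ are closed convex surfaces bounding compact convex bodies $\overline K_1$ and $\overline K_2$. The map $f$ extends by continuity to an intrinsic isometry of these compact boundaries: distances are not shortened, since paths through a set of $\mathcal H^1$-measure zero can be perturbed into $\partial X_i$ with arbitrarily small change in length.

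\textbf{Step 3: conclude.} Once we have a Euclidean isometry $A$ of $\mathbb{R}^3$ realizing the Pogorelov image of $f$, invert the Pogorelov map, which is functorial for isometries. This yields a projective map preserving $B$ that agrees with $f$ on $\partial X_1$, namely an element of $\mathrm{Isom}(\mathbb{H}^3)$. The dimension-two case is handled by the metric double of \cref{rmk: 1}: the same argument applies to the doubled surface, which bounds a flat compact body in the limit.

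\textbf{Main obstacle.} The hardest step is Step 2: showing that the intrinsic metric of $\partial X_i$ extends to the closure in a way that matches the Pogorelov-transformed Euclidean intrinsic metric, and that $f$ extends as an isometry across $X_i^P$. Two things must be controlled here. First, the conformal/projective distortion factor in the Pogorelov map blows up near $\partial B$, so we need to check that Euclidean lengths of curves near $X_i^P$ remain controlled by the corresponding transformed lengths. Second, the perturbation of curves off $X_i^P$ must be justified using $\mathcal H^1(X_i^P)=0$, via a covering of $X_i^P$ by small balls with summable radii and detours along their boundaries. I would first try to prove this length comparison along radial approaches to $\partial B$ and then treat the general case by a compactness argument.
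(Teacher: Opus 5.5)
There is a genuine gap. Steps 1--2 use the right tools (Tabor--Tabor, negligibility of $\mathscr{H}^1$-null sets for path lengths, \cref{thm: Pogorelov}), but apply them to the wrong surfaces, because you conflate the Klein model with the Pogorelov map. The Klein model does carry $X_i$ to a Euclidean convex set $K_i$, and $\overline{K_i}$ is a compact convex body for \emph{every} closed convex $X_i$, so the hypothesis $\mathscr{H}^1(X_i^P)=0$ plays no role there. However, $f$ is an isometry of the hyperbolic intrinsic metrics, not of the Euclidean intrinsic metrics of $\partial K_i$. So $\overline{\partial K_1}$ and $\overline{\partial K_2}$ are in general not isometric, and \cref{thm: Pogorelov} says nothing about them: the Klein images of a regular ideal tetrahedron centred at $0$ and of a generic hyperbolic translate of it are non-congruent, hence non-isometric, Euclidean tetrahedra. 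The Pogorelov map $\Phi(x,f(x))$ acts on pairs, not on sets. A priori its images $S_1,S_2$ are only radial and locally convex with respect to $0$ (\cref{thm: PogorelovII}), and these images, not the Klein surfaces, are what must be closed up. The exhaustion step does not help either. Compact pieces of isometric convex surfaces are generally not congruent, so nothing in $\Omega_n,\Omega_n'$ alone lets you extend their images to closed convex surfaces ``compatibly with $f$''. The closing-up has to come from the global structure of $\partial X_1$, and your proposal gives no mechanism for it.

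The missing idea is that the global images close up on their own.
\begin{itemize}
\item[(a)] Move $0$ into $\operatorname{int}X_1$ and $\operatorname{int}X_2$ by separate isometries; this, not matching a frame, is the normalization you need. Then apply $\Phi(x,f(x))$ to all of $\partial X_1$ at once.
\item[(b)] By \cref{lem: SumIs2} the images lie in $B_2(0)$. Since $P_1(x,f(x))$ points in the direction of $x$ and $P_2(x,f(x))$ in that of $f(x)$, the radial projection of $S_i$ is exactly $\mathbb{S}^2\setminus X_i^P$. The homogeneous gauge $q_i$ of $S_i$ is locally convex on the cone over $\mathbb{S}^2\setminus X_i^P$ (\cref{lem: equivalence-locally-convex-to-0}).
\item[(c)] This is where $\mathscr{H}^1(X_i^P)=0$ enters: the cone over $X_i^P$ is $\mathscr{H}^2$-null (\cref{lem:MeasureZero}). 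So \cref{thm: ExtendConvex} extends $q_i$ and yields a compact locally convex surface $\tilde S_i\supset S_i$ homeomorphic to $\mathbb{S}^2$, which is convex by \cref{cor: locallyconvex-convex}.
\item[(d)] The inverse radial projection is locally Lipschitz, so $\tilde S_i\setminus S_i$ is $\mathscr{H}^1$-null. By \cref{thm: zero-H1-dont-change-metric}, $\tilde S_i$ is the metric completion of $S_i$, and $G$ extends to an isometry $\tilde S_1\to\tilde S_2$.
\item[(e)] \cref{thm: Pogorelov} and \cref{prop: RnIsometry-to-HnIsometry} finish the proof. The latter is a genuine statement about pairs, not a ``functoriality'' of $\Phi$.
\end{itemize}
In particular, the obstacle you single out does not occur. \cref{lem: preserve-intrinsic-isometries} gives exact equality of arc length for the pair, everything stays in $B_2(0)$, and no limit $n\to\infty$ is needed. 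Finally, the two-dimensional (and mixed-dimensional) case is not ``the same argument''. \cref{thm: PogorelovII} needs $e$ in the interior of three-dimensional convex sets, which fails when $\dim X_i=2$, so that case needs separate treatment.
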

    As observed in \cite{luo2026rigidity}, the following counterexample by Thurston \cite{Thurston} shows that the condition of vanishing one-dimensional Hausdorff measure is optimal. Let $X_i$ be the hyperbolic convex hull of a non-circular Jordan curve $J_i \subset \partial \mathbb{H}^3_P$ ($i=1,2$). Then each $\partial X_i$ is intrinsically isometric to two copies of $\mathbb{H}^2$, which implies that $\partial X_1$ and $\partial X_2$ are isometric. Note that $X_1$ and $X_2$ are globally isometric if and only if $J_1$ and $J_2$ differ by a Möbius transformation. Thus, taking $J_1$ to be a square and $J_2$ to be a non-square rectangle yields the desired counterexample. This demonstrates that the naive extension of Pogorelov's theorem to arbitrary closed convex sets in $\mathbb{H}^3_P$ fails.
    
    In \cite{LuoWu}, Luo and Wu considered the following special form of the Weyl problem:
	\begin{conjecture}\label{conj: 2}
		Every genus-zero complete hyperbolic surface $S$ is isometric to the boundary $\partial C_P(X)$ of the convex hull of some circle-type closed set $X \subset \partial \mathbb{H}^3_P$. Furthermore, such a circle-type closed set $X$ is unique up to Möbius transformations.
	\end{conjecture}
	By relating \cref{conj: 2} to the Koebe circle domain conjecture, Luo and Wu \cite{LuoWu} proved the existence of such a circle-type closed set $X$ for any genus-zero complete hyperbolic surface $S$ with countably many topological ends. When $S$ has countably many ends and every end is a cusp, \cref{thm: LuoLuoRao} established the uniqueness of the circle-type closed set $X$ in \cref{conj: 2}. These previous developments, in conjunction with \cref{thm: MainThm1}, fully resolve the restricted version of \cref{conj: 2} for the class of genus-zero complete hyperbolic surfaces with countably many topological ends:
    \begin{theorem}\label{thm: Countable-ends}
        Every genus-zero complete hyperbolic surface $S$ with countably many topological ends is isometric to the boundary $\partial C_P(X)$ of the convex hull of some circle-type closed set $X \subset \partial \mathbb{H}^3_P$. Furthermore, such a circle-type closed set $X$ is unique up to Möbius transformations.
    \end{theorem}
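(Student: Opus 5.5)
Existence is the result of Luo and Wu \cite{LuoWu} quoted above: a genus-zero complete hyperbolic surface $S$ with countably many ends is isometric to $\partial C_P(X)$ for some circle-type closed set $X \subset \partial \mathbb{H}^3_P$. So the work is uniqueness. I would first check that any such $X$ has countably many connected components, so that \cref{thm: MainThm1} applies. The components of $X$ are round disks or points. The components of $\hat{\mathbb{C}} \setminus X$ are arranged so that the ends of $\partial C_P(X)$, which is homeomorphic to $\Omega = \hat{\mathbb{C}}\setminus X$ via the nearest-point retraction, correspond to the complementary pieces of $\Omega$. The end space of $\Omega$ is the space of components of $X$, obtained by collapsing each component to a point. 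Since the end space of $S$ is countable and homeomorphic to that of $\Omega$, $X$ has countably many components. The degenerate cases need separate care. If $X$ lies on a round circle, $C_P(X)$ is two-dimensional and its boundary is understood as the metric double, as in \cref{rmk: 1}. Sets $X$ with at most two points do not give complete hyperbolic surfaces, or are excluded by the dimension hypothesis.

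Now take two circle-type closed sets $X_1, X_2$ with $\partial C_P(X_1)$ and $\partial C_P(X_2)$ both isometric to $S$. Set $\mathcal{X}_i = C_P(X_i)$. This is a closed convex subset of $\mathbb{H}^3_P$, noncompact because $X_i \neq \emptyset$, and of dimension at least $2$ because $X_i$ has at least three points. The closure of $\mathcal{X}_i$ in $\mathbb{R}^3$ meets the sphere at infinity exactly in $X_i$; this is a standard fact about convex hulls of closed sets in $\partial \mathbb{H}^3_P$. So $\mathcal{X}_i^P = X_i$ is circle-type with countably many components. Composing the two isometries gives an intrinsic isometry $\partial \mathcal{X}_1 \to \partial \mathcal{X}_2$. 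By \cref{thm: MainThm1} it extends to a global isometry $g$ of $\mathbb{H}^3_P$.

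Since the isometry maps $\partial\mathcal{X}_1$ onto $\partial\mathcal{X}_2$, $g$ maps $\mathcal{X}_1$ onto $\mathcal{X}_2$ by convexity. Hence the boundary extension of $g$ maps $X_1 = \overline{\mathcal{X}_1}\cap\partial\mathbb{H}^3_P$ onto $X_2$. That extension is a Möbius transformation, which gives uniqueness. One subtle point remains in the two-dimensional case: there $g$ must respect the doubled structure, which is presumably built into \cref{thm: MainThm1} through \cref{rmk: 1}.

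I expect the main obstacle to be the topological step: identifying the ends of $S$ with the components of $X$. This needs the homeomorphism between $\partial C_P(X)$ and $\Omega$, and the description of the ends of a planar domain as the components of its complement. Here I would cite the standard retraction argument (Thurston / Epstein–Marden) and the classical end theory of planar domains. Everything else reduces directly to \cref{thm: MainThm1} and \cite{LuoWu}.
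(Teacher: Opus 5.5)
Your proposal is correct and follows the same route as the paper. Existence is quoted from Luo--Wu \cite{LuoWu}, uniqueness comes from applying \cref{thm: MainThm1} to $C_P(X_1)\cap\mathbb{H}^3_P$ and $C_P(X_2)\cap\mathbb{H}^3_P$, and your end-counting argument makes explicit a check the paper leaves implicit: any realizing $X$ has countably many components. One small correction: the nearest-point retraction is in general not injective (it collapses arcs of $\hat{\mathbb{C}}\setminus X$ onto points of bending lines), so take the homeomorphism $\partial C_P(X)\cong\hat{\mathbb{C}}\setminus X$ instead from radial projection from an interior point of $C_P(X)$, as the paper does, or from the doubling picture in the two-dimensional case.
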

    
    One of the primary motivations for \cref{conj: 2} stems from the discrete conformal geometry of polyhedral surfaces; we refer to \cite{MR2100762,MR3375525,MR3807319,MR3825607,luo2026DUT} for a comprehensive account of this theory. In particular, the rigidity of noncompact convex sets in hyperbolic 3-space is closely related to the uniqueness aspect of the uniformization of noncompact polyhedral surfaces. As proposed in \cite{MR4466644}, a version of the discrete uniformization problem for noncompact polyhedral surfaces in the setting of hyperbolic geometry is formulated as follows:
    \begin{conjecture}\label{conj: discrete-uniformization-problem}
        Suppose that $S$ is a genus-zero complete hyperbolic surface with countably many ends, all but at most one of which are cusp ends. Then $S$ is isometric to the boundary $\partial C_P(X)$ of the convex hull of some circle-type closed set $X \subset \partial \mathbb{H}^3_P$. Furthermore, such a circle-type closed set $X$ is unique up to Möbius transformations.
    \end{conjecture}
    In the classical setting, the uniqueness part of the uniformization problem follows from the Schwarz lemma and Liouville's theorem. Recently, in a forthcoming paper, Luo and Luo \cite{luo2026SchwarzLemma} established a discrete version of the Schwarz lemma, thereby completing the proof of \cref{conj: discrete-uniformization-problem}. It is worth noting that \cref{conj: discrete-uniformization-problem} follows as an immediate consequence of \cref{thm: Countable-ends}. In particular, \cref{thm: MainThm2} implies the discrete Schwarz lemma, which can be reformulated in terms of the rigidity of convex hull boundaries as follows:

    \begin{corollary}[(Discrete Schwarz lemma)]
        Let $V_1$ and $V_2$ be two discrete subsets of the upper hemisphere $\mathbb{S}^2_+$ whose set of accumulation points coincides with the equator $\partial \mathbb{S}^2_+$. If $\partial C_P(V_1 \cup \mathbb{S}^2_-)$ is isometric to $\partial C_P(V_2 \cup\mathbb{S}^2_-)$ with respect to their intrinsic metrics, then $V_1$ and $V_2$ differ by a Möbius transformation.
    \end{corollary}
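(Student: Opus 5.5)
The plan is to apply \cref{thm: MainThm2} to $X_i = C_P(V_i \cup \mathbb{S}^2_-)$ for $i=1,2$, and then pass from a global isometry of $\mathbb{H}^3_P$ to a Möbius transformation of $\partial \mathbb{H}^3_P$.

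\emph{Hypotheses of \cref{thm: MainThm2}.} Each $X_i$ is closed, convex and noncompact, and it has dimension three because $\mathbb{S}^2_-$ is a closed hemisphere (a round disk) whose convex hull is already a half-space. Next we identify the ideal boundary. For the convex hull of a closed set $K \subset \partial\mathbb{H}^3_P$, the closure in $\mathbb{R}^3$ meets the sphere at infinity exactly in $K$. Here $V_i \cup \mathbb{S}^2_-$ is closed, since the accumulation points of $V_i$ lie on the equator $\partial\mathbb{S}^2_+ \subset \mathbb{S}^2_-$. Hence
\[
X_i^P = \mathbb{S}^2_- \sqcup V_i .
\]
This is one closed disk together with the countable set $\mathscr{P}_i = V_i$. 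The union is disjoint because $V_i$ lies in the open upper hemisphere. A countable set has zero one-dimensional Hausdorff measure, so all hypotheses of \cref{thm: MainThm2} hold.

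\emph{Conclusion.} \cref{thm: MainThm2} upgrades the given intrinsic isometry $\partial X_1 \to \partial X_2$ to a global isometry $\Phi$ of $\mathbb{H}^3_P$ with $\Phi(\partial X_1) = \partial X_2$. Since $\Phi$ maps supporting planes to supporting planes, it carries $X_1$ onto $X_2$. Its boundary extension is a Möbius transformation $\phi$ of $\hat{\mathbb{C}}$, and it satisfies
\[
\phi(X_1^P) = X_2^P, \quad\text{that is,}\quad \phi(\mathbb{S}^2_- \cup V_1) = \mathbb{S}^2_- \cup V_2 .
\]
Connected components are preserved by a homeomorphism. The only component with nonempty interior on either side is the disk $\mathbb{S}^2_-$, while the points of $V_i$ are singleton components. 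Therefore $\phi(\mathbb{S}^2_-) = \mathbb{S}^2_-$ and $\phi(V_1) = V_2$, which is the claim.

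\emph{Main obstacle.} The step needing most care is the identification of $X_i^P$ with $\mathbb{S}^2_- \cup V_i$, together with the disjointness condition required in \cref{thm: MainThm2}. The standard fact to cite or check is that for a closed set $K$ of the sphere at infinity, every point of $\overline{C_P(K)} \cap \partial\mathbb{H}^3_P$ lies in $K$. This holds because any ideal point outside $K$ has a neighbourhood cut off by a hyperbolic plane separating it from $K$. The rest is bookkeeping.
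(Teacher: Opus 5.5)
Your proposal is correct and follows the paper's route: the paper obtains the discrete Schwarz lemma by applying \cref{thm: MainThm2} to $X_i=C_P(V_i\cup\overline{\mathbb{S}^2_-})$, whose ideal boundary is one closed disk together with the countable set $V_i$, and your check of the hypotheses and your component-matching argument spell out what the paper leaves implicit. The only loosely justified line is $\Phi(X_1)=X_2$, and it does hold: two distinct closed convex sets with nonempty interior and common boundary would have disjoint interiors, so a separating plane would contain $\partial X_2$ and force $X_2$ to be a half-space; alternatively you can avoid this step by noting that the boundary extension maps $\overline{\partial X_1}\cap\partial\mathbb{H}^3_P=V_1\cup\partial\mathbb{S}^2_+$ onto $V_2\cup\partial\mathbb{S}^2_+$, hence isolated points onto isolated points.
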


   \subsection{Idea of the proof and organization of the paper}
	
	We now briefly outline the main idea of the proof of \cref{thm: MainThm1} and \cref{thm: MainThm2}. First, we apply the Pogorelov map to obtain a pair of isometric, locally convex surfaces $S_i$ ($i=1,2$) in $\mathbb{R}^3$ that are radial with respect to $0$. When $X_i^P$ has only finitely many disk components, we adapt an argument similar to that in \cite{luo2026rigidity} to extend each surface $S_i$ to a locally convex surface $\tilde{S}_i$ that is homeomorphic to a circle domain $U_i$ whose boundary consists of finitely many circles. The proof of \cref{thm: MainThm2} is then completed by the finite-disk filling argument below. Next, we show that on the image surface under the Pogorelov map, each isolated component of $X_i^P$ can be filled with either a point or a flat elliptic disk in $\mathbb{R}^3$; see \cref{prop: fill-isolated-points} and \cref{prop: fill-isolated-disks}. This is achieved by perturbing the Pogorelov map via isometries of $\mathbb{H}^3_P$. By gluing these points or flat elliptic disks to the corresponding boundary components of ${S}_i$, we obtain two new isometric, locally convex surfaces in which the original isolated holes are filled. 
	We note that each gluing operation eliminates isolated holes and may convert some original non-isolated holes into isolated ones. However, a finite number of filling operations is insufficient to eliminate all holes; see \cref{rmk: cantor-rank-infinite}. To overcome this, we construct a transfinite sequence of surfaces by performing successive gluing operations at successor ordinals and taking appropriate limits at limit ordinals. By invoking a theorem from descriptive set theory, we finally extend the surfaces ${S}_i$ to a pair of isometric, closed convex surfaces. Once this construction is complete, Pogorelov's rigidity theorem (\cref{thm: Pogorelov}) and \cref{prop: RnIsometry-to-HnIsometry} imply \cref{thm: MainThm1}.
	
	The paper is organized as follows. In \cref{sec: Preliminaries}, we discuss some properties of convex surfaces and locally convex surfaces, and introduce the proof framework based on the Pogorelov map. In \cref{sec: single-disk}, we address the case where there is exactly one disk component in each $X_i^P$. Finally, in \cref{sec: the-general-case}, we treat the general case and complete the proofs of \cref{thm: MainThm1} and \cref{thm: MainThm2}.

\section{Preliminaries}\label{sec: Preliminaries}
    \subsection{Convex surfaces and locally convex surfaces}\label{sec: convex surfaces}
    In this subsection, we introduce several results for convex surfaces and locally convex surfaces in $\mathbb{R}^3$. Recall that a \emph{convex body} in $n$-dimensional Euclidean space $\mathbb{R}^{n}$ is a convex set with nonempty interior, and a surface $S\subset\mathbb{R}^3$ is called a \emph{convex surface} if it can be realized as a connected and open subset of the topological boundary of a convex body in $\mathbb{R}^3$. In this paper, we employ the standard identification of $\mathbb{R}^2$ with the hyperplane $\{x \in \mathbb{R}^3 : x_3 = 0\}$. First, we introduce a theorem which is a direct consequence of \cite[Theorem 1]{MR657117}.
    \begin{theorem}\label{thm: interior-Lipschitz}
        Let $K$ be a compact convex body in $\mathbb{R}^n$ containing the origin $0$ in its interior. Let $R:\partial K\to \mathbb{S}^{n-1}$ be the radial projection defined by $R(x)=\frac{x}{\|x\|}$. Then the inverse map $Q: \mathbb{S}^{n-1}\to\partial K$ of $R$ is Lipschitz continuous, i.e., there exists a constant $L>0$ such that
        $$
        \|Q(x)-Q(y)\|\leq L\|x-y\|,\quad\forall x,y\in\mathbb{S}^{n-1}.
        $$
    \end{theorem}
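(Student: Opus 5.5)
We prove that $Q$ is Lipschitz on $\mathbb{S}^{n-1}$ directly. Two elementary features of $K$ do all the work: it contains a ball around the origin and it is contained in a ball.

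\textbf{Setup.} Since $K$ is compact and $0\in\operatorname{int}K$, choose $0<r<R_0$ with
\[
B(0,r)\subset K\subset B(0,R_0).
\]
Write $Q(u)=\rho(u)u$ for $u\in\mathbb{S}^{n-1}$, where $\rho$ is the radial function of $K$. Then $r\le\rho\le R_0$. Since $\|Q(u)-Q(v)\|\le \rho(u)\|u-v\|+|\rho(u)-\rho(v)|$, it suffices to show that $\rho$ is Lipschitz on the sphere.

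\textbf{Key step: Lipschitz bound for $\rho$.} Fix $u,v\in\mathbb{S}^{n-1}$ and put $x=Q(u)\in\partial K$.
\begin{itemize}
\item Since $K$ is convex and closed, there is a supporting hyperplane at $x$. Its unit outer normal $\nu$ satisfies $\langle y,\nu\rangle\le\langle x,\nu\rangle$ for all $y\in K$.
\item Applying this to $y=r\nu\in B(0,r)\subset K$ gives
\[
\langle x,\nu\rangle\ge r, \qquad\text{so}\qquad \langle u,\nu\rangle\ge r/\rho(u)\ge r/R_0 .
\]
\item The point $Q(v)=\rho(v)v$ lies in $K$. If $\langle v,\nu\rangle>0$, this gives $\rho(v)\langle v,\nu\rangle\le\rho(u)\langle u,\nu\rangle$.
\item Now $\langle v,\nu\rangle\ge\langle u,\nu\rangle-\|u-v\|$. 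So whenever $\|u-v\|\le r/(2R_0)$, we have $\langle v,\nu\rangle\ge \langle u,\nu\rangle/2>0$, and therefore
\[
\rho(v)-\rho(u)\le \rho(u)\,\frac{\langle u-v,\nu\rangle}{\langle v,\nu\rangle}\le R_0\cdot\frac{2R_0}{r}\,\|u-v\|.
\]
\end{itemize}
Exchanging the roles of $u$ and $v$ gives
\[
|\rho(u)-\rho(v)|\le \frac{2R_0^2}{r}\,\|u-v\| \qquad\text{whenever } \|u-v\|\le \frac{r}{2R_0}.
\]

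\textbf{Far-apart pairs.} If $\|u-v\|>r/(2R_0)$, then trivially
\[
\|Q(u)-Q(v)\|\le 2R_0\le \frac{4R_0^2}{r}\,\|u-v\|.
\]

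\textbf{Conclusion.} Combining the two cases, $Q$ is Lipschitz with constant
\[
L=R_0+\frac{4R_0^2}{r}.
\]

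The only real point of care is the positivity of $\langle v,\nu\rangle$ in the key step, which is why the argument is local. The uniform lower bound $\langle u,\nu\rangle\ge r/R_0$, coming from the inner ball, is exactly what makes this locality uniform over the sphere. Everything else is routine, and the same bound is the content of \cite[Theorem 1]{MR657117}.
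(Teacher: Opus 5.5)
Your proof is correct and complete. The supporting-hyperplane inequality at $Q(u)$ and the bound $\langle u,\nu\rangle\ge r/R_0$ coming from the inscribed ball together make the local estimate uniform. The split into near and far pairs then gives $L=R_0+4R_0^2/r$.

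There is one cosmetic slip. The point $r\nu$ lies on the boundary of the open ball $B(0,r)$, not inside it. You should say $r\nu\in\overline{B(0,r)}\subset K$ because $K$ is closed, or apply the inequality to $t\nu$ with $t<r$ and let $t\to r$.

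The paper does not prove this statement. It only records it as a direct consequence of Theorem~1 of \cite{MR657117}, so your argument takes a genuinely different, self-contained route.

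Your route buys two things. It is elementary, using nothing beyond the existence of supporting hyperplanes and the triangle inequality. It also gives an explicit constant depending only on the inradius $r$ and outer radius $R_0$ of $K$ about the origin. That quantitative form fits how the result is used later; for instance, in \cref{lem: Bi-Lipschitz} the constant $L$ is combined with an inscribed ball $B_r(0)$.

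The citation costs the paper nothing and presumably covers whatever generality the reference treats. Your closing remark that your bound ``is the content'' of that theorem is an attribution you cannot check from the paper alone. It would be safer to say only that the statement also follows from it.
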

    The following lemma is immediate.
	\begin{lemma}\label{lem: Bi-Lipschitz}
		Let $\Sigma$ be the boundary of a compact convex body $K$ in $\mathbb{R}^n$. Then there exists a constant $C_{\Sigma}>0$ such that for all $x,y\in\Sigma$, $d_{\Sigma}(x,y)\leq C_{\Sigma}\|x-y\|$.
	\end{lemma}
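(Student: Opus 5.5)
The plan is to use the radial parametrization of $\Sigma$ over the unit sphere, with \cref{thm: interior-Lipschitz} providing the Lipschitz bound in one direction. We compare $d_\Sigma(x,y)$ with $\|x-y\|$ by passing through the sphere $\mathbb{S}^{n-1}$, where intrinsic and chordal distances are comparable.

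First, normalize. Since isometries of $\mathbb{R}^n$ preserve both the intrinsic metric of $\Sigma$ and the Euclidean distance, we may translate $K$ so that $0$ lies in its interior. Choose $r>0$ with $\overline{B(0,r)}\subset K$; then $\|x\|\ge r$ for every $x\in\Sigma$. Let $R:\Sigma\to\mathbb{S}^{n-1}$ be the radial projection and $Q=R^{-1}$. By \cref{thm: interior-Lipschitz}, $Q$ is $L$-Lipschitz for some $L>0$.

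Second, show that $R$ is Lipschitz on $\Sigma$. For $x,y\in\Sigma$,
$$
\Bigl\|\frac{x}{\|x\|}-\frac{y}{\|y\|}\Bigr\|\le \frac{\|x-y\|}{\|x\|}+\|y\|\Bigl|\frac{1}{\|x\|}-\frac{1}{\|y\|}\Bigr| \le \frac{2\|x-y\|}{r},
$$
where the last step uses $\bigl|\|x\|-\|y\|\bigr|\le\|x-y\|$ and $\|x\|\ge r$.

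Third, build a short path on $\Sigma$. Put $u=R(x)$ and $v=R(y)$, and let $\gamma$ be a minimizing great-circle arc in $\mathbb{S}^{n-1}$ from $u$ to $v$. If $u=-v$, any half great circle will do. Its length is the angle $\theta$ between $u$ and $v$. Since $\|u-v\|=2\sin(\theta/2)\ge 2\theta/\pi$ for $\theta\in[0,\pi]$, we get $\mathrm{length}(\gamma)\le \frac{\pi}{2}\|u-v\|$. The curve $Q\circ\gamma$ lies in $\Sigma$ and joins $x$ to $y$. Because an $L$-Lipschitz map multiplies the length of rectifiable curves by at most $L$, we obtain
$$
d_\Sigma(x,y)\le \mathrm{length}(Q\circ\gamma)\le L\cdot\frac{\pi}{2}\|u-v\|\le \frac{\pi L}{r}\|x-y\|.
$$
Hence $C_\Sigma=\pi L/r$ works.

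There is no real obstacle, since the substantive input is \cref{thm: interior-Lipschitz}. The only points needing care are the elementary facts above: Lipschitz maps do not increase curve length by more than the factor $L$, the chord–arc comparison on the sphere, and the antipodal case.
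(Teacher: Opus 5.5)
Your proposal is correct and follows essentially the same route as the paper: translate so $0\in\operatorname{int}K$ with $B_r(0)\subset K$, apply \cref{thm: interior-Lipschitz} along a great-circle arc to get $d_\Sigma(x,y)\le \frac{\pi L}{2}\|R(x)-R(y)\|$, and bound $\|R(x)-R(y)\|\le \frac{2}{r}\|x-y\|$, arriving at the same constant $C_\Sigma=\pi L/r$.
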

	\begin{proof}
	    Without loss of generality, we may assume that $0$ is an interior point of $K$ and the ball $B_r(0)$ centered at $0$ with radius $r$ is contained in $K$. By \cref{thm: interior-Lipschitz}, we have
        \begin{equation}\label{eq: Bi-Lipschitz1}
            d_{\Sigma}(x,y)\leq L\cdot d_{\mathbb{S}^{n-1}}(R(x),R(y))\leq \frac{\pi L}{2} \|R(x) - R(y)\|,\quad\forall x,y\in\Sigma.
        \end{equation}
        Since
        $$
        R(x) - R(y) = \frac{x}{\|x\|} - \frac{y}{\|y\|} = \frac{x - y}{\|x\|} + y \left( \frac{1}{\|x\|} - \frac{1}{\|y\|} \right),
        $$
        we have
        \begin{equation}\label{eq: Bi-Lipschitz2}
            \|R(x) - R(y)\| \leq \frac{\|x - y\|}{\|x\|} + \|y\| \frac{|\|y\| - \|x\||}{\|x\|\|y\|} \leq \frac{2}{\|x\|} \|x - y\|\leq \frac{2}{r} \|x - y\|.
        \end{equation}
        Combining \eqref{eq: Bi-Lipschitz1} and \eqref{eq: Bi-Lipschitz2}, we arrive at the desired inequality
        $$
        d_{\Sigma}(x,y) \leq \frac{\pi L}{r} \|x - y\|, \quad \forall x,y \in \Sigma.
        $$
	\end{proof}

	\begin{lemma}\label{lem: path-metric-boundary-removable}
		Let $S$ be a surface in the upper half-space $\mathbb{R}^3_{+}=\{x\in\mathbb{R}^3:x_3>0\}$. Suppose that there exists a compact convex body $P$ in $\mathbb{R}^2$ such that $S\cup P$ is the boundary of some compact convex body $K$ in $\mathbb{R}^3$. Then $d_S(x,y)=d_{\overline{S}}(x,y)$ for all $x,y\in S$, where $d_S$ and $d_{\overline{S}}$ are the intrinsic path metrics on $S$ and $\overline{S}=S\cup \partial P$ induced by $\mathbb{R}^3$, respectively.
	\end{lemma}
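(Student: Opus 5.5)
We must show that for $x,y\in S$, a path in $\overline{S}$ joining $x$ to $y$, possibly running along $\partial P$, can be replaced by a path inside $S$ of almost the same length. Clearly $d_{\overline S}\le d_S$ on $S$, since every path in $S$ is a path in $\overline S$. So the task is the reverse inequality $d_S(x,y)\le d_{\overline S}(x,y)+\varepsilon$ for every $\varepsilon>0$.

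The plan is to push curves off $\partial P$ by a vertical shift and then reproject onto $\overline S$. Fix $\varepsilon>0$ and a path $\gamma\subset\overline S$ from $x$ to $y$ with length at most $d_{\overline S}(x,y)+\varepsilon$. For small $t>0$, cut $K$ by the plane $\{x_3=t\}$ and set $K_t=K\cap\{x_3\ge t\}$. This is a compact convex body whose boundary is $S_t\cup P_t$, where $S_t=S\cap\{x_3\ge t\}$ and $P_t$ is the flat slice $K\cap\{x_3=t\}$. For $t$ smaller than $\min(x_3,y_3)$, both $x$ and $y$ lie in $S_t$. The idea is to map $\gamma$ into $\partial K_t$ by the nearest-point projection $\pi_t$ onto the convex set $K_t$, which is $1$-Lipschitz, restricted to $\overline S$. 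One checks that points of $\overline S$ near $\partial P$ project into $\partial K_t$, and that a point of $\overline S$ with $x_3\ge t$ is fixed. The resulting curve $\pi_t\circ\gamma$ lies on $\partial K_t$, joins $x$ to $y$, and has length at most $L(\gamma)$. It may still travel inside the flat disk $P_t$ or along its rim $\partial P_t\subset S$.

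Next we remove the parts of this curve lying in the open flat face $\operatorname{int}P_t$. Each maximal such arc has endpoints on $\partial P_t$, which lies in $S$. Replacing the arc by the rim arc of the convex curve $\partial P_t$ would cost too much in general, so instead we use that distances along $\partial P_t$ are comparable to chord lengths. Concretely, the maximal segments of $\gamma$ lying in $\partial P$ are handled as follows. Since $\gamma\subset\overline S$, which meets $\{x_3=0\}$ only in $\partial P$, the portions of $\gamma$ at height $0$ lie on the convex curve $\partial P$. Then $\pi_t$ maps $\gamma$ into $S\cap\{x_3\ge t\}$ for generic small $t$, because $\pi_t(\overline S)\subset \partial K_t\setminus\operatorname{int}P_t$ whenever $K$ has no other points near the slice. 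The key geometric claim is that the projection of $\overline S$ onto $K_t$ avoids $\operatorname{int}P_t$. To prove it, use the supporting-plane characterization: $\pi_t(z)\in\operatorname{int}P_t$ forces $z-\pi_t(z)$ to be a vertical downward vector, so $z$ lies directly below the interior of $P_t$ inside $K$. Such a $z$ is interior to $K$ or lies in $P$, and in either case it is not in $\overline S\setminus\operatorname{int}P$.

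It then remains to check that $\pi_t(\overline S)\subset S$ (the rim $\partial P_t$ lies in $S$), which gives a path in $S$ of length at most $L(\gamma)$. Hence $d_S(x,y)\le d_{\overline S}(x,y)+\varepsilon$. I expect the main obstacle to be the careful verification that $\pi_t$ sends points of $\partial P$ to points on $\partial K_t$ with positive height. This needs $P_t\ne\emptyset$, which requires $K$ to have interior points just above $\{x_3=0\}$; that holds because $K$ is a body with $S\subset\mathbb R^3_+$. If this direct route fails, the fallback is to use \cref{lem: Bi-Lipschitz} on $\partial K_t$ to reroute short excursions, with error $O(t)$.
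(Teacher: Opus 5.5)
Your reduction to $d_S\le d_{\overline S}$ is correct, and so is the use of the $1$-Lipschitz nearest-point projection. The step everything rests on, however, is false in general: the claim that $\pi_t(\overline S)$ avoids the open face $\operatorname{int}P_t$.

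The first half of your justification is right. If $\pi_t(z)$ lies in the relative interior of the face $P_t$, the normal cone there is spanned by $-e_3$, so $z$ sits directly below $\pi_t(z)$. The next assertion fails as soon as $S$ overhangs $\partial P$. That assertion says a point of $K$ directly below $\operatorname{int}P_t$ is interior to $K$ or lies in $P$.

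Take the upward-flaring frustum $K=\operatorname{conv}\bigl(\{\|x'\|\le 1,\ x_3=0\}\cup\{\|x'\|\le 2,\ x_3=1\}\bigr)$. Its slice $P_t$ is the disk of radius $1+t$. The point $z=(1+t/2,0,t/2)$ lies on $S$. Since $K_t\subset\{x_3\ge t\}$, its nearest point in $K_t$ is $(1+t/2,0,t)\in\operatorname{int}P_t$. Likewise every point $(\cos\theta,\sin\theta,0)\in\partial P$ is sent to $(\cos\theta,\sin\theta,t)\in\operatorname{int}P_t$.

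This happens for every small $t>0$, so choosing a ``generic'' $t$ does not help. The part of $\gamma$ on $\partial P$, which is exactly what the lemma is about, lands in the flat face rather than in $S$. Your claim does hold if $K$ lies in the vertical cylinder $P\times\mathbb{R}$, but the hypotheses give nothing of the kind. The fallback of rerouting along $\partial P_t$ via chord--arc comparability only yields $d_S\le C\,d_{\overline S}$ with a multiplicative constant. That is the content of \cref{prop: convex-surface-Lipschitz}, not the equality you need.

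The missing idea is to move the curve strictly \emph{outside} $K$ and strictly \emph{above} the cutting height before projecting. The paper uses a homothety $h(w)=A_t+a(w-A_t)$ with $a>1$, centered at $A_t=(0,0,-t)$ just below a relative interior point $0$ of $P$. For $t=t_a$ small, the image $\sigma_a$ of the curve has three properties:
\begin{itemize}
\item it lies in $K^c$;
\item it has length $a\,l(\sigma)$;
\item it lies in $\{x_3>\lambda_a\}$ for some $\lambda_a>0$, since heights become $a w_3+(a-1)t_a$.
\end{itemize}
Now project $\sigma_a$ onto $K_{\lambda_a}=K\cap\{x_3\ge\lambda_a\}$. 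Landing in the open bottom face would force $\sigma_a(s)-\pi(\sigma_a(s))$ to point downward, contradicting $\sigma_a(s)_3>\lambda_a$. So the projected curve lies in $S$ and has length at most $a\,l(\sigma)$. Its endpoints are within $O(a-1)$ of $x$ and $y$, and can be joined to them by short arcs in $S$. Letting $a\to1^+$ gives the result.
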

	\begin{proof}
		Let $x,y\in S$, and let $\sigma:[0,1]\to \overline{S}$ be a rectifiable curve with $\sigma(0)=x$ and $\sigma(1)=y$. It suffices to prove that for each $\varepsilon>0$, there exists a curve $\tilde{\sigma}$ in $S$ connecting $x$ and $y$ with length $l(\tilde{\sigma})\leq l(\sigma)+\varepsilon$.
		Without loss of generality, we may assume that the origin $0$ lies in the relative interior of $P$. For any $t\geq0$ and $a>1$, we define the homothetic transformation with center $A_t = (0,0,-t)$ and scaling factor $a$ by
		$$
		h_{a,t}(x)=A_t+a(x-A_t).
		$$
		Fix $a>1$. Since $h_{a,0}(\sigma)\subset K^c$, by the continuity of $h_{a,t}$ with respect to $t$, there exists $t_{a}\in(0,1)$ such that the curve $\sigma_{a}:=h_{a,t_{a}}(\sigma)\subset K^c$. Clearly, the length $l(\sigma_{a})$ of $\sigma_{a}$ is $a \cdot l(\sigma)$. 
		
		For any compact convex body $C$ in $\mathbb{R}^3$ and any point $x\in \mathbb{R}^3$, let $P_C(x)$ be the orthogonal projection of $x$ onto $C$, i.e., $P_C(x)$ is the unique point in $C$ such that 
		$$
		\|P_C(x)-x\|=\min\{\|y-x\|,y\in C\}.
		$$
		Now for each $\lambda>0$, let $K_{\lambda}=K\cap\{x_3\geq \lambda\}$ and $S_{\lambda}=S\cap \partial K_{\lambda}$. After shrinking $\lambda$ if necessary, we may assume that $S_{\lambda}$ is not empty and there exists an open subset $U_{\lambda}$ of the plane $\{x_3=\lambda\}$ such that $S_{\lambda}\cup U_{\lambda}=\partial K_{\lambda}$. Since $\sigma_{a}$ is a compact subset of $\mathbb{R}^3_{+}$, there exists $\lambda_a>0$ such that $\sigma_{a}\subset\{x_3>\lambda_a\}$ and $\sigma(0),\sigma(1)\in\{x_3>\lambda_a\}$.
		It follows that $\tilde{\sigma}_{a}:=P_{K_{\lambda_{a}}}(\sigma_{a})$ is a curve in $\partial K_{\lambda_a}=S_{\lambda_a}\cup U_{\lambda_a}$ since $\sigma_{a}\subset K^c\subset K_{\lambda_a}^c$. We claim that $\tilde{\sigma}_{a}\subset S$. Suppose, for contradiction, that there exists $s\in[0,1]$ such that $\tilde{\sigma}_{a}(s)\in U_{\lambda_a}$. Since $U_{\lambda_a}$ is an open subset of the plane $\{x_3=\lambda_a\}$, it follows that there exists $\delta>0$ such that $\tilde{\sigma}_{a}(s)+\delta e_3\in K_{\lambda_a}$, where $e_3=(0,0,1)$. By \cite[Theorem 3.1.1]{MR1865628}, the inner product 
		$$
		\big({\sigma}_{a}(s)-\tilde{\sigma}_{a}(s),y-\tilde{\sigma}_{a}(s)\big)\leq 0
		$$ 
		for all $y\in K_{\lambda_a}$. Take $y=\tilde{\sigma}_{a}(s)+\delta e_3$. Then we have $\big({\sigma}_{a}(s)-\tilde{\sigma}_{a}(s),e_3\big)\leq 0$, which contradicts the fact that ${\sigma}_{a}\subset\{x_3>\lambda_a\}$.
		
		By \cite[Proposition 3.1.3]{MR1865628}, for each compact convex body $C$, the projection $P_C$ is a Lipschitz map with Lipschitz constant $1$. Thus, $l(\tilde{\sigma}_{a})\leq l(\sigma_{a})=a\cdot l(\sigma)$ for all $a>1$. By definition of the transformation $h_{a,t_{a}}$, we have
        $$
        \|{\sigma}_{a}(s)-\sigma(s)\|=(a-1)\|\sigma(s)-A_{t_{a}}\|\quad\forall s\in[0,1].
        $$ 
        For $i\in\{0,1\}$, the choice of $\lambda_a$ ensures that $\sigma(i)\in K_{\lambda_a}$, so
		$$
		\lim_{a\to 1^{+}}\|\tilde{\sigma}_{a}(i)-{\sigma}_{a}(i)\|\leq\lim_{a\to 1^{+}}
		\|\sigma(i)-{\sigma}_{a}(i)\|=\lim_{a\to 1^{+}}(a-1)\|\sigma(i)-A_{t_{a}}\|=0.
		$$
		Therefore, when $a$ is sufficiently close to $1$, for each $i\in\{0,1\}$ there exists a curve $\hat{\sigma}_i$ in $S$ connecting $\sigma(i)$ and $\tilde{\sigma}_{a}(i)$ with length $l(\hat{\sigma}_i)<\varepsilon/3$. Let $\tilde{\sigma}=\hat{\sigma}_0\cup\tilde{\sigma}_a\cup\hat{\sigma}_1$. Clearly, $\tilde{\sigma}$ is a curve in $S$ connecting $x=\sigma(0)$ and $y=\sigma(1)$ with length $l(\tilde{\sigma})\leq a \cdot l(\sigma)+2\varepsilon/3$. The proof is then concluded by choosing $a < 1 + \frac{\varepsilon}{3l(\sigma)}$.
	\end{proof}
	
	\begin{proposition}\label{prop: convex-surface-Lipschitz}
		Let $S$ be a surface in $\mathbb{R}^3_{+}$. Suppose that there exists a compact convex body $P$ in $\mathbb{R}^2$ such that $\Sigma=S\cup P$ is the boundary of some compact convex body in $\mathbb{R}^3$. Then, there exists a constant $C>0$ such that $d_{S}(x,y)\leq C\|x-y\|$ for all $x,y\in S$, where $d_{S}$ is the intrinsic path metric on ${S}$ induced by $\mathbb{R}^3$.
	\end{proposition}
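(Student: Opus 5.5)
The plan is to combine \cref{lem: Bi-Lipschitz} with \cref{lem: path-metric-boundary-removable}: first compare $d_S$ with the intrinsic metric of $\overline{S}=S\cup\partial P$, then compare that with the intrinsic metric of the whole closed convex surface $\Sigma$.

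\emph{Step 1.} Apply \cref{lem: Bi-Lipschitz} to $\Sigma=\partial K$. This gives $d_\Sigma(x,y)\le C_\Sigma\|x-y\|$ for all $x,y\in\Sigma$. By \cref{lem: path-metric-boundary-removable}, for $x,y\in S$ we have $d_S(x,y)=d_{\overline{S}}(x,y)$. So it suffices to bound $d_{\overline{S}}$ by a constant multiple of $d_\Sigma$ on $S$.

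\emph{Step 2.} Take a curve $\gamma$ in $\Sigma$ from $x$ to $y$ whose length is close to $d_\Sigma(x,y)$. If $\gamma$ meets the flat face $P$, let $t_0$ be the first time and $t_1$ the last time that $\gamma$ is in $P$. Both $\gamma(t_0)$ and $\gamma(t_1)$ lie in $\partial P$, because $S\subset\mathbb{R}^3_+$ and $S$ can only reach $P$ through the relative boundary $\partial P$. Replace the segment $\gamma|_{[t_0,t_1]}$ by an arc of the convex curve $\partial P$ joining these two points.

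\emph{Step 3 (main point).} We need a constant $C_P$ such that two points $p,q\in\partial P$ are joined by an arc of $\partial P$ of length at most $C_P\|p-q\|$. This is exactly \cref{lem: Bi-Lipschitz} in dimension $n=2$, applied to the compact convex body $P\subset\mathbb{R}^2$; $P$ has nonempty interior in the plane because it is called a convex body. Since the excised segment has length at least $\|p-q\|$, the modified curve lies in $\overline{S}$ and has length at most $\max(1,C_P)\,l(\gamma)$.

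\emph{Conclusion.} Combining the steps, for $x,y\in S$:
\[
d_S(x,y)=d_{\overline{S}}(x,y)\le \max(1,C_P)\,d_\Sigma(x,y)\le \max(1,C_P)\,C_\Sigma\,\|x-y\|.
\]
The delicate point is checking that the excision in Step 2 produces a curve inside $\overline{S}$. This holds because the portions of $\gamma$ before $t_0$ and after $t_1$ avoid $P$, so they lie in $S$, and the inserted arc lies in $\partial P\subset\overline{S}$.
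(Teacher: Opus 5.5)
Your proposal is correct and follows the paper's proof essentially step for step. You use the two preceding lemmas to reduce the problem to comparing $d_{\overline{S}}$ with $d_\Sigma$. You then replace the portion of a curve in $\Sigma$ between its first and last contact with the flat face by a shortest arc of $\partial P$, whose length is controlled by the planar ($n=2$) case of the bi-Lipschitz lemma.

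The only differences from the paper are cosmetic. You take first and last times in $P$ rather than in $\partial P$. You also write the constant as $\max(1,C_P)$, which is slightly more careful.
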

	\begin{proof}
		By \cref{lem: Bi-Lipschitz}, there exists a constant $C_{\Sigma}>0$ such that $d_{\Sigma}(x,y)\leq C_{\Sigma}\|x-y\|$ for all $x,y\in \Sigma$. Since  $d_S(x,y)=d_{\overline{S}}(x,y)$ for all $x,y\in S$ by \cref{lem: path-metric-boundary-removable}, we only need to prove that there exists a constant $C>0$ such that $d_{\overline{S}}(x,y)\leq Cd_{\Sigma}(x,y)$ for all $x,y\in S$. Observe that $P$ is a compact convex body in $\mathbb{R}^2$. Again by \cref{lem: Bi-Lipschitz}, there exists a constant $C_{\partial P}>0$ such that $d_{\partial P}(x,y)\leq C_{\partial P}\|x-y\|$ for all $x,y\in \partial P$. Now let $x,y\in S$, and let $\sigma$ be a path connecting $x$ and $y$ in $\Sigma$. We claim that there always exists a path $\sigma'$ connecting $x$ and $y$ in $\overline{S}$ such that $l(\sigma')\leq C_{\partial P}\cdot l(\sigma)$, where $l(\sigma)$ and $l(\sigma')$ are the lengths of the curves $\sigma$ and $\sigma'$, respectively. If $\sigma$ lies entirely in $\overline{S}$, then there is nothing to prove. If $\sigma$ crosses $\partial P$, let $x_1$ and $y_1$ be the first and last points in $\sigma\cap\partial P$ when traveling along $\sigma$ from $x$ to $y$. Let $\sigma'$ be the curve obtained from $\sigma$ by replacing the curve segment $\sigma_1$ of $\sigma$ from $x_1$ to $y_1$ by the shortest curve $\sigma_1'$ connecting $x_1$ and $y_1$ in $\partial P$. Clearly, 
		$$
		l(\sigma_1')=d_{\partial P}(x_1,y_1)\leq  C_{\partial P}\|x_1-y_1\|\leq  C_{\partial P}\cdot l(\sigma_1).
		$$
		It follows that $l(\sigma')\leq C_{\partial P}\cdot l(\sigma)$, which completes the proof.
	\end{proof}
    
    Now we consider locally convex surfaces in $\mathbb{R}^3$. Recall that a surface $S \subset \mathbb{R}^3$ (possibly with boundary) is called \emph{locally convex} at a point $p \in \operatorname{int} S$ if there exists a neighborhood $U$ of $p$ in $\mathbb{R}^3$ such that the intersection $S \cap U$ lies on the boundary of a convex set $K \subset \mathbb{R}^3$. If, in addition, there exists a supporting plane of $K$ that meets $K$ only at the point $p$, then $S$ is said to be \emph{absolutely convex} at $p$. The surface $S$ is said to be a \emph{locally convex surface} in $\mathbb{R}^3$ if $S$ is locally convex at each point $p\in \operatorname{int} S$. For locally convex surfaces, we have the following theorem by John van Heijenoort \cite{LocallyConvex}.
	
	\begin{theorem}
		Suppose that $S$ is a complete locally convex surface without boundary in $\mathbb{R}^3$ such that $S$ is absolutely convex at some point $p\in S$. Then $S$ is the boundary of a convex body in $\mathbb{R}^3$.
	\end{theorem}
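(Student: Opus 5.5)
The strategy is the classical one for van Heijenoort's theorem. We show that $S$ lies on one side of the tangent (supporting) plane at the point $p$ of absolute convexity, and then propagate local convexity to global convexity by a connectedness argument.

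\textbf{Local setup at $p$.} Let $\Pi$ be a supporting plane at $p$ of the local convex set $K$, meeting $K$ only at $p$, and orient coordinates so that $\Pi=\{x_3=0\}$ and $K$ lies in $\{x_3\ge 0\}$. Because $\Pi\cap K=\{p\}$, the local convex piece of $S$ near $p$ is a cap. For small $h>0$, the set $S\cap\{x_3<h\}$ near $p$ is a compact convex cap $C_h$ whose boundary curve $\partial C_h$ lies in the plane $\{x_3=h\}$ and bounds a planar convex disk $D_h$. Together, $C_h\cup D_h$ bound a compact convex body.

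\textbf{Continuation.} Let $I$ be the set of heights $h>0$ with the following property: the connected component $C_h$ of $S\cap\{x_3\le h\}$ containing $p$ is compact, and $C_h\cup D_h$ is the boundary of a convex body, where $D_h$ is the planar convex region bounded by $C_h\cap\{x_3=h\}$.
\begin{itemize}
\item[(a)] $I$ is nonempty. This was shown in the local setup.
\item[(b)] $I$ is closed in its supremum range. Completeness of $S$, together with the fact that limits of convex bodies are convex, gives the cap at the limiting height. Compactness of $C_{h^*}$ comes from a uniform diameter bound, as long as $D_h$ stays bounded.
\item[(c)] $I$ is open. At height $h$, every point of $\partial C_h$ is an interior point of $S$, so local convexity applies there. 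Each local convex patch is transverse to $\{x_3=h\}$ or at least not folded back, since the cap is convex below it. A compactness argument along the curve $\partial C_h$ then lets us extend the cap slightly higher while keeping it convex. The key fact is that a locally convex surface whose boundary is a planar convex curve, attached to a convex cap, is convex: it is locally convex everywhere, including across $D_h$ after gluing, and a compact locally convex closed surface is convex by the Tietze–Nakajima theorem.
\end{itemize}

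\textbf{Conclusion.} Two outcomes are possible.
\begin{itemize}
\item If the caps close up at some height, meaning $D_h$ degenerates to a point or the curve is empty, then $S$ is a closed convex surface, because $S$ is connected and the cap is open and closed in $S$.
\item Otherwise $\sup I=\infty$ and $S=\bigcup_h C_h$ is an increasing union of convex caps. The union of the bodies is a convex set, and $S$ is its boundary by completeness and connectedness.
\end{itemize}
The degenerate case in which the limit body is flat or a cylinder is handled by noting that it still has nonempty interior, since the initial cap is strictly convex at $p$.

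\textbf{Main obstacle.} The hardest step is (c), the openness. It requires excluding the possibility that $S$ turns back over itself near $\partial C_h$, where the local convex patches might sit on the wrong side. The tool is the Tietze-type principle that a connected set which is locally convex, in the sense of supporting half-spaces, is globally convex; applying it correctly to the glued surface $C_h\cup D_h$ is the crux.
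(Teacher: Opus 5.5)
The paper does not prove this statement. It quotes it from van Heijenoort \cite{LocallyConvex} and uses it only to derive the compact corollary that follows. Judged on its own, your sweeping outline has a genuine gap in step (c), and that step carries the entire content of the theorem.

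You assert that at a rim point $q\in\partial C_h$ the local convex patch is ``not folded back, since the cap is convex below it.'' Local convexity of $S$ at $q$ does not decide on which side of $S$ the local convex body $K_q$ lies. This matters whenever the body $B_h$ bounded by $C_h\cup D_h$ has a planar face meeting the rim near $q$. For example, suppose near $q$ the cap is the vertical wall $\{x_1=0,\ x_3\le h\}$ with $B_h\subset\{x_1\le 0\}$. Continue it by $\{x_1=\max(0,x_3-h)^2\}$, which is locally the boundary of the convex set $\{x_1\ge\max(0,x_3-h)^2\}$. This continuation:
\begin{itemize}
\item is locally convex at every point;
\item is transverse to $\{x_3=h\}$;
\item coincides with the convex cap up to height $h$, so it is compatible with $h\in I$;
\item yet bends \emph{outward} immediately above $h$.
\end{itemize}
For every $\varepsilon>0$, near the new rim point above $q$ the glued surface $C_{h+\varepsilon}\cup D_{h+\varepsilon}$ locally bounds $\{x_1\le\max(0,x_3-h)^2,\ x_3\le h+\varepsilon\}$. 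This set is not convex in any neighborhood of that point. So the glued surface is not locally convex there, and no compactness argument along $\partial C_h$ that uses only local information can give openness.

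Your proposed remedy does not close this gap, for two reasons.
\begin{itemize}
\item The Tietze--Nakajima theorem concerns closed connected \emph{sets} that are locally convex. Applying it to the solid bounded by $C_{h+\varepsilon}\cup D_{h+\varepsilon}$ presupposes that this solid lies on the convex side of $S$ at every new rim point, and that the new section is a convex curve at all. That is exactly the orientation question.
\item ``A compact locally convex closed surface is convex'' is the compact case of the theorem you are proving; the paper deduces it from van Heijenoort's theorem. It cannot be invoked without an independent proof.
\end{itemize}

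What is needed is a global propagation argument. At rim points near which the cap is not contained in a plane, $K_q$ is forced onto the side of $B_h$, because two convex bodies on opposite sides of a common boundary piece force that piece to be planar. One must then show that this inward orientation cannot flip across the maximal planar pieces of $S$ meeting the rim, i.e.\ that an outward-bending region cannot be enclosed inside such a planar piece. That is the substance of van Heijenoort's proof, and your outline leaves it open.

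Two lesser points. The boundedness caveat in (b) is unnecessary: the closure of $\bigcup_{h<h^*}B_h$ is a closed convex set in $\{x_3\le h^*\}$ with bounded sublevel sets $\{x_3\le h\}$, so it has no nonzero recession direction and is compact. Also, your final dichotomy omits the case in which $S$ closes up with a flat face at height $h^*$.
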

	Note that a compact surface $S$ without boundary in $\mathbb{R}^3$ is always absolutely convex at some point. Indeed, if we take a closed ball $B$ in $\mathbb{R}^3$ containing $S$ such that $\partial B\cap S\neq\varnothing$, then $S$ is absolutely convex at each point $p\in \partial B\cap S$. Thus, we obtain the following corollary.
	\begin{corollary}\label{cor: locallyconvex-convex}
		Suppose that $S$ is a compact locally convex surface without boundary in $\mathbb{R}^3$. Then $S$ is the boundary of a compact convex body in $\mathbb{R}^3$.
	\end{corollary}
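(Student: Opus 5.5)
The corollary follows by combining van Heijenoort's theorem with the observation made just before the statement. The plan is to check the two hypotheses of that theorem, completeness and absolute convexity at some point, and then upgrade "convex body" to "compact convex body."

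\emph{Completeness.} Since $S$ is compact, it is complete with respect to the metric induced from $\mathbb{R}^3$. It is also complete for its intrinsic path metric. The intrinsic metric dominates the Euclidean one, and a locally convex surface is locally a piece of a convex surface, on which the two metrics are locally comparable (for instance via \cref{lem: Bi-Lipschitz} applied to a local convex body). So a Cauchy sequence for the intrinsic metric is Euclidean Cauchy, converges in $S$, and converges intrinsically as well.

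\emph{Absolute convexity.} Following the remark preceding the statement, I choose the smallest closed ball $B=\overline{B_\rho(c)}$ containing $S$, for example with $c\in S$ and $\rho=\max_{x\in S}\|x-c\|$. The maximum is attained by compactness, so $\partial B\cap S\neq\varnothing$. Pick $p\in\partial B\cap S$ and let $H$ be the tangent plane to $\partial B$ at $p$. Then $S\setminus\{p\}$ lies strictly on one side of $H$, because every point of $B\setminus\{p\}$ does.

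Now take the neighborhood $U$ of $p$ and the convex set $K$ given by local convexity. The step I expect to need care is showing that $H$ is a supporting plane of $K$ meeting $K$ only at $p$. The definition does not pin down $K$, so I would shrink $U$ and replace $K$ by the convex hull of $S\cap U$ intersected with a small ball around $p$. This set still has $S\cap U$ on its boundary, after shrinking $U$ once more if needed. It is contained in the closed half-space bounded by $H$ on the side of $B$. Moreover, its only point on $H$ is $p$: a convex combination of points of $B$ lies on $\partial B\cap H=\{p\}$ only if all the points equal $p$. Hence $S$ is absolutely convex at $p$.

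\emph{Conclusion.} A compact surface without boundary in $\mathbb{R}^3$ is embedded and closed, and we have shown it is complete and absolutely convex at $p$. Van Heijenoort's theorem therefore gives a convex body $C$ with $\partial C=S$. Finally, $C$ must be compact. If $C$ were unbounded, it would contain a ray from an interior point, and so its boundary would be unbounded unless $C=\mathbb{R}^3$; but $C=\mathbb{R}^3$ is excluded because $\partial C=S\neq\varnothing$. Hence $C$ is bounded, and it is closed (taking its closure if necessary), so $C$ is a compact convex body.
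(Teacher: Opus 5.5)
Your proposal is correct and follows the same route as the paper: van Heijenoort's theorem, with absolute convexity obtained at a point $p$ where a ball enclosing $S$ touches it. You replace the local convex set by $\operatorname{conv}(S\cap U)$ so that the tangent plane $H$ meets it only at $p$, and you check completeness and boundedness of the resulting body; these fill in details the paper leaves as ``clearly.''
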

    A local supporting plane of an open set $U$ in $\mathbb{R}^3$ at a boundary point $p\in \partial U$ is a plane $H$ through $p$ such that there exists a neighborhood of $p$ in $H$ having empty intersection with $U$. It is worth noting that a supporting plane $H$ of $\partial U$ is not necessarily a local supporting plane of $U$ since $H$ might have a nonempty intersection with $U$. The following theorem was first established by Erhard Schmidt; we refer the reader to \cite{LocallyConvex} for a proof.
	\begin{theorem}[(The Erhard Schmidt Theorem)]\label{thm: The-Erhard-Schmidt-Theorem}
		A connected open set in $\mathbb{R}^3$ having a local supporting plane at each boundary point is convex.
	\end{theorem}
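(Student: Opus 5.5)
Let $U\subset\mathbb{R}^3$ be a connected open set with a local supporting plane at every boundary point. I will show that $U$ is convex by proving that for any $x,y\in U$ the segment $[x,y]$ lies in $U$. The plan is a continuity (connectedness) argument along paths, together with a local analysis at a boundary point where convexity would first fail.

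\textbf{Step 1: visibility from a point.} Fix $x\in U$ and let $V_x=\{y\in U:[x,y]\subset U\}$. This set is nonempty, since it contains a small ball around $x$, and it is open, because $[x,y]$ is compact and $U$ is open, so a tubular neighborhood of $[x,y]$ lies in $U$. As $U$ is connected, it suffices to show that $V_x$ is relatively closed in $U$. Take $y\in U$ with $y_n\to y$, $y_n\in V_x$, and suppose $[x,y]\not\subset U$. Every point of $[x,y]$ is a limit of points of the segments $[x,y_n]\subset U$, so $[x,y]\subset\overline U$. Hence the set $[x,y]\setminus U$ is nonempty, closed, and consists of boundary points of $U$; let $p$ be one of them, lying strictly between $x$ and $y$.

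\textbf{Step 2: contradiction with the local supporting plane.} Let $H$ be a local supporting plane at $p$, so there is a ball $B$ around $p$ with $H\cap B\cap U=\varnothing$.
\begin{itemize}
\item If the segment direction is transverse to $H$, the points of $[x,y]$ near $p$ on either side of $p$ lie on opposite sides of $H$. The nearby segments $[x,y_n]\subset U$ pass close to $p$ and therefore cross $H$ inside $B$. This puts points of $U$ in $H\cap B$, a contradiction.
\item If the segment lies in $H$, I perturb. The triangles $\operatorname{conv}\{x,y_n,\cdot\}$ are not yet known to lie in $U$, so instead I use that $x$ and $y$ have small balls $B_x,B_y\subset U$, and choose $x'\in B_x$ and $y'\in B_y$ off the plane $H$, on opposite sides of it. The idea is to run the same closure argument for the pair $(x',y')$, so that the new first bad point is transverse.
\end{itemize}

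A cleaner route is to strengthen the induction hypothesis to ``the closed triangle $\operatorname{conv}\{x,y_n,z\}$ lies in $U$ for $z$ in a small ball around $x$''. Choosing an appropriate $z$ would then force a two-dimensional family of segments through a neighborhood of $p$ meeting $H$, which avoids the tangential case.

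\textbf{Main obstacle.} The hard step is exactly this tangential case, where the segment $[x,y]$ touches $\partial U$ along the supporting plane. There the local supporting plane gives no immediate crossing, and the argument must be robust enough to perturb the segment into general position while keeping the approximating segments inside $U$. I would handle it via the open-ball thickening described above: the cone $\operatorname{conv}(B_x\cup\{y_n\})$ lies in $U$, since $V_z$ is open for each $z$ and the same closure argument can be applied uniformly in $z$ near $x$. Any plane through $p$ must cut such a cone near $p$, which yields the contradiction.
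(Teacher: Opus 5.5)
\textbf{Gap: the tangential case is not handled.} Your framework is sound. $V_x$ is nonempty and open, connectedness of $U$ reduces everything to relative closedness, and your transverse case is correct. But none of the remedies you sketch for the tangential case works.

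\emph{The perturbation to $(x',y')$.} To rerun the closure argument you need points $y_n'\to y'$ with $[x',y_n']\subset U$. The only available source is the segments $[x,y_n]$. Write $p=x+t(y-x)$. The point $x+t(y_n-x)$ of $[x,y_n]$ lies at distance $t\|y_n-y\|$ from $p\notin U$. So any tube around $[x,y_n]$ contained in $U$ has radius tending to $0$, and translating the segments gives nothing. Moreover, even with such points, the new bad point on $[x',y']$ has its own supporting plane, which may again contain the segment. Putting $x',y'$ on opposite sides of $H$ controls nothing there.

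\emph{The cone claim.} Openness of $V_{y_n}$ only gives $\operatorname{conv}(B_{\rho_n}(x)\cup\{y_n\})\subset U$ with $\rho_n\to0$, for the same reason. A radius $\rho$ independent of $n$ cannot come from ``the same closure argument applied uniformly in $z$'', because that argument is exactly what is stuck. In fact such a uniform cone would contain the ball $B_{(1-t)\rho}\bigl(p+t(y_n-y)\bigr)$. That ball contains $p$ once $t\|y_n-y\|<(1-t)\rho$. So the claim would give $p\in U$ with no supporting plane at all: it is as strong as the theorem itself. The triangle version has the same defect.

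\textbf{The fix is a better choice of $p$.} Take $p$ to be the point of the compact set $[x,y]\setminus U$ closest to $x$, so that $[x,p)\subset U$. Suppose the line through $x$ and $y$ were contained in the local supporting plane $H$ at $p$. Then points of $[x,p)$ inside the ball $B$ would lie in $H\cap B\cap U=\varnothing$, which is absurd. Hence the line crosses $H$ transversally at $p$, and your transverse argument finishes the proof. Concretely, you take $a_n,b_n\in[x,y_n]\cap B$ on strictly opposite sides of $H$, and $[a_n,b_n]\subset U\cap B$ must meet $H$. The tangential case therefore never occurs.

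With this change your proof is complete and elementary, and it works verbatim in $\mathbb{R}^n$ with hyperplanes. The paper itself gives no proof of this theorem; it cites van Heijenoort \cite{LocallyConvex}.
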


	\subsection{The Pogorelov map and the proof framework}\label{sec: pogorelov}
     
    We now define the concept of the Pogorelov map and introduce some fundamental properties of this map that will be used in this paper. For references, see \cite{PogorelovBook,MR1744513,MR2208419,virto2010,luo2026rigidity}. 

	Let \((\mathbb{R}^{1,n}, \langle\cdot,\cdot\rangle)\) denote the $(n+1)$-dimensional Minkowski space, where the Minkowski inner product for vectors $x = (x_0, x_1, \dots, x_n)$ and $y = (y_0, y_1, \dots, y_n)$ is defined as
	$$
	\langle x, y \rangle = -x_0y_0 + \sum_{i=1}^n x_i y_i = -x_0y_0 + (P(x), P(y)).
	$$
	Here, $P(x_0, x_1, \dots, x_n) = (x_1, \dots, x_n)$ is the projection onto the spatial coordinates, and $(u, v)$ is the standard Euclidean inner product of two vectors $u, v \in \mathbb{R}^n$.
	The hyperboloid model $\mathcal{H}^n$ of $n$-dimensional hyperbolic space is defined by
	$$
	\mathcal{H}^n = \{ x \in \mathbb{R}^{1,n} : \langle x, x \rangle = -1,\; x_0 > 0 \}
	= \left\{ x \in \mathbb{R}^{1,n} : x_0^2 = 1 + \sum_{i=1}^n x_i^2,\; x_0 > 0 \right\}.
	$$
	\begin{definition}
		Let $e = (1, 0, \dots, 0) \in \mathbb{R}^{1,n}$. The Pogorelov map 
		$$
        \Phi : (\mathbb{R}^{1,n} \times \mathbb{R}^{1,n}) \setminus \{(x, y) \in \mathbb{R}^{1,n} \times \mathbb{R}^{1,n} : \langle x + y, e \rangle = 0\}\to \mathbb{R}^n \times \mathbb{R}^n
        $$ 
		is defined by 
		\begin{equation*}
			\Phi(x, y) = \left( \frac{2 P(x)}{-\langle x + y, e \rangle}, \frac{2 P(y)}{-\langle x + y, e \rangle} \right).
		\end{equation*}
		In particular, the Pogorelov map $\Phi$ is defined on $\mathcal{H}^n \times \mathcal{H}^n$. Let $P_i$ be the components of the Pogorelov map $\Phi$, i.e., 
		\begin{equation*}
			P_1(x, y) = \frac{2 P(x)}{-\langle x + y, e \rangle}, \quad P_2(x, y) = \frac{2 P(y)}{-\langle x + y, e \rangle}.
		\end{equation*}
	\end{definition}
	
	\begin{remark}
		Although the Pogorelov map is defined using the hyperboloid model $\mathcal{H}^n$, we will mainly employ the Poincaré model $\mathbb{H}^n_P$ throughout this paper. We shall identify $\mathcal{H}^n$ with $\mathbb{H}^n_P$ via the canonical map $f:\mathcal{H}^n\to\mathbb{H}^n_P$, $x\mapsto\frac{P(x)}{-\langle x,e\rangle+1}$.
	\end{remark}
	The following simple observation is immediate.
	\begin{lemma}\label{lem: SumIs2}
		{\rm{(1)}}. For all $x,y\in\mathcal{H}^n$, $\Vert P_1(x,y)\Vert+\Vert P_2(x,y)\Vert<2$. 
		
		{\rm{(2)}}. Let $\{x_n\}_n$ and $\{y_n\}_n$ be two sequences in $\mathcal{H}^n$ such that 
		$$
		\lim_{n \to\infty}(d_{\mathcal{H}^n}(x_n,e)+d_{\mathcal{H}^n}(y_n,e))=+\infty.
		$$
		Then 
		$$
		\lim_{n\to\infty}(\Vert P_1(x_n,y_n)\Vert+\Vert P_2(x_n,y_n)\Vert)=2.
		$$
	\end{lemma}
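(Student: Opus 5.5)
Both parts reduce to explicit formulas, since for $x\in\mathcal{H}^n$ we have $-\langle x,e\rangle = x_0$ and $\|P(x)\| = \sqrt{x_0^2-1}$. Writing $a = x_0 \ge 1$ and $b = y_0 \ge 1$, the denominator of $\Phi$ is $-\langle x+y,e\rangle = a+b > 0$. Hence
$$
\|P_1(x,y)\|+\|P_2(x,y)\| = \frac{2\bigl(\sqrt{a^2-1}+\sqrt{b^2-1}\bigr)}{a+b}.
$$

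For (1), I will use the strict inequality $\sqrt{t^2-1} < t$, valid for every $t \ge 1$. Applying it to $t=a$ and $t=b$ and summing bounds the numerator strictly by $2(a+b)$, so the quotient is strictly less than $2$.

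For (2), I first recall that $\cosh d_{\mathcal{H}^n}(x,e) = -\langle x,e\rangle = x_0$. The hypothesis therefore says $a_n + b_n \to +\infty$, where $a_n=(x_n)_0$ and $b_n=(y_n)_0$. I then estimate the defect from $2$:
$$
2 - \frac{2\bigl(\sqrt{a_n^2-1}+\sqrt{b_n^2-1}\bigr)}{a_n+b_n} = \frac{2\bigl[(a_n-\sqrt{a_n^2-1})+(b_n-\sqrt{b_n^2-1})\bigr]}{a_n+b_n}.
$$
For $t\ge1$ we have $t-\sqrt{t^2-1} = 1/(t+\sqrt{t^2-1}) \le 1$, so the bracket is at most $2$. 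The defect is therefore at most $4/(a_n+b_n)$, which tends to $0$.

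The only point requiring care is that the hypothesis does not force \emph{both} sequences to go to infinity: one of them may stay bounded, even equal to $e$. The uniform bound $t-\sqrt{t^2-1}\le 1$ handles this, because it controls the bounded coordinate as well without any case split. Apart from this, the argument is a direct computation.
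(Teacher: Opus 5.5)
Your proof is correct. The explicit formula $\Vert P_1(x,y)\Vert+\Vert P_2(x,y)\Vert = 2\bigl(\sqrt{a^2-1}+\sqrt{b^2-1}\bigr)/(a+b)$, together with the uniform bound $t-\sqrt{t^2-1}\le 1$, settles both parts, including the case where only one of the two sequences escapes to infinity. The paper gives no proof and simply calls the lemma immediate; your direct computation is the verification it leaves to the reader.
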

    From now on, we will always assume that $\Phi$ is restricted to $\mathcal{H}^n\times\mathcal{H}^n$. Let $\Omega = \{(\hat{x}, \hat{y}) \in \mathbb{R}^n \times\mathbb{R}^n : \|\hat{x}\| + \|\hat{y}\| < 2\}$. By \cref{lem: SumIs2}, $\Phi$ maps $\mathcal{H}^n\times\mathcal{H}^n$ to $\Omega$. In fact, by \cite[Proposition 6.3]{virto2010}, $\Phi$ is a diffeomorphism between $\mathcal{H}^n\times\mathcal{H}^n$ and $\Omega$. The following lemma shows that the Pogorelov map preserves intrinsic isometries. The proof is a natural generalization of the argument in \cite[Lemma 3.5]{luo2026rigidity}.
    \begin{lemma}\label{lem: preserve-intrinsic-isometries}
    Suppose that $\alpha(t)$ and $\beta(t)$ are two absolutely continuous curves in $\mathcal{H}^n$. Let $\gamma(t) = P_1(\alpha(t), \beta(t))$ and $\delta(t) = P_2(\alpha(t), \beta(t))$. Then $\alpha(t)$ and $\beta(t)$ have the same length in $\mathcal{H}^n$ and satisfy $\langle \alpha'(t),\alpha'(t)\rangle=\langle \beta'(t),\beta'(t)\rangle$ for almost all $t$ if and only if $\gamma(t)$ and $\delta(t)$ have the same length in $\mathbb{R}^n$ and satisfy $\|\gamma'(t)\|=\|\delta'(t)\|$ for almost all $t$.
    \end{lemma}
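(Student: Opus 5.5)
The strategy is to prove a pointwise identity relating the Minkowski norms of $\alpha',\beta'$ to the Euclidean norms of $\gamma',\delta'$:
\begin{equation*}
\|\gamma'(t)\|^2-\|\delta'(t)\|^2 = \mu(t)\bigl(\langle\alpha'(t),\alpha'(t)\rangle-\langle\beta'(t),\beta'(t)\rangle\bigr)
\end{equation*}
with a positive factor $\mu(t)$ depending only on $\alpha(t),\beta(t)$. Once this holds, $\langle\alpha',\alpha'\rangle=\langle\beta',\beta'\rangle$ a.e.\ is equivalent to $\|\gamma'\|=\|\delta'\|$ a.e. Integrating the square roots of equal densities then gives equal lengths in each direction. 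Absolute continuity is preserved because $\Phi$ is a diffeomorphism onto $\Omega$, and its inverse is smooth (by \cite[Proposition 6.3]{virto2010}). The curves $\alpha,\beta$ stay in $\mathcal{H}^n$, so locally they lie in compact sets where $\Phi$ and $\Phi^{-1}$ are Lipschitz. Hence $\gamma,\delta$ are absolutely continuous, the chain rule holds a.e., and the converse direction follows in the same way.

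To compute, write $a=\alpha_0=-\langle\alpha,e\rangle$, $b=\beta_0$, and $s=a+b>0$. Then $\gamma=2P(\alpha)/s$ and $\delta=2P(\beta)/s$. Differentiating gives
\begin{equation*}
\gamma'=\frac{2}{s}\Bigl(P(\alpha')-\frac{s'}{s}P(\alpha)\Bigr),\qquad \delta'=\frac{2}{s}\Bigl(P(\beta')-\frac{s'}{s}P(\beta)\Bigr).
\end{equation*}
Expanding the squared norms,
\begin{equation*}
\tfrac{s^2}{4}\bigl(\|\gamma'\|^2-\|\delta'\|^2\bigr)=\|P\alpha'\|^2-\|P\beta'\|^2-\tfrac{2s'}{s}\bigl((P\alpha,P\alpha')-(P\beta,P\beta')\bigr)+\tfrac{s'^2}{s^2}\bigl(\|P\alpha\|^2-\|P\beta\|^2\bigr).
\end{equation*}
We now use the hyperboloid constraints:
\begin{itemize}
\item $\|P\alpha\|^2=a^2-1$ and $\|P\beta\|^2=b^2-1$;
\item differentiating, $(P\alpha,P\alpha')=aa'$ and $(P\beta,P\beta')=bb'$;
\item $\|P\alpha'\|^2=\langle\alpha',\alpha'\rangle+a'^2$, and similarly for $\beta$.
\end{itemize}
Substituting, the right-hand side becomes
\begin{equation*}
\langle\alpha',\alpha'\rangle-\langle\beta',\beta'\rangle+\bigl(a'^2-b'^2\bigr)-\tfrac{2s'}{s}(aa'-bb')+\tfrac{s'^2}{s^2}(a^2-b^2).
\end{equation*}
The last three terms should cancel. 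With $s=a+b$ and $s'=a'+b'$, we have $a'^2-b'^2=s'(a'-b')$ and $a^2-b^2=s(a-b)$. Also $2(aa'-bb')=s(a'-b')+s'(a-b)$, which one checks by expanding $(a+b)(a'-b')+(a'+b')(a-b)$. So the three terms equal $s'(a'-b')-s'(a'-b')-\tfrac{s'^2}{s}(a-b)+\tfrac{s'^2}{s}(a-b)=0$. Therefore $\|\gamma'\|^2-\|\delta'\|^2=\frac{4}{s^2}\bigl(\langle\alpha',\alpha'\rangle-\langle\beta',\beta'\rangle\bigr)$, i.e.\ $\mu=4/s^2>0$.

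The main obstacle is this algebraic cancellation, which is where the specific normalization $-\langle x+y,e\rangle$ in $\Phi$ matters. The remaining care is measure-theoretic: confirming absolute continuity and the a.e.\ chain rule via the local Lipschitz property of $\Phi^{\pm1}$ on compact subsets of $\mathcal{H}^n\times\mathcal{H}^n$ and of $\Omega$.
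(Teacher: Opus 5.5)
Your proof is correct and is essentially the paper's argument: both use the hyperboloid relations $\|P\alpha\|^2=\alpha_0^2-1$, $(P\alpha,P\alpha')=\alpha_0\alpha_0'$, $\|P\alpha'\|^2=\langle\alpha',\alpha'\rangle+(\alpha_0')^2$ (and their analogues for $\beta$) to obtain the identity $\|\gamma'\|^2-\|\delta'\|^2=\frac{4}{(\alpha_0+\beta_0)^2}\bigl(\langle\alpha',\alpha'\rangle-\langle\beta',\beta'\rangle\bigr)$. The only difference is cosmetic: the paper adapts the unit-speed formula of Luo--Luo--Rao to write each norm in closed form, $\frac{4}{(\alpha_0+\beta_0)^4}\bigl[(\alpha_0'\beta_0-\alpha_0\beta_0')^2+\langle\alpha',\alpha'\rangle(\alpha_0+\beta_0)^2-(\alpha_0'+\beta_0')^2\bigr]$, and then subtracts, whereas you expand the difference directly and check the cancellation by hand.
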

    
    \begin{proof}
    The proof closely follows the framework of \cite[Lemma~3.5]{luo2026rigidity}. For brevity, we only present the necessary modifications and refer the reader to the original paper for the complete argument. We replace the unit-speed assumption 
    $$
    \sum_{i=1}^n (\alpha_i')^2 = 1 + (\alpha_0')^2, \quad \sum_{i=1}^n (\beta_i')^2 = 1 + (\beta_0')^2
    $$
    from \cite[equation (3.4)]{luo2026rigidity} with the general relations
    $$
    \sum_{i=1}^n (\alpha_i')^2 = \langle\alpha',\alpha' \rangle + (\alpha_0')^2, \quad  \sum_{i=1}^n (\beta_i')^2 = \langle\beta',\beta' \rangle + (\beta_0')^2.
    $$
    Substituting these general relations into the numerator of the speed formula (3.2) in \cite{luo2026rigidity} yields
    $$
    \sum_{i=1}^n \left[ \alpha_i'(\alpha_0 + \beta_0) - \alpha_i(\alpha_0' + \beta_0') \right]^2 = (\alpha_0' \beta_0 - \alpha_0 \beta_0')^2 + \langle\alpha',\alpha' \rangle (\alpha_0 + \beta_0)^2 - (\alpha_0' + \beta_0')^2.
    $$
    Consequently, the squared Euclidean norm of $\gamma'(t)$ is given by:
    $$
    \|\gamma'(t)\|^2 = \frac{4}{(\alpha_0 + \beta_0)^4} \left[ (\alpha_0' \beta_0 - \alpha_0 \beta_0')^2 + \langle\alpha',\alpha' \rangle (\alpha_0 + \beta_0)^2 - (\alpha_0' + \beta_0')^2 \right].
    $$
    By symmetry, for $\delta(t)$ we analogously obtain:
    $$
    \|\delta'(t)\|^2 = \frac{4}{(\alpha_0 + \beta_0)^4} \left[ (\alpha_0' \beta_0 - \alpha_0 \beta_0')^2 + \langle\beta',\beta' \rangle (\alpha_0 + \beta_0)^2 - (\alpha_0' + \beta_0')^2 \right].
    $$
    Taking the difference of these two expressions and canceling the symmetric terms yields
    $$
    \|\gamma'(t)\|^2 - \|\delta'(t)\|^2 = \frac{4}{(\alpha_0 + \beta_0)^2} \left( \langle \alpha'(t), \alpha'(t) \rangle - \langle \beta'(t), \beta'(t) \rangle \right),
    $$
    which completes the proof.
    \end{proof}

    In the spherical geometry $\mathbb{S}^n$, Pogorelov \cite{PogorelovBook} showed that two isometric surfaces are congruent if and only if their images under the Pogorelov map are congruent in Euclidean space. This congruency-preserving property also holds in hyperbolic geometry. As a direct consequence of \cite[Proposition 3.8]{luo2026rigidity}, we obtain the following proposition.
	\begin{proposition}\label{prop: RnIsometry-to-HnIsometry}
		Let $X$ be a subset of $\mathcal{H}^n$ and $F$ be a map from $X$ to $\mathcal{H}^n$. Suppose that there exists an isometry $T\in\operatorname{Isom}(\mathbb{R}^n)$ such that $P_2(x,F(x))=T(P_1(x,F(x)))$ for all $x\in X$. Then there exists $A \in\operatorname{Isom}(\mathcal{H}^n)$ such that $F(x)=A(x)$ for all $x\in X$.
	\end{proposition}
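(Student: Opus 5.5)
The plan is to exploit the Minkowski-space structure behind the Pogorelov map and read off a linear relation between $x$ and $F(x)$. Write $y=F(x)$. We know $\hat x:=P_1(x,y)=2P(x)/(x_0+y_0)$ and $\hat y:=P_2(x,y)=2P(y)/(x_0+y_0)$, and we write $T(v)=Ov+b$ with $O\in O(n)$ and $b\in\mathbb{R}^n$. The hypothesis then reads
$$
P(y)=O\,P(x)+\tfrac{1}{2}(x_0+y_0)\,b\qquad\text{for all }x\in X.
$$
The idea is to combine this with the constraints $\langle x,x\rangle=\langle y,y\rangle=-1$ and solve for $y_0$ as an affine function of $x$. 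This should show that $y=A(x)$ for an explicit Lorentz transformation $A$ depending only on $O$ and $b$. Since the statement is cited as a direct consequence of \cite[Proposition 3.8]{luo2026rigidity}, I would mainly verify that its hypotheses match. The direct computation below is the fallback.

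\textbf{Step 1 (reduction).} Compose with the hyperbolic isometry $\tilde O=\mathrm{diag}(1,O)$, which commutes with $e$ and with $P$. We have $P_i(\tilde O x,\tilde O y)=O\,P_i(x,y)$. Replacing $F$ by $\tilde O^{-1}\circ F$ changes the relation to $P_2=O^{-1}(O P_1+b)$, so the rotational part becomes trivial. We may therefore assume $O=I$, and $T$ is the translation by $b$.

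\textbf{Step 2 (solving for $y_0$).} With $O=I$ we have $P(y)=P(x)+\frac{1}{2}(x_0+y_0)b$. Substitute this into $y_0^2=1+\|P(y)\|^2$ and use $x_0^2=1+\|P(x)\|^2$, writing $s=x_0+y_0$. This gives
$$
y_0^2-x_0^2=s\,(P(x),b)+\tfrac14 s^2\|b\|^2 .
$$
The left side equals $s(y_0-x_0)$, and $s>0$, so we may divide by $s$:
$$
y_0-x_0=(P(x),b)+\tfrac14 (x_0+y_0)\|b\|^2 .
$$
This equation is linear in $y_0$. Solving it requires $1-\|b\|^2/4\neq 0$. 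That should follow from $\|\hat x\|+\|\hat y\|<2$ (\cref{lem: SumIs2}), since $\|b\|=\|\hat y-\hat x\|<2$. The solution is
$$
y_0=\frac{(1+\|b\|^2/4)\,x_0+(P(x),b)}{1-\|b\|^2/4}.
$$
Substituting back, $P(y)$ is also linear in $x$. Hence $y=Lx$ for a fixed linear map $L$ of $\mathbb{R}^{1,n}$.

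\textbf{Step 3 (identifying $L$).} I expect the structure of $L$ to be a Lorentz boost in the direction $b/\|b\|$ with rapidity $\tau$ determined by $\tanh(\tau/2)=\|b\|/2$. This is the expected form, because boosts move the pair $(P_1,P_2)$ by a translation. It remains to show that $L$ preserves $\langle\cdot,\cdot\rangle$ and the sheet $x_0>0$, so that $L\in\operatorname{Isom}(\mathcal{H}^n)$. One could check this directly by computing $L^{T}JL=J$ with $J=\mathrm{diag}(-1,1,\dots,1)$. A slicker route: it is enough to check invariance of $\langle\cdot,\cdot\rangle$ on the $2$-plane spanned by $e$ and $(0,b)$ and on its orthogonal complement, where $L$ acts as the identity.

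\textbf{Main obstacle.} Step 3 is the only computational one, and there is a subtlety in it. The formula for $L$ is derived only on $X$, not on all of $\mathbb{R}^{1,n}$. But $L$ is defined globally by the formulas for $O$ and $b$, and $F=L$ holds on $X$ by construction. So no spanning hypothesis on $X$ is needed; we only need that the globally defined $L$ is an isometry. Setting $A=\tilde O\circ L$ then completes the proof.
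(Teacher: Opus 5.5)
Your proof is correct, and it takes a different route from the paper. The paper gives no argument of its own for this statement; it deduces it from \cite[Proposition 3.8]{luo2026rigidity}. Your ``fallback'' is instead a complete, self-contained Minkowski computation, and each step holds:
conjugating by $\mathrm{diag}(1,O)$ only replaces $b$ by $O^{-1}b$; dividing by $s=x_0+y_0\ge 2$ is legitimate; and $\|b\|<2$ follows from \cref{lem: SumIs2} as soon as $X\neq\varnothing$ (the empty case is trivial).

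Your Step~3 is only announced, but it is a two-line check. Write $P(x)=tu+w$ with $u=b/\|b\|$, $w\perp u$ and $\beta=\|b\|/2$. Your formulas then give $y_0=\frac{(1+\beta^2)x_0+2\beta t}{1-\beta^2}$, a $u$-component of $P(y)$ equal to $\frac{2\beta x_0+(1+\beta^2)t}{1-\beta^2}$, and leave $w$ unchanged. So on $\operatorname{span}\{e,(0,u)\}$ the map $L$ is $\begin{pmatrix}\cosh\tau&\sinh\tau\\ \sinh\tau&\cosh\tau\end{pmatrix}$ with $\tanh(\tau/2)=\beta$, and it is the identity on the Minkowski-orthogonal complement. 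This is a time-orientation-preserving boost, exactly as you predicted, and $L=I$ when $b=0$.

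The citation keeps the paper short. Your computation makes the proposition self-contained and produces the explicit isometry $A=\mathrm{diag}(1,O)\circ L$. It also shows transparently why no nondegeneracy assumption on $X$ is needed: $L$ is determined by $T$ alone.
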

    We say that a subset $S$ of $\mathbb{H}^3_P$ or $\mathbb{R}^3$ is \emph{radial with respect to $v$} if each geodesic ray emanating from $v$ intersects $S$ in at most one point. Since we only consider surfaces that are radial with respect to $0$ in this paper, we will refer to them simply as \emph{radial surfaces}. A surface $S \subset \mathbb{R}^3$ is called \emph{locally convex with respect to $0$} if for each point $p \in S$, there exists a convex set $K_p$ containing $0$ such that $S \cap \partial K_p$ contains a neighborhood of $p$ in $S$. An important property of the Pogorelov map is that it preserves the local convexity of surfaces. The following theorem was originally proved by Pogorelov in the setting of the 3-sphere $\mathbb{S}^3$ \cite[pp.~314--322]{PogorelovBook}. The proof in \cite{PogorelovBook} remains valid in the hyperbolic setting.
 	\begin{theorem}[(Pogorelov).]\label{thm: PogorelovII}
		Suppose that $X_1$ and $X_2$ are two convex 3-dimensional sets in $\mathcal{H}^3$ that contain $e=(1,0,0,0)$ in their interiors. Let $\Sigma_i$ be a surface contained in $\partial X_i$ and $F : \Sigma_1 \to \Sigma_2$ be an intrinsic isometry. Then 
		\begin{enumerate}
			\item[\rm(1)] $S_1 = \{P_1(x, F(x)) : x \in \Sigma_1\}$ and $S_2 = \{P_2(x, F(x)) : x \in \Sigma_1\}$ are two radial surfaces in $\mathbb{R}^3$ with respect to $0$.
			
			\item[\rm(2)] For each $i\in\{ 1, 2\}$, $S_i$ is a locally convex surface with respect to $0$.
			
			\item[\rm(3)] There exists an intrinsic isometry $G : S_1 \to S_2$ such that $\Phi(x, F(x)) = (y(x), G(y(x)))$ for all $x \in \Sigma_1$, where $y(x)=P_1(x,F(x))$.
		\end{enumerate}
	\end{theorem}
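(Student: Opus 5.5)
The plan is to prove the three parts in order (1), (3), (2). Parts (1) and (3) are fairly formal consequences of the diffeomorphism property of $\Phi$ and of \cref{lem: preserve-intrinsic-isometries}. The real content is the local convexity in (2), which I would handle by polyhedral approximation.

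\emph{Step 1 (radiality).} Write $y(x)=P_1(x,F(x))=2P(x)/(x_0+F(x)_0)$. Since $x_0+F(x)_0>0$, the Euclidean direction of $y(x)$ is that of $P(x)$, which is exactly the direction at $e$ of the hyperbolic geodesic from $e$ to $x$. Because $e\in\operatorname{int}X_1$ and $X_1$ is convex, every geodesic ray from $e$ meets $\partial X_1$ at most once. Hence $x\mapsto y(x)/\|y(x)\|$ is injective on $\Sigma_1$, and $S_1$ is radial.

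Moreover, radial projection identifies $\Sigma_1$ with an open subset $W$ of $\mathbb{S}^2$. Then $S_1$ is the graph over $W$ of the continuous positive function $\|y\|$, so it is a topological surface. The same argument with the roles of $P_1,P_2$ exchanged (using $e\in\operatorname{int}X_2$) handles $S_2$.

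\emph{Step 2 (the isometry $G$).} Since $x\mapsto y(x)$ is a bijection $\Sigma_1\to S_1$, I define $G(y(x))=P_2(x,F(x))$. This gives (3) by construction, and it remains to show that $G$ is an intrinsic isometry.

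Take a rectifiable curve $\alpha$ in $\Sigma_1$ parametrized by arclength, and put $\beta=F\circ\alpha$. Because $F$ is an intrinsic isometry, $\beta$ is rectifiable, absolutely continuous, and has unit speed a.e. \cref{lem: preserve-intrinsic-isometries} then shows that $\gamma=P_1(\alpha,\beta)$ and $\delta=P_2(\alpha,\beta)=G\circ\gamma$ have equal speeds a.e., hence equal lengths.

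The converse direction uses that $\Phi$ is a diffeomorphism onto $\Omega$. Any rectifiable curve in $S_1$ lifts to a rectifiable curve in $\Sigma_1$, since on compact pieces $\Phi^{-1}$ is Lipschitz. The lemma is an equivalence, so the argument reverses. Thus $G$ and $G^{-1}$ preserve lengths of curves, and so preserve the induced path metrics.

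\emph{Step 3 (local convexity), the main obstacle.} I would first treat the polyhedral case. Suppose $\Sigma_1,\Sigma_2$ are pieces of convex polyhedral surfaces and $F$ is, on each face $\tau$, the restriction of a Minkowski-linear isometry $A_\tau$. A hyperbolic plane is the section of $\mathcal{H}^3$ by a linear $3$-space $L$. For $x\in L$ we have
$$
P_1(x,A_\tau x)=\frac{2P(x)}{x_0+(A_\tau x)_0},
$$
a ratio of a linear map by a linear form on $L$. Its image is therefore a projective image of $L$, i.e.\ a planar polygon, and likewise for $P_2$. So $S_1$ and $S_2$ are radial polyhedral surfaces.

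Local convexity with respect to $0$ then reduces to checking convexity at edges and vertices. At an edge, the two adjacent faces of $\partial X_1$ lie in linear $3$-spaces $L,L'$ with $X_1$ locally on the side $\{\langle x,n\rangle\le 0\}$, where $\langle e,n\rangle<0$. I would verify by a direct sign computation that the projective images keep $0$ on the inner side of both image planes and that the dihedral angle stays below $\pi$. Convexity at a vertex follows from convexity along the incident edges together with radiality: a radial polyhedral cone that is convex along each edge is convex.

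For general $X_i$, I would approximate $\Sigma_1,\Sigma_2$ locally by convex polyhedral surfaces $\Sigma_1^k,\Sigma_2^k$ with isometries $F_k\to F$ uniformly. This uses Alexandrov's theorem: approximate the intrinsic metric of $\Sigma_1$ polyhedrally and realize the approximation in both $X_1$- and $X_2$-neighbourhoods. Hausdorff limits of surfaces lying on boundaries of convex sets containing $0$ are again of this kind, so local convexity of $S_i$ with respect to $0$ passes to the limit.

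I expect the edge sign computation and the construction of the simultaneous polyhedral approximation of the pair $(\Sigma_1,\Sigma_2,F)$ to be the genuinely hard part. Pogorelov's original argument in $\mathbb{S}^3$ is what I would follow at that step.
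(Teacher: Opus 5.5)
\textbf{Steps 1 and 2.} These essentially give (1) and (3), modulo a repair noted at the end.

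\textbf{Step 3 cannot work, because (2) is false in $\mathcal{H}^3$.} The edge sign computation you postpone cannot be carried out, because part (2) is false as stated there. Write $u=(u_1,u_2,u_3)$ for coordinates on $\mathbb{R}^3$. Since $P_1(x,F(x))=\frac{2x_0}{x_0+F(x)_0}\cdot\frac{P(x)}{x_0}$, the surface $S_1$ is the Klein image of $\Sigma_1$ pushed radially \emph{outward} wherever $F(x)$ is closer to $e$ than $x$ is.

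Here is an explicit counterexample.
\begin{itemize}
\item Let $p=(\cosh h,0,0,\sinh h)$, $n=(\sinh h,0,0,\cosh h)$ and $T=(0,1,0,0)$.
\item Let $B$ be the Lorentz transformation fixing $p$ and $(0,0,1,0)$ with $BT=\cos\phi\,T-\sin\phi\,n$ and $Bn=\sin\phi\,T+\cos\phi\,n$, where $0<\phi<\pi/2$.
\item Put $X_1=\{x\in\mathcal{H}^3:\langle x,n\rangle\le 0\}$ and $X_2=X_1\cap B(X_1)$. Both contain $e$ in their interiors, since $\langle e,n\rangle=-\sinh h$ and $\langle e,Bn\rangle=-\cos\phi\sinh h$.
\item Let $F$ be the unfolding map: the identity on $\partial X_1\cap\{x_1\le 0\}$ and $B$ on $\partial X_1\cap\{x_1\ge 0\}$. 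It is an intrinsic isometry $\partial X_1\to\partial X_2$, and $F(x)_0=x_0-\sin\phi\,\sinh h\,x_1<x_0$ when $x_1>0$.
\end{itemize}
Hence $S_1$ contains an open piece of the plane $\{u_3=\tanh h\}$ on one side of $y(p)=(0,0,\tanh h)$. On the other side it contains points with $u_3>\tanh h$ arbitrarily close to $y(p)$. A convex set containing $0$ whose boundary contained a neighbourhood of $y(p)$ in $S_1$ would have $\{u_3=\tanh h\}$ as a supporting plane. It would then lie in $\{u_3\le\tanh h\}$, a contradiction.

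This pair is polyhedral, so it refutes your edge claim directly. More generally, take two wedges folded along the same line with exterior angles $\phi_1,\phi_2\in[0,\pi/2)$ and $F=\mathrm{id}$ on the common face. The computation in the slice $x_2=0$ shows that $S_1$ is locally convex with respect to $0$ along the fold if and only if $(\cosh^2h+1)\sin\phi_1\ge\sinh^2h\,\sin\phi_2$.

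In $\mathbb{S}^3$ the coefficient $\sinh^2h$ becomes $-\sin^2h$, so the inequality is automatic there. This sign flip is exactly why Pogorelov's spherical argument does not transfer. The paper gives no proof beyond citing that argument with the remark that it ``remains valid in the hyperbolic setting''. So neither the citation nor your outline establishes (2). It does hold when $F$ is the restriction of a global isometry, since then $S_1,S_2$ are projective images of $\Sigma_1,\Sigma_2$. In general it needs extra hypotheses.

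\textbf{Two further problems, independent of the above.}

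First, in Step 3, Alexandrov's existence theorem only yields \emph{some} realization of an approximating polyhedral metric near $\Sigma_2$. Forcing these realizations to converge to $\Sigma_2$ with $F_k\to F$ would require uniqueness, i.e.\ rigidity in $\mathcal{H}^3$. That is precisely what this theorem is meant to deliver downstream, and it fails outright for non-closed pieces $\Sigma_i$.

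Second, in Step 2 the lift of a rectifiable $\gamma\subset S_1$ does not follow from the Lipschitz property of $\Phi^{-1}$, since that presupposes $G\circ\gamma$ is rectifiable. Instead take $\alpha=Q\circ R\circ\gamma$, with $R(u)=u/\|u\|$ and $Q$ the inverse of the radial projection of $\partial X_1$ from $e$. The map $Q$ is locally Lipschitz because $e\in\operatorname{int}X_1$ (\cref{thm: interior-Lipschitz} in the Klein model). Then use \cref{lem: preserve-intrinsic-isometries} only in its forward direction, and argue symmetrically with $F^{-1}$ and $X_2$ for the reverse inequality.
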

    
    \begin{figure}[htbp]
        \vspace{-2em}
        \centering
        \begin{subfigure}[b]{0.45\textwidth}
            \centering
            \includegraphics[width=\textwidth]{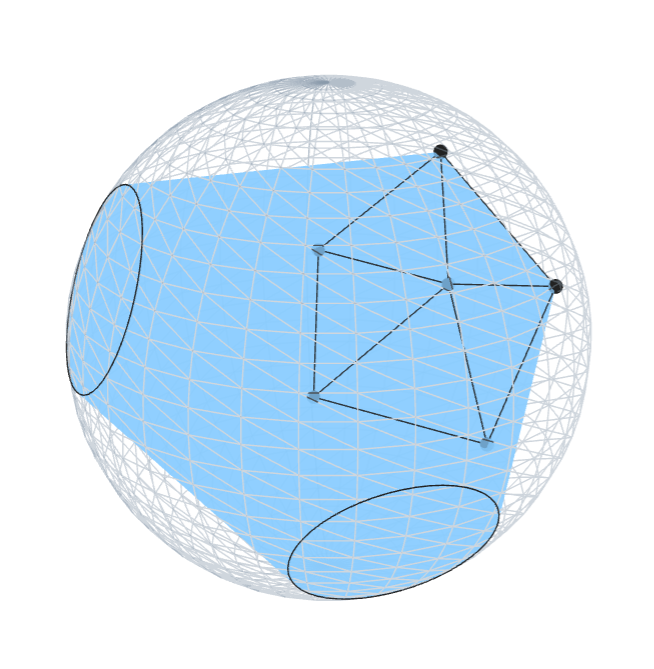}
            \caption{$\partial X_1$ (Klein model)}
            \label{fig:sub1}
        \end{subfigure}
        \hfill 
        \begin{subfigure}[b]{0.45\textwidth}
            \centering
            \includegraphics[width=\textwidth]{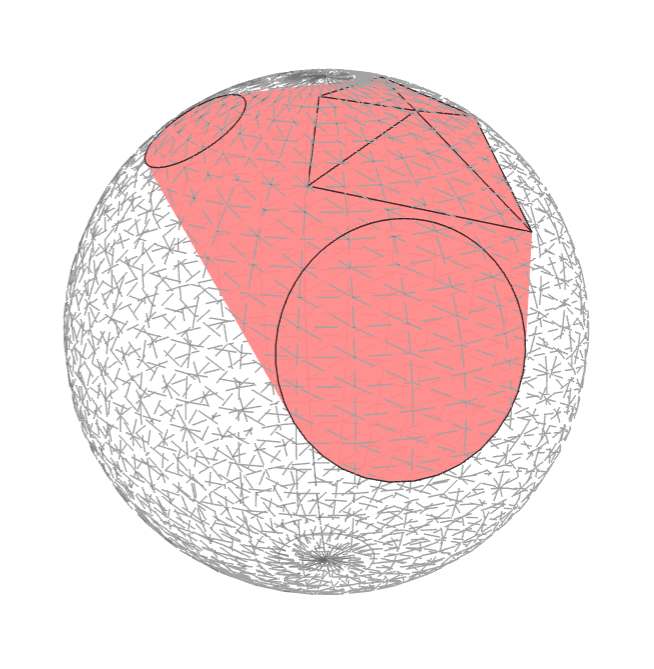} 
            \caption{$T(\partial X_1)$ (Klein model)}
            \label{fig:sub2}
        \end{subfigure}

        \begin{subfigure}[b]{0.45\textwidth}
            \centering
            \includegraphics[width=\textwidth]{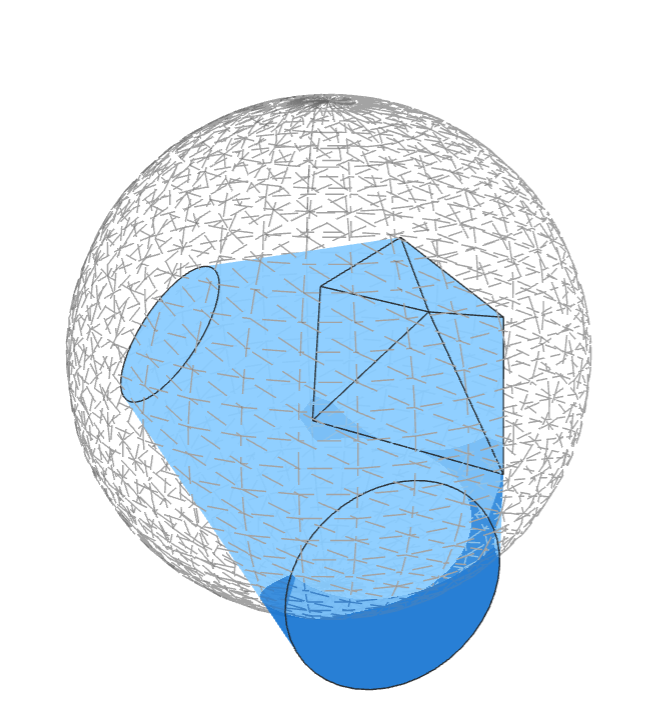} 
            \caption{Mapped surface $S_1$ in $\mathbb{R}^3$}
            \label{fig:sub3}
        \end{subfigure}
        \hfill
        \begin{subfigure}[b]{0.45\textwidth}
            \centering
            \includegraphics[width=\textwidth]{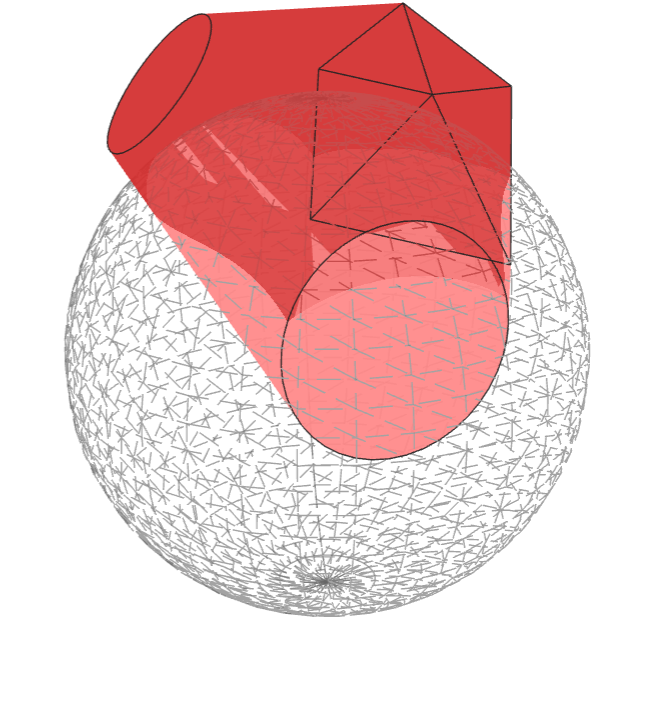}
            \caption{Mapped surface $S_2$ in $\mathbb{R}^3$}
            \label{fig:sub4}
        \end{subfigure}
        \caption{Visualizations of the Pogorelov map $\Phi(x,T(x))$ restricted to $\partial X_1$. Here, $T$ is an isometry of hyperbolic 3-space such that $T(x_1,x_2,x_3) = (ex_1, ex_2, ex_3)$ in the upper half-space model, and $X_1$ is the convex hull of a circle-type closed set consisting of two disks and six isolated points.}
        \label{fig:Pogorelov_map}
    \end{figure}
    Before introducing the proof framework, we need several auxiliary lemmas. Recall that a function $f$ defined on an open set $\Omega$ in $\mathbb{R}^n$ is called \emph{locally convex} if for each $p\in \Omega$, there exists a convex neighborhood $U_p$ of $p$ in $\Omega$ such that $f|_{U_p}$ is a convex function. The following lemma about local convexity is well known.
	\begin{lemma}\label{lem: LocalToGlobalConvex}
		If $\Omega \subset \mathbb{R}^n$ is a convex open set and $f : \Omega \to \mathbb{R}$ is locally convex, then $f$ is a convex function on $\Omega$.
	\end{lemma}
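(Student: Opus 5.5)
The plan is to reduce to a one-dimensional statement along line segments, and then propagate convexity along each segment by a compactness argument. A function on a convex set is convex if and only if its restriction to every line segment in the set is convex. So fix $x,y\in\Omega$ and set
$$
g(t)=f\big((1-t)x+ty\big),\qquad t\in[0,1],
$$
which is well defined because $\Omega$ is convex. The goal becomes showing that $g$ is convex on $[0,1]$. In fact I will work on a slightly larger open interval $I\supset[0,1]$, which still maps into $\Omega$ since $\Omega$ is open.

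\textbf{Step 1: $g$ is locally convex.} For each $t\in I$, the point $p=(1-t)x+ty$ has a convex neighborhood $U_p\subset\Omega$ on which $f$ is convex. The preimage of $U_p$ under the affine map $t\mapsto (1-t)x+ty$ is an open interval containing $t$. On that interval $g$ is the composition of a convex function with an affine map, so it is convex there.

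\textbf{Step 2: local convexity on an interval implies convexity.} This is the main point, and I will use the characterization of convexity via monotone slopes. For a locally convex $g$ on an open interval:

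1. $g$ is continuous, since each local piece is convex on an open interval.
2. $g$ has one-sided derivatives $g'_-\le g'_+$ everywhere.
3. $g'_+$ is nondecreasing on each small interval where $g$ is convex.

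By compactness, cover $[0,1]$ by finitely many such intervals. Overlapping intervals then chain the monotonicity, so $g'_+$ is nondecreasing on all of $[0,1]$. Since $g$ is continuous and locally convex, it is locally Lipschitz, hence absolutely continuous, and
$$
g(b)-g(a)=\int_a^b g'_+\,ds .
$$
A function that is the integral of a nondecreasing function is convex, so $g$ is convex on $[0,1]$.

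\textbf{Alternative for Step 2.} If I want to avoid derivatives, I can argue by a first failure point. Suppose $g$ lies strictly above some chord on $[a,b]$. Subtract that affine chord, so the new function $h$ vanishes at $a,b$ and has a positive maximum at some interior point $m$. The set of maximizers is closed, so take its smallest element $m$. Near $m$, $h$ is convex. But a convex function with a local maximum at an interior point is constant on a neighborhood of it. That contradicts $m$ being the smallest maximizer, unless $m$ is an endpoint, which is impossible since $h(a)=h(b)=0<h(m)$.

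\textbf{Conclusion and main obstacle.} Applying the above to every pair $x,y\in\Omega$ gives convexity of $f$ on all of $\Omega$. The only genuine step is Step 2, the passage from local to global convexity in one dimension. I expect the maximum-principle argument to be the cleanest way to handle it.
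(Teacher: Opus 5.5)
Your proof is correct. The paper gives no proof of this lemma; it calls it well known and uses it directly, so there is no competing argument to compare against. What you supply is the standard self-contained proof:
\begin{itemize}
\item restrict $f$ to a segment, extended slightly past its endpoints using openness of $\Omega$;
\item note that the restriction $g$ is locally convex on an open interval;
\item pass from local to global convexity in one variable.
\end{itemize}

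Both versions of your Step 2 work. The monotone-derivative version needs the finite cover, the chaining of monotonicity of $g'_+$ across overlapping intervals, and the identity $g(b)-g(a)=\int_a^b g'_+$.

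The leftmost-maximizer version is shorter and avoids derivatives, but it relies on two points you should state. First, it uses the continuity of $g$ from your item 1. That continuity is what guarantees $h$ attains its maximum on $[a,b]$ and that the set of maximizers is closed. Second, the neighborhood of $m$ on which $h$ is constant should be taken inside $(a,b)$, which is possible since $m$ is interior. Constancy itself is the one-line estimate $h(m)\le\frac12\bigl(h(m-s)+h(m+s)\bigr)\le h(m)$.

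The final passage from convexity of each such $g$ to convexity of $f$ is immediate. The inequality $f((1-t)x+ty)\le(1-t)f(x)+tf(y)$ is exactly $g(t)\le(1-t)g(0)+tg(1)$.
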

	The following result in \cite{TaborTabor} ensures the extension of a locally convex function defined outside a closed set of vanishing $(n-1)$-dimensional Hausdorff measure.
	\begin{theorem}[{\cite[Corollary 4.2]{TaborTabor}}]\label{thm: ExtendConvex}
		Let $\Omega$ be an open subset of $\mathbb{R}^n$ and $A$ be a closed subset of $\Omega$ such that $\mathscr{H}^{n-1}(A) = 0$. If $f : \Omega \setminus A \to \mathbb{R}$ is locally convex, then $f$ can be extended to a locally convex function on $\Omega$.
	\end{theorem}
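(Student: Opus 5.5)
Since local convexity is a local property, I will work in a neighbourhood of a fixed point $a\in A$. I choose a small open cube $U$ with $\overline U\subset\Omega$ and $a\in U$, and show that $f|_{U\setminus A}$ extends to a convex function on $U$. Extensions built on overlapping cubes agree on the dense set $U\setminus A$, so they glue into the desired locally convex extension on $\Omega$.

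\emph{Step 1: generic lines miss $A$.} Orthogonal projection $\pi_v$ onto $v^\perp$ is $1$-Lipschitz, so $\mathscr{H}^{n-1}(\pi_v(A))=0$. Hence, for each direction $v$, almost every line parallel to $v$ avoids $A$. On every segment in $U$ that avoids $A$, $f$ is convex: it is locally convex on a connected interval, so \cref{lem: LocalToGlobalConvex} applies with $n=1$. The same slicing argument gives the following facts.
\begin{itemize}
\item For a.e.\ $t$, the hyperplane slice $A\cap\{x_1=t\}$ has zero $\mathscr{H}^{n-2}$ measure, by the Eilenberg/coarea inequality.
\item $U\setminus A$ is connected and dense in $U$.
\item Any two points of $U\setminus A$ are joined by a polygonal path inside $U\setminus A$, using segments in generic directions.
\end{itemize}

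\emph{Step 2: local boundedness and a uniform Lipschitz bound.} This is the step I expect to be the main obstacle. The delicate part is the upper bound on $f$ near $A$.

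I would first choose a slightly smaller box $B\subset U$ around $a$ whose faces lie in generic hyperplanes. Then each face meets $A$ in an $\mathscr{H}^{n-2}$-null set. By induction on dimension, using the slice statement, $f$ restricted to each face minus $A$ is bounded above. The base case is an interval with endpoints off $A$, where convexity bounds $f$ by its endpoint values.

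Next, for a.e.\ $x\in B$, the line through $x$ parallel to $e_1$ misses $A$. Convexity on that line then bounds $f(x)$ by the maximum of $f$ on the two faces hit by the line. Since such $x$ are dense, I will propagate the bound to all of $B\setminus A$ using lines in nearby generic directions.

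With an upper bound $M$ on $B\setminus A$ and the value at one point, the standard one-dimensional convexity estimate along generic segments gives two things on a smaller box $B'$:
\begin{itemize}
\item a lower bound, and
\item a Lipschitz estimate $|f(x)-f(y)|\le L\|x-y\|$ whenever the segment $[x,y]$ misses $A$.
\end{itemize}
Using the polygonal paths from Step 1, the Lipschitz estimate holds for all $x,y\in B'\setminus A$, since nearby generic directions give paths of length close to $\|x-y\|$.

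\emph{Step 3: extension and convexity.} Since $f$ is uniformly Lipschitz on the dense set $B'\setminus A$, it extends uniquely to a Lipschitz function $g$ on $B'$. Take any segment $[x,y]\subset B'$. It is the limit of segments $[x_k,y_k]$ that miss $A$, obtained by translating in a generic transverse direction, and $f$ is convex on each of these. Passing to the limit using continuity of $g$, the convexity inequality holds on $[x,y]$. Thus $g$ is convex on $B'$, and hence locally convex near $a$.

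Doing this at every point of $A$ and gluing by uniqueness on the dense set $\Omega\setminus A$ yields the locally convex extension of $f$ to $\Omega$.
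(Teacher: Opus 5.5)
The paper does not prove this statement; it imports it from \cite[Corollary~4.2]{TaborTabor}. There, the result follows from an extension theorem for closed \emph{intervally thin} sets, i.e.\ sets such that every segment is a limit of segments disjoint from $A$. The hypothesis $\mathscr{H}^{n-1}(A)=0$ is used only to verify that property, by the same projection argument as your Step~1.

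Your argument is correct in outline and shares that mechanism in Steps~1 and~3. It differs in how you get local boundedness: you use Eilenberg slicing and induction on dimension to bound $f$ on the faces of a generic box. That works, but it uses the Hausdorff hypothesis a second time, and it can be avoided. Take a simplex around $a$ with vertices $v_0,\dots,v_n\notin A$. Reach any $x=\sum_i\lambda_iv_i\in\operatorname{int}\Delta\setminus A$ through successive two-point convex combinations, perturbing each segment slightly so that it misses $A$. Every intermediate point then lies off $A$, where $f$ is continuous, so each perturbation changes the relevant value of $f$ by an arbitrarily small amount. This gives $f(x)\le\max_i f(v_i)$ from the segment-approximation property alone. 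So your route is a self-contained proof specific to $\mathscr{H}^{n-1}$-null sets, while the cited route yields the more general statement with no slicing or induction.

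One intermediate claim needs correcting. The Lipschitz estimate does not follow from the one-dimensional convexity estimate just because $[x,y]$ misses $A$. A convex function with values in $[m,M]$ on an interval can have unbounded slope at its ends (e.g.\ $t\mapsto-\sqrt{t}$ on $(0,1)$), and the $A$-free part of the line through $x,y$ may end just beyond $x$ or $y$. Require instead that the whole chord through $x$ and $y$ of a box on which $m\le f\le M$ misses $A$. For $x,y$ in a strictly smaller box at distance $\delta$ from that box's boundary, this gives $L=(M-m)/\delta$. Such pairs are dense in $(B'\setminus A)^2$ (translate $[x,y]$ generically), so continuity of $f$ on $\Omega\setminus A$ gives the estimate for all pairs, and polygonal paths are not needed. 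The same density-plus-continuity device is the clean way to pass your upper and lower bounds from generic points to every point of $B\setminus A$. For the lower bound, take the reference point off $A$, since the centre of a box around $a$ may lie in $A$.
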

    
	To show that a set of zero one-dimensional Hausdorff measure does not change the intrinsic metric of the surface, we need the following area formula, which is well known in geometric measure theory; see, e.g., \cite[Section 8.5]{MR2976521} for a proof.
    \begin{theorem}[(Area formula)]\label{thm: Area-formula}
		If $f \colon \mathbb{R}^n \to \mathbb{R}^m$ \textup{(}$1 \le n \le m$\textup{)} is a Lipschitz function and $E \subset \mathbb{R}^n$ is Lebesgue measurable, then
		\begin{equation}
			\int_{\mathbb{R}^m}  \mathscr{H}^0(E\cap \{f=y\}) \mathrm{d}\mathscr{H}^n(y)= \int_E Jf(x) \,\mathrm{d}x,
		\end{equation}
        where the Jacobian $J f: \mathbb{R}^n \to [0, \infty]$ is defined by
        \begin{equation*}\label{eq: jacobian-definition}
            Jf(x) = 
            \begin{cases}
                \sqrt{\det\left(\nabla f(x)^* \nabla f(x)\right)}, & \text{if } f \text{ is differentiable at } x; \\
                +\infty, & \text{if } f \text{ is not differentiable at } x.
            \end{cases}
        \end{equation*}
	\end{theorem}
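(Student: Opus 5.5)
We would follow the classical route of Federer, as presented in Evans--Gariepy, and reduce everything to linear algebra through Rademacher's theorem. First, by Rademacher's theorem $f$ is differentiable at Lebesgue-almost every point, so the set $N$ where $f$ fails to be differentiable has measure zero. With the convention $\infty\cdot 0=0$, $N$ contributes nothing to the right-hand side. For the left-hand side, a Lipschitz map sends Lebesgue-null sets to $\mathscr{H}^n$-null sets, since $\mathscr{H}^n(f(A))\le (\operatorname{Lip} f)^n\,\mathscr{H}^n(A)$ (up to normalisation). Hence $N$ contributes nothing there either. We may therefore assume $f$ is differentiable on $E$. We also note that $y\mapsto \mathscr{H}^0(E\cap\{f=y\})$ is $\mathscr{H}^n$-measurable: approximate $E$ by Borel sets, and count points via finer and finer dyadic partitions of $E$, so that the multiplicity is a monotone limit of sums of indicators of images of Borel sets. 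Images of Borel sets are Suslin and hence measurable.

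Second, we treat the degenerate set $Z=\{x\in E: Jf(x)=0\}$ and show $\mathscr{H}^n(f(Z))=0$, which is a Sard-type statement. At $x\in Z$ the range of $Df(x)$ lies in an $(n-1)$-dimensional subspace. The standard trick is to consider $g_\varepsilon(x)=(f(x),\varepsilon x)\colon\mathbb{R}^n\to\mathbb{R}^{m+n}$. This map has $Jg_\varepsilon\le C\varepsilon$ on $Z$ and is injective, and $f=p\circ g_\varepsilon$ with $p$ the 1-Lipschitz projection. Applying the nondegenerate case (below) to $g_\varepsilon$ gives $\mathscr{H}^n(f(Z))\le \mathscr{H}^n(g_\varepsilon(Z))\le C\varepsilon\,\mathcal{L}^n(Z)$. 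This holds first for bounded $Z$ and then in general, and letting $\varepsilon\to0$ gives the claim. Consequently both sides of the formula vanish on $Z$.

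Third, the core step is the nondegenerate set $E^+=\{Jf>0\}$. Fix $t>1$. We would decompose $E^+$ into countably many Borel pieces $E_k$ on which there is a fixed injective linear map $L_k\colon\mathbb{R}^n\to\mathbb{R}^m$ such that
\[
t^{-1}\|L_k(x-x')\|\le \|f(x)-f(x')\|\le t\|L_k(x-x')\|,\qquad t^{-1}JL_k\le Jf\le t\,JL_k \quad\text{on } E_k.
\]
This uses the differentiability of $f$ together with a countable dense family of linear maps and rational radii. On each piece $f=(f\circ L_k^{-1})\circ L_k$, with $f\circ L_k^{-1}$ being $t$-bi-Lipschitz on $L_k(E_k)$. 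The linear case $\mathscr{H}^n(L(A))=JL\,\mathcal{L}^n(A)$ follows from polar decomposition $L=O\circ S$ and the fact that $\mathscr{H}^n$ on $n$-planes coincides with Lebesgue measure. Combining these gives
\[
t^{-2n}\int_{E_k}Jf\le \mathscr{H}^n(f(E_k))\le t^{2n}\int_{E_k}Jf .
\]
Summing over $k$ gives $\int \mathscr{H}^0(E^+\cap\{f=y\})\,d\mathscr{H}^n(y)=\sum_k\mathscr{H}^n(f(E_k))$, because the pieces are disjoint and $f$ is injective on each. Letting $t\to1^+$ yields the formula.

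We expect the main obstacle to be the measurable decomposition in the third step. One must produce disjoint Borel pieces on which the \emph{global} two-sided comparison with a single linear map holds, not merely an infinitesimal one. This requires a careful choice of $\delta$-neighbourhoods depending on the point, followed by refining into sets of diameter less than $\delta$. By comparison, the degenerate set and the measurability of the multiplicity function are more routine.
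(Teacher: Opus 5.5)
Your proposal is correct and is the standard Federer/Evans--Gariepy argument: Rademacher's theorem, the linear case via polar decomposition, a countable decomposition of $\{Jf>0\}$ into pieces on which $f$ is $t$-bi-Lipschitz to a fixed injective linear map, and the map $x\mapsto(f(x),\varepsilon x)$ to dispose of $\{Jf=0\}$. The paper gives no proof of its own and only cites the textbook treatment \cite{MR2976521}, which follows essentially this same route; one cosmetic point is that your pieces of $E^+$ are really $E\cap E_k$ with $E_k$ Borel, hence only Lebesgue measurable, and your Step~1 measurability argument already covers that.
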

    We are now ready to establish the following theorem, which generalizes Theorem 4.4 in \cite{luo2026rigidity} to noncomplete surfaces.
	\begin{theorem}\label{thm: zero-H1-dont-change-metric}
		Suppose that $S \subset \mathbb{R}^3$ is a locally Lipschitz surface in $\mathbb{R}^3$ and $A\subset S$ is a subset such that $\mathscr{H}^1(A) = 0$ and $A\cap K$ is compact for any compact subset $K$ of $S$. Let $d_S$ and $d_{S \setminus A}$ be the intrinsic path metrics on $S$ and $S \setminus A$, respectively. Then for any $x, y \in S \setminus A$, 
		\begin{equation*}
			d_S(x, y) = d_{S \setminus A}(x, y).
		\end{equation*}
	\end{theorem}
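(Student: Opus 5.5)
The inequality $d_S(x,y)\le d_{S\setminus A}(x,y)$ is immediate, since every path in $S\setminus A$ is also a path in $S$. For the reverse inequality, fix $x,y\in S\setminus A$, $\varepsilon>0$, and a rectifiable path $\sigma$ in $S$ from $x$ to $y$. I will construct a path in $S\setminus A$ from $x$ to $y$ of length at most $l(\sigma)+\varepsilon$.

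\textbf{Step 1: localize.} The trace of $\sigma$ is compact in $S$, so it can be covered by finitely many coordinate charts. In each chart, a neighbourhood in $S$ is the image of an open set of $\mathbb{R}^2$ under a bi-Lipschitz map $\phi$; this is how I read "locally Lipschitz surface", for example as a Lipschitz graph over a plane. Subdividing $\sigma$, it suffices to treat a subarc lying in one chart, with endpoints $p,q\notin A$. Tiny adjustments near the subdivision points can be used to move those points off $A$; since $A$ has empty interior, this is possible, and it costs arbitrarily little length because $\phi$ is bi-Lipschitz.

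\textbf{Step 2: the straight-segment case.} In a chart, set $\tilde A=\phi^{-1}(A)$. Bi-Lipschitz maps preserve $\mathscr{H}^1$-null sets, so $\mathscr{H}^1(\tilde A)=0$. Moreover $\tilde A$ is closed in the chart, because $A$ meets each compact set of $S$ in a compact set. Consider a short segment $[u,v]$ in the plane with $u,v\notin\tilde A$. I move it to parallel segments $[u+tw,v+tw]$, where $w\perp v-u$ and $t\in(0,\eta)$, and join them to $u$ and $v$ by short pieces. Let $\pi$ be the orthogonal projection onto the line $\mathbb{R}w$. Then $\pi$ is $1$-Lipschitz, so $\mathscr{H}^1(\pi(\tilde A))=0$, which is the content of the area formula (\cref{thm: Area-formula}) in its easy one-dimensional form. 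Hence almost every $t$ gives a translated segment disjoint from $\tilde A$.

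The connecting pieces also need to avoid $\tilde A$. Because $u,v\notin\tilde A$ and $\tilde A$ is closed, there are small discs around $u$ and $v$ that miss $\tilde A$. I join $u$ to $u+tw$ inside such a disc, and similarly at $v$. The extra length is $O(\eta)$.

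\textbf{Step 3: approximate and push forward.} Approximate $\phi^{-1}\circ\sigma$ on the subarc by a polygonal path. The lengths of $\phi$-images of these polygons should converge to $l(\sigma)$; this is the delicate point discussed below. Apply Step 2 to each edge, choosing the vertices off $\tilde A$, which is possible since $\tilde A$ is closed with empty interior. The resulting path in the plane avoids $\tilde A$. Its image under $\phi$ lies in $S\setminus A$ and has length within $\varepsilon$ of $l(\sigma)$, because $\phi$ is Lipschitz and the perturbations were uniformly small.

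\textbf{Main obstacle.} The hard part is controlling length through $\phi$. A merely bi-Lipschitz $\phi$ distorts lengths by up to a constant factor, so replacing $\sigma$ by polygonal curves in the parameter domain need not give lengths close to $l(\sigma)$. I plan to avoid this by perturbing directly in $\mathbb{R}^3$ rather than in the chart. For each short chord of $\sigma$, take the nearby curves on $S$ obtained by intersecting $S$ with a one-parameter family of parallel planes transverse to the chord. Define $g:S\to\mathbb{R}$ as the parameter selecting the plane through a given point. Then $g$ is Lipschitz, so $\mathscr{H}^1(g(A))=0$, and almost every member of the family misses $A$.

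It remains to show that, as the family parameter tends to $0$, the lengths of these section curves converge to the length of the section through the chord. This should follow from local Lipschitz graph structure together with lower semicontinuity of length plus an upper bound from the Lipschitz parametrization. If that fails in full generality, I would fall back on the setting the paper actually needs: locally convex surfaces. There, $S$ is locally the graph of a convex function, and the analogous convergence is standard (as in \cite[Theorem 4.4]{luo2026rigidity}, which can be localized using the compactness of $A\cap K$).
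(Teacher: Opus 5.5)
Your Step~2 is exactly the local device the paper uses. In a bi-Lipschitz chart with convex domain, you replace the chord $[\xi,\eta]$ by the boundary of a thin rectangle. Its far side is a parallel translate missing $\tilde A$, which exists because the projection onto the normal line is Lipschitz and $\mathscr{H}^1(\tilde A)=0$. Its two short sides lie in discs around $\xi,\eta$ that miss the closed set $\tilde A$.

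The gap is in Step~3. You try to replace the \emph{whole} curve $\sigma$ by such pieces. That requires the length distortion of the chart to become negligible, and for a merely bi-Lipschitz chart it does not. Your remedy via planar sections is the same problem in disguise. If $S$ is locally a Lipschitz graph $\phi(z)=(z,f(z))$, the section of $S$ by the vertical plane through two points $p,q$ of $\sigma$ is just $\phi$ of the planar chord between their projections. So the sections over a subdivision are $\phi$-images of inscribed polygons. (A non-vertical plane cuts $S$ in a level set of a Lipschitz function, which need not be a curve at all.) Suppose $f$ vanishes along the projected curve but has slope-$\pm1$ sawtooth oscillations at scales comparable to the distance from it. Then these lengths stay a fixed factor above $l(\sigma)$ however fine the subdivision. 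Already for a straight chord, the section length is also not continuous in the offset parameter at $0$. Your fallback to locally convex surfaces changes the hypothesis and merely cites \cite[Theorem 4.4]{luo2026rigidity}, the very result this statement generalizes. So it does not close the gap either.

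The missing idea is that you never need small distortion, only a short modified portion. Parametrize $\sigma$ by arc length on $[0,l(\sigma)]$ and let $E=\sigma^{-1}(A)$. It is compact, since $A$ meets the trace of $\sigma$ in a compact set. The area formula (\cref{thm: Area-formula}) gives $m(E)=\int_E\|\sigma'\|\,dt=\int_A\mathscr{H}^0(E\cap\{\sigma=y\})\,d\mathscr{H}^1(y)=0$. Hence $E$ is covered by finitely many disjoint open intervals $(s_i,t_i)$ with $\sum_i(t_i-s_i)<\delta$. Their endpoints lie outside $E$, and for small $\delta$ each $\sigma|_{[s_i,t_i]}$ lies in a single chart. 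Keep $\sigma$ unchanged outside these intervals, since it already avoids $A$ there. Apply your Step~2 only once per interval, to the chord between $\phi^{-1}(\sigma(s_i))$ and $\phi^{-1}(\sigma(t_i))$. With chart constant $L$ and offset smaller than the chord, the detour has length at most $3L^2(t_i-s_i)$. The total increase is therefore at most $3L^2\delta$; the constant loss is harmless because it multiplies only $\delta$. This removes the need for polygonal approximation of the whole curve, planar sections, convexity, and the ``tiny adjustments'' of Step~1.
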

	\begin{proof}
		Suppose that $x, y \in S \setminus A$ and $\sigma:[0,l(\sigma)]\to S$ is a rectifiable curve parametrized by arc length with $\sigma(0)=x$ and $\sigma(l(\sigma))=y$. It suffices to prove that for each $\varepsilon>0$, there exists a curve $\tilde{\sigma}$ in $S\setminus A$ connecting $x$ and $y$ with length $l(\tilde{\sigma})\leq l(\sigma)+\varepsilon$. Let $E=\sigma^{-1}(A)$. It is clear that $E$ is compact since $A\cap\sigma$ is compact. By the area formula, 
		\begin{equation}
			m(E)=\int_E \|\sigma'(x) \|\,\mathrm{d}x=\int_A  \mathscr{H}^0(E\cap \{\sigma=y\}) \mathrm{d}\mathscr{H}^1(y) =0,
		\end{equation}
		where $m(E)$ is the Lebesgue measure of $E$ in $[0,l(\sigma)]$. Since $E$ is compact, for each $\delta>0$, there exists a collection of disjoint open intervals $\{I_i=(s_i,t_i),1\leq i\leq n\}$ covering $E$ such that $\sum_i|t_i-s_i|<\delta$. Since $S$ is locally Lipschitz, there exists a finite open cover $\{U_j\}_{j=1}^{m}$ of $\sigma$ in $S$ and a constant $L>0$ such that for each $U_j$, there exists a bi-Lipschitz map $f_j$ from $U_j$ to the unit open disk $\mathbb{D}$ with bi-Lipschitz constant $L$. Let $\sigma_i=\sigma|_{\overline{I_i}}$. Choosing $\delta$ small enough, we may assume that each $\sigma_i$ is contained in some $U_j$.
		
		We claim that there exists a curve $\tilde{\sigma}_i$ in $U_j\setminus A$ connecting $\sigma(s_i)$ and $\sigma(t_i)$ with length $l(\tilde{\sigma}_i)\leq 3L^2 \cdot l(\sigma_i)$. This clearly holds when $\sigma(s_i)=\sigma(t_i)$. Now assume that $\sigma(s_i)\neq\sigma(t_i)$.
		Let $\gamma_i$ be the line segment in $\mathbb{D}$ connecting $\xi_{i}=f_j(\sigma(s_i))$ and $\eta_{i}= f_j(\sigma(t_i))$. Let $\hat{A}_i=f_j(U_j\cap A)$. Then $\hat{A}_i\cap K$ is compact for any compact subset $K$ in $\mathbb{D}$.
        Since $\xi_i,\eta_i\notin \hat{A}_i$ and $\mathbb{D}$ is locally compact, there exists $\tau<l(\gamma_i)$ such that the disks centered at $\xi_i$ and $\eta_i$ with radius $\tau$ are contained in $\mathbb{D}\setminus\hat{A}_i$.
		Let $\nu$ be a unit normal vector to $\gamma_i$ in $\mathbb{R}^2$. For each $t>0$, we define a rectangle
		$$
		R^t = \{z + s\nu \mid z \in \gamma_i, s \in [0, t]\}.
		$$
		Since $\mathbb{D}$ is convex, by definition we have $R^{\tau}\subset\mathbb{D}$.
		For each $t\in(0,\tau)$, let $\gamma_i^t = \{z + t\nu \mid z \in \gamma_i\}$ be the line segment in $R^{\tau}$ parallel to $\gamma_i$ at distance $t$. We first show that there exists $t_0\in(0,\tau)$ such that $\gamma_i^{t_0}\cap \hat{A}_i=\varnothing$. Assume the contrary. Define $\Pi:R^\tau\to[0,\tau]$ by $\Pi(z+s\nu)=s$ for $z\in\gamma_i$ and $0\le s\le\tau$. Then
        $(0,\tau)\subset\Pi(\hat A_i\cap R^\tau)$. This contradicts the fact that $\Pi$ is Lipschitz and $\mathscr{H}^1(\hat{A}_i)=0$. Now let $\tilde{\gamma}_i=\partial R^{t_0}\setminus\gamma_i^{\circ}$, where $\gamma_i^{\circ}$ denotes the relative interior of $\gamma_i$. It follows that $\tilde{\gamma}_i$ is a curve in $\mathbb{D}\setminus \hat{A}_i$ connecting $\xi_i$ and $\eta_i$ with length $l(\tilde{\gamma}_i)= l(\gamma_i)+2t_0\leq3l(\gamma_i)$. Let $\tilde{\sigma}_i=f_j^{-1}(\tilde{\gamma}_i)$. Then we have 
		\begin{equation}\label{eq: tilde-sigma-i}
			l(\tilde{\sigma}_i)\leq L\cdot l(\tilde{\gamma}_i)\leq 3L\cdot l(\gamma_i)\leq 3L^2l(\sigma_i)=3L^2|t_i-s_i|.
		\end{equation}
		Now let $\tilde{\sigma}$ be the curve obtained from $\sigma$ by replacing each $\sigma_i$ with $\tilde{\sigma}_i$. By \eqref{eq: tilde-sigma-i},
		$$
		l(\tilde{\sigma})\leq l(\sigma)+\sum_{i=1}^{n}l(\tilde{\sigma}_i)\leq l(\sigma)+3L^2\sum_{i=1}^{n}|t_i-s_i| \leq l(\sigma)+3L^2\delta.
		$$
		Then the result follows by taking $\delta=\varepsilon/3L^2$.
	\end{proof}
    
	Following the approach in \cite{luo2026rigidity}, we now establish the foundational framework for studying the rigidity of noncompact convex sets via the Pogorelov map. For an isometry $F: \partial X_1\to\partial X_2$,  define 
	$$
	S_1 = \{P_1(x, F(x)) : x \in \partial X_1\}, \quad S_2 = \{P_2(x, F(x)) : x \in \partial X_1\}.
	$$
	Assume that both $X_1$ and $X_2$ contain $0$ in their interiors. By \cref{thm: PogorelovII}, $S_1$ and $S_2$ are two radial surfaces in $\mathbb{R}^3$ with respect to $0$, and there exists an isometry $G : S_1 \to S_2$ such that $\Phi(x, F(x)) = (y, G(y))$ holds for all $x \in \partial X_1$.
	For each $i = 1, 2$, let 
    $$
    W_i = \{x / \|x\| : x \in S_i\},\quad Z_i = \{kx : k \geq 0, x \in W_i\}.
    $$ Clearly $W_i\subset\mathbb{S}^2$, and $Z_i$ is star-shaped with center $0$. Since $S_i$ is a radial surface without boundary, Brouwer's invariance of domain theorem implies that $W_i$ is an open subset of $\mathbb{S}^2$ which is homeomorphic to $S_i$. Consequently, $Z_i \setminus \{0\}$ is homeomorphic to $S_i \times \mathbb{R}$.
	
	Assume that $X_i^P=\mathscr{D}_i\cup\mathscr{P}_i$, where $\mathscr{D}_i$ is a circle-type closed set and $\mathscr{P}_i$ is a set having zero one-dimensional Hausdorff measure in $\mathbb{R}^3$. Let $V_i=\mathbb{S}^2-\mathscr{D}_i$ and $U_i= \{kx : k \geq 0, x \in V_i\}$. Using essentially the same method as in \cite[Lemma 5.1]{luo2026rigidity}, we have the following lemma.
	
	\begin{lemma}\label{lem:MeasureZero}
		The one-dimensional Hausdorff measure $\mathscr{H}^1(V_i \setminus W_i)$ of $V_i \setminus W_i$ is zero. Furthermore, the two-dimensional Hausdorff measure $\mathscr{H}^2(U_i \setminus Z_i)$ is zero.
	\end{lemma}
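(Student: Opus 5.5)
Our plan is to show that $V_i \setminus W_i$ is contained in the radial image of $\mathscr{P}_i$ under a Lipschitz map. Then $\mathscr{H}^1(\mathscr{P}_i)=0$ forces $\mathscr{H}^1(V_i\setminus W_i)=0$, and the statement about cones follows by integrating in the radial direction.

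The first step is to identify $W_i$. A point $x\in\partial X_i$ (Poincaré ball, $0\in\operatorname{int}X_i$) is sent by $P_1(x,F(x))$ to a nonzero multiple of $x$, since $P_1$ is a positive scalar multiple of $P(x)$. Identifying the hyperboloid with the ball, $P(x)$ is parallel to the ball coordinate of $x$. Hence $W_1$ is the set of directions $x/\|x\|$ with $x\in\partial X_1$, and similarly $W_2$ is the set of directions of points of $\partial X_2$, since $F$ is a bijection onto $\partial X_2$. So $W_i$ is the radial projection of $\partial X_i\subset\mathbb{H}^3_P$ to $\mathbb{S}^2$.

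The second step takes a direction $u\in\mathbb{S}^2$ and considers the ray $\{tu: 0\le t<1\}$ from $0$. Since $X_i$ is closed and convex with $0$ in its interior, either this ray leaves $X_i$ at a unique point of $\partial X_i$, so $u\in W_i$, or the ray stays inside $X_i$. In the latter case the endpoint $u$ lies in $\overline{X}_i\cap\partial\mathbb{H}^3_P=X_i^P$, because the closure in $\mathbb{R}^3$ contains the limit. Thus $\mathbb{S}^2\setminus W_i\subset X_i^P=\mathscr{D}_i\cup\mathscr{P}_i$. Intersecting with $V_i=\mathbb{S}^2\setminus\mathscr{D}_i$ gives $V_i\setminus W_i\subset\mathscr{P}_i$. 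Here $\mathscr{P}_i$ lies on $\mathbb{S}^2$, and its measure is taken in $\mathbb{R}^3$, so $\mathscr{H}^1(V_i\setminus W_i)\le\mathscr{H}^1(\mathscr{P}_i)=0$. The one subtlety is that the radial projection here is the identity on $\mathbb{S}^2$, so no Lipschitz distortion arises. I would still check the claim that a ray contained in $X_i$ converges to a point of $X_i^P$; it does, because rays from $0$ in the Poincaré ball are Euclidean segments ending at $u$.

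The third step treats $U_i\setminus Z_i=\{ku: k>0,\ u\in V_i\setminus W_i\}$, apart from possibly $0$. This set lies in the cone over a set $E\subset\mathbb{S}^2$ with $\mathscr{H}^1(E)=0$. Intersect with the shells $\{1/m\le\|x\|\le m\}$. The map $(k,u)\mapsto ku$ from $[1/m,m]\times\mathbb{S}^2$ is Lipschitz, and $\mathscr{H}^2([1/m,m]\times E)=0$ because $\mathscr{H}^1([1/m,m])$ is finite and $\mathscr{H}^1(E)=0$. The product inequality $\mathscr{H}^{s+t}(A\times B)\le C\,\mathscr{H}^s(A)\,\mathscr{H}^t(B)$ makes this immediate via covers: cover $E$ by sets of small diameter $r_j$ with $\sum r_j<\varepsilon$, and cover each $[1/m,m]\times(\text{set})$ by about $m/r_j$ cubes of side $r_j$, giving a total of $\lesssim m\varepsilon$. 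Lipschitz images preserve $\mathscr{H}^2$-null sets, and countable additivity over $m$ gives $\mathscr{H}^2(U_i\setminus Z_i)=0$.

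The main obstacle is not analytic. It is the correct identification of $W_i$ through the Pogorelov map, namely that $P_1(x,F(x))$ is radially aligned with $x$, together with the fact that every non-escaping ray ends in $X_i^P$.
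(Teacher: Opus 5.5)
Your proof is correct and takes essentially the route the paper intends; the paper gives no proof of its own and defers to Luo--Luo--Rao, Lemma 5.1. The steps are: $P_1(x,F(x))$ and $P_2(x,F(x))$ are positive multiples of $x$ and $F(x)$, so $W_i$ is the radial image of $\partial X_i$, which gives $\mathbb{S}^2\setminus W_i\subset X_i^P$ and hence $V_i\setminus W_i\subset\mathscr{P}_i$; then the cone over an $\mathscr{H}^1$-null subset of $\mathbb{S}^2$ is $\mathscr{H}^2$-null. One small caution: the product inequality $\mathscr{H}^{s+t}(A\times B)\le C\,\mathscr{H}^s(A)\,\mathscr{H}^t(B)$ is false for general sets, but you only use it when one factor is an interval, and your explicit covering argument proves exactly that case.
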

	\noindent Define the function $q_i : Z_i \to \mathbb{R}$ by 
	\begin{equation*}
		q_i(\alpha) = \inf\{k > 0 : \alpha/k \in S_i\} \geq 0.
	\end{equation*}
    Clearly, $q_i(k\alpha) = k\cdot q_i(\alpha)$ for all $\alpha\in Z_i$ and $k > 0$. The following lemma shows that $q_i$ is a locally convex function on the open set $Z_i \setminus \{0\}$.
   \begin{lemma}\label{lem: equivalence-locally-convex-to-0}
        Let $S$ be a radial surface in $\mathbb{R}^3$. Define 
        $$
        W = \{x / \|x\| : x \in S\}\quad\text{and}\quad Z = \{kx : k \geq 0, x \in W\}.
        $$
        Then $S$ is locally convex with respect to $0$ if and only if the function $q : Z \to \mathbb{R}$ defined by 
    	\begin{equation*}
    		q(\alpha) = \inf\{k > 0 : \alpha/k \in S\} \geq 0,
    	\end{equation*}
        is a locally convex function on the open set $Z \setminus \{0\}$.
    \end{lemma}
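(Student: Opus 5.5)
The plan is to pass between sublevel sets of $q$ and the sets $K_p$ in the definition of local convexity with respect to $0$. By construction $q$ is positively $1$-homogeneous on the open cone $Z\setminus\{0\}$. It is continuous there, because $S$ is a radial surface without boundary and the radial projection $S\to W$ is a homeomorphism (invariance of domain, as noted above). For $\alpha\in Z$ we have $\alpha\in S$ exactly when $q(\alpha)=1$. So $S$ is the level set $\{q=1\}$, and $\{0\}\cup\{tx: x\in S,\ 0\le t\le 1\}$ is the sublevel set $\{q\le 1\}$ inside the cone.

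For the direction ($\Rightarrow$), fix $\alpha_0\in Z\setminus\{0\}$. By homogeneity we may assume $q(\alpha_0)=1$, i.e.\ $p=\alpha_0\in S$. Take the convex set $K_p\ni 0$ with $S\cap\partial K_p$ containing a neighborhood $N$ of $p$ in $S$, and let $C$ be the open subcone over the radial projection of $N$. For $\alpha\in C$ the ray through $\alpha$ meets $\partial K_p$ at $\alpha/q(\alpha)$. Since $K_p$ is convex and contains $0$, the gauge $\mu_{K_p}(\alpha)=\inf\{k>0:\alpha/k\in K_p\}$ is convex. A boundary point of a convex set containing $0$ along a ray from $0$ realizes the gauge value $1$, with a subtlety if $0\in\partial K_p$, addressed below. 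Hence $q=\mu_{K_p}$ on $C$. Shrinking $C$ to a convex neighborhood of $\alpha_0$ inside it gives local convexity of $q$.

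For the direction ($\Leftarrow$), fix $p\in S$ and a convex open neighborhood $U$ of $p$ in $Z\setminus\{0\}$ on which $q$ is convex. Replace $U$ by the convex open cone $C=\{t u: t>0,\ u\in U\}$. On $C$, set $\tilde q(t u)=t\,q(u)$; by homogeneity this equals $q$ on $C\cap Z$, and it is convex on $C$. Indeed, a positively homogeneous function is convex iff it is subadditive, and subadditivity on $C$ follows from convexity on $U$ by rescaling a pair of points into a segment in $U$. If $U$ is a small ball and $C$ is the cone over it, I will verify this directly rather than invoke a lemma. Then $K_p=\{0\}\cup\{\alpha\in C:\tilde q(\alpha)\le 1\}$ is convex, being a sublevel set of a convex function on a convex cone together with the apex, which is a limit point. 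Its boundary near $p$ is $\{\tilde q=1\}\cap C=S\cap C$, a neighborhood of $p$ in $S$.

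The main obstacle is the boundary identification in the ($\Rightarrow$) direction. One must make sure $\partial K_p$ near $p$ coincides with the gauge level set; in particular the rays in $C$ must hit $\partial K_p$ exactly once. This can fail if $0\in\partial K_p$ and some rays lie along a flat face through $0$. To handle this, I would first use radiality of $S$ together with $S\cap\partial K_p\supset N$ to show the rays through $N$ are not tangent to $K_p$. Alternatively, I would replace $K_p$ by $\operatorname{conv}(K_p\cup B_\varepsilon(0))$ intersected with the cone $C$, after checking this does not alter $\partial K_p$ near $p$. A minor additional point is continuity of $q$, which is needed for the sets above to be open or closed as claimed.
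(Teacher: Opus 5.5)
Your proposal is correct. For ($\Leftarrow$) it is essentially the paper's argument, and for ($\Rightarrow$) it supplies a proof that the paper does not give.

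\textbf{The direction ($\Leftarrow$).} You and the paper build the same object: the cone over a neighborhood of $p$, and $K_p$ defined as the sublevel set $\{q\le 1\}$ in that cone together with the apex. The difference is how convexity on the cone is obtained.
\begin{itemize}
\item The paper takes a small ball $B$ with $\overline{B}\subset Z\setminus\{0\}$. It observes that $C_0(B)$ is an open convex subset of $Z\setminus\{0\}$ and applies \cref{lem: LocalToGlobalConvex} there directly.
\item You start from a convex neighborhood on which $q$ is already convex. You then spread convexity to the cone using homogeneity and subadditivity.
\end{itemize}
Both routes are valid: yours avoids the local-to-global lemma, while the paper's avoids the rescaling computation.

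One justification needs correcting. The set $\{0\}\cup\{\tilde q\le 1\}$ is not convex because $0$ is a limit point; adjoining a limit point to a convex set can destroy convexity. The correct reason is homogeneity: $\tilde q(t\alpha)=t\,\tilde q(\alpha)\le 1$ for $t\in(0,1]$, so the segment $[0,\alpha]$ lies in the set. The paper glosses over this step as well.

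\textbf{The direction ($\Rightarrow$).} The paper does not prove this direction; it only cites \cite[p.~20]{luo2026rigidity}. Your gauge argument is therefore additional content, and your first fix for the case $0\in\partial K_p$ works:
\begin{itemize}
\item $K_p$ has nonempty interior. Otherwise $N$ lies in a plane through $0$, so $R(N)$ lies in a great circle and cannot be open in $\mathbb{S}^2$. Hence $\partial\overline{K_p}=\partial K_p$.
\item Suppose $\lambda x\in\overline{K_p}$ for some $x\in N$ and $\lambda>1$. Then $x$ lies in the open segment $(0,\lambda x)$, so any supporting plane at $x$ contains $[0,\lambda x]$. Therefore $[0,\lambda x]\subset\partial K_p$.
\item By invariance of domain, $N$ is open in the $2$-manifold $\partial K_p$. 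So the points $tx$ with $t\neq 1$ near $1$ lie in $N\subset S$, which contradicts radiality.
\item Hence $\mu_{\overline{K_p}}=1$ on $N$, and $q=\mu_{\overline{K_p}}$ on the open cone over $R(N)$.
\end{itemize}
Your alternative of replacing $K_p$ by $\operatorname{conv}(K_p\cup B_\varepsilon(0))$ is exactly the device the paper uses in the next lemma (local Lipschitz continuity of $Q$). That argument hinges on the same step: a supporting plane at $p$ passing through $0$ contradicts radiality.
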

    \begin{proof}
        The fact that $q(\alpha)$ is locally convex provided $S$ is locally convex with respect to $0$ is proved in \cite[p.~20]{luo2026rigidity}. We only prove the converse. Suppose that $q(\alpha)$ is a locally convex function on the open set $Z \setminus \{0\}$. By definition, $S=\{\alpha\in Z:q(\alpha)=1\}$. Let $p$ be a point in $S$. Choose a sufficiently small open ball $B$ containing $p$ such that the closure of $B$ is contained in $Z\setminus\{0\}$. Let
        $C_0(B) = \{kx : k > 0, x \in B\}$ be the cone over $B$ from $0$. Then $C_0(B)$ is a convex subset of $Z$. By \cref{lem: LocalToGlobalConvex}, $q(\alpha)$ is a convex function on $C_0(B)$. It follows that 
        $$
        K_p:=\{\alpha\in C_0(B):q(\alpha)\leq 1\}\cup\{0\}
        $$
        is a convex body in $\mathbb{R}^3$ containing $0$. By the choice of the ball $B$, $B\cap S$ is a neighborhood of $p$ in $S$ and $B\cap S\subset\{\alpha\in C_0(B):q(\alpha)= 1\}$. Then by the definition of the set $K_p$, $B\cap S\subset\partial K_p$. It follows that $S$ is locally convex with respect to $0$, which completes the proof.
    \end{proof}
    By \cref{thm: ExtendConvex} and \cref{lem:MeasureZero}, we can extend $q_i$ to a locally convex function $\tilde{q}_i$ on $U_i$ such that $\tilde{q}_i(k\alpha) = k \cdot \tilde{q}_i(\alpha)$ for all $\alpha \in U_i$ and $k > 0$.
	Let 
    $$
    \tilde{Y}_i = {\{\alpha\in U_i : \tilde{q}_i(\alpha) \leq 1\}},\quad\tilde{S}_i = {\{\alpha\in U_i : \tilde{q}_i(\alpha)=1\}}.
    $$
    Then, by \cref{lem: equivalence-locally-convex-to-0}, $\tilde{S}_i$ is locally convex with respect to $0$. Note that by \cref{lem: SumIs2}, $S_i$ is contained in the ball $B_2(0)=\{x:\|x\|<2\}$, which implies that $\tilde{Y}_i$ is bounded in $\mathbb{R}^3$. Let $Q_i: V_i\to\tilde{S}_i$ be the inverse of the radial projection $R:\tilde{S}_i \to\mathbb{S}^2$. Since $\tilde{S}_i$ is a locally convex surface with respect to $0$, the following lemma shows that $Q_i$ is a locally Lipschitz map.
    \begin{lemma}
        Let $S$ be a radial surface in $\mathbb{R}^3$ that is locally convex with respect to $0$. Suppose that $Q$ is the inverse of the radial projection $R:S \to\mathbb{S}^2$. Then $Q$ is a locally Lipschitz map on $R(S)$.
    \end{lemma}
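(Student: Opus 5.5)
The plan is to reduce the statement to \cref{thm: interior-Lipschitz} by working locally. Since $S$ is locally convex with respect to $0$, each point $p\in S$ has a convex set $K_p\ni 0$ such that $S\cap\partial K_p$ contains a neighborhood $N_p$ of $p$ in $S$. Since $R(S)$ is open in $\mathbb{S}^2$ (invariance of domain, as noted above for $W_i$), it suffices to find, for every $\xi_0\in R(S)$, a neighborhood of $\xi_0$ on which $Q$ is Lipschitz. Fix $p=Q(\xi_0)$.

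\textbf{Step 1: build a compact convex body with $0$ in its interior.} The set $K_p$ need not be compact, and $0$ need not lie in its interior (it may even lie on $\partial K_p$). To handle this, shrink the neighborhood to a small open ball $B$ around $p$ with $\overline B\cap\{0\}=\varnothing$ and $B\cap S\subset N_p$. Let $q$ be the gauge function of \cref{lem: equivalence-locally-convex-to-0}. By that lemma, $q$ is locally convex near $p$, hence convex on the cone $C_0(B)$ by \cref{lem: LocalToGlobalConvex}. A convex, positively $1$-homogeneous function on a convex cone is locally Lipschitz on the open cone.

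In fact, homogeneity gives the most direct route. For $\xi\in R(S)$ near $\xi_0$ we have $Q(\xi)=\xi/q(\xi)$, because $q(\xi)\,\cdot$ places the point on $S$ along the ray. So the plan is to show that $q$ restricted to a small spherical cap around $\xi_0$ is Lipschitz and bounded away from $0$.

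\textbf{Step 2: Lipschitz bound on the gauge.} Choose a compact convex neighborhood $D\subset C_0(B)$ of $\xi_0\cdot\|p\|^{-1}\cdots$; more simply, take a compact convex set $D$ contained in the open convex set $C_0(B)$ whose interior contains the point $p$. A convex function on an open convex set is Lipschitz on compact subsets, with constant controlled by its oscillation on a slightly larger compact set. Continuity of the convex function $q$ on the open set gives this oscillation bound. Thus $q$ is Lipschitz on a neighborhood of $p$, and by homogeneity on a neighborhood of $\xi_0$ on the sphere, since the cone is scale invariant.

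Also, $q(\xi)=1/\|Q(\xi)\|\ge 1/\sup\|Q\|$. Moreover $q$ is continuous and positive at $\xi_0$, so $q\ge c>0$ near $\xi_0$.

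\textbf{Step 3: conclude.} With $L_q$ the Lipschitz constant of $q$ and $c>0$ the lower bound, write
$$\frac{\xi}{q(\xi)}-\frac{\eta}{q(\eta)}=\frac{\xi-\eta}{q(\xi)}+\eta\,\frac{q(\eta)-q(\xi)}{q(\xi)q(\eta)}.$$
This gives $\|Q(\xi)-Q(\eta)\|\le (c^{-1}+c^{-2}L_q)\|\xi-\eta\|$ on that cap, as in the computation in \cref{lem: Bi-Lipschitz}.

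\textbf{Main obstacle.} The delicate point is Step 1: justifying $Q(\xi)=\xi/q(\xi)$ and the convexity of $q$ on an honest open convex set. This requires that the infimum defining $q$ near $\xi_0$ is attained at the nearby piece of $S$ and not at some far-away part. Radiality excludes that, since each ray meets $S$ only once.

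Alternatively, one could try to apply \cref{thm: interior-Lipschitz} directly to the compact body $\{q\le1\}\cap C_0(\overline{B'})$ with the apex cut off. That route requires recentering $0$ into its interior, so the gauge argument above is preferable.
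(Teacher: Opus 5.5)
Your proof is correct, but it takes a different route from the paper's.

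\textbf{The paper's route.} The paper argues geometrically. For $p\in S$ it replaces $K_p$ by $K_p(r)=\operatorname{conv}(K_p\cup B_r(0))$. It then shows by contradiction that for some $r_0>0$ the boundary $\partial K_p(r_0)$ still contains a neighborhood of $p$ in $S$. Otherwise, supporting planes of $K_p$ at points $x_n\to p$ that meet $B_{r_n}(0)$, with $r_n\to 0$, would converge to a supporting plane at $p$ through $0$, contradicting radiality. Since $0\in\operatorname{int}K_p(r_0)$, \cref{thm: interior-Lipschitz} then gives the Lipschitz bound directly.

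\textbf{Your route.} You work analytically with the gauge $q$. Radiality gives $Q(\xi)=\xi/q(\xi)$ on all of $R(S)$, because the set in the infimum is a singleton; the ``main obstacle'' you flag is therefore not really one. \cref{lem: equivalence-locally-convex-to-0} and \cref{lem: LocalToGlobalConvex} make $q$ convex on the open cone $C_0(B)$, hence locally Lipschitz and locally bounded below, and your quotient estimate finishes the argument.

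\textbf{Comparison.} Your version is shorter and needs neither \cref{thm: interior-Lipschitz} nor the compactness that theorem requires. (The paper's $K_p(r_0)$ would need a harmless truncation if $K_p$ is unbounded.) In effect you reprove \cref{thm: interior-Lipschitz} locally. The price is that the real geometric input is hidden inside the forward direction of \cref{lem: equivalence-locally-convex-to-0}, which the paper only cites from \cite{luo2026rigidity}. That input is radiality ruling out a supporting plane of $K_p$ through $0$ near $p$, which is exactly what makes the Minkowski gauge of $K_p$ agree with $q$ near the ray through $p$. The paper's argument makes this step explicit.

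\textbf{Two small fixes.}
First, choose $B$ with $\overline{B}\subset Z\setminus\{0\}$, not merely $0\notin\overline{B}$. This is possible since $Z\setminus\{0\}$ is open, and it guarantees $C_0(B)$ lies where \cref{lem: equivalence-locally-convex-to-0} applies.
Second, drop the bound $q\ge 1/\sup\|Q\|$, since $S$ is not assumed bounded. Your continuity argument already gives $q\ge c>0$ near $\xi_0$.
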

    \begin{proof}
        Let $p$ be an arbitrary point in $S$. Since $S$ is locally convex with respect to $0$, there exists a neighborhood $U_p$ of $p$ in $S$ and a convex set $K_p$ containing $0$ such that $U_p\subset S \cap \partial K_p$. Let $B_r(0)$ be the ball in $\mathbb{R}^3$ centered at $0$ with radius $r>0$. Denote by $K_p(r)$ the convex hull of the set $K_p\cup B_r(0)$ in $\mathbb{R}^3$. We claim that there exists $r>0$ such that $\partial K_p(r)$ contains a neighborhood $U_p'$ of $p$ in $S$. Suppose this claim is false. Then for each $r>0$ and each neighborhood $V_p\subset U_p$ of $p$ in $S$, there exists a point $x\in V_p\setminus \partial K_p(r)$. Let $H$ be a supporting plane of $K_p$ at $x$. Then we must have $H\cap B_r(0)\neq \varnothing$. Otherwise, if $H\cap B_r(0)=\varnothing$, then $K_p\cup B_r(0)$ lies on the same side of $H$. It follows that $H$ is also a supporting plane of $K_p(r)$ and $x\in \partial K_p(r)$, which is a contradiction. Therefore, we can choose a sequence $r_n\to 0$ and a sequence $x_n$ in $S$ converging to $p$ such that for each $n\in\mathbb{N}$, there exists a supporting plane $H_{n}$ of $K_p$ at $x_n$ such that $H_{n}\cap B_{r_n}(0)\neq \varnothing$. Passing to a subsequence if necessary, we may assume that the plane sequence $\{H_n\}_n$ converges to some plane $H_0$. It follows that $H_0$ is a supporting plane of $K_p$ at $p$ and $0\in H_0$, which contradicts the fact that $S$ is a radial surface. Thus, there exists $r_0>0$ and a neighborhood $U_p'$ of $p$ such that $U_p'\subset \partial K_p(r_0)$. By \cref{thm: interior-Lipschitz}, there exists a constant $L>0$ such that
        $$
        \|Q(x)-Q(y)\|\leq L\|x-y\|,\quad\forall x,y\in R(U_p').
        $$
        Since $R$ is a homeomorphism from $S$ to $R(S)$, it follows that $Q$ is locally Lipschitz continuous, which completes the proof.
    \end{proof}

	Since $\mathscr{H}^1(V_i \setminus W_i)=0$ by \cref{lem:MeasureZero} and $Q_i$ is locally Lipschitz continuous, we see that the one-dimensional Hausdorff measure of $\tilde{S}_i\setminus S_i=Q_i(V_i \setminus W_i)$ is zero. Moreover, by construction, $\tilde{S}_i$ is locally convex and therefore locally Lipschitz. It then follows from \cref{thm: zero-H1-dont-change-metric} that $d_{\tilde{S}_i}(x,y)=d_{S_i}(x,y)$ for all $x,y\in S_i$. A natural continuation, following the approach in \cite{luo2026rigidity}, would be to extend the isometry $G : S_1 \to S_2$ to an isometry $\tilde{G}:\tilde{S}_1 \to \tilde{S}_2$. However, since $\tilde S_1$ and $\tilde S_2$ need not be complete, the standard extension theorem for isometries does not immediately yield an isometry between them. We defer this step and first establish the following extension criterion.
    
    \begin{lemma}\label{lem: Isometry-Extension}
        Let $(X, d_X)$ and $(Y, d_Y)$ be two metric spaces with dense subspaces $X_1 \subseteq X$ and $Y_1 \subseteq Y$. Let $(\overline{Y}, d_{\overline{Y}})$ be a complete metric space such that there exists a topological embedding $\iota: Y \hookrightarrow \overline{Y}$ satisfying $d_{\overline{Y}}(\iota(y_1),\iota(y_2))\leq d_{Y}(y_1,y_2)$ for all $y_1,y_2\in Y$. Suppose $f: X_1 \to Y_1$ is an isometry satisfying the following property:
        \medskip
        \begin{adjustwidth}{2em}{2em}
            For each sequence $\{x_n\}_n$ in $X_1$ such that $\{x_n\}_n$ converges to some $x_0 \in X$ and $\{f(x_n)\}_n$ converges in $\overline{Y}$ to some $y_0 \in \overline{Y}$, we have $y_0 \in Y$.
        \end{adjustwidth}
        \medskip
        Then $f$ admits a unique extension to an isometric embedding $\tilde{f}: X \to Y$.
    \end{lemma}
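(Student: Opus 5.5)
The natural definition is by limits: for $x\in X$, pick a sequence $\{x_n\}_n\subset X_1$ with $x_n\to x$ (using density of $X_1$), and set $\tilde f(x)$ to be the limit of $f(x_n)$. The proof then has three parts: show this limit exists in $Y$, show it is independent of the chosen sequence, and show the resulting map is an isometric embedding that is unique.

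\textbf{Existence of the limit.} Since $\{x_n\}_n$ is Cauchy in $X$ and $f$ is an isometry, $\{f(x_n)\}_n$ is Cauchy in $(Y_1,d_Y)$. The inequality $d_{\overline Y}(\iota(y),\iota(y'))\le d_Y(y,y')$ shows that $\{\iota(f(x_n))\}_n$ is Cauchy in the complete space $\overline Y$. Hence it converges to some $y_0\in\overline Y$. By the hypothesis displayed in the statement (reading convergence of $f(x_n)$ in $\overline Y$ through $\iota$), $y_0\in\iota(Y)$, say $y_0=\iota(y)$.

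\textbf{Upgrading convergence and well-definedness.} This is the one delicate point: we only know $\iota(f(x_n))\to\iota(y)$ in $\overline Y$, but we need $f(x_n)\to y$ in $d_Y$. Since $\iota$ is a topological embedding, $\iota^{-1}:\iota(Y)\to Y$ is continuous. All the points $\iota(f(x_n))$ and $\iota(y)$ lie in $\iota(Y)$, and convergence in the subspace topology of $\iota(Y)$ is just convergence in $\overline Y$. So $f(x_n)\to y$ in $(Y,d_Y)$. (Without the embedding hypothesis, the $d_Y$-Cauchy sequence might have no $d_Y$-limit even though its $\overline Y$-limit lies in $\iota(Y)$; this is exactly why both hypotheses are needed.) For independence of the sequence, take two sequences $\{x_n\}_n$ and $\{x_n'\}_n$ converging to $x$ and interleave them. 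The interleaved sequence also converges to $x$, so by the previous step its images converge in $Y$. Therefore both subsequences of images have the same limit. Define $\tilde f(x)=y$; if $x\in X_1$, the constant sequence gives $\tilde f=f$ on $X_1$.

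\textbf{Isometry and uniqueness.} For $x,x'\in X$ with approximating sequences $x_n\to x$ and $x_n'\to x'$, continuity of the metrics gives
$$d_Y(\tilde f(x),\tilde f(x'))=\lim_n d_Y(f(x_n),f(x_n'))=\lim_n d_X(x_n,x_n')=d_X(x,x').$$
So $\tilde f$ is distance preserving, hence an isometric embedding. Uniqueness holds because any continuous extension, in particular any isometric one, is determined by its values on the dense set $X_1$.
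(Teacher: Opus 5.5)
Your proposal is correct and follows essentially the same route as the paper. The paper obtains the extension by regarding $f$ as a Lipschitz map into the complete space $\overline{Y}$, which packages your Cauchy-sequence and interleaving argument. It then uses the stated hypothesis to land in $Y$ and passes to the limit in $d_Y(f(x_n^1),f(x_n^2))=d_X(x_n^1,x_n^2)$. Your explicit transfer of convergence from $\overline{Y}$ to $(Y,d_Y)$ via continuity of $\iota^{-1}$ is exactly the step the paper compresses into ``$Y$ is an embedded subspace of $\overline{Y}$.''
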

    
    \begin{proof}
        Since $d_{\overline{Y}}(y_1,y_2)\leq d_{Y}(y_1,y_2)$ for all $y_1,y_2\in Y$, the map $f:X_1\to\overline{Y}$ is Lipschitz continuous. Given that $\overline{Y}$ is a complete metric space and $X_1$ is dense in $X$, $f$ extends to a unique continuous map $\tilde{f}:X\to \overline{Y}$. Let $x_0\in X$ and $\{x_n\}_n$ be a sequence in $X_1$ that converges to $x_0$ in $X$. Then, $f(x_n)=\tilde{f}(x_n)$ converges to a point $\tilde{f}(x_0)$ in $\overline{Y}$. By the condition of $f$, $\tilde{f}(x_0)\in Y$. Since $Y$ is an embedded subspace of $\overline{Y}$, it follows that $\tilde{f}$ is a continuous map from $X$ to $Y$. Now we show that $\tilde{f}$ is an isometric embedding. Let $x_1,x_2$ be two points in $X$. Choose sequences $\{x_n^1\}_n$ and $\{x_n^2\}_n$ in $X_1$ converging to $x_1$ and $x_2$, respectively. Since $\tilde{f}$ is continuous, we have
        $$ 
        d_Y(\tilde{f}(x_1), \tilde{f}(x_2)) = \lim_{n \to \infty} d_Y(f(x_n^1), f(x_n^2)) =\lim_{n \to \infty} d_X(x_n^1, x_n^2)=d_X(x_1,x_2),
        $$
        which completes the proof.
    \end{proof}
	
	\subsection{Deformation of the Pogorelov map under isometries}\label{sec: Deformation of the Pogorelov map under isometries}
	
	\subsubsection{$\Phi(\Gamma(x),F(x))$}
	Let $\Gamma$ be an isometry of $\mathbb{H}^3_P$. It is well known that the isometries of $\mathbb{H}^3_P$ are exactly the Möbius transformations of the one-point compactification $\hat{\mathbb{R}}^3$ of $\mathbb{R}^3$ that map $\mathbb{H}^3_P$ onto itself. In this paper, we shall always regard $\Gamma$ as a homeomorphism from the closure $\overline{\mathbb{H}^3_P}$ onto itself. The restriction of $\Gamma$ to the boundary $\partial\mathbb{H}^3_P$ acts as a Möbius transformation of the Riemann sphere. Now, consider the Pogorelov map 
	$$
	\Phi(\Gamma(x),F(x))=\left(\frac{2P(\Gamma(x))}{-\langle \Gamma(x)+ F(x),e\rangle},\frac{2P(F(x))}{-\langle\Gamma(x)+ F(x),e\rangle}\right).
	$$
	Note that in this paper we identify $\mathcal{H}^3$ and $\mathbb{H}^3_P$ via the canonical map $\frac{P(x)}{-\langle x,e \rangle+1}$ and use these two models interchangeably. Under this identification, $e$ corresponds to $0$ in the Poincaré model.
	
	\begin{lemma}\label{lem: Expression-of-L1-General-gamma}
		Let $\Gamma$ be an isometry of $\mathbb{H}^3_P$ and denote by $x_0$ the point in $\mathbb{H}^3_P$ such that $\Gamma(x_0)=0$. Suppose that $\{x_n\}_n$ is a sequence in $\partial X_1$ converging to an ideal point $p\in\partial \mathbb{H}^3_P$ in $\mathbb{R}^3$. Then the limit 
        $$
        A:=\lim_{n\to\infty}\|P_1(x_n,F(x_n))\|
        $$ 
        exists if and only if the limit $$
        A':=\lim_{n\to\infty}\|P_1(\Gamma(x_n),F(x_n))\|
        $$ 
        exists. Furthermore, if these limits exist, they satisfy the relation
		$$
		A'=\frac{2A}{A+c_p(2-A)},\quad\text{where}\quad
        c_p=\frac{1-\|x_0\|^2}{\|p-x_0\|^2}>0.
		$$
		In particular, $A\in(0,2)$ if and only if $A'\in(0,2)$.
	\end{lemma}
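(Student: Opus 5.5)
The plan is to reduce everything to the ratio of the time coordinates $F(x_n)_0/(x_n)_0$ in the hyperboloid model, and then compare the time coordinates of $x_n$ and $\Gamma(x_n)$ through a Busemann function (Poisson kernel) computation.

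\emph{Step 1: rewrite $A$.} Write $y_n=F(x_n)$ and work in $\mathcal{H}^3$. For points of $\mathcal{H}^3$ we have $-\langle x,e\rangle=x_0$, so
$$
\|P_1(x_n,y_n)\|=\frac{2\|P(x_n)\|}{(x_n)_0+(y_n)_0}.
$$
Since $x_n\to p\in\partial\mathbb{H}^3_P$, we get $(x_n)_0=\cosh d(x_n,e)\to\infty$. From $\|P(x_n)\|^2=(x_n)_0^2-1$ it follows that $\|P(x_n)\|/(x_n)_0\to1$. Hence
$$
\|P_1(x_n,y_n)\|=\frac{2}{1+r_n}+o(1),\qquad r_n:=\frac{(y_n)_0}{(x_n)_0}\in(0,\infty).
$$
The map $r\mapsto 2/(1+r)$ is a decreasing homeomorphism $[0,\infty]\to[0,2]$. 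Therefore $A$ exists if and only if $r:=\lim r_n$ exists in $[0,\infty]$, and then $A=2/(1+r)$, i.e. $r=(2-A)/A$.

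\emph{Step 2: rewrite $A'$ the same way.} Replace $x_n$ by $\Gamma(x_n)$, which still tends to the ideal point $\Gamma(p)$. The same computation gives $A'=2/(1+r')$ with $r'=\lim (y_n)_0/\Gamma(x_n)_0$, provided this limit exists. Now
$$
\frac{(y_n)_0}{\Gamma(x_n)_0}=r_n\cdot\frac{(x_n)_0}{\Gamma(x_n)_0},
$$
so everything reduces to the following claim:
$$
\frac{\Gamma(x_n)_0}{(x_n)_0}\longrightarrow\frac1{c_p}\in(0,\infty).
$$
Given the claim, $r'$ exists if and only if $r$ exists, and $r'=c_p r$. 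This yields
$$
A'=\frac{2}{1+c_p(2-A)/A}=\frac{2A}{A+c_p(2-A)}.
$$
This function fixes $0$ and $2$ and maps $(0,2)$ onto itself, which gives the final assertion.

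\emph{Step 3: the claim (main step).} By invariance of the Minkowski product,
$$
\Gamma(x)_0=-\langle\Gamma(x),e\rangle=-\langle x,\Gamma^{-1}(e)\rangle=\cosh d(x,x_0).
$$
Hence
$$
\frac{\Gamma(x_n)_0}{(x_n)_0}=\frac{\cosh d(x_n,x_0)}{\cosh d(x_n,0)}\sim e^{\,d(x_n,x_0)-d(x_n,0)},
$$
and the difference $d(x_n,x_0)-d(x_n,0)$ converges to the Busemann function $B_p(x_0)$ based at $p$ and normalized at $0$. Since convergence $x_n\to p$ is in $\mathbb{R}^3$ and not necessarily along a geodesic, I would verify the limit directly in the ball model from
$$
\cosh d(x,y)=1+\frac{2\|x-y\|^2}{(1-\|x\|^2)(1-\|y\|^2)}.
$$
As $\|x_n\|\to1$ this gives
$$
\frac{\cosh d(x_n,x_0)}{\cosh d(x_n,0)}\to\frac{\|p-x_0\|^2/(1-\|x_0\|^2)}{\|p\|^2}=\frac{\|p-x_0\|^2}{1-\|x_0\|^2}=\frac1{c_p},
$$
using $\|p\|=1$. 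This explicit ball-model computation is the only real content of the lemma. The identification of $\Gamma(x)_0$ with $\cosh d(x,x_0)$ and the routine bookkeeping of limits in $[0,\infty]$ carry the rest.
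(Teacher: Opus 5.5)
Your proposal is correct and follows essentially the same route as the paper. Like the paper, you rewrite both norms through the ratio of time coordinates $\langle F(x_n),e\rangle/\langle x_n,e\rangle$, use $\langle \Gamma(x_n),e\rangle=\langle x_n,x_0\rangle$, and evaluate $\cosh d(x_n,x_0)/\cosh d(x_n,0)\to \|p-x_0\|^2/(1-\|x_0\|^2)$ with the ball-model distance formula. The one difference is bookkeeping: you work in $[0,\infty]$ via the homeomorphism $r\mapsto 2/(1+r)$, which treats the cases $A\in\{0,2\}$ and the converse direction uniformly. The paper instead handles $A=0$ separately and obtains the converse by applying the argument to $\Gamma^{-1}$.
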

	\begin{proof}
		For any two points $x,y\in\mathbb{H}^3_P$, we have
		$$
		\cosh d_{\mathbb{H}^3_P}(x,y)=1+\frac{2\|x-y\|^2}{(1-\|x\|^2)(1-\|y\|^2)}.
		$$
		Since $\{x_n\}_n$ converges to $p$ in $\mathbb{R}^3$, it follows that 
		\begin{equation}\label{eq: infinity-ratio}
			\lim_{n \to \infty} \frac{\langle x_n,x_0\rangle}{\langle x_n,e\rangle} =\lim_{n \to \infty} \frac{\cosh(d_{\mathcal{H}^3}(x_n, x_0))}{\cosh(d_{\mathcal{H}^3}(x_n, e))}=\lim_{n \to \infty} \frac{\cosh(d_{\mathbb{H}^3_P}(x_n, x_0))}{\cosh(d_{\mathbb{H}^3_P}(x_n, 0))} = \frac{\|p-x_0\|^2}{1-\|x_0\|^2}.
		\end{equation}
		Suppose that $A:=\lim_{n\to\infty}\|P_1(x_n,F(x_n))\|$ exists. If $A=0$, the defining formula gives $\langle F(x_n),e\rangle/\langle x_n,e\rangle\to+\infty$ as $n\to\infty$. It then follows from \eqref{eq: infinity-ratio} that $\langle F(x_n),e\rangle/\langle \Gamma (x_n),e\rangle\to+\infty$ and therefore $A'=0$. We may therefore assume $A>0$ in the calculation below. The converse follows by applying the same argument to $\Gamma^{-1}$. Since 
        $$
        \|P(x_n)\|/(-\langle x_n, e\rangle)\to 1\quad \text{as}\ n\to\infty,
        $$
        by the definition of $P_1(x_n,F(x_n))$, we have
		$$
		\lim_{n\to\infty}\frac{\langle F(x_n),e \rangle}{\langle x_n,e \rangle}=\frac{2-A}{A}.
		$$
		Therefore,
		$$
		\lim_{n\to\infty}\frac{\langle F(x_n),e \rangle}{\langle \Gamma(x_n),e \rangle}=\lim_{n\to\infty}\frac{\langle F(x_n),e \rangle}{\langle x_n,e \rangle}\cdot \lim_{n\to\infty}\frac{\langle x_n,e \rangle}{\langle x_n,x_0 \rangle}=\frac{2-A}{A}\cdot\frac{1-\|x_0\|^2}{\|p-x_0\|^2},
		$$
		where the first equality holds because $\Gamma$ is a Lorentz transformation, which implies
		$$
		\langle \Gamma(x_n), e\rangle =\langle  x_n,\Gamma^{-1} (e)\rangle=\langle  x_n,x_0\rangle.
		$$
		It follows that
		$$
		\lim_{n\to\infty}\|P_1(\Gamma(x_n),F(x_n))\|=\lim_{n\to\infty} \frac{2}{1+ \frac{\langle F(x_n),e \rangle}{\langle \Gamma(x_n),e \rangle}}=\frac{2}{1+\frac{2-A}{A}\cdot\frac{1-\|x_0\|^2}{\|p-x_0\|^2}}.
		$$
		This completes the proof.
	\end{proof}
	
	\subsubsection{$\Phi(\Gamma_1(x),\Gamma_2\circ F(x))$}
	In this subsection, suppose that both $X_1^P$ and $X_2^P$ contain $\overline{\mathbb{S}^2_{-}}$ as a disk component.
	Let $e^{i\alpha}=(\cos\alpha,\sin\alpha,0)\in\mathbb{R}^2$ be a unit complex number. Assume that for any $\alpha\in[0,2\pi)$ and any sequence $\{x_n\}_n$ in $\partial X_1$ converging to $e^{i\alpha}$,
	\begin{equation}\label{eq: condition-of-two-circle}
	    \lim_{n\to\infty}P_1(x_n,F(x_n))=\lim_{n\to \infty}P_2(x_n,F(x_n))=e^{i\alpha}.
	\end{equation}
	It follows that $F(x_n)\to e^{i\alpha}$ in $\mathbb{R}^3$ as $n\to\infty$.
	Since 
    $$
    \|P_1(x_n,F(x_n))\|\to 1\quad\text{and}\quad\frac{\|P(x_n)\|}{-\langle x_n, e\rangle}\to 1\quad \text{as}\ n\to\infty,
    $$
    we have 
	$$
	\lim_{n\to\infty} \frac{\langle F(x_n),e \rangle}{\langle x_n,e \rangle}=\frac{2-1}{1}=1.
	$$
	Let $\Gamma_1,\Gamma_2$ be two isometries of $\mathbb{H}^3_P$. We use $x_0$ and $y_0$ to denote the points in $\mathbb{H}^3_P$ such that $\Gamma_1(x_0)=0$ and $\Gamma_2(y_0)=0$, respectively. Consider the Pogorelov map 
	$$
	\Phi(\Gamma_1(x),\Gamma_2 \circ
	F(x))=\left(\frac{2P(\Gamma_1(x))}{-\langle \Gamma_1(x)+ \Gamma_2 \circ F(x),e\rangle},\frac{2P(\Gamma_2 \circ F(x))}{-\langle\Gamma_1(x)+ \Gamma_2 \circ F(x),e\rangle}\right).
	$$
	Since both $x_n$ and $F(x_n)$ converge to $e^{i\alpha}$ as $n\to\infty$, by \eqref{eq: infinity-ratio} we have
	\begin{equation}\label{eq: h(alpha)-definition}
		\begin{aligned}
			\lim_{n\to \infty} \frac{\langle \Gamma_2 \circ F(x_n),e\rangle}{\langle \Gamma_1(x_n),e\rangle}&=\lim_{n\to \infty} \frac{\langle \Gamma_2 \circ F(x_n),e\rangle}{\langle F(x_n),e\rangle}\cdot \frac{\langle  F(x_n),e\rangle}{\langle  x_n,e\rangle} 
			\cdot\frac{\langle  x_n,e\rangle}{\langle \Gamma_1(x_n),e\rangle}\\
			&= \lim_{n\to \infty} \frac{\langle  F(x_n),\Gamma_2^{-1}(e)\rangle}{\langle F(x_n),e\rangle}\cdot \frac{\langle  F(x_n),e\rangle}{\langle  x_n,e\rangle} 
			\cdot\frac{\langle  x_n,e\rangle}{\langle x_n,\Gamma_1^{-1}(e)\rangle}\\
			&=\lim_{n\to \infty} \frac{\langle  F(x_n),y_0\rangle}{\langle F(x_n),e\rangle}
			\cdot\frac{\langle  x_n,e\rangle}{\langle x_n,x_0\rangle}\\
			&=\frac{\|e^{i\alpha}-y_0\|^2}{1-\|y_0\|^2}\frac{1-\|x_0\|^2}{\|e^{i\alpha}-x_0\|^2}.
		\end{aligned}
	\end{equation}
	Assuming \eqref{eq: condition-of-two-circle}, we define 
	$$
	L_{i;\, \Gamma_1,\Gamma_2}(\alpha)=\lim_{n\to \infty}  P_i(\Gamma_1(x_n),\Gamma_2 \circ
	F(x_n)),\quad \forall \alpha\in[0,2\pi),\ i\in\{1,2\}.
	$$
	It is clear that $L_{i;\, \Gamma_1,\Gamma_2}$ exists and is independent of the choice of $\{x_n\}_n$ in $\partial X_1$ converging to $e^{i\alpha}$. The following lemma will be useful in the proof of \cref{thm: MainThm1} in \cref{sec: the-general-case}.
	
	\begin{lemma}\label{lem: unit-circle-to-ellipse}
		Suppose that \eqref{eq: condition-of-two-circle} holds. Then for each $i\in\{1,2\}$, the curve $L_{i;\, \Gamma_1,\Gamma_2}(\alpha)$ parametrizes an ellipse in some plane in $\mathbb{R}^3$. Moreover, there exists an isometry $T\in\operatorname{Isom}(\mathbb{R}^3)$ such that $L_{2;\, \Gamma_1,\Gamma_2}(\alpha)=T\left(L_{1;\, \Gamma_1,\Gamma_2}(\alpha)\right)$ for all $\alpha\in[0,2\pi)$.
	\end{lemma}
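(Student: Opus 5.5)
The plan is to compute $L_{1;\,\Gamma_1,\Gamma_2}$ and $L_{2;\,\Gamma_1,\Gamma_2}$ explicitly in terms of null vectors of $\mathbb{R}^{1,3}$. The ellipse claim and the congruence claim then both come from the linear action of the Lorentz transformations representing $\Gamma_1,\Gamma_2$.

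\emph{Step 1 (explicit formula).} Let $\Lambda_1,\Lambda_2$ be the Lorentz transformations of $\mathbb{R}^{1,3}$ inducing $\Gamma_1,\Gamma_2$. For $\xi=e^{i\alpha}$ put $v_\alpha=(1,\cos\alpha,\sin\alpha,0)$, a null vector, and write $\hat x_0,\hat y_0\in\mathcal{H}^3$ for the hyperboloid points corresponding to $x_0,y_0$. Since $x_n\to\xi$, the normalized vectors $x_n/(-\langle x_n,e\rangle)$ converge to $v_\alpha$. Because $\Lambda_1$ is linear and $-\langle\Lambda_1 u,e\rangle=-\langle u,\hat x_0\rangle$, we get
$$
\frac{P(\Gamma_1(x_n))}{-\langle x_n,e\rangle}\to P(\Lambda_1v_\alpha),\qquad \frac{-\langle\Gamma_1(x_n),e\rangle}{-\langle x_n,e\rangle}\to-\langle v_\alpha,\hat x_0\rangle .
$$
A direct computation gives $-\langle v_\alpha,\hat x_0\rangle=\|e^{i\alpha}-x_0\|^2/(1-\|x_0\|^2)>0$, and this is consistent with \eqref{eq: h(alpha)-definition}. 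Next we use \eqref{eq: condition-of-two-circle}, which gives $\langle F(x_n),e\rangle/\langle x_n,e\rangle\to1$ and $F(x_n)\to\xi$, so the same limits hold for $\Gamma_2\circ F(x_n)$ with $\Lambda_2$ and $\hat y_0$. Dividing numerator and denominator of $P_i$ by $-\langle x_n,e\rangle$, we obtain, with the linear functional $\ell(u)=-\langle u,\hat x_0+\hat y_0\rangle$ (positive on $v_\alpha$),
$$
L_{1;\,\Gamma_1,\Gamma_2}(\alpha)=\frac{2P(\Lambda_1v_\alpha)}{\ell(v_\alpha)},\qquad L_{2;\,\Gamma_1,\Gamma_2}(\alpha)=\frac{2P(\Lambda_2v_\alpha)}{\ell(v_\alpha)}.
$$

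\emph{Step 2 (ellipse).} The vector $v_\alpha$ runs over the unit circle in the affine plane $\{u_0=1,u_3=0\}\cong\mathbb{R}^2$. The map $w\mapsto 2P(\Lambda_i w)/\ell(w)$ is a projective map from this plane into $\mathbb{R}^3$ with linear numerator and affine denominator. Its image of the $2$-plane is therefore contained in a plane, or degenerates to a line. It sends the circle to a conic, and the conic is bounded because $\ell>0$ on the closed disk's circle, hence an ellipse. I will also check that $\ell(v_\alpha)>0$ on the whole closed unit disk, so that the denominator does not vanish there. Finally I will rule out degeneration: the curve is injective, since $\Gamma_i$ is injective on the circle and $P(\Lambda_iv_\alpha)$ is a positive multiple of $\Gamma_i(e^{i\alpha})$. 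An injective closed curve cannot lie in a line, so the image is a genuine ellipse in a plane.

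\emph{Step 3 (congruence).} Set $V_\alpha=\Lambda_1v_\alpha/\ell(v_\alpha)$ and $V'_\alpha=\Lambda_2v_\alpha/\ell(v_\alpha)$; both are null vectors. For null $V,W$ we have
$$
\|P V-P W\|^2=\langle V-W,V-W\rangle+(V_0-W_0)^2=-2\langle V,W\rangle+(V_0-W_0)^2 .
$$
Now $\langle V_\alpha,V_\beta\rangle=\langle v_\alpha,v_\beta\rangle/(\ell(v_\alpha)\ell(v_\beta))=\langle V'_\alpha,V'_\beta\rangle$, since Lorentz maps preserve $\langle\cdot,\cdot\rangle$. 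For the time components, set $s_\alpha=(V_\alpha)_0=-\langle v_\alpha,\hat x_0\rangle/\ell(v_\alpha)$. Then $(V'_\alpha)_0=-\langle v_\alpha,\hat y_0\rangle/\ell(v_\alpha)=1-s_\alpha$. Hence $(V_\alpha-V_\beta)_0^2=(V'_\alpha-V'_\beta)_0^2$, and so
$$
\|L_{1;\,\Gamma_1,\Gamma_2}(\alpha)-L_{1;\,\Gamma_1,\Gamma_2}(\beta)\|=\|L_{2;\,\Gamma_1,\Gamma_2}(\alpha)-L_{2;\,\Gamma_1,\Gamma_2}(\beta)\|\quad\text{for all }\alpha,\beta .
$$
A distance-preserving map between subsets of $\mathbb{R}^3$ extends to an isometry $T\in\operatorname{Isom}(\mathbb{R}^3)$, which gives the second claim.

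The main obstacle I expect is Step 1. There I must justify the limits carefully, in particular the replacement of $F(x_n)$ by $x_n$ in the normalization, which is where \eqref{eq: condition-of-two-circle} enters. I must also identify the limits with the functional $\ell$ so that the common factor $1/\ell$ appears in both components. Once this formula is in place, Steps 2 and 3 are short algebraic identities.
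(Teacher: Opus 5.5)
Your proposal is correct, but it takes a different route from the paper.

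\textbf{The paper's route.} The paper stays in the Poincar\'e ball throughout.
\begin{enumerate}
\item It recognizes $h(\alpha)$ as the reciprocal of the boundary conformal factor of $\gamma_2\circ\gamma_1^{-1}$ at $L(\alpha)=\gamma_1(e^{i\alpha})$.
\item This gives $L_{1;\,\Gamma_1,\Gamma_2}(\alpha)=\frac{1-\|a\|^2}{1-(L(\alpha),a)}\,L(\alpha)$, where $a$ is the point with $\Gamma_2\circ\Gamma_1^{-1}(a)=0$.
\item For planarity it exhibits the explicit normal vector $N-Ca$. The ellipse is then a plane section of the cone over the circle $L(\alpha)$.
\item It decomposes $\Gamma_2\circ\Gamma_1^{-1}=A\circ\Gamma_a$ with $A\in O(3)$ and computes $L_{2;\,\Gamma_1,\Gamma_2}(\alpha)=A\bigl(2a-L_{1;\,\Gamma_1,\Gamma_2}(\alpha)\bigr)$.
\end{enumerate}
So $T$ is identified concretely: the point reflection through $a$ followed by $A$.

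\textbf{Your route.} You lift to the light cone of $\mathbb{R}^{1,3}$, where both curves share the denominator $\ell(v_\alpha)$.
\begin{itemize}
\item Planarity and the ellipse come from the fact that $w\mapsto 2P(\Lambda_i w)/\ell(w)$ is a projective map. No special normal vector has to be found. Your injectivity argument correctly rules out the rank-deficient case.
\item Congruence comes from three ingredients: the null-vector identity $\|PV-PW\|^2=-2\langle V,W\rangle+(V_0-W_0)^2$, Lorentz invariance of $\langle\cdot,\cdot\rangle$, and the complementarity $(V_\alpha)_0+(V'_\alpha)_0=1$.
\item You then finish with the extension theorem for distance-preserving maps between subsets of $\mathbb{R}^3$.
\end{itemize}
The complementarity relation is just the boundary form of $\|P_1\|+\|P_2\|\to 2$, since $\|L_{1;\,\Gamma_1,\Gamma_2}(\alpha)\|=2(V_\alpha)_0$ and $\|L_{2;\,\Gamma_1,\Gamma_2}(\alpha)\|=2(V'_\alpha)_0$.

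\textbf{Comparison.} Your argument is symmetric in $\Gamma_1,\Gamma_2$. It avoids both the conformal-factor formula for boundary M\"obius maps and the decomposition $A\circ\Gamma_a$. Its distance computation is the same mechanism by which the Pogorelov map detects congruence. The trade-off is that your $T$ is obtained only abstractly, whereas the paper's computation names it explicitly.
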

	
	\begin{proof}
		Let $\gamma_1$ and $\gamma_2$ denote the restrictions of $\Gamma_1$ and $\Gamma_2$ to the unit sphere, respectively. By \eqref{eq: h(alpha)-definition} we have
		$$
		L_{1;\, \Gamma_1,\Gamma_2}(\alpha)=\frac{2}{1+h(\alpha)}\gamma_1(e^{i\alpha})\quad\text{and}\quad
		L_{2;\, \Gamma_1,\Gamma_2}(\alpha)=\frac{2h(\alpha)}{1+h(\alpha)}\gamma_2(e^{i\alpha}),
		$$
		where
		$$
		h(\alpha):=\frac{\|e^{i\alpha}-y_0\|^2}{1-\|y_0\|^2}\frac{1-\|x_0\|^2}{\|e^{i\alpha}-x_0\|^2}.
		$$
		Let $a\in\mathbb{H}^3_P$. It is known that if an isometry $\Gamma$ of $\mathbb{H}^3_P$ satisfies $\Gamma(a)=0$, then the differential $D\gamma(p):T_p\mathbb{S}^2\to T_{\gamma(p)}\mathbb{S}^2$ of its restriction $\gamma=\Gamma|_{\mathbb{S}^2}$ at any point $p\in\mathbb{S}^2$ acts by a conformal scaling
        $$
        \|D\gamma(p)(v)\|=\frac{1 - \|a\|^2}{\|p - a\|^2}\|v\|, \quad\forall v\in T_p\mathbb{S}^2.
        $$ 
        A detailed discussion can be found, for example, in \cite[Section 3.4]{MR698777}. Therefore, setting $\gamma=\gamma_2\circ\gamma_1^{-1}$ and $L(\alpha)=\gamma_1(e^{i\alpha})$, we obtain
		$$
		h(\alpha) = \frac{\|D\gamma_1(e^{i\alpha})\|}{\|D\gamma_2(e^{i\alpha})\|} = \frac{1}{\|D\gamma(L(\alpha))\|} =\frac{\|L(\alpha)-a\|^2}{1-\|a\|^2},
		$$
        where $a$ is the point in $\mathbb{H}^3_P$ such that $\Gamma_2\circ\Gamma_1^{-1}(a)=0$.
		Substituting this back into the expression for $L_{1;\, \Gamma_1,\Gamma_2}(\alpha)$, we have
		$$
		L_{1;\, \Gamma_1,\Gamma_2}(\alpha)=\frac{1-\|a\|^2}{1-( L(\alpha),a )} L(\alpha).
		$$
        Let $\lambda(\alpha)=\frac{1-\|a\|^2}{1-( L(\alpha),a )}$.
		It follows that
		$$
		\big( L_{1;\, \Gamma_1,\Gamma_2}(\alpha),a \big) = \lambda(\alpha)( L(\alpha),a )=\frac{(1-\|a\|^2)( L(\alpha),a )}{1-( L(\alpha),a )} =\lambda(\alpha)-(1-\|a\|^2),
		$$
		which implies that 
        $$
        \lambda(\alpha)=\big( L_{1;\, \Gamma_1,\Gamma_2}(\alpha),a \big)+1-\|a\|^2.
        $$
		Since Möbius transformations map circles on $\mathbb{S}^2$ to circles, the image $L(\alpha)$ is a circle and thus lies on some plane $H$. Let $N$ be a normal vector to the plane $H$; then $( L(\alpha),N )=C$ for some constant $C\in\mathbb{R}$. We then have
		$$
		\big( L_{1;\, \Gamma_1,\Gamma_2}(\alpha),N \big)=\lambda(\alpha)( L(\alpha),N )=C\lambda(\alpha)=C\big( L_{1;\, \Gamma_1,\Gamma_2}(\alpha),a \big)+C(1-\|a\|^2).
		$$
		Thus, $\big( L_{1;\, \Gamma_1,\Gamma_2}(\alpha), N-Ca \big) = C(1-\|a\|^2)$ is constant. Consequently, $L_{1;\, \Gamma_1,\Gamma_2}(\alpha)$ lies on a plane in $\mathbb{R}^3$. Since $L_{1;\, \Gamma_1,\Gamma_2}(\alpha)$ also lies on the cone subtended by the circle $L(\alpha)$ from the origin, its image must be an ellipse.
		
		Finally, we show that there exists an isometry $T\in\operatorname{Isom}(\mathbb{R}^3)$ such that $L_{2;\, \Gamma_1,\Gamma_2}(\alpha)=T\left(L_{1;\, \Gamma_1,\Gamma_2}(\alpha)\right)$. Set $\Gamma:=\Gamma_2\circ\Gamma_1^{-1}$. By the classical decomposition of Möbius transformations, we can write $\Gamma=A\circ\Gamma_a$, where $A\in O(3)$ is an orthogonal transformation and 
		\begin{equation}\label{eq:Ball-transformation-a-to-0}
		    \Gamma_a(x)= \frac{a\|x-a\|^2-(1-\|a\|^2)(x-a)}{1-2(a,x)+\|a\|^2\|x\|^2}.
		\end{equation}
		Then, the restriction $\gamma_a$ of $\Gamma_a$ to $\mathbb{S}^2$ can be expressed as
		$$
        \begin{aligned}
            \gamma_a(p) = \frac{a\|p - a\|^2 - (1 -\|a\|^2)(p - a)}{\|p- a\|^2}& = a-\|D\gamma(p)\|(p-a)\\
            &= (1+\|D\gamma(p)\|)a-\|D\gamma(p)\|p.
        \end{aligned}
		$$
		Since $h(\alpha)=1/\|D\gamma(L(\alpha))\|$, we have
		$$
		\begin{aligned}
			L_{2;\, \Gamma_1,\Gamma_2}(\alpha)&=\frac{2}{1+\|D\gamma(L(\alpha))\|}A\big(\gamma_a(L(\alpha))\big)
			\\&=\frac{2}{1+\|D\gamma(L(\alpha))\|}A\Big(\left(1+\|D\gamma(L(\alpha))\|\right)a-\|D\gamma(L(\alpha))\|L(\alpha)\Big)
			\\&=A \left(2a-\frac{2\|D\gamma(L(\alpha))\|}{1+\|D\gamma(L(\alpha))\|}L(\alpha)\right)
			\\&=A\big(2a-L_{1;\, \Gamma_1,\Gamma_2}(\alpha)\big),
		\end{aligned}
		$$
		which completes the proof.
	\end{proof}

\section{The case of a single disk}\label{sec: single-disk}
	In this section, we study the case where both $X_1^P$ and $X_2^P$ contain only one disk component. Up to an isometry of $\mathbb{H}^3_P$, we may assume that both $X_1^P$ and $X_2^P$ are the union of the closed half-sphere $\overline{\mathbb{S}^2_{-}}=\mathbb{S}^2\cap\{x_3\leq0\}$ and a set of zero one-dimensional Hausdorff measure. In this paper, we identify $\mathbb{S}^1$ with the equator $\mathbb{S}^2\cap\{x_3=0\}$.

\subsection{The half-space case}\label{subsec:single-half-sapce}
    In this subsection, we consider the situation where the interior of $X_1$ does not contain the origin $0$. Then we must have $X_1=\mathbb{H}^3_P\cap \overline{\mathbb{R}^3_{-}}$ and $X^P_1=X^P_2=\overline{\mathbb{S}^2_{-}}$. It follows that $\partial X_1=\mathbb{H}^3_P\cap \mathbb{R}^2$ is the hyperbolic plane. Let
    $$
    S_1=\{P_1(x,F(x)):x\in\partial X_1\}\quad \text{and}\quad S_2=\{P_2(x,F(x)):x\in\partial X_1\}.
    $$
    Suppose, for contradiction, that $\partial X_2\neq \mathbb{H}^3_P\cap\mathbb{R}^2$. This implies that $\partial X_2\subset \mathbb{H}^3_P\cap\mathbb{R}^3_{+}$ and therefore $S_2\subset \mathbb{R}^3_{+}$. We first show that there exists an intrinsic isometry $G:S_1\to S_2$. For every $i\in\{1,2\}$ and every $x\in\partial X_1$, set $\varphi_i(x):=P_i(x,F(x))$. Let $\gamma:[0,\infty)\to\partial X_1$ be a unit-speed geodesic ray with $\gamma(0)=0$, and set $b(r):=\cosh d_{\mathbb H^3_P}(F(\gamma(r)),0)$. Since $F$ is an intrinsic isometry, the function $r\mapsto d_{\mathbb H^3_P}(F(\gamma(r)),0)$ is $1$-Lipschitz. Hence $|b'|\leq\sqrt{b^2-1}$ almost everywhere. By definition, the map $\varphi_1$ preserves radial directions and its radial coordinate $u(r)$ along $\gamma$ satisfies 
    \begin{equation}\label{eq: planar-component-monotone} 
    \left\{\begin{aligned} u(r)&=\frac{2\sinh r}{\cosh r+b(r)},\\ u'(r)&= \frac{2\bigl(1+b(r)\cosh r-b'(r)\sinh r\bigr)} {(\cosh r+b(r))^2} \geq \frac{2(1+b(r)\cosh r-\sqrt{b(r)^2-1}\sinh r)}{(\cosh r+b(r))^2}>0,\quad a.e. \end{aligned}\right.
    \end{equation} 
    Thus $\varphi_1: \partial X_1\to S_1$ is a bijection. By Brouwer's invariance of domain theorem, $S_1$ is an open planar domain in $B_2(0) \cap \mathbb{R}^2$ containing $0$.
    The uniform positive lower bound for $\partial_r u$ on compact subsets of $\partial X_1 \setminus \{0\}$, together with the local Lipschitz continuity of $u$ with respect to the angular variable, implies that $\varphi_1^{-1}$ is locally Lipschitz away from $0$. To see this, write $\varphi_1$ in polar coordinates as $(r, \theta) \mapsto (u(r,\theta), \theta)$. Let $K\subset (0,\infty) \times \mathbb{S}^1$ be a compact set, and let $c_K > 0$ and $L_K > 0$ be the uniform positive lower bound of $\partial_r u$ and the Lipschitz constant of $u$ with respect to $\theta$ on $K$, respectively. For any two points $(r_1, \theta_1), (r_2, \theta_2) \in K$, denote $\rho_i := u(r_i, \theta_i)$ for $i = 1, 2$. Integrating $\partial_r u$ along the radial direction yields
    $$
    \begin{aligned} 
    c_K \vert{}r_1 - r_2\vert{}  &\leq  \vert{}u(r_1,\theta_1) - u(r_2,\theta_1)\vert{} \\ &\leq \vert{}\rho_1 - \rho_2\vert{} + \vert{}u(r_2,\theta_2) - u(r_2,\theta_1)\vert{} \\ &\leq \vert{}\rho_1 - \rho_2\vert{} + L_K d_{\mathbb{S}^1}(\theta_1, \theta_2),
    \end{aligned}
    $$
    which implies that $\varphi_1^{-1}$ is Lipschitz on $\varphi_1(K)$.
    Near the origin, the local Lipschitz continuity of $\varphi_1^{-1}$ follows from the Cartesian representation $\varphi_1(x) = a(x)x$, where the scalar function $a$ is locally Lipschitz with $a(0) > 0$. Therefore, we conclude that $\varphi_1^{-1}$ is locally Lipschitz.
    
    For $\varphi_2$, let $R(x)=x/\|x\|$ on $\mathbb{R}^3\setminus\{0\}$, and let $Q_2:\mathbb{S}^2\setminus X_2^P=\mathbb{S}^2_+\to\partial X_2$ be the inverse of $R|_{\partial X_2}$.
    Since $0\in\operatorname{int}X_2$, the map $Q_2$ is locally Lipschitz for the spherical metric and the intrinsic hyperbolic metric on $\partial X_2$.
    To see this, we view $X_2$ in the Klein projective model $\mathbb{B}^3$, in which $X_2$ is a Euclidean convex set containing the origin in its interior. For any point $p \in \partial X_2$, we choose a radius $r < 1$ such that $p \in B_r(0)$. Applying Theorem 2.1 to the Euclidean convex body $X_2 \cap \overline{B}_r(0)$, the Euclidean inverse radial projection is locally Lipschitz near $R(p)$ with respect to the ambient Euclidean metric, and hence also with respect to the intrinsic metric on the convex boundary. Since the hyperbolic Riemannian metric is bi-Lipschitz equivalent to the Euclidean metric on the compact ball $\overline{B}_r(0)$, pulling back along spherical arcs yields the required local Lipschitz bound for $Q_2$ under the intrinsic hyperbolic metric. Since $S_2\subset\mathbb{R}^3\setminus\{0\}$, $R$ is also locally Lipschitz. Note that $R\circ\varphi_2=R\circ F$. We then have
    $$
    \varphi_2^{-1}=F^{-1}\circ Q_2\circ R,
    $$
    which implies that $\varphi_2^{-1}$ is a locally Lipschitz map from $S_2$ to $\partial X_1$ with respect to their intrinsic metrics. Now, define $G := \varphi_2 \circ \varphi_1^{-1} \colon S_1 \to S_2$. Let $\gamma$ be an absolutely continuous curve in $S_1$, and set $\alpha := \varphi_1^{-1}(\gamma)$ and $\beta := F(\alpha)$. Since $\varphi_1^{-1}$ is locally Lipschitz, $\alpha$ and $\beta$ are absolutely continuous curves in $\mathbb{H}^3_P$ having the same length. It then follows from \cref{lem: preserve-intrinsic-isometries} that $\gamma$ and $G(\gamma)$ have the same length in $\mathbb{R}^3$, which implies that $d_{S_2}(G(x_1), G(x_2)) \leq d_{S_1}(x_1, x_2)$ for all $x_1, x_2 \in S_1$. A symmetric argument applied to $G^{-1} = \varphi_1 \circ \varphi_2^{-1}$ yields the reverse inequality, proving that $G$ is an intrinsic isometry from $S_1$ to $S_2$.
    
    Define the function $q : \mathbb{R}^3_{+} \to \mathbb{R}$ by
    \begin{equation*}
    q(\alpha) = \inf\{k > 0 : \alpha/k \in S_2\} \geq 0.
    \end{equation*}
    \begin{lemma}
    The function $q$ is a convex function on $\mathbb{R}^3_{+}$.
    \end{lemma}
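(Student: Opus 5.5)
The plan is to reduce convexity of $q$ on $\mathbb{R}^3_{+}$ to local convexity, using \cref{lem: LocalToGlobalConvex}. First note that $\mathbb{R}^3_{+}$ is an open convex cone, so it suffices to prove two things: that the domain of $q$ is exactly $\mathbb{R}^3_{+}$ (i.e. $Z\setminus\{0\}=\mathbb{R}^3_{+}$, with $W$ and $Z$ built from $S_2$ as in \cref{lem: equivalence-locally-convex-to-0}), and that $q$ is locally convex there.

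For the first point I will use the identity $R\circ\varphi_2=R\circ F$ already noted above. Since $0\in\operatorname{int}X_2$ and $X_2^P=\overline{\mathbb{S}^2_{-}}$, every ray from $0$ in direction $v\in\mathbb{S}^2_+$ leaves the convex set $X_2$ and meets $\partial X_2$ exactly once. Rays in directions in $\overline{\mathbb{S}^2_{-}}$ stay in $X_2$, since $X_2$ is closed and convex and contains the corresponding ideal points. Hence $R(\partial X_2)=\mathbb{S}^2_+$. Because $F$ is a bijection onto $\partial X_2$, we get $W=R(S_2)=R(F(\partial X_1))=\mathbb{S}^2_+$, so $Z\setminus\{0\}=\mathbb{R}^3_{+}$. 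Radiality of $S_2$ then follows from the injectivity of $Q_2$ together with that of $R\circ\varphi_2$. Consequently $q$ is finite, positive and $1$-homogeneous on $\mathbb{R}^3_{+}$, and $S_2=\{q=1\}$.

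For the second point, by \cref{lem: equivalence-locally-convex-to-0} it is enough to show that $S_2$ is locally convex with respect to $0$. This is statement (2) of \cref{thm: PogorelovII}. However, that theorem is stated under the hypothesis that both convex sets contain $e$ (i.e. $0$) in their interiors, and here $0\in\partial X_1$. Nor can we simply replace $X_1$ by a convex set with $0$ in its interior whose boundary agrees with $\partial X_1$ near a given point: the only supporting plane there is the plane $\partial X_1$ itself, which passes through $0$.

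I therefore plan to use the fact that Pogorelov's argument for (2) is pointwise. At each point $y=\varphi_2(x)$ it uses only three ingredients: local convexity of $\Sigma_1$ and $\Sigma_2$ near $x$ and $F(x)$; the isometry $F$ between neighborhoods of $x$ and $F(x)$; and the fact that $0$ lies on the inner side of the local supporting plane of $\Sigma_2$ at $F(x)$. The last condition holds because $0\in\operatorname{int}X_2$. The position of $0$ relative to $\Sigma_1$ enters only through convexity of $\Sigma_1$ itself, and here $\Sigma_1=\partial X_1$ is a totally geodesic plane, a degenerate but admissible convex surface. Concretely, I would rerun Pogorelov's construction: approximate small pieces near $x$ and $F(x)$ by isometric convex polyhedral pieces, for which the Pogorelov image is checked directly via the dihedral angle comparison, and pass to the limit. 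If instead one wants to avoid re-reading \cite{PogorelovBook}, an alternative is to tilt $X_1$ slightly: replace $\partial X_1$ near $x$ by a piece of a slightly convex isometric surface bounding a set with $0$ inside. This route is less clean, because isometric deformation of a plane piece changes $F$.

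Once $S_2$ is known to be locally convex with respect to $0$, \cref{lem: equivalence-locally-convex-to-0} shows that $q$ is locally convex on $\mathbb{R}^3_{+}$. Then \cref{lem: LocalToGlobalConvex}, applied on the convex open set $\mathbb{R}^3_{+}$, gives that $q$ is convex. I expect the main obstacle to be the justification of local convexity of $S_2$ when $0\notin\operatorname{int}X_1$; the remaining steps are formal.
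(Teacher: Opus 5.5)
Your preliminary steps are fine. The computation $R(S_2)=R(F(\partial X_1))=\mathbb{S}^2_+$ correctly shows that $q$ is a finite, positive, $1$-homogeneous function on $\mathbb{R}^3_+$. You also correctly diagnose that \cref{thm: PogorelovII} cannot be invoked directly, because $0\in\partial X_1$.

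The gap is that the step you single out as the main obstacle is never proved. The claim that Pogorelov's argument is pointwise, and that the position of $0$ relative to $\Sigma_1$ enters only through the convexity of $\Sigma_1$, is an unverified assertion about the internals of \cite{PogorelovBook}. ``Rerun the polyhedral approximation and pass to the limit'' is a program for reproving that theorem in a degenerate configuration, not an argument. Your fallback of replacing $\partial X_1$ by a slightly convex isometric surface also fails, for the reason you give yourself: it changes the surface and hence the isometry $F$, and you offer no control over the resulting Pogorelov image.

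The missing idea is to move $X_1$ by an ambient hyperbolic isometry rather than deform it, and to get convexity of $q$ by a limit rather than from local convexity of $S_2$ itself.
\begin{enumerate}
\item Choose isometries $\Gamma_n\to\operatorname{id}$ of $\mathbb{H}^3_P$ with $\Gamma_n^{-1}(0)\in\mathbb{H}^3_P\cap\mathbb{R}^3_-$. Then $0\in\operatorname{int}\Gamma_n(X_1)$.
\item The map $F\circ\Gamma_n^{-1}\colon\partial\Gamma_n(X_1)\to\partial X_2$ is an intrinsic isometry between boundaries of convex sets that both contain $0$ in their interiors. So \cref{thm: PogorelovII} applies verbatim to $\Phi(\Gamma_n(x),F(x))$, $x\in\partial X_1$.
\item The resulting surface $S_{2,n}=\{P_2(\Gamma_n(x),F(x)):x\in\partial X_1\}$ is locally convex with respect to $0$. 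By the same computation you made, $R(S_{2,n})=R(F(\partial X_1))=\mathbb{S}^2_+$. Hence its gauge $q^n$ is defined on all of $\mathbb{R}^3_+$, and it is convex there by \cref{lem: equivalence-locally-convex-to-0} and \cref{lem: LocalToGlobalConvex}.
\item For each $x$, the point $P_2(\Gamma_n(x),F(x))$ lies on the ray through $F(x)$ and converges to $P_2(x,F(x))$ by continuity of $\Phi$. Thus $q^n\to q$ pointwise on $\mathbb{R}^3_+$, and a pointwise limit of convex functions is convex.
\end{enumerate}
This is how the paper proves the lemma. It never needs \cref{thm: PogorelovII} in the degenerate configuration.
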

    \begin{proof}
    Let $\{\Gamma_n\}_{n}$ be a sequence of isometries of $\mathbb{H}^3_P$ converging to the identity map $\operatorname{id}_{\mathbb{H}^3_P}$ such that $\Gamma_n^{-1}(0)\in \mathbb{H}^3_P\cap\mathbb{R}^3_{-}$ for all $n\in\mathbb{N}$. Consider the corresponding Pogorelov map $\Phi(\Gamma_n(x),F(x))$. Since $\Gamma_n^{-1}(0)\in \mathbb{H}^3_P\cap\mathbb{R}^3_{-}$ for all $n\in\mathbb{N}$, we see that $\Gamma_n(X_1)$ contains $0$ in its interior. Let
    $$
    S_{1,n}=\{P_1(\Gamma_n(x),F(x)):x\in\partial X_1\}\quad \text{and}\quad S_{2,n}=\{P_2(\Gamma_n(x),F(x)):x\in\partial X_1\}.
    $$
    It follows from \cref{thm: PogorelovII} that both $S_{1,n}$ and $S_{2,n}$ are locally convex surfaces with respect to $0$. Define the function $q^{n} : \mathbb{R}^3_{+} \to \mathbb{R}$ by
    \begin{equation*}
    q^{n}(\alpha) = \inf\{k > 0 : \alpha/k \in S_{2,n}\}\geq 0.
    \end{equation*}
    Since $S_{2,n}$ is locally convex with respect to $0$, \cref{lem: equivalence-locally-convex-to-0} implies that $q^{n}$ is locally convex on $\mathbb{R}^3_{+}$. Therefore, by \cref{lem: LocalToGlobalConvex}, $q^{n}$ is a globally convex function on $\mathbb{R}^3_{+}$. It is evident that $q$ is the pointwise limit of the sequence of convex functions $\{q^n\}_n$. Hence, $q$ is also a convex function on $\mathbb{R}^3_{+}$, which completes the proof.
    \end{proof}
    \noindent Let
    $$
    K = \overline{\{\alpha\in \mathbb{R}^3_{+} : q(\alpha) \leq 1\}}.
    $$
    Since $q$ is a convex function on $\mathbb{R}^3_{+}$ by the above lemma and $S_2\subset B_2(0)$ by \cref{lem: SumIs2}, $K$ is a compact convex body in $\overline{\mathbb{R}^3_+}$. It follows that $S_2=\partial K\setminus\mathbb{R}^2$ is a convex surface. 
    
    Now, we show that the convex surface $S_2$ cannot be isometric to any planar domain, thereby yielding the desired contradiction. For this purpose, we recall the notion of the extrinsic curvature measure of convex surfaces.
    Let $M$ be a subset of a convex surface $S$ in $\mathbb{R}^3$. The spherical image of $M$ is the set of outer normal vectors to the supporting planes of $S$ at points of $M$; it is a subset of the unit sphere $\mathbb{S}^2$. For any Borel set $A$ of $S$, its spherical image is Lebesgue measurable in $\mathbb{S}^2$; we then define the measure of the spherical image of $A$ to be the \emph{extrinsic curvature measure} of $A$ on the surface $S$. For regular surfaces, the Gauss theorem asserts that the extrinsic curvature measure of a domain coincides with its intrinsic integral curvature. Alexandrov generalized this fundamental result to arbitrary convex surfaces. In particular, if a convex surface $S$ is locally isometric to a planar domain, then the extrinsic curvature measure of every Borel subset of $S$ must equal zero. We refer the reader to \cite{Intrinsic-geometry-of-convex-surfaces} for a comprehensive introduction to this theory. 
    
    Returning to the proof, since $S_2$ is isometric to the planar domain $S_1$, the extrinsic curvature measure of every open subset of $S_2$ must equal zero. To derive a contradiction, we choose a closed ball $B$ in $\mathbb{R}^3$ such that $\partial K \setminus B$ is a nonempty set contained entirely within $S_2$. Fixing the center of $B$, we continuously expand its radius until the convex body $K$ is entirely contained within the enlarged ball $\tilde{B}$. By this construction, there exists at least one contact point $p \in S_2 \cap \partial \tilde{B}$. Let $H$ be the supporting plane of $\tilde{B}$ at $p$. Clearly, $H$ is also a supporting plane of $K$ at $p$ and $H\cap K=\{p\}$. Let $\vec{n}\in\mathbb{S}^2$ be the outer normal vector to the supporting plane $H$ of $S_2$ at $p$, and let $U$ be a neighborhood of $p$ in $S_2$. Since $H \cap K = \{p\}$, there exists $\delta > 0$ such that the portion of $\partial K$ lying between $H$ and the parallel plane $H - \delta \vec{n} $ is entirely contained in $U$.  It follows that there exists a small neighborhood $W$ of $\vec{n}$ in $\mathbb{S}^2$ such that for any plane $H'$ passing through $p$ with a normal vector $\vec{n}' \in W$, the intersection $H' \cap \partial K$ is contained in $U$. By translating the plane $H'$ along $\vec{n}'$ away from $K$ if necessary, we eventually obtain a supporting plane of $K$ at some point $p' \in U$ with the outer normal vector $\vec{n}'$. This implies that the entire neighborhood $W$ on $\mathbb{S}^2$ is contained in the spherical image of $U$. Consequently, the extrinsic curvature measure of $U$ is strictly positive, which is a contradiction. This contradiction implies that $\partial X_2 = \mathbb{H}^3_P \cap \mathbb{R}^2 = \partial X_1$. Since any isometry of the hyperbolic plane $\mathbb{H}^3_P \cap \mathbb{R}^2$ onto itself extends to a global isometry of $\mathbb{H}^3_P$, the conclusion follows.

\subsection{The non-half-space case}
	By the arguments in \cref{subsec:single-half-sapce}, we can now assume that both $X_1$ and $X_2$ contain $0$ in their interiors. Recall the definitions of $S_i$, $\tilde{q}_i$, and $\tilde{S}_i$ in \cref{sec: pogorelov}. In the current setting, $\tilde{q}_i$ is a convex function defined on the upper half-space $\mathbb{R}^3_{+}$. Let 
    $$
    {K}_i = \overline{\{\alpha\in \mathbb{R}^3_{+} : \tilde{q}_i(\alpha) \leq 1\}},\quad i\in\{1,2\}.
    $$
    It is clear that $K_i$ is a compact convex body in $\overline{\mathbb{R}^3_+}$ and $\mathbb{R}^2$ is a supporting plane of $K_i$.
	In view of \cref{thm: Pogorelov} and \cref{prop: RnIsometry-to-HnIsometry}, it suffices to prove that ${G}:{S}_1\to{S}_2$ can be extended to an isometry between $\partial{K}_1$ and $\partial K_2$. Let $P_i=\partial K_i\setminus\tilde{S}_i=\partial K_i\cap\mathbb{R}^2$ and denote by $\overline{S_i}$ the closure of $\tilde{S}_i$ in $\mathbb{R}^3$. Then $P_i$ is a compact convex set in $\mathbb{R}^2$ containing $0$. 
    The main idea is to deform $P_i$ by composing the Pogorelov map $\Phi$ with isometries of $\mathbb{H}^3_P$ that preserve the lower hemisphere $\mathbb{S}^2_{-}$. Note that the set of orientation-preserving isometries on $\mathbb{H}^3_P$ mapping $\mathbb{S}^2_{-}$ onto itself is in one-to-one correspondence with the set of Möbius transformations on the unit open disk $\mathbb{D}$ via restriction. 
    Accordingly, throughout this section, any Möbius transformation $\Gamma$ of $\mathbb{D}$ will be identified with its unique extension to an orientation-preserving isometry of $\mathbb{H}^3_P$.
	
	Let $\Gamma$ be an isometry of $\mathbb{H}^3_P$ such that $\Gamma(X_1)$ contains $0$ in its interior. Consider the Pogorelov map $\Phi(\Gamma(x), F(x))$. Define 
	$$
	S_{1,\Gamma} = \{P_1(\Gamma(x), F(x)) : x \in \partial X_1\}, \quad S_{2,\Gamma} = \{P_2(\Gamma(x), F(x)) : x \in \partial X_1\}.
	$$
	Following an argument analogous to that in \cref{sec: pogorelov}, there exists an isometry $G_{\Gamma}$ from $S_{1,\Gamma}$ to $S_{2,\Gamma}$. Furthermore, each $S_{i,\Gamma}$ admits an extension to a surface $\tilde{S}_{i,\Gamma}$ that is locally convex with respect to $0$. Under this extension, the radial projection $R$ induces homeomorphisms from $\tilde{S}_{1,\Gamma}$ onto $\mathbb{S}^2\setminus \Gamma(\overline{\mathbb{S}^2_{-}})$ and from $\tilde{S}_{2,\Gamma}$ onto $\mathbb{S}^2_{+}$.
    
    Now we assume that $\Gamma$ preserves the lower hemisphere $\mathbb{S}^2_{-}$. Then $\tilde{S}_{i,\Gamma}=\partial K_{i,\Gamma}\cap \mathbb{R}^3_{+}$ for some compact convex body $K_{i,\Gamma}$ in $\overline{\mathbb{R}^3_{+}}$. Let $P_{i,\Gamma}=K_{i,\Gamma}\cap\mathbb{R}^2$. Then $P_{i,\Gamma}$ is a compact convex set in $\mathbb{R}^2$.
	In the rest of this section, for each $a\in\mathbb{R}$, we identify the quotient space $[a, a+2\pi]/\{a, a+2\pi\}$ with the unit circle $\mathbb{S}^1$ via the homeomorphism $\alpha \mapsto e^{i\alpha}$. The restriction $\gamma:\mathbb{S}^1\to\mathbb{S}^1$ of the Möbius transformation $\Gamma$ to the unit circle is understood to be a real valued function on $[0,2\pi)$ such that $e^{i\gamma(\alpha)}=\Gamma(e^{i\alpha})$ for all $\alpha\in[0,2\pi)$. The following lemma describes the relationship between the sets $P_{1,\Gamma}$ and $P_1$.
	
	\begin{lemma}\label{lem: Expression-of-L1-gamma}
		Let $\Gamma$ be an orientation-preserving isometry of $\mathbb{H}^3_P$ mapping $\mathbb{S}^2_{-}$ onto itself. Assume that $P_1$ has nonempty interior in $\mathbb{R}^2$. Suppose that
		$$
		L_1(\alpha)=r_1(\alpha)e^{i\alpha},\quad\alpha\in[\theta_1,\theta_2],
		$$
		is a curve segment of $\partial P_1$ contained in $\mathbb{R}^3\setminus\{0\}$, where $0\leq\theta_1<\theta_2\leq 2\pi$. Then
		$$
		L_{1,\Gamma}(\alpha)=\frac{2{r_1(\alpha)}}{(2-r_1(\alpha))\gamma'(\alpha)+{r_1(\alpha)}}e^{i\gamma(\alpha)},\quad\alpha\in[\theta_1,\theta_2]
		$$
		is a curve segment of $\partial P_{1,\Gamma}$ contained in $\mathbb{R}^3\setminus\{0\}$, where $\gamma$ is the restriction of $\Gamma$ to $\mathbb{S}^1$, i.e., $e^{i\gamma(\alpha)}=\Gamma(e^{i\alpha})$ for all $\alpha\in[0,2\pi)$. In particular, $0$ is an interior point of $P_1$ if and only if $0$ is an interior point of $P_{1,\Gamma}$.
	\end{lemma}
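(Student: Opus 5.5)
The plan is to identify points of $\partial P_1$ with boundary limits of the Pogorelov image. Then \cref{lem: Expression-of-L1-General-gamma} transports these limits from $\Phi(x,F(x))$ to $\Phi(\Gamma(x),F(x))$.

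\emph{Step 1: describe $\partial P_1$ as a limit.} Since $K_1$ is a compact convex body in $\overline{\mathbb{R}^3_+}$ with $\tilde S_1=\partial K_1\cap\mathbb{R}^3_+$ and $P_1=K_1\cap\mathbb{R}^2$, the radial projection $R$ maps $\tilde S_1$ homeomorphically onto $\mathbb{S}^2_+$. I would show that for a direction $e^{i\alpha}$ with $\alpha\in[\theta_1,\theta_2]$, and points $y_n\in\tilde S_1$ with $R(y_n)\to e^{i\alpha}$, we have $y_n\to r_1(\alpha)e^{i\alpha}$. This comes from convexity of $K_1$. Every limit point of $y_n$ lies in $\partial K_1\cap\mathbb{R}^2=P_1$ on the ray through $e^{i\alpha}$. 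Since $0\in P_1$ and a convex body meets a ray from $0\in\partial K_1$ in a segment, the limit must be the far endpoint $r_1(\alpha)e^{i\alpha}$. Here I use that $\partial K_1$ near that ray consists of the face $P_1$ and $\tilde S_1$, and that $r_1(\alpha)>0$ on this segment. Because $\tilde S_1\setminus S_1$ has $\mathscr H^1$-measure zero, $S_1$ is dense in $\tilde S_1$, so we may take $y_n=P_1(x_n,F(x_n))$ with $x_n\in\partial X_1$.

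\emph{Step 2: the $x_n$ converge to $e^{i\alpha}$.} Since $R(P_1(x,F(x)))=R(x)$, we have $R(x_n)\to e^{i\alpha}$. As $0\in\operatorname{int}X_1$ and $\overline{\mathbb{S}^2_-}\subset X_1^P$, the closure of $X_1$ contains the hyperbolic half-space bounded by the equatorial plane. Hence points of $\partial X_1$ whose directions approach the equator must escape to the ideal point $e^{i\alpha}$, and $x_n\to e^{i\alpha}$ in $\mathbb{R}^3$. Thus $\|P_1(x_n,F(x_n))\|\to r_1(\alpha)=:A$ along any such sequence.

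\emph{Step 3: apply \cref{lem: Expression-of-L1-General-gamma}.} With $p=e^{i\alpha}$ and $x_0=\Gamma^{-1}(0)$, this gives $\|P_1(\Gamma(x_n),F(x_n))\|\to 2A/(A+c_p(2-A))$. By the conformal-factor formula quoted in the proof of \cref{lem: unit-circle-to-ellipse}, $c_p=(1-\|x_0\|^2)/\|p-x_0\|^2$ is the stretching factor of $\Gamma|_{\mathbb{S}^2}$ at $p$. Since $\Gamma$ preserves $\mathbb{S}^1$, this factor equals $\gamma'(\alpha)$, which yields the claimed formula. For the direction, $P_1(\Gamma(x_n),F(x_n))$ points along $\Gamma(x_n)$, which tends to $\Gamma(e^{i\alpha})=e^{i\gamma(\alpha)}$. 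The points $P_1(\Gamma(x_n),F(x_n))$ lie on $S_{1,\Gamma}\subset\tilde S_{1,\Gamma}$ with directions tending to the equator. Running Step 1 in reverse for $K_{1,\Gamma}$, their limit $L_{1,\Gamma}(\alpha)$ lies on $\partial P_{1,\Gamma}$. Since $A\in(0,2)$ by \cref{lem: SumIs2}, also $A'\in(0,2)$, so the curve avoids $0$.

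\emph{Step 4: the interior statement.} The point $0$ is interior to $P_1$ iff the radial function of $\partial P_1$ is positive in every direction. By the equivalence $A\in(0,2)\Leftrightarrow A'\in(0,2)$ in \cref{lem: Expression-of-L1-General-gamma}, applied with $\Gamma$ and $\Gamma^{-1}$, this holds for $P_1$ iff it holds for $P_{1,\Gamma}$.

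The main obstacle is Step 1 together with the reverse argument in Step 3. One must make rigorous that limits of $\tilde S_1$ along directions approaching $e^{i\alpha}$ land exactly at the outer endpoint $r_1(\alpha)e^{i\alpha}$ of $P_1$ in that direction. The same must hold for $K_{1,\Gamma}$, where a priori $0$ might lie on $\partial P_{1,\Gamma}$. I would handle this with supporting-plane arguments for the convex body $K_1$.
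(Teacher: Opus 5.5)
Your route is the same as the paper's: identify $\partial P_1$ through the boundary values of $\|P_1(x,F(x))\|$ as $x\to e^{i\alpha}$, then transport them with \cref{lem: Expression-of-L1-General-gamma}, using $c_p=\gamma'(\alpha)$.

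\textbf{The gap is Step 1 as you state it.} You claim that \emph{every} sequence $y_n\in\tilde S_1$ with $R(y_n)\to e^{i\alpha}$ converges to $r_1(\alpha)e^{i\alpha}$. Your argument only puts the limit points on the segment $[0,r_1(\alpha)e^{i\alpha}]\subset P_1$.

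To force them to the far endpoint, you need the open segment to lie in the relative interior of $P_1$. In that case, take $c'$ on the segment, a planar disk $D\subset P_1$ around $c'$, and a point $w^*\in\operatorname{int}K_1$. Then $\operatorname{conv}(D\cup\{w^*\})\subset K_1$, so points of $\mathbb{R}^3_+$ near $c'$ are interior to $K_1$. This rules out such limits and gives $\liminf\|y_n\|\ge r_1(\alpha)$. The argument works whenever the ray meets $\operatorname{int}P_1$, for example for $\alpha\in(\theta_1,\theta_2)$, or whenever $0\in\operatorname{int}P_1$.

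It fails at an endpoint direction when $0\in\partial P_1$ and that ray runs along an edge of $P_1$ through $0$. Here is a concrete example:
\begin{itemize}
\item Let $\Sigma$ be a closed planar sector with vertex $0$ and angle less than $\pi$, and set $K=\{x\in\overline{B_1(0)}: x_3\ge\operatorname{dist}((x_1,x_2),\Sigma)^2\}$.
\item Then $K$ is a convex body, $\partial K\cap\mathbb{R}^3_+$ is radial, and it projects onto $\mathbb{S}^2_+$.
\item Take directions with azimuth $\beta\to0^-$ and elevation $\eta$ satisfying $\sin\eta=\kappa\sin^2\beta$. The boundary point on the corresponding ray sits at distance $\kappa/\cos^2\eta\to\kappa$.
\item Hence the limit is $\kappa e^{i0}$, for any $\kappa\in(0,1)$, not the far endpoint.
\end{itemize}
This degenerate configuration is exactly where the lemma is needed. \cref{prop: Pi-contain-0-interior} applies it under the contradiction hypothesis $0\notin\operatorname{int}P_1$, and there the endpoint directions may be edge directions. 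So neither convexity of $K_1$ nor the eventual conclusion $0\in\operatorname{int}P_1$ can rescue the claim.

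\textbf{The repair is small.} The paper works with the upper limit:
$r_1(\alpha)=\lim_n\sup\{\|P_1(x,F(x))\|:x\in\partial X_1\cap B_{1/n}(e^{i\alpha})\}$.
This holds in every direction. The upper bound comes from limit points lying in $P_1$ on the ray. The lower bound comes from $r_1(\alpha)e^{i\alpha}$ being a relative-boundary point of $P_1$, hence in $\overline{\tilde S_1}$, together with density of $S_1$. The same formula holds for $r_{1,\Gamma}(\gamma(\alpha))$. Passing to subsequences in \cref{lem: Expression-of-L1-General-gamma} is then legitimate because $A\mapsto 2A/(A+c_p(2-A))$ is strictly increasing.

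Alternatively, keep your limit argument on $(\theta_1,\theta_2)$ and handle the endpoints separately. Either choose a sequence with $P_1(x_n,F(x_n))\to r_1(\alpha)e^{i\alpha}$, or use continuity of the explicit formula together with closedness of $\partial P_{1,\Gamma}$.

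The rest of your proposal is sound:
\begin{itemize}
\item Step 2.
\item The identification $c_p=\gamma'(\alpha)$.
\item The fact that limits of $\tilde S_{1,\Gamma}$ in $\mathbb{R}^2$ lie in $\partial P_{1,\Gamma}$, by the same cone argument.
\item Step 4, which only uses the case $0\in\operatorname{int}P_1$, where Step 1 does hold in every direction.
\end{itemize}
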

	\begin{proof}
		Let 
		$$
		r_{1,\Gamma}(\alpha)=\sup\{t\geq0:te^{i\alpha}\in P_{1,\Gamma}\},\quad \alpha\in[0,2\pi),
		$$ 
        be the radial function of $P_{1,\Gamma}$.
		For each $\alpha\in[\theta_1,\theta_2]$, it is clear that
		$$
		r_1(\alpha)=\lim_{n\to\infty} \sup\{\|P_1(x, F(x))\|:x\in \partial X_1\cap B_{1/n}(e^{i\alpha})\},
		$$
		and
		$$
		r_{1,\Gamma}(\gamma(\alpha))=\lim_{n\to\infty} \sup\{\|P_1(\Gamma(x), F(x))\|:x\in \partial  X_1\cap B_{1/n}(e^{i\alpha})\},
		$$
        where $B_{1/n}(e^{i\alpha})$ is the ball in $\mathbb{R}^3$ centered at $e^{i\alpha}=(\cos\alpha,\sin\alpha,0)$ with radius $1/n$.
		Let $x_0$ be the point in $\mathbb{H}^3_P$ such that $\Gamma (x_0)=0$. Since $\gamma'(\alpha)=\frac{1-\|x_0\|^2}{\|e^{i\alpha}-x_0\|^2}$, by \cref{lem: Expression-of-L1-General-gamma}, we have
		$$
		r_{1,\Gamma}(\gamma(\alpha))=\frac{2{r_1(\alpha)}}{(2-r_1(\alpha))\gamma'(\alpha)+{r_1(\alpha)}}>0,\quad\forall \alpha\in[\theta_1,\theta_2].
		$$
		Thus, 
		$$
		L_{1,\Gamma}(\alpha)=r_{1,\Gamma}(\gamma(\alpha))e^{i\gamma(\alpha)},\quad\alpha\in[\theta_1,\theta_2]
		$$
		is a curve segment of $\partial P_{1,\Gamma}$, which completes the proof.
		
	\end{proof}

	We now show that each $P_i$, $i\in\{1,2\}$, is a compact convex body in $\mathbb{R}^2$. The proof relies on the following lemma:

    \begin{lemma}\label{lem:Pi-not-segment-or-point}
        Let $D\subset\mathbb{S}^2_+$ be a closed spherical disk centered at $e_3=(0,0,1)$, and let $U$ be a neighborhood of $D$ in $\mathbb{S}_+^2$. Let $\psi$ be a positive continuous function on $U\setminus D$. Suppose that
        $$
            S=\{\psi(w)w:w\in U\setminus D\}
        $$
        is locally convex with respect to $0$. Then, for every $p\in\partial D$,
        \begin{equation}\label{eq:psi-at-least-two-not-zero}
            \limsup_{\substack{x\to p,\ x\in U\setminus D}}\psi(x)>0.
        \end{equation}
    \end{lemma}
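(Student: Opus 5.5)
We argue by contradiction. Suppose that for some $p\in\partial D$ we have $\psi(x)\to 0$ as $x\to p$ within $U\setminus D$. We reduce the statement to convexity of a homogeneous function, as in \cref{lem: equivalence-locally-convex-to-0}. Let $Z=\{kw:k>0,\ w\in U\setminus D\}$, an open cone, and put $q(kw)=k/\psi(w)$. By \cref{lem: equivalence-locally-convex-to-0}, $q$ is locally convex on $Z$, and it is positively $1$-homogeneous. The assumption $\psi\to0$ at $p$ says that $q(x)\to+\infty$ as $x\to p$ along points of $Z$ near $p$.

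\textbf{Step 1: a convex open set on one side of $p$.} Since $D$ is a spherical cap centred at $e_3$, its complement in a small neighbourhood of $p$ on the sphere is locally "concave" from the outside. Concretely, choose a small open spherical disk $B$ around $p$ with $B\subset U$ and set $B^-:=B\setminus D$. We want a convex open set $C\subset Z$, namely a cone over a spherically convex region, whose closure contains the ray through $p$. The set $B\setminus D$ is not spherically convex, because $D$ bulges into it. Instead, take a spherically convex region $T\subset B\setminus D$ with $p\in\overline{T}$, for example a thin spherical triangle with vertex $p$ pointing into $B\setminus D$ along the outward normal direction of $\partial D$ at $p$. Its cone $C=\{kw:k>0,\ w\in T\}$ is convex and open, so $q|_C$ is convex by \cref{lem: LocalToGlobalConvex}.

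\textbf{Step 2: a convex function on an open convex set cannot blow up at an interior-type boundary point of a segment.} Pick two points $a,b\in C$ such that the segment $[a,b]$ passes through the ray over $p$ at a point $c=tp$, with $a,b$ on opposite sides of that ray. Since the tip of a triangle cannot straddle $p$, we instead use a segment from a point of $C$ to $c$ and exploit convexity along it. A convex function on $[a,c)$ is bounded above by $\max(q(a),\cdot)$ only if it is also defined beyond $c$. This is the delicate point, so we enlarge $T$. Choose $T$ to be a spherically convex quadrilateral in $B\setminus D$ whose closure meets $\partial D$ only at $p$, with $p$ lying in the relative interior of one edge $E$ of $T$ (an edge tangent to $\partial D$ at $p$, which lies outside $D$ because $D$ is a round cap). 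This is possible because a great circle tangent to the small circle $\partial D$ at $p$ lies, near $p$, outside the open cap $D$ except at $p$. Then for $c=tp$ on the boundary face of the cone $C$, take $a,b\in\overline{C}$ on the ray face over $E$, on either side of $c$, with $a,b\notin\overline D$-rays. Then $a,b\in Z$, because $E\setminus\{p\}\subset U\setminus D$.

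\textbf{Step 3: contradiction.} The convex function $q|_C$ extends lower semicontinuously to $\overline{C}\cap Z$, and along the face over $E$ it is still locally convex near $a$ and $b$, since these points are in $Z$, so $q$ is finite and continuous there. Approach $c$ from inside $C$ by points $c_n=\tfrac12(a_n+b_n)$ with $a_n\to a$, $b_n\to b$, $a_n,b_n\in C$. Convexity gives $q(c_n)\le\tfrac12(q(a_n)+q(b_n))$, which stays bounded. This contradicts $q(c_n)\to\infty$, which holds because $c_n/\|c_n\|\to p$. Hence $\limsup\psi>0$.

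The main obstacle is Step 2: ensuring that the tangent great-circle arc $E$ avoids $D$ except at $p$, so that $a,b$ stay in the domain, and that $C$ is genuinely convex. Both facts follow from $D$ being a round cap, and this is where the hypothesis that $D$ is centred at $e_3$ and contained in $\mathbb{S}^2_+$ (so $D$ is strictly smaller than a hemisphere) is used.
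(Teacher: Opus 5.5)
Your argument is correct and is essentially the paper's proof. In the gnomonic chart $w\mapsto w/w_3$ that the paper uses, your edge $E$ on the great circle tangent to $\partial D$ at $p$ becomes the tangent line to the Euclidean disk $B$ at $y$. Your midpoints $c_n=\tfrac12(a_n+b_n)$ then play the role of the midpoints $y+s\nu$ of the outward-translated segments $I_s$, with the same midpoint-convexity bound and continuity at the endpoints giving the conclusion. The differences are cosmetic: you work with the $1$-homogeneous $q$ on the cone over a spherically convex quadrilateral instead of its restriction $F$ to $\{x_3=1\}$. The abandoned triangle attempt in Steps~1--2 can simply be deleted.
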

    
    \begin{proof}
    Identify the upper hemisphere with the tangent plane
    $$
        P=T_{e_3}\mathbb{S}^2=\{x_3=1\}
    $$
    by the radial map $w\mapsto w/w_3$. Let $Z=\{kx:k\geq0,x\in S\}$ and let $q:Z\to\mathbb{R}$ be defined as
    \begin{equation*}
    		q(\alpha) = \inf\{k > 0 : \alpha/k \in S\}.
    \end{equation*}
    Then by \cref{lem: equivalence-locally-convex-to-0}, we see that $q$ is a locally convex function on $Z\setminus\{0\}$. Let $F:=q|_{P\cap Z}$. It follows that $F$ is locally convex and we have
    \begin{equation}\label{eq:F-is-q-restriction}
        F\left(\frac{w}{w_3}\right)=\frac{1}{\psi(w)w_3}, \quad\forall w\in U \setminus D.
    \end{equation}
    
    Let $B, V \subset P$ be the subsets corresponding to $D$ and $U$ under the radial map, respectively. Note that $B$ is a Euclidean disk. Fix $y\in\partial B$, and let $\nu$ and $\tau$ be the outward unit normal vector and a unit tangent vector of $\partial B$ at $y$, respectively. Choose $a,s_0>0$ sufficiently small such that $y+s\nu+t\tau\in V$ whenever $0\leq s\leq s_0$ and $|t|\leq a$. For each $0<s<s_0$, the segment
    $$
        I_s=[y+s\nu-a\tau,\,y+s\nu+a\tau]
    $$
    lies in $V\setminus B$. Hence, by the convexity of $F|_{I_s}$,
    \begin{equation}\label{eq:lem47-midpoint}
        F(y+s\nu)
        \leq\frac{F(y+s\nu-a\tau)+F(y+s\nu+a\tau)}2.
    \end{equation}
    The right-hand side remains bounded as $s\to 0^{+}$, since $y+s\nu \pm a\tau$ converge to the fixed points $y\pm a\tau\in V\setminus B$; see \cref{fig:lem4-7}.

    \begin{figure}[htbp]
        \centering
        \begin{tikzpicture}[
            x=1.35cm,y=1.35cm,
            point/.style={circle,fill=black,inner sep=1.05pt},
            openpoint/.style={circle,draw=black,fill=white,inner sep=1.1pt},
            every node/.style={font=\small},
            >=Latex]
            \def\R{1.55}
            \def\s{0.55}
            \def\a{0.88}
            \fill[black!7] (0,0) circle (\R);
            \draw[thick] (0,0) circle (\R);

            \coordinate (Y) at (\R,0);
            \coordinate (M) at ({\R+\s},0);
            \coordinate (EzeroP) at (\R,\a);
            \coordinate (EzeroM) at (\R,-\a);
            \coordinate (EsP) at ({\R+\s},\a);
            \coordinate (EsM) at ({\R+\s},-\a);

            \draw[dashed] (EzeroM)--(EzeroP);
            \draw[very thick] (EsM)--(EsP);
            \draw[->,thick] (Y)--(M) node[midway,below=4pt] {$s\nu$};
            \draw[->,thick] ($(Y)+(-0.10,0.06)$)--($(Y)+(-0.10,0.62)$)
                node[midway,left=2pt] {$\tau$};

            \node[point] at (Y) {};
            \node[anchor=north east] at ($(Y)+(-0.03,-0.03)$) {$y$};
            \node[point] at (M) {};
            \node[anchor=north west] at ($(M)+(0.04,-0.03)$) {$y+s\nu$};
            \node[openpoint] at (EzeroP) {};
            \node[openpoint] at (EzeroM) {};
            \node[point] at (EsP) {};
            \node[point] at (EsM) {};
            \node at (-0.45,0.14) {$B$};
            \node[anchor=west] at ({\R+0.08},0.55) {$I_0$};
            \node[anchor=west] at ({\R+\s+0.08},0.55) {$I_s$};
        \end{tikzpicture}
        \caption{The tangent segment $I_0$ and its outward translate $I_s$. The endpoint values in \eqref{eq:lem47-midpoint} converge to finite values outside $B$.}
        \label{fig:lem4-7}
    \end{figure}

    Set $w_s=(y+s\nu)/\lVert y+s\nu\rVert$. Then $w_s\to p:=y/\lVert y\rVert\in\partial D$ as $s\to 0^+$, and \eqref{eq:F-is-q-restriction} implies that
    $$
        \psi(w_s)=\frac{\lVert y+s\nu\rVert}{F(y+s\nu)}.
    $$
    As $s \to 0^+$, the numerator is bounded away from zero, and the denominator is bounded above. Thus, $\liminf_{s \to 0^+} \psi(w_s) > 0$ and \eqref{eq:psi-at-least-two-not-zero} holds for $p=y/\|y\|$. Since $y \in \partial B$ was chosen arbitrarily, the assertion holds at every point of $\partial D$, which completes the proof.
    \end{proof}

	\begin{proposition}\label{prop: Pi-contain-0-interior}
		The compact convex sets $P_1$ and $P_2$ in $\mathbb{R}^2$ both have $0$ in their interiors.
	\end{proposition}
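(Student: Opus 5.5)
We want to show that $0$ lies in the interior of each $P_i=\partial K_i\cap\mathbb{R}^2$. The plan is to use the local convexity of the image surfaces near the ideal disk $\overline{\mathbb{S}^2_-}$. Lemma~\ref{lem:Pi-not-segment-or-point} then shows that $\partial K_i$ cannot collapse radially onto $0$ along any direction of the equator.

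\textbf{Step 1: reduction to a radial lower bound.} Since $P_i$ is a compact convex set in $\mathbb{R}^2$, it suffices to prove two things. First, its radial function $r_i(\alpha)=\sup\{t\ge0: te^{i\alpha}\in P_i\}$ is strictly positive for every $\alpha\in[0,2\pi)$. Second, it is bounded below by a positive constant. Indeed, a convex set containing $0$ whose radial function is positive in every direction contains $0$ in its interior. The reason is that otherwise a supporting line through $0$ would leave a whole open half-plane of directions in which $r_i=0$.

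\textbf{Step 2: relate $r_i$ to $\psi_i$.} We apply Lemma~\ref{lem:Pi-not-segment-or-point} to $\tilde S_i$. By construction, $\tilde S_i=\{\psi_i(w)w:w\in\mathbb{S}^2_+\}$ with $\psi_i=1/\tilde q_i$ on $\mathbb{S}^2_+$. This surface is locally convex with respect to $0$ by Lemma~\ref{lem: equivalence-locally-convex-to-0}. Its radial projection is all of $\mathbb{S}^2_+$, since it is the graph of a radial function over the complement of the disk component.

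Here is the main technical point. The lemma concerns a disk $D$ \emph{inside} $\mathbb{S}^2_+$ centred at $e_3$, whereas our boundary is the equator. I would handle this with a projective change of coordinates, which preserves local convexity with respect to $0$. Concretely, apply a linear map $A\in GL(3)$ (for example a rotation about a horizontal axis, composed with a suitable linear map) sending the closed lower half-space's boundary circle directions to a cone around $e_3$. The image of the half-space $\{x_3\le 0\}$ then becomes the cone over a spherical disk $D$ centred at $e_3$. Linear maps preserve the property of being locally convex with respect to $0$, and they preserve radial structure. The region $\mathbb{S}^2_+$ corresponds to $\mathbb{S}^2\setminus D$, and a neighbourhood $U$ of $D$ minus $D$ is covered by $A(\tilde S_i)$ after rescaling $\psi$. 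Alternatively, I would simply redo the proof of Lemma~\ref{lem:Pi-not-segment-or-point} directly in the current setting. Take the plane $\{x_2=1\}$ (after rotating so that $e^{i\alpha}$ points along $e_2$). The boundary $\{x_3=0\}$ becomes a line, and $\mathbb{S}^2_+$ becomes a half-plane. The midpoint inequality along short segments parallel to the line, translated into the half-plane, gives $\liminf\psi_i>0$ when approaching $e^{i\alpha}$ along the normal direction. This second route seems cleaner, and I would try it first; the argument is verbatim.

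\textbf{Step 3: conclude.} From $\liminf_{s\to0^+}\psi_i(w_s)>0$ with $w_s\to e^{i\alpha}$, the points $\psi_i(w_s)w_s\in\tilde S_i$ converge, along a subsequence, to a point $c\,e^{i\alpha}\in\partial K_i\cap\mathbb{R}^2=P_i$ with $c>0$. Hence $r_i(\alpha)>0$ for every $\alpha$, and Step~1 gives $0\in\operatorname{int}P_i$. The bound in the lemma is local but uniform enough: positivity in every direction already suffices for a compact convex set, by the supporting-line argument in Step~1.

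The main obstacle is checking that the hypothesis of the lemma, namely local convexity with respect to $0$ on a neighbourhood of the boundary of the removed region, applies at the equator. There the removed region is a hemisphere rather than a small disk. That is why I would rerun the midpoint argument in the chart $\{x_2=1\}$ instead of forcing the lemma's exact form.
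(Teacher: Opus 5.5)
Your proposal has a genuine gap at Step 2, and it cannot be closed using only the local convexity of $\tilde S_i$.

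\textbf{Why the midpoint argument fails at the equator.} In \cref{lem:Pi-not-segment-or-point} the removed region $B$ is a round, strictly convex disk in the chart $\{x_3=1\}$. The endpoints $y\pm a\tau$ of the tangent segment therefore lie at positive distance from $B$, where $F$ is finite, and this is exactly what bounds the right-hand side of \eqref{eq:lem47-midpoint}. In your chart $\{x_2=1\}$ the removed region is a half-plane, and its tangent line at $y$ is the boundary line itself. The endpoints $y+s\nu\pm a\tau$ then tend to boundary points where $F$ may blow up, so the inequality gives nothing; the argument is not verbatim.

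\textbf{Why the linear change of coordinates fails.} Any map preserving lines through $0$ sends great circles of directions to great circles. So no $A\in GL(3)$ turns the cone over $\overline{\mathbb{S}^2_-}$ into the cone over a small round disk.

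\textbf{The local claim itself is false.} Take $q(x)=\|x\|^2/x_3$, which is convex and positively homogeneous on $\mathbb{R}^3_+$. Its level set $\{q=1\}$ is the sphere $\|x-\frac12 e_3\|=\frac12$ minus the origin. This surface is radial over $\mathbb{S}^2_+$ and locally convex with respect to $0$. Yet $\psi(w)=w_3\to0$ at every point of the equator, and $P=\{0\}$. So nothing about $\tilde S_i$ alone forces $0\in\operatorname{int}P_i$, and the claim $\liminf\psi_i>0$ in your Step 2 fails.

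\textbf{The missing idea.} You need to use that the whole family of deformed Pogorelov images is locally convex, and to deform by hyperbolic isometries rather than linear maps of $\mathbb{R}^3$.
\begin{itemize}
\item \emph{Degenerate case.} Choose an isometry $\Gamma$ of $\mathbb{H}^3_P$ moving $\overline{\mathbb{S}^2_-}$ to a small disk $D$ about $e_3$, and apply \cref{lem:Pi-not-segment-or-point} to the surface $\tilde S_{1,\Gamma}$ coming from $\Phi(\Gamma(x),F(x))$. Transport the information back with \cref{lem: Expression-of-L1-General-gamma}: a radial limit vanishes before the deformation if and only if it vanishes after. If $P_1$ is a point or a segment from $0$, then $\limsup\psi=0$ at all but at most one point of $\partial D$, contradicting the lemma.
\item \emph{Case where $P_1$ meets at least two rays but $0\in\partial P_1$.} The paper uses Möbius maps of $\mathbb{D}$ preserving $\mathbb{S}^2_-$, together with the explicit formula of \cref{lem: Expression-of-L1-gamma}. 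These produce a $P_{1,\Gamma}$ whose boundary subtends an angle greater than $\pi$ at $0$. That forces $0\in\operatorname{int}P_{1,\Gamma}$, and transforming back gives $0\in\operatorname{int}P_1$.
\end{itemize}
Your Step 1 reduction is fine. The positivity of $r_i$, however, has to come from this deformation argument, not from a local estimate at the equator.
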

	\begin{proof}
		We only prove the statement for $P_1$. The conclusion for $P_2$ then follows by applying the same argument to the Pogorelov map $\Phi(x,F^{-1}(x))$ on $\partial X_2$. Suppose, for contradiction, that $P_1$ does not have $0$ in its interior. 
		
		\textbf{Case 1.} $P_1=\{0\}$ or $P_1$ is a line segment from $0$ to some point $x_0 \in \mathbb{R}^2$.
		Let $D$ be a disk satisfying the conditions in \cref{lem:Pi-not-segment-or-point}. Given the assumption that $X_1$ contains $0$ in its interior, there exists an isometry $\Gamma$ of $\mathbb{H}^3_P$ mapping $\overline{\mathbb{S}^2_{-}}$ onto $D$ such that $0$ lies in the interior of $\Gamma(X_1)$. Consider the Pogorelov map $\Phi(\Gamma(x), F(x))$. Let $Q_{1,\Gamma}$ be the inverse map of the radial projection $R: \tilde{S}_{1,\Gamma}\to\mathbb{S}^2$. We define $\psi(x)=\|{Q}_{1,\Gamma}(x)\|$ for all $x\in R(\tilde{S}_{1,\Gamma})$. 
        Since the radial function of $P_1$ is nonzero in at most one direction, by \cref{lem: Expression-of-L1-General-gamma} there exists at most one point $p\in\partial D$ such that
		\begin{equation*}
			\limsup_{x \to p} \psi(x) > 0.
		\end{equation*}
        However, since $\tilde{S}_{1,\Gamma}$ is locally convex with respect to $0$, by \cref{lem:Pi-not-segment-or-point} we have
		\begin{equation*}
			\limsup_{x \to p} \psi(x) > 0 \quad\forall p\in\partial D,
		\end{equation*}
        which is a contradiction.
      
		\textbf{Case 2.} $0$ is not an interior point of $P_1$ in $\mathbb{R}^2$ and there exist at least two directions $p_1,p_2\in \mathbb{S}^1$ such that $P_1\cap\{tp_i:t>0\}\neq\varnothing$. After a rotation, we may assume that $p_1=1$ and $p_2=e^{i\theta}$ for some $\theta\in(0,\pi]$.
		
		\begin{enumerate}
			\item[]\textbf{Case 2.1.} $\theta<\pi$. Since $P_1$ is convex, for each $\alpha\in(0,\theta)$, $\partial P_1\cap\{te^{i\alpha}:t>0\}$ contains exactly one element. Let 
			$$
			L_1(\alpha)=r_1(\alpha)e^{i\alpha},\quad\alpha\in[0,\theta]
			$$
			be the polar parametrization of the curve segment of $\partial P_1$ between angle $0$ and $\theta$. It is clear that $r_1(\alpha)>0$ for all $\alpha\in[0,\theta]$. Now let 
			$$
			\Gamma(z)=e^{-i\theta}\frac{2z-(1+e^{i\theta})}{(1+e^{-i\theta})z-2}.
			$$
			Then, $\Gamma$ is a Möbius transformation of $\mathbb{D}$ and can be extended as an orientation-preserving isometry of $\mathbb{H}^3_P$ mapping $\mathbb{S}^2_{-}$ onto itself. Consider the Pogorelov map $\Phi(\Gamma(x),F(x))$. Since $\Gamma(1)=1$ and $\Gamma(e^{i\theta})=e^{-i\theta}$, by \cref{lem: Expression-of-L1-gamma} $\partial P_{1,\Gamma}$ contains a curve segment that subtends a cone angle of $2\pi-\theta>\pi$ at the origin. The convexity of $P_{1,\Gamma}$ then forces $0$ to be an interior point of $P_{1,\Gamma}$. Note that $P_1=(P_{1,\Gamma})_{\Gamma^{-1}}$, applying \cref{lem: Expression-of-L1-gamma} once more shows that $0$ is an interior point of $P_1$, which is a contradiction.
			
			\item[]\textbf{Case 2.2.} $\theta=\pi$. Choose a Möbius transformation $\Gamma$ of $\mathbb{D}$ such that $\Gamma(1)=1$ and $\Gamma(-1)=i$. Following the same argument as in the proof of \cref{lem: Expression-of-L1-gamma}, we see that $P_{1,\Gamma}$ has nonempty intersection with both $\{t:t>0\}$ and $\{te^{i\pi/2}:t>0\}$. It follows that $P_{1,\Gamma}$ falls into Case 2.1 with $\theta=\frac{\pi}{2}$, which has already been shown to yield a contradiction.
		\end{enumerate}
	\end{proof}

    \begin{corollary}\label{cor: G-extends-closure-G}
        The isometry $G : S_1 \to S_2$ extends to a unique isometry $\overline{G} : \overline{S_1} \to \overline{S_2}$, where $\overline{S_1}$ and $\overline{S_2}$ are the closures of $S_1$ and $S_2$ in $\mathbb{R}^3$, respectively.
    \end{corollary}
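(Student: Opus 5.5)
The plan is to apply \cref{lem: Isometry-Extension} with $X=\overline{S_1}$, $X_1=S_1$, $Y=\overline{S_2}$, $Y_1=S_2$, each carrying its intrinsic path metric. For the ambient complete space we take $\overline{Y}=\overline{S_2}$ with the Euclidean metric of $\mathbb{R}^3$, and $\iota$ the inclusion. The space is complete because $\overline{S_2}$ is compact, and $\|y_1-y_2\|\le d_{\overline{S_2}}(y_1,y_2)$ holds trivially.

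\textbf{Step 1: intrinsic structure of the closures.} By \cref{prop: Pi-contain-0-interior}, each $P_i$ is a compact convex body in $\mathbb{R}^2$, so $\partial K_i=\tilde{S}_i\cup P_i$ is the boundary of a compact convex body. We then have $\overline{S_i}=\tilde{S}_i\cup\partial P_i$, because $S_i$ is dense in $\tilde S_i$ (its complement has zero $\mathscr{H}^1$-measure, hence empty interior). \cref{lem: path-metric-boundary-removable} and \cref{thm: zero-H1-dont-change-metric} give $d_{S_i}=d_{\tilde S_i}=d_{\overline{S_i}}$ on $S_i$. By \cref{prop: convex-surface-Lipschitz} and its proof, $d_{\overline{S_i}}\le C\|\cdot\|$ on $\overline{S_i}$. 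Consequently $(\overline{S_i},d_{\overline{S_i}})$ is bi-Lipschitz to $\overline{S_i}$ with the Euclidean metric. It is therefore compact and complete, its intrinsic topology agrees with the Euclidean one, and $S_i$ is dense in it. In particular $\iota$ is a topological embedding, and $G:S_1\to S_2$ is an isometry for the restricted metrics $d_{\overline{S_i}}$.

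\textbf{Step 2: the sequence condition.} Let $x_n\in S_1$ converge to $x_0\in\overline{S_1}$, and let $G(x_n)\to y_0$ in $\mathbb{R}^3$. Since $\overline{S_2}$ is closed, $y_0\in\overline{S_2}=Y$ automatically, so the hypothesis of \cref{lem: Isometry-Extension} holds trivially. The lemma then yields an isometric embedding $\overline{G}:\overline{S_1}\to\overline{S_2}$ extending $G$. Applying the same argument to $G^{-1}$ gives an isometric embedding $\overline{S_2}\to\overline{S_1}$ extending $G^{-1}$. The two compositions are the identity on the dense sets $S_i$, so by continuity they are the identity everywhere, and $\overline{G}$ is a bijective isometry. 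Uniqueness follows from density of $S_1$ and continuity.

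\textbf{Main obstacle.} The main difficulty is Step 1, specifically the comparison $d_{\overline{S_i}}\le C\|\cdot\|$ uniformly up to the boundary curve $\partial P_i$. This needs $P_i$ to be a genuine two-dimensional convex body. If $P_i$ were degenerate, $\overline{S_i}$ could fail to be a Lipschitz disk near $\partial P_i$, and the intrinsic and Euclidean topologies might differ. \cref{prop: Pi-contain-0-interior} is exactly what rules this out.

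A second point to verify is that the isometry of $S_1$ and $S_2$ under $d_{S_i}$ transfers to the metrics restricted from $d_{\overline{S_i}}$. This is the content of \cref{lem: path-metric-boundary-removable} together with \cref{thm: zero-H1-dont-change-metric}. The latter applies because $\tilde S_i\setminus S_i$ has zero $\mathscr{H}^1$-measure and meets compact subsets of $\tilde S_i$ in compact sets, as $S_i$ is open in $\tilde S_i$.
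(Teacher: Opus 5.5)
Your proof is correct and takes the same route as the paper. Both proofs use \cref{thm: zero-H1-dont-change-metric} and \cref{lem: path-metric-boundary-removable} to get $d_{S_i}=d_{\tilde S_i}=d_{\overline{S_i}}$ on $S_i$, and then extend the isometry from a dense subset to a complete space. Where the paper simply asserts that $(\overline{S_i},d_{\overline{S_i}})$ is the ``metric completion'' of $(S_i,d_{S_i})$, you justify completeness and density explicitly through the bound $d_{\overline{S_i}}\le C\|\cdot\|$ from \cref{prop: convex-surface-Lipschitz}, and you carry out the extension with \cref{lem: Isometry-Extension} in place of the classical completion theorem.
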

    \begin{proof}
         Since each $P_i$ has nonempty interior in $\mathbb{R}^2$, $\tilde{S}_i$ satisfies the conditions in \cref{lem: path-metric-boundary-removable}. Then, according to \cref{thm: zero-H1-dont-change-metric} and \cref{lem: path-metric-boundary-removable}, we have 
         $$
         d_{S_i}(x,y)=d_{\tilde{S}_i}(x,y)=d_{\overline{S_i}}(x,y),\quad\forall x,y\in S_i,\ i\in\{1,2\}.
         $$
         It follows that the space $(\overline{S_i},d_{\overline{S_i}})$ is the metric completion of $({S}_i,d_{{S}_i})$. Since ${G}$ is an isometry between $({S}_1,d_{{S}_1})$ and $({S}_2,d_{{S}_2})$, a classical extension theorem for metric spaces ensures that $G$ extends to a unique isometry between $(\overline{S_1},d_{\overline{S_1}})$ and $(\overline{S_2},d_{\overline{S_2}})$.
    \end{proof}

    The following lemma shows a relation between the shapes of $P_1$ and $P_2$.
	
	\begin{lemma}\label{lem: L2-formula-via-L1}
        Suppose that
		$$
		L_1(\alpha)=r_1(\alpha)e^{i\alpha},\quad\alpha\in[0,2\pi),
		$$
		is the polar parametrization of $\partial P_1$, where $r_1(\alpha)\in(0,2)$ for all $\alpha\in[0,2\pi)$. Then there exists a continuous and strictly monotone function $f: [0, 2\pi) \to \mathbb{R}$ such that 
		$$
		L_2(\alpha) = (2 - r_1(\alpha)) e^{i f(\alpha)},\quad\alpha\in[0,2\pi),
		$$
		parametrizes the curve $\partial P_2$. Moreover, $\overline{G}(L_1(\alpha))=L_2(\alpha)$ for all $\alpha\in[0,2\pi)$.
	\end{lemma}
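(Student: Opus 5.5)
The plan is to read $\partial P_1$ and $\partial P_2$ as the sets of "ideal" points added when completing $S_1$ and $S_2$. Combined with \cref{cor: G-extends-closure-G} and \cref{lem: SumIs2}(2), this should give both the norm relation $\|L_2(\alpha)\|=2-r_1(\alpha)$ and the angular function $f$.

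\textbf{Step 1: ideal points and the extended isometry.} By \cref{prop: Pi-contain-0-interior}, $P_i$ has $0$ in its interior. So $\overline{S_i}=\tilde{S}_i\cup\partial P_i$, and $\tilde S_i$ is the relatively open part of $\partial K_i$ off the plane. By \cref{cor: G-extends-closure-G}, $(\overline{S_i},d_{\overline{S_i}})$ is the metric completion of $(S_i,d_{S_i})$. By \cref{prop: convex-surface-Lipschitz} and \cref{lem: path-metric-boundary-removable}, convergence in $d_{\overline{S_i}}$ is equivalent to Euclidean convergence on $\overline{S_i}$. I then want to show that $\overline{G}$ maps $\partial P_1$ onto $\partial P_2$.

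The first attempt would be to characterize $\partial P_i$ intrinsically as the set of points of $\overline{S_i}$ that have no neighborhood isometric to an open set of a surface without boundary. That characterization may be delicate, so I would instead proceed by the following direct route.

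\begin{itemize}
\item Take $y\in\partial P_1$ and a sequence $y_n=P_1(x_n,F(x_n))\in S_1$ with $y_n\to y$ in $\mathbb{R}^3$. This is possible because $S_1$ is dense in $\tilde S_1$.
\item Since $P_1(x,F(x))$ is a positive multiple of the direction of $x$, the directions $x_n/\|x_n\|$ tend to the equatorial direction $y/\|y\|=e^{i\alpha}$.
\item $X_1^P$ contains the closed lower hemisphere, and $X_1$ has $0$ in its interior. Hence the points of $\partial X_1$ in near-equatorial directions must escape to $\partial\mathbb{H}^3_P$, so $d_{\mathcal H^3}(x_n,e)\to\infty$.
\item By \cref{lem: SumIs2}(2), $\|P_2(x_n,F(x_n))\|\to 2-\|y\|=2-r_1(\alpha)$.
\item By Step 1's equivalence of metrics, $y_n\to y$ intrinsically, so $G(y_n)=P_2(x_n,F(x_n))\to\overline G(y)$.
\end{itemize}

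This gives $\|\overline G(L_1(\alpha))\|=2-r_1(\alpha)$. The same statement holds for every sequence, independently of its choice. Since $r_1(\alpha)>0$, this norm is $<2$.

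\textbf{Step 2: $\overline G$ maps $\partial P_1$ into $\partial P_2$.} By Step 1, $\overline G(y)$ is a limit of points $P_2(x_n,F(x_n))$ with $x_n$ tending to the ideal boundary. I need to exclude that this limit lies in $\tilde S_2$.

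If $\overline G(y)\in\tilde S_2$, then $\overline G(y)$ would have a small intrinsic ball lying in $\tilde S_2$. Points of $S_2$ in that ball correspond, via $\varphi_2^{-1}=F^{-1}\circ Q_2\circ R$, to points of $\partial X_1$ in a compact region, using radial projection from $X_2$. This contradicts $d(x_n,e)\to\infty$.

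Applying the symmetric argument to $\overline G^{-1}$ gives $\overline G(\partial P_1)=\partial P_2$. Thus $\overline G$ restricts to a homeomorphism between the two Jordan curves $\partial P_1$ and $\partial P_2$, both star-shaped about $0$.

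\textbf{Step 3: the function $f$.} Define $f(\alpha)$ as a continuous lift of the argument of $\overline G(L_1(\alpha))\in\partial P_2\setminus\{0\}$. The map $\alpha\mapsto \overline G(L_1(\alpha))/\|\overline G(L_1(\alpha))\|$ is a homeomorphism of $\mathbb S^1$. This is because radial projection is a homeomorphism $\partial P_2\to\mathbb S^1$, as $0$ lies in the interior of $P_2$. Hence the lift $f$ is continuous and strictly monotone on $[0,2\pi)$. Combining with Step 1 gives
$$
\overline G(L_1(\alpha))=(2-r_1(\alpha))e^{if(\alpha)},
$$
and these points exhaust $\partial P_2$.

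\textbf{Expected main obstacle.} The main obstacle is Step 2, together with the escape claim in Step 1. One must show rigorously that a sequence in $S_i$ accumulating on $\partial P_i$ comes from points of $\partial X_1$ leaving every compact set. Conversely, one must show that limits of such sequences under $G$ cannot land in the interior surface $\tilde S_2$. I would handle both using the local Lipschitz structure of $\varphi_2^{-1}$ established in \cref{subsec:single-half-sapce}, transported to the current setting.
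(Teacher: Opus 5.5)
Your overall route is the paper's: set $L_2=\overline G\circ L_1$, read off $\|L_2(\alpha)\|=2-r_1(\alpha)$ from \cref{lem: SumIs2}, and lift the direction map to get $f$. Your Step 1 is correct and supplies details the paper leaves implicit: the escape of $x_n$ to the ideal boundary, and the equivalence of intrinsic and Euclidean convergence on $\overline{S_i}$.

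Step 2, however, has a genuine gap. Your contradiction only excludes $\overline G(y)\in S_2$; it does not exclude $\overline G(y)\in\tilde S_2\setminus S_2$. In this section $X_2^P=\overline{\mathbb S^2_-}\cup\mathscr P_2$, and $\mathscr P_2$ may meet $\mathbb S^2_+$. So $\tilde S_2$ can contain filled-in points lying over $\mathscr P_2\cap\mathbb S^2_+$. At such a point $z$, the map $\varphi_2^{-1}=F^{-1}\circ Q_2\circ R$ is not defined. Moreover, every intrinsic ball around $z$ contains points of $S_2$ whose directions tend to $R(z)\in X_2^P$, so their preimages in $\partial X_1$ leave every compact set. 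A sequence $x_n$ escaping toward the equator while $F(x_n)$ escapes toward $R(z)$ is therefore fully compatible with $d(x_n,e)\to\infty$, and no contradiction arises. Local Lipschitz bounds for $\varphi_2^{-1}$ on $S_2$ cannot reach these points either.

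The missing idea is the topological characterization you set aside as delicate. It is in fact the easy route, and it is what the paper's phrase ``$\overline G$ acts as a homeomorphism between the boundary curves'' rests on.

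By your Step 1, $d_{\overline{S_i}}$ and the Euclidean metric induce the same topology on $\overline{S_i}$, so $\overline G$ is a homeomorphism $\overline{S_1}\to\overline{S_2}$. Since $0$ is an interior point of the face $P_i=K_i\cap\mathbb R^2$, radial projection is a continuous bijection from the compact set $\overline{S_i}=\tilde S_i\cup\partial P_i$ onto the closed upper hemisphere. Hence $\overline{S_i}$ is a closed disk whose manifold boundary is $\partial P_i$.

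Now suppose $\overline G(y)\in\tilde S_2$ for some $y\in\partial P_1$. Then $\overline G^{-1}$ would carry an open disk around $\overline G(y)$ homeomorphically onto a neighborhood of the boundary point $y$. This contradicts invariance of domain; it is the same half-plane-chart argument used in \cref{prop: fill-isolated-points} and in the proof of \cref{thm: MainThm2}. With this in place of your Step 2, your Steps 1 and 3 go through as written.
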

	
	\begin{proof}		
		Define $L_2(\alpha) = \overline{G}(L_1(\alpha))$ and consider the normalized direction map $h(\alpha) = \frac{L_2(\alpha)}{\|L_2(\alpha)\|}$. Since $\overline{G}$ acts as a homeomorphism between the boundary curves, $h$ is a continuous bijection from $[0,2\pi)$ onto $\mathbb{S}^1$. It follows that there exists a continuous and strictly monotone function $f: [0, 2\pi) \to \mathbb{R}$ such that $h(\alpha) = e^{i f(\alpha)}$. Furthermore, by \cref{lem: SumIs2}, the radial length 
		$$
		\|L_2(\alpha)\|=2-\|L_1(\alpha)\|=2-r_1(\alpha),\quad \forall \alpha\in[0,2\pi).
		$$
		Combining these facts, we see that $\partial P_2$ can be explicitly parametrized as 
		$$
		L_2(\alpha) = (2 - r_1(\alpha)) e^{i f(\alpha)},\quad\alpha\in[0,2\pi).
		$$
		This completes the proof.
	\end{proof}
	
	Under the identification between $[a, a+2\pi]/\{a, a+2\pi\}$ and $\mathbb{S}^1$, it is clear that the map $f$ introduced in \cref{lem: L2-formula-via-L1} defines a homeomorphism of $\mathbb{S}^1$. By composing the Pogorelov map $\Phi(x,F(x))$ with a reflection of $\mathbb{H}^3_P$ if necessary, we may assume that $f$ preserves the orientation of $\mathbb{S}^1$. Then, we have the following regularity result.

	\begin{lemma}\label{lem: f'-formula}
		The map $f$ belongs to $\operatorname{Diff}^1(\mathbb{S}^1)$. Moreover, the derivative 
		$$
		f'(\alpha)=\dfrac{r_1(\alpha)}{2-r_1(\alpha)}>0,\quad\forall \alpha\in[0,2\pi),
		$$ 
		is a Lipschitz function on $[0,2\pi]$.
	\end{lemma}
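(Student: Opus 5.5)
The plan is to compare arc lengths of the two boundary curves $\partial P_1$ and $\partial P_2$, using that $\overline{G}$ is an isometry of intrinsic metrics, and to read off $f'$ from the equality of speeds.

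\textbf{Step 1: regularity of $r_1$ and $L_2$.} By \cref{prop: Pi-contain-0-interior}, $P_1$ is a compact convex body in $\mathbb{R}^2$ with $0$ in its interior. Applying \cref{thm: interior-Lipschitz} with $n=2$, the radial function $r_1$ is Lipschitz on $[0,2\pi]$, so $L_1(\alpha)=r_1(\alpha)e^{i\alpha}$ is a Lipschitz curve. By continuity and compactness, $r_1$ takes values in a compact subinterval $[m,M]\subset(0,2)$, so both $r_1$ and $2-r_1$ are bounded away from $0$.

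By \cref{lem: L2-formula-via-L1}, $L_2=\overline{G}\circ L_1$, where $\overline{G}$ is the isometry of \cref{cor: G-extends-closure-G}. Since $d_{\overline{S_2}}\ge\|\cdot\|$, we get
$$
\|L_2(\alpha)-L_2(\beta)\|\le d_{\overline{S_2}}(L_2(\alpha),L_2(\beta))=d_{\overline{S_1}}(L_1(\alpha),L_1(\beta)).
$$
By \cref{prop: convex-surface-Lipschitz} (combined with \cref{lem: path-metric-boundary-removable} to pass to $\overline{S_1}$), the right-hand side is at most $C\|L_1(\alpha)-L_1(\beta)\|$. Hence $L_2$ is Lipschitz. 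Because $\|L_2\|=2-r_1\ge 2-M>0$, the argument function $f$ is also Lipschitz, hence absolutely continuous, and differentiable a.e.

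\textbf{Step 2: equality of speeds.} An isometry between length spaces preserves the length of every curve. In $\overline{S_i}$, the length of a curve with respect to $d_{\overline{S_i}}$ equals its Euclidean length, because the intrinsic metric is induced by $\mathbb{R}^3$. Therefore, for all $\theta_1<\theta_2$,
$$
l\bigl(L_1|_{[\theta_1,\theta_2]}\bigr)=l\bigl(L_2|_{[\theta_1,\theta_2]}\bigr),
$$
and since both curves are Lipschitz, $\|L_1'(\alpha)\|=\|L_2'(\alpha)\|$ for a.e.\ $\alpha$. Computing in polar form gives
$$
\|L_1'\|^2=(r_1')^2+r_1^2,\qquad \|L_2'\|^2=(r_1')^2+(2-r_1)^2(f')^2 .
$$
The $(r_1')^2$ terms cancel. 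Since $f$ is orientation preserving, $f'\ge 0$, and we obtain
$$
f'=\frac{r_1}{2-r_1}\quad\text{a.e.}
$$

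\textbf{Step 3: upgrade to $\operatorname{Diff}^1(\mathbb{S}^1)$.} Since $f$ is absolutely continuous and its a.e.\ derivative agrees with the continuous function $r_1/(2-r_1)$, we have
$$
f(\alpha)=f(0)+\int_0^\alpha \frac{r_1}{2-r_1}\,dt ,
$$
so $f$ is $C^1$ with $f'=r_1/(2-r_1)$ everywhere. This derivative is strictly positive, so $f^{-1}$ is also $C^1$ and $f\in\operatorname{Diff}^1(\mathbb{S}^1)$. Finally, $t\mapsto t/(2-t)$ is smooth on $[m,M]$ and $r_1$ is Lipschitz, so $f'$ is Lipschitz on $[0,2\pi]$.

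The main obstacle is Step 2: justifying that $\overline{G}$ preserves the Euclidean lengths of the boundary arcs, which live on $\partial P_i$ and not in $S_i$ itself. This requires that the length of a curve in $\overline{S_i}$ measured with $d_{\overline{S_i}}$ coincides with its Euclidean length, together with the Lipschitz control from Step 1 so that the a.e.\ speed computation is legitimate.
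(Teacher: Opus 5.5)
Your proposal is correct and follows essentially the same route as the paper. Both arguments obtain Lipschitz regularity of $L_1$ from \cref{thm: interior-Lipschitz}, transfer it to $L_2=\overline{G}\circ L_1$, equate the a.e.\ speeds $\sqrt{r_1^2+(r_1')^2}=\sqrt{(r_1')^2+(2-r_1)^2(f')^2}$, and upgrade the a.e.\ identity (whose right-hand side is continuous) to $f\in C^1$ with Lipschitz derivative. Your Euclidean Lipschitz bound for $L_2$ via $\|\cdot\|\le d_{\overline{S_2}}=d_{\overline{S_1}}$, and your justification of $ds_1=ds_2$ through preservation of curve lengths, are if anything slightly more careful than the paper's wording. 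For points of $\partial P_1$, the bound $d_{\overline{S_1}}\le C\|\cdot\|$ follows most directly from $d_{\overline{S_1}}\le d_{\partial P_1}$ together with \cref{lem: Bi-Lipschitz}.
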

	\begin{proof}
		We first prove that $f$ is Lipschitz continuous on $[0,2\pi]$. Since $P_1$ is a compact convex set containing the origin in its interior, by \cref{thm: interior-Lipschitz}, the parametrization $L_1(\alpha) = r_1(\alpha) e^{i\alpha}$ is a Lipschitz map on $[0,2\pi]$ with respect to the intrinsic path metric $d_{\partial P_1}$ of $\partial P_1$. According to \cref{lem: L2-formula-via-L1}, we have $L_2(\alpha) = \overline{G}(L_1(\alpha))$ for all $\alpha \in [0,2\pi]$. Since $\overline{G}:\overline{S_1}\to\overline{S_2}$ is an isometry with respect to their intrinsic path metrics, $L_2: [0,2\pi] \to \partial P_2$ is a Lipschitz map with respect to the intrinsic path metric $d_{\partial P_2}$ of $\partial P_2$.
        By \cref{lem: Bi-Lipschitz}, for each $i\in\{1,2\}$, the intrinsic path metric $d_{\partial P_i}$ of $\partial P_i$ is bi-Lipschitz equivalent to the ambient Euclidean metric. Therefore, we conclude that each map $L_i$ is Lipschitz continuous with respect to the ambient Euclidean metric.
		Recall that $L_2(\alpha) = (2 - r_1(\alpha)) e^{i f(\alpha)}$. Since $r_1:[0,2\pi]\to (0,2)$ is Lipschitz continuous and the image of $r_1$ lies in a compact interval within $(0,2)$, the function $\frac{1}{2 - r_1(\alpha)}$ is Lipschitz continuous on $[0,2\pi]$. Therefore, the map 
        $$
        \alpha \mapsto e^{i f(\alpha)}=\frac{L_2(\alpha)}{2 - r_1(\alpha)}
        $$
        itself is Lipschitz. It follows that $f$ is a Lipschitz function on $[0,2\pi]$.
		
		For almost every $\alpha \in [0,2\pi]$, the arc-length elements of $\partial P_1$ and $\partial P_2$ exist and are given by
		\begin{equation}\label{eq: ds1-elements}
			ds_1 = \|L_1'(\alpha)\| \, d\alpha = \sqrt{ r_1(\alpha)^2 + r_1'(\alpha)^2 } \, d\alpha
		\end{equation}
		and
		\begin{equation}\label{eq: ds2-elements}
			ds_2 = \|L_2'(\alpha)\| \, d\alpha = \sqrt{ r_1'(\alpha)^2 + (2 - r_1(\alpha))^2 f'(\alpha)^2 } \, d\alpha,
		\end{equation}
		respectively. Since $\overline{G}$ is an intrinsic isometry between $\partial P_1$ and $\partial P_2$, it follows that $ds_1 = ds_2$ for almost every $\alpha \in [0,2\pi]$. By the expressions \eqref{eq: ds1-elements} and \eqref{eq: ds2-elements}, we have the identity
		$$
		r_1(\alpha)^2 = (2 - r_1(\alpha))^2 f'(\alpha)^2 \quad \text{for a.e. }\alpha\in[0,2\pi].
		$$
		Since $f$ is assumed to preserve the orientation of $\mathbb{S}^1$, it follows that $f'(\alpha)\geq0$ for almost every $\alpha\in[0,2\pi]$, which yields
		\begin{equation}\label{eq: f_prime}
			f'(\alpha) = \frac{r_1(\alpha)}{2 - r_1(\alpha)}>0 \quad \text{for a.e. }\alpha\in[0,2\pi].
		\end{equation}
		Since $f$ is Lipschitz and the right-hand side of \eqref{eq: f_prime} is a continuous function of $\alpha$ on $[0,2\pi]$, it follows that $f$ is continuously differentiable on $\mathbb{S}^1$ and $f'(\alpha) = \frac{r_1(\alpha)}{2 - r_1(\alpha)}>0$ for all $\alpha\in[0,2\pi]$. Thus, $f \in \operatorname{Diff}^1(\mathbb{S}^1)$. The Lipschitz continuity of $f'$ follows directly from the fact that both $r_1(\alpha)$ and $\frac{1}{2 - r_1(\alpha)}$ are Lipschitz functions on $[0,2\pi]$. This completes the proof.
	\end{proof}
	
	For each isometry $\Gamma$ of $\mathbb{H}^3_P$ preserving the lower hemisphere, define $\gamma:\mathbb{S}^1\to \mathbb{S}^1$ to be the restriction of $\Gamma$ to the unit circle, i.e., $\Gamma(e^{i\alpha})=e^{i\gamma(\alpha)}$. We consider the Pogorelov map $\Phi(\Gamma(x),F(x))$. By \cref{lem: Expression-of-L1-gamma} and \cref{lem: f'-formula},
	\begin{equation}\label{eq: L1gamma-formula}
		L_{1,\Gamma}(\alpha)=\frac{2f'(\alpha)}{\gamma'(\alpha)+f'(\alpha)}e^{i\gamma(\alpha)}
	\end{equation}
	is a parametrization of $\partial P_{1,\Gamma}$. We now show that $f(\alpha)$ must be the restriction of some Möbius transformation of the unit disk to the unit circle $\mathbb{S}^1$.
	
	\begin{proposition}\label{prop: f -is-Möbius}
		Suppose that both $P_1$ and $P_2$ in $\mathbb{R}^2$ contain the origin in their interiors. Then the map $f$ is the restriction of some Möbius transformation $\Gamma_0$ on the unit open disk $\mathbb{D}$ to the unit circle, i.e., $\Gamma_0(e^{i\alpha})=e^{if(\alpha)}$ for all $\alpha\in[0,2\pi)$.
	\end{proposition}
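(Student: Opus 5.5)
The plan is to turn the convexity of the deformed faces $P_{1,\Gamma}$ and $P_{2,\Gamma}$, required for \emph{every} isometry $\Gamma$ preserving $\mathbb{S}^2_{-}$, into a Möbius-invariant differential inequality for $f$. I would then use the three-dimensional freedom in $\Gamma$ to force the Möbius-invariant "circle Schwarzian" of $f$ to vanish. A circle diffeomorphism whose circle Schwarzian vanishes is the boundary restriction of a Möbius transformation of $\mathbb{D}$.

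\textbf{Step 1 (polar convexity criterion).} For each such $\Gamma$, \eqref{eq: L1gamma-formula} together with \cref{lem: Expression-of-L1-gamma} and \cref{prop: Pi-contain-0-interior} shows that $P_{1,\Gamma}$ is a compact convex set with $0$ in its interior. Its boundary is given in the polar angle $\beta=\gamma(\alpha)$ by $r(\beta)=2f'/(\gamma'+f')$. Writing $g:=f\circ\gamma^{-1}$, one gets $\gamma'(\alpha)/f'(\alpha)=1/g'(\beta)$, so that
$$
u_\Gamma(\beta):=\frac{1}{r(\beta)}=\frac12+\frac{1}{2g'(\beta)} .
$$
A closed star-shaped curve $r(\beta)e^{i\beta}$ bounds a convex set iff $u+u''\geq 0$ in the sense of distributions. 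Since $f'$ is Lipschitz by \cref{lem: f'-formula}, $w:=1/g'$ is Lipschitz and $w''$ is a distribution of order at most one, so the condition reads
$$
w''+w+1\geq 0 \quad\text{on } \mathbb{S}^1 .
$$

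Running the same argument for $P_{2,\Gamma}$ uses \cref{lem: SumIs2}: its radius in direction $f(\alpha)$ is $2-r=2\gamma'/(\gamma'+f')$. This yields the identical inequality with $g$ replaced by $g^{-1}=\gamma\circ f^{-1}$. Alternatively one can apply the first argument to the Pogorelov map of $F^{-1}$ on $\partial X_2$, as in \cref{prop: Pi-contain-0-interior}. Either way, we obtain a pair of one-sided inequalities valid for $g=f\circ\gamma^{-1}$ with $\gamma$ an arbitrary boundary Möbius map.

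\textbf{Step 2 (pointwise form and normalization).} Heuristically, assume first that $f\in C^3$. We may also post-compose, replacing $g$ by $\phi\circ g$ for Möbius $\phi$: this amounts to changing the normalization, since we may move $F$ by an isometry preserving $\mathbb{S}^2_-$, and it does not affect the conclusion. Fix a point $\beta_0$ and normalize by post-composition so that $g''(\beta_0)=0$ while $g'(\beta_0)=\lambda>0$ is arbitrary.

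With $S_c(g):=S(g)+\tfrac12(g'^2-1)$, where $S$ is the Schwarzian, the two inequalities at $\beta_0$ become
$$
-\tfrac12(1+\lambda)^2\;\leq\; S_c(g)(\beta_0)\;\leq\;\tfrac12(1+\lambda)^2 .
$$
The first inequality comes from $g^{-1}$, using $S_c(g^{-1})\circ g=-S_c(g)/g'^2$. Letting $\lambda\to 0$ gives $|S_c(g)|\leq \tfrac12$ everywhere.

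Now use the cocycle rule $S_c(g\circ\phi)=(S_c(g)\circ\phi)\,\phi'^2$ together with $S_c(\phi)=0$ for Möbius $\phi$. Replacing $\gamma^{-1}$ by $\gamma^{-1}\circ\phi$ with $\phi'(\beta)$ arbitrarily large gives $|S_c(g)(\phi(\beta))|\leq 1/(2\phi'(\beta)^2)\to 0$. Hence $S_c(f)\equiv 0$, and solving this ODE shows that $f$ is the boundary value of a Möbius $\Gamma_0$. The orientation-preserving normalization made after \cref{lem: L2-formula-via-L1} matches $\Gamma_0$ with an automorphism of $\mathbb{D}$.

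\textbf{Main obstacle.} The difficulty is regularity: $f$ is only $C^{1,1}$, so $S_c(f)$ must be read as a distribution (a signed measure plus the $L^\infty$ part $-\tfrac32(f''/f')^2$). I would handle this in one of two ways. The first is to carry out Steps 1 and 2 directly at the level of measures, testing the distributional inequality against bump functions concentrated near $\beta_0$ and transporting them by $\phi$. The second, which I would try first, is to avoid the Schwarzian entirely and prove directly from the inequalities that $w=1/f'$ satisfies $w''+w=c$ for a constant $c\geq 1$. This follows by showing that the measure $\mu:=w''+w$ has a Möbius-equivariance that forces it to be a constant multiple of $d\beta$, and such $w$ are exactly $1/\phi'$ for Möbius $\phi$. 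Checking that the blow-up argument survives in this weak setting is where I expect most of the work.
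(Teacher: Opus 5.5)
Your formal argument is correct, and it takes a genuinely different route from the paper's. The following all check out:
\begin{itemize}
\item the identity $1/r=\tfrac12+\tfrac{1}{2g'}$ and the criterion convexity $\iff w''+w+1\ge 0$;
\item the fact that the $P_{2,\Gamma}$-face gives the same inequality for $g^{-1}=\gamma\circ f^{-1}$;
\item the two-sided bound $|S_c(g)(\beta_0)|\le\tfrac12(1+\lambda)^2$ under the normalization $g''(\beta_0)=0$, $g'(\beta_0)=\lambda$;
\item the cocycle and rescaling step.
\end{itemize}

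The gap is that this proves the proposition only for $f\in C^3$. All that is known is that $f'$ is Lipschitz (\cref{lem: f'-formula}), so $f'''$ need not exist at any point. The pointwise quantity $S_c(g)(\beta_0)$ on which your Step 2 rests is therefore undefined, and a priori $S_c(f)$ could carry a singular part. You identify this as the main obstacle but leave it open, and it is exactly where the work lies.

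The fix you would try first also does not work as stated. Under pre-composition, $w=1/f'$ transforms as a vector field, $w\mapsto (w\circ\psi)/\psi'$. Then $w''+w$ picks up a $w'$-term, so it has no M\"obius equivariance. The equivariant object is Bol's operator $w'''+w'=(w''+w)'$. Since $(w^2S_c(f))'=-w\,(w'''+w')$, that route is the Schwarzian route again, with the same regularity problem.

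Your measure-theoretic fix can be completed, as follows.
\begin{itemize}
\item Because $w''+w+1\ge0$, $w''$ is a signed measure. Hence $S_c(g)=-w''/w+(w'^2+1-w^2)/(2w^2)$ is a signed measure, and the convexity condition is exactly the measure inequality $S_c(g)\le\tfrac12\bigl((g''/g')^2+(1+g')^2\bigr)\,d\beta$.
\item So the singular part of $S_c(f)$ is $\le0$. Applying the same to $f^{-1}$ and using $S_c(f)=-f^{*}S_c(f^{-1})$ shows it is $\ge 0$. This identity holds for measures by the one-dimensional BV chain rule, and pull-back by an increasing bi-Lipschitz homeomorphism preserves both singularity and sign.
\item For the density $\sigma$, run your pointwise optimization at every point where $f''$ exists, using a countable dense set of post-compositions (the bound depends continuously on the $2$-jet of $\phi$). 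This gives $\sigma\le\tfrac12$ a.e.
\item Rescaling by pre-compositions then gives $\sigma\le0$, and the same argument for $f^{-1}$ gives $\sigma\ge0$.
\item Finally, $w''=(w'^2+1-w^2)/(2w)\in L^\infty$, which bootstraps to $w\in C^\infty$, and your ODE step applies.
\end{itemize}

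The paper avoids all of this by never differentiating three times. Its supporting-line inequality \eqref{eq: ComplexExpression} uses $f''$ only at the base point $\beta$, where it exists a.e., and only $f,f'$ at an arbitrary second point $\alpha$. Optimizing over pre-compositions gives \eqref{eq: FinalExpression1}. Inserting post-compositions then forces a square to vanish, yielding $\sin^2\tfrac{f(\alpha)-f(\beta)}{2}=f'(\alpha)f'(\beta)\sin^2\tfrac{\alpha-\beta}{2}$. This is the two-point, integrated form of $S_c(f)=0$: for smooth $f$, the logarithm of the ratio of the two sides is $-\tfrac16S_c(f)(\beta)(\alpha-\beta)^2+O(|\alpha-\beta|^3)$. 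The cross-ratio lemma then replaces your ODE step.

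Your route is more conceptual and isolates the M\"obius-invariant quantity. The paper's nonlocal formulation buys validity directly at $C^{1,1}$ regularity with elementary tools. It also obtains equality from the convexity of the $P_1$-type faces alone, without your reverse bound via $f^{-1}$.
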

	\begin{proof}
		\textbf{Step 1. The convexity inequality for $\Phi(\Gamma(x), F(x))$.}
		
		Fix $z_0=r_0e^{i\theta_0}$ in the unit open disk $\mathbb{D}$. Let $\Gamma(z)$ be a Möbius transformation of $\mathbb{D}$ mapping $z_0$ to $0$. It follows that $\Gamma(z)=e^{i\theta}\frac{z-z_0}{1-\overline{z_0}z}$ for some $\theta\in[0,2\pi)$. The transformation $\Gamma$ extends uniquely to an orientation-preserving isometry of $\mathbb{H}^3_P$ that maps $\mathbb{S}^2_{-}$ onto itself. Denote by $\gamma:\mathbb{S}^1\to \mathbb{S}^1$ the restriction of $\Gamma$ to the unit circle, i.e., $\Gamma(e^{i\alpha})=e^{i\gamma(\alpha)}$. Then we have
		$$
		\gamma'(\alpha)=\frac{1-\|z_0\|^2}{\|e^{i\alpha}-z_0\|^2}=\frac{1-r_0^2}{1+r_0^2-2r_0\cos(\alpha-\theta_0)},\quad\forall\alpha\in[0,2\pi).
		$$
		We have shown that $P_{1,\Gamma}$ is a compact convex set in $\mathbb{R}^2$ containing $0$ in its interior. We now use up to second-order derivatives of $f$ to characterize the convexity of $P_{1,\Gamma}$. It is well known that the convexity of a planar domain $U$ is equivalent to the existence of a supporting line at every boundary point of $U$. Since $f'(\alpha)$ is Lipschitz continuous, the second derivative $f''(\alpha)$ exists for almost every $\alpha\in[0,2\pi)$. Let $L_{1,\Gamma}(\beta)$ be defined as in \eqref{eq: L1gamma-formula}. It follows that for almost every $\beta\in [0,2\pi)$, the tangent vector $L_{1,\Gamma}'(\beta)$ exists, and the line parametrized by $t \mapsto L_{1,\Gamma}(\beta)+tL_{1,\Gamma}'(\beta)$ is a supporting line of $P_{1,\Gamma}$ at the point $L_{1,\Gamma}(\beta)$.
		
		Since $0$ lies in the interior of $P_{1,\Gamma}$ and the boundary $\partial P_{1,\Gamma}$ is parametrized by $L_{1,\Gamma}(\beta)$ in the counterclockwise order as $\beta$ increases from $0$ to $2\pi$, the inner product of the vector $L_{1,\Gamma}(\alpha)-L_{1,\Gamma}(\beta)$ with the inward normal vector $i L_{1,\Gamma}'(\beta)$ must be nonnegative for every $\alpha$ and almost every $\beta$. In terms of complex arithmetic, this is equivalent to 
		\begin{equation}\label{eq: ComplexExpression}
			\operatorname{Im}\left[ (L_{1,\Gamma}(\alpha) - L_{1,\Gamma}(\beta)) \overline{L_{1,\Gamma}'(\beta)} \right] \geq 0, \quad \text{ for a.e. } \alpha,\beta \in [0,2\pi).
		\end{equation}
		Substituting the expression \eqref{eq: L1gamma-formula} for $L_{1,\Gamma}$ into \eqref{eq: ComplexExpression} and multiplying both sides by the strictly positive factor 
		$$
		\frac{\big(f'(\beta)+\gamma'(\beta)\big)^2\big(f'(\alpha)+\gamma'(\alpha)\big)}{4},
		$$
		we obtain
		\begin{equation}\label{eq: FirstExpression1}
			\begin{aligned}
				&f'(\alpha) \left( {f''(\beta)\gamma'(\beta) - f'(\beta)\gamma''(\beta)} \right) \sin(\gamma(\alpha)-\gamma(\beta))   \\
				-& f'(\alpha){f'(\beta)}\gamma'(\beta)\big(\gamma'(\beta)+f'(\beta)\big) \cos(\gamma(\alpha)-\gamma(\beta))+ f'(\beta)^2\gamma'(\beta)\big(\gamma'(\alpha)+f'(\alpha)\big) \geq 0.
			\end{aligned}
		\end{equation}

		Next, we insert the explicit formula $\gamma'(\alpha)=\frac{1-r_0^2}{1+r_0^2-2r_0\cos(\alpha-\theta_0)}$ into \eqref{eq: FirstExpression1}. For simplicity, set $k(\alpha)=\frac{1}{\gamma'(\alpha)}$. It follows that 
		\begin{equation}\label{eq: k-and-gamma}
			\gamma'(\alpha)=\frac{1}{k(\alpha)}, \quad \gamma''(\alpha)=-\frac{k'(\alpha)}{k(\alpha)^2}.
		\end{equation}
		Since $\gamma$ is the restriction of the Möbius transformation $\Gamma$ to $\mathbb{S}^1$, we have the trigonometric identities
		\begin{equation}\label{eq: cos-and-sin}
			\begin{aligned}
				&\cos\big(\gamma(\alpha)-\gamma(\beta)\big) = \frac{k(\alpha)k(\beta) - (1-\cos(\alpha-\beta))}{k(\alpha)k(\beta)},\\
				&\sin\big(\gamma(\alpha)-\gamma(\beta)\big) = \frac{k(\beta)\sin(\alpha-\beta) + k'(\beta)(1-\cos(\alpha-\beta))}{k(\alpha)k(\beta)}.
			\end{aligned}
		\end{equation}
		Substituting equations \eqref{eq: k-and-gamma} and \eqref{eq: cos-and-sin} into \eqref{eq: FirstExpression1} and multiplying both sides of the inequality by the positive factor $k(\alpha)k(\beta)^3$, we obtain
		\begin{equation}\label{eq: SecondExpression1}
			\begin{aligned}
				&f'(\alpha) \big[ f''(\beta) k(\beta) + f'(\beta) k'(\beta)\big]\big[ k(\beta)\sin(\alpha-\beta) + k'(\beta)(1-\cos(\alpha-\beta)) \big]\\
				-&f'(\alpha)f'(\beta) \big(1+f'(\beta)k(\beta)\big) \big[ k(\alpha)k(\beta) - (1-\cos(\alpha-\beta)) \big]\\
				+&f'(\beta)^2 k(\beta)^2 \big[1+f'(\alpha)k(\alpha)\big]\geq0.
			\end{aligned}
		\end{equation}
		
		Let $A=\frac{1+r_0^2}{1-r_0^2}$ and $B=\frac{2r_0}{1-r_0^2}$. Then $k(\beta)$ and its derivative can be expressed as
		\begin{equation}\label{eq: k-expression}
			k(\beta)=A-B\cos(\beta-\theta_0),\quad k'(\beta)=B\sin(\beta-\theta_0).
		\end{equation}
		Consequently, we can deduce the relations
		\begin{equation}\label{eq: kalpha-and-k'beta}
			\begin{aligned}
				&k(\alpha)=A\big(1-\cos(\alpha-\beta)\big)+\cos(\alpha-\beta)k(\beta)+\sin(\alpha-\beta)k'(\beta),\\
				&k'(\beta)^2=-k(\beta)^2+2Ak(\beta)-1.
			\end{aligned}
		\end{equation} 
		Insert the expression \eqref{eq: kalpha-and-k'beta} into \eqref{eq: SecondExpression1} and divide both sides by $k(\beta)$. After rearranging the terms with respect to $k(\beta)$ and $k'(\beta)$, we obtain 
		\begin{equation}\label{eq: ThirdExpression1}
			\begin{aligned}
				V_1 k(\beta)+V_2k'(\beta)+V_3 &:= \left[f'(\alpha)f''(\beta)\sin(\alpha-\beta)-f'(\alpha)f'(\beta)+f'(\beta)^2\right]k(\beta)\\
				&\quad +\left[f'(\alpha)f''(\beta)\big(1-\cos(\alpha-\beta)\big)\right]k'(\beta)\\
				&\quad +\big[Af'(\alpha)f'(\beta)+f'(\alpha)f'(\beta)^2\big]\big(1-\cos(\alpha-\beta)\big)\\
				&\geq 0.
			\end{aligned}
		\end{equation}
		Substituting the expression \eqref{eq: k-expression} of $k(\beta)$ and $k'(\beta)$ into \eqref{eq: ThirdExpression1}, the inequality transforms into
		\begin{equation}\label{eq: all-theta0}
			AV_1+V_3- B V_1 \cos(\beta-\theta_0) +B V_2 \sin(\beta-\theta_0) \geq 0.
		\end{equation}
		Since $P_{1,\Gamma}$ is convex for any Möbius transformation $\Gamma$ of $\mathbb{D}$, the inequality \eqref{eq: all-theta0} holds for all possible choices of $\theta_0\in[0,2\pi)$. This implies that
		\begin{equation}\label{eq: after-all-theta0}
		    AV_1+V_3 \geq B\sqrt{V_1^2+V_2^2}.
		\end{equation}
		Dividing both sides by $A$ and substituting the expression of $V_3$ in \eqref{eq: after-all-theta0}, we obtain
		$$
		V_1+f'(\alpha)f'(\beta)\big(1-\cos(\alpha-\beta)\big)+\frac{1}{A}f'(\alpha)f'(\beta)^2\big(1-\cos(\alpha-\beta)\big) \geq \frac{B}{A}\sqrt{V_1^2+V_2^2}.
		$$
		Note that the terms $V_1$ and $V_2$ are independent of $r_0$. Letting $r_0\to 1$, we observe that $\frac{B}{A}\to 1$ and $\frac{1}{A}\to 0$. Thus, passing to the limit yields
		$$
		V_1+f'(\alpha)f'(\beta)\big(1-\cos(\alpha-\beta)\big) \geq \sqrt{V_1^2+V_2^2}.
		$$
		Finally, substituting the explicit expressions for $V_1$ and $V_2$ back into the above inequality and carefully rearranging the terms, we arrive at the following convexity inequality for the curves $L_{1,\Gamma}$:
		\begin{equation}\label{eq: FinalExpression1}
			\frac{f'(\beta)}{f'(\alpha)} \geq \left[ \frac{f''(\beta)}{f'(\beta)} \sin\left(\frac{\alpha-\beta}{2}\right) - \cos\left(\frac{\alpha-\beta}{2}\right) \right]^2,\quad\text{ for a.e. } \alpha, \beta \in [0,2\pi).
		\end{equation}
		
		\textbf{Step 2. The convexity inequality for $\Phi(x,\Gamma\circ F(x))$.}
		
		Let $\Gamma$ be an orientation-preserving isometry of $\mathbb{H}^3_P$ mapping $\mathbb{S}^2_{-}$ onto itself. Consider the Pogorelov map $\Phi(x,\Gamma\circ F(x))$. Define 
    	$$
    	S_{1}^\Gamma = \{P_1(x, \Gamma\circ F(x)) : x \in \partial X_1\}, \quad S_{2}^\Gamma = \{P_2(x, \Gamma\circ F(x)) : x \in \partial X_1\}.
    	$$
    	By an argument similar to that in \cref{sec: pogorelov}, we can extend each $S_{i}^\Gamma$ to a locally convex surface $\tilde{S}_{i}^\Gamma=\partial K_{i}^\Gamma\cap \mathbb{R}^3_{+}$, where $K_{i}^\Gamma$ is a compact convex body in $\overline{\mathbb{R}^3_{+}}$. Let $P_{i}^\Gamma=K_{i}^\Gamma\cap\mathbb{R}^2$. Then $P_{i}^\Gamma$ is a compact convex set in $\mathbb{R}^2$.
        Let 
        $$
        L_1^{\Gamma}(\alpha)=r_1^{\Gamma}(\alpha)e^{i\alpha},\quad \alpha\in[0,2\pi),
        $$
        be the polar parametrization of $\partial P_{1}^\Gamma$. It is clear that 
        $$
        L_2^{\Gamma}(\alpha):=(2-r_1^{\Gamma}(\alpha)) e^{i\gamma(f(\alpha))},\quad\alpha\in[0,2\pi),
        $$
        is a parametrization of $\partial P_{2}^\Gamma$, where $\gamma$ is the restriction of $\Gamma$ to the unit circle $\mathbb{S}^1$.
		Let $g(\alpha) = \gamma(f(\alpha))$ for all $\alpha \in [0,2\pi)$. Then $g(\alpha)$ also satisfies the inequality \eqref{eq: FinalExpression1}. That is,
		\begin{equation}\label{eq: Expression-for-g}
			\frac{g'(\beta)}{g'(\alpha)} \geq \left[ \frac{g''(\beta)}{g'(\beta)} \sin\left(\frac{\alpha-\beta}{2}\right) - \cos\left(\frac{\alpha-\beta}{2}\right) \right]^2\quad\text{ for a.e. } \alpha, \beta \in [0,2\pi).
		\end{equation}
		By the chain rule, we compute that
		$$
		g'(\beta) = \gamma'(f(\beta)) f'(\beta),\quad g''(\beta) = \gamma''(f(\beta)) [f'(\beta)]^2 + \gamma'(f(\beta)) f''(\beta).
		$$
		Substituting the expressions for $g'$ and $g''$ into the inequality \eqref{eq: Expression-for-g}, we obtain
		\begin{equation}\label{eq: FirstExpGamma}
			\frac{\gamma'(f(\beta)) f'(\beta)}{\gamma'(f(\alpha)) f'(\alpha)} \geq \left[ \left( \frac{\gamma''(f(\beta))}{\gamma'(f(\beta))} f'(\beta) + \frac{f''(\beta)}{f'(\beta)} \right) \sin\left(\frac{\alpha-\beta}{2}\right) - \cos\left(\frac{\alpha-\beta}{2}\right) \right]^2.
		\end{equation}
		
		Let $\Delta_1=\alpha-\beta$ and $\Delta_2=f(\alpha)-f(\beta)$. Define 
		$$
		Q = \frac{f''(\beta)}{f'(\beta)} \sin \left(\frac{\Delta_1}{2}\right) - \cos \left(\frac{\Delta_1}{2}\right),\quad R = f'(\beta) \sin \left(\frac{\Delta_1}{2}\right).
		$$
		Inserting the expression $\gamma' = 1/k$ and $\gamma''= -k'/k^2$ into \eqref{eq: FirstExpGamma} yields
		$$
		k(f(\alpha)) k(f(\beta)) f'(\beta) \ge f'(\alpha) \left[ k(f(\beta)) Q - k'(f(\beta)) R \right]^2.
		$$
		Utilizing the relations in \eqref{eq: kalpha-and-k'beta} once more, we can expand the above inequality and obtain that
		\begin{equation}\label{eq: SecondExpGamma}
			U_2 \cdot k(f(\beta))^2 + U_1 \cdot k(f(\beta))k'(f(\beta)) + A U_0 \cdot  k(f(\beta)) + f'(\alpha)R^2 \ge 0,
		\end{equation}
		where the coefficients
		\begin{equation}\label{eq: UiExpression}
			\begin{aligned}
				U_2 &:= f'(\beta)\cos (\Delta_2) - f'(\alpha)(Q^2 - R^2),\\
				U_1 &:= f'(\beta)\sin (\Delta_2) + 2f'(\alpha)QR,\\
				U_0 &:= f'(\beta)\big(1-\cos(\Delta_2)\big) - 2f'(\alpha)R^2
			\end{aligned}
		\end{equation}
        depend on $f$, $\alpha$, and $\beta$, but not on the chosen Möbius transformation. Let $\phi=f(\beta)-\theta_0$. Observe that
		$$
		\lim_{r_0\to 1^-}\frac{k(f(\beta))}{A}=1-\cos\phi, \quad \lim_{r_0\to 1^-}\frac{k'(f(\beta))}{A}= \sin\phi.
		$$
		Dividing both sides of \eqref{eq: SecondExpGamma} by $A^2(1-\cos \phi)$ when $\phi\neq0$ and letting $r_0\to 1^-$, we deduce that
		$$
		U_2(1-\cos \phi)+U_1\sin\phi+U_0 \geq 0.
		$$
		Since $\Gamma$ is arbitrary, this inequality holds for almost all $\theta_0\in[0,2\pi)$ and consequently for almost all $\phi\in[0,2\pi)$. This implies the inequality
		$$
		U_2+U_0 \geq \sqrt{U_1^2+U_2^2}.
		$$
		Substitute the expressions of $U_i$ for $i\in\{0,1,2\}$ in \eqref{eq: UiExpression} into the above inequality. After suitable simplification, we obtain
		$$
		-4 f'(\alpha) f'(\beta) \left[ Q \sin\left(\frac{\Delta_2}{2}\right) + R \cos\left(\frac{\Delta_2}{2}\right) \right]^2 \ge 0,\quad \text{ for a.e. } \alpha, \beta \in [0,2\pi).
		$$
		Since the left-hand side of this inequality is nonpositive, it must vanish for almost every pair $(\alpha,\beta)$. Substituting the definitions of $Q$ and $R$ back into this identity yields the equality
		\begin{equation}\label{eq: ThirdExpGamma}
			\begin{aligned}
				&\left( \frac{f''(\beta)}{f'(\beta)}\sin\left(\frac{\alpha-\beta}{2}\right) - \cos\left(\frac{\alpha-\beta}{2}\right) \right) \sin\left(\frac{f(\alpha)-f(\beta)}{2}\right) \\
				= &- f'(\beta)\sin\left(\frac{\alpha-\beta}{2}\right) \cos\left(\frac{f(\alpha)-f(\beta)}{2}\right),\quad \text{ for a.e. } \alpha, \beta \in [0,2\pi).
			\end{aligned}
		\end{equation}
		Divide both sides of \eqref{eq: ThirdExpGamma} by $\sin\left(\frac{\alpha-\beta}{2}\right) \sin\left(\frac{f(\alpha)-f(\beta)}{2}\right)$. It follows that
		$$
		\frac{f''(\beta)}{f'(\beta)}  = \cot\left(\frac{\alpha-\beta}{2}\right) - f'(\beta) \cot\left(\frac{f(\alpha)-f(\beta)}{2}\right), \quad \text{ for a.e. } \alpha, \beta \in [0,2\pi).
		$$
		Notice that the left-hand side is independent of $\alpha$. Therefore, differentiating the right-hand side with respect to $\alpha$ yields
		$$
		-\frac{1}{2} \frac{1}{\sin^2\left(\frac{\alpha-\beta}{2}\right)} + \frac{1}{2} \frac{f'(\alpha) f'(\beta)}{\sin^2\left(\frac{f(\alpha)-f(\beta)}{2}\right)} = 0,\quad \text{ for a.e. } \alpha, \beta \in [0,2\pi),
		$$
		which can be reformulated as
		$$
		\sin^2\left(\frac{f(\alpha)-f(\beta)}{2}\right) = f'(\alpha) f'(\beta) \sin^2\left(\frac{\alpha-\beta}{2}\right), \quad \text{ for a.e. } \alpha, \beta \in [0,2\pi).
		$$
		Since both sides of the above equation are continuous functions, it follows that the equality holds for all $\alpha,\beta\in[0,2\pi)$. By \cref{lem: Möbius-Characterization}, $f$ is the restriction of some Möbius transformation $\Gamma_0$ to the unit circle, which completes the proof.
	\end{proof}
	\begin{lemma}\label{lem: Möbius-Characterization}
		Suppose that $f \in \operatorname{Diff}^1(\mathbb{S}^1)$ is orientation-preserving and satisfies the equation
		\begin{equation}\label{eq: Möbius-Characterization}
			\sin^2\left(\frac{f(\alpha)-f(\beta)}{2}\right)= {f'(\alpha) f'(\beta)} \sin^2\left(\frac{\alpha-\beta}{2}\right), \quad\forall\alpha,\beta\in[0,2\pi).
		\end{equation} 
        Then $f$ is the restriction to the unit circle of a Möbius transformation $\Gamma$ on the unit disk.
	\end{lemma}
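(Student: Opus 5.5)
The natural strategy is to reduce to the case $f(0)=0$ and then exploit the functional equation \eqref{eq: Möbius-Characterization} with one variable fixed to read off $f$ explicitly. We first normalize. Post-composing $f$ with a rotation, which is itself the restriction of a Möbius transformation of $\mathbb{D}$, preserves \eqref{eq: Möbius-Characterization}, since both $f(\alpha)-f(\beta)$ and $f'$ are unchanged. We may also precompose $f$ with a Möbius transformation $\gamma$ of $\mathbb{D}$. Indeed, every such $\gamma$ satisfies \eqref{eq: Möbius-Characterization} itself; this is the identity $\|\gamma(u)-\gamma(v)\|^2=\|D\gamma(u)\|\,\|D\gamma(v)\|\,\|u-v\|^2$ for Möbius maps, with chordal distance $\|e^{i\alpha}-e^{i\beta}\|=2|\sin\frac{\alpha-\beta}{2}|$. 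The equation is also compatible with composition by the chain rule, since $(f\circ\gamma)'=f'(\gamma)\gamma'$ multiplies correctly. Hence the set of solutions is invariant under left and right composition with Möbius maps. By choosing a suitable three-point normalization, it therefore suffices to prove that a solution fixing three points, say $0$, $2\pi/3$ and $4\pi/3$ (or another convenient triple), is the identity.

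The key step is to rewrite \eqref{eq: Möbius-Characterization} through a Cayley-type coordinate. Fix $\beta$ with $f(\beta)=\beta$. Take square roots, using orientation preservation to choose signs consistently: on the arc where $\alpha-\beta\in(0,2\pi)$, continuity and monotonicity give $f(\alpha)-f(\beta)\in(0,2\pi)$, so both sines are positive. This yields
$$\sin\Big(\tfrac{f(\alpha)-\beta}{2}\Big)=\sqrt{f'(\alpha)f'(\beta)}\,\sin\Big(\tfrac{\alpha-\beta}{2}\Big).$$
Set $u(\alpha)=\cot\frac{\alpha-\beta}{2}$ and $v(\alpha)=\cot\frac{f(\alpha)-\beta}{2}$, which are real coordinates on $\mathbb{S}^1\setminus\{e^{i\beta}\}$ (stereographic projection from $e^{i\beta}$). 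A direct computation gives $\frac{d}{d\alpha}\cot\frac{f(\alpha)-\beta}{2}=-\frac{f'(\alpha)}{2\sin^2((f(\alpha)-\beta)/2)}$, and substituting the equation turns this into
$$\frac{dv}{d\alpha}=\frac{1}{f'(\beta)}\,\frac{du}{d\alpha}.$$
So $v=u/f'(\beta)+c$ for a constant $c$. In the stereographic coordinate from the fixed point $e^{i\beta}$, the map $f$ is therefore affine, $u\mapsto u/f'(\beta)+c$, and any real affine map of the line in this coordinate is the boundary restriction of a Möbius transformation of $\mathbb{D}$ fixing $e^{i\beta}$. This proves the lemma directly, and the normalization by three points is then unnecessary. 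We only need some fixed point $\beta$ of $f$.

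The main obstacle is producing a fixed point. I would post-compose $f$ with a rotation $R_\theta$, using $\theta=-f(0)$, so that $0$ becomes fixed. By the invariance noted above, $R_\theta\circ f$ still satisfies \eqref{eq: Möbius-Characterization}, and it is orientation preserving and $C^1$. The argument above then shows that $R_\theta\circ f$ is Möbius, hence so is $f$. The remaining care lies in the sign of the square root at $\alpha=\beta$ and the use of $f'>0$ there, which holds since both sides of the equation are smooth and $f\in\operatorname{Diff}^1$; continuity extends the identity across the single excluded point.
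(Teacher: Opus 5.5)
Your proof is correct, but it takes a different route from the paper.

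The paper never differentiates. Writing $F(e^{i\theta})=e^{if(\theta)}$, it reads the identity as $|F(z)-F(w)|^2=f'(\alpha)f'(\beta)|z-w|^2$. The derivative factors cancel in $|[z_1,z_2,z_3,z_4]|$, so $F$ preserves absolute values of cross-ratios. The paper then picks a M\"obius map $\Gamma$ of $\mathbb{D}$ agreeing with $F$ at $F^{-1}(1),F^{-1}(i),F^{-1}(-1)$, and forces $F(w)=\Gamma(w)$ by intersecting Apollonius circles with the unit circle.

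You instead use the identity only for one fixed $\beta$ with $f(\beta)=\beta$. This turns it into $\frac{d}{d\alpha}\cot\frac{f(\alpha)-\beta}{2}=\frac{1}{f'(\beta)}\frac{d}{d\alpha}\cot\frac{\alpha-\beta}{2}$ on the connected interval $(\beta,\beta+2\pi)$, which integrates to an explicit affine map in the Cayley coordinate.

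Your route is shorter, gives $\Gamma$ in closed form, and shows that the identity for a single $\beta$ already suffices. The paper's route uses the identity for all pairs, but it needs no calculus, only the product structure $\phi(z)\phi(w)$ of the factor.

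Some remarks in your write-up are unnecessary or slightly imprecise:
\begin{itemize}
\item The square root and the sign discussion can be dropped, since your substitution uses only the squared identity.
\item The M\"obius invariance of the solution set is never used; only the trivial rotation invariance is.
\item The map $u\mapsto u/f'(\beta)+c$ conjugates to a M\"obius map of $\mathbb{D}$ because its slope is positive (a negative slope would reverse orientation), not because every real affine map does.
\end{itemize}

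What the argument actually uses is:
\begin{itemize}
\item $f'>0$, to cancel $f'(\alpha)$;
\item injectivity of $f$, so that $v$ is $C^1$ on $(\beta,\beta+2\pi)$;
\item the fact that both $f$ and the constructed map fix $e^{i\beta}$, which covers the one excluded point.
\end{itemize}
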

	\begin{remark}
    	The conclusion of the preceding lemma is a classical result. Indeed, the density $\frac{|d\alpha\,d\beta|}{\sin^2((\alpha-\beta)/2)}$ is, up to a constant factor, the Liouville measure on the space of unoriented geodesics in $\mathbb{H}^2$. Equation \eqref{eq: Möbius-Characterization} states precisely that $f$ preserves this measure. Since the Liouville measure recovers the cross-ratio, it follows that $f$ preserves the cross-ratio and, consequently, is the restriction to the unit circle of a Möbius transformation of the unit disk. For the sake of completeness, we provide a direct proof of this lemma here.
    \end{remark}
	\begin{proof}
		Consider $\mathbb{S}^1$ as the unit circle of the complex plane. For any two points $z = e^{i\alpha}$ and $w = e^{i\beta}$ on $\mathbb{S}^1$, we have
		$$
		|z - w|^2 = 4\sin^2\left(\frac{\alpha-\beta}{2}\right).
		$$
		Define the homeomorphism $F: \mathbb{S}^1 \to \mathbb{S}^1$ by $F(e^{i\theta}) = e^{if(\theta)}$. Then the equation \eqref{eq: Möbius-Characterization} can be rewritten as
		\begin{equation}\label{eq: Möbius-Characterization1}
			|F(z) - F(w)|^2 = f'(\alpha)f'(\beta) |z - w|^2.
		\end{equation}
		For any four distinct points $z_1, z_2, z_3, z_4 \in \mathbb{S}^1$, we define the cross-ratio $[z_1, z_2, z_3, z_4]=\frac{(z_1 - z_3) (z_2 - z_4)}{(z_1 - z_2) (z_3 - z_4)}$. By \eqref{eq: Möbius-Characterization1}, we have
		\begin{equation}\label{eq: Möbius-Characterization2}
			\begin{aligned}
				\Big|[F(z_1),F(z_2),F(z_3),F(z_4)]\Big|^2&=\frac{|F(z_1) - F(z_3)|^2 |F(z_2) - F(z_4)|^2}{|F(z_1) - F(z_2)|^2 |F(z_3) - F(z_4)|^2}\\
				&= \frac{|z_1 - z_3|^2 |z_2 - z_4|^2}{|z_1 - z_2|^2 |z_3 - z_4|^2}=\big|[z_1, z_2,z_3, z_4]\big|^2
			\end{aligned}
		\end{equation}
		This implies that the map $F$ preserves the absolute value of the cross-ratio for any four points on the unit circle. Let $w_1=F^{-1}(1)$, $w_2=F^{-1}(i)$, and $w_3=F^{-1}(-1)$. Then, there exists a Möbius transformation $\Gamma$ on $\mathbb{D}$ such that $\Gamma(w_i)=F(w_i)$ for each $i\in\{1,2,3\}$. Since $\Gamma$ preserves the cross-ratio, for each $w\in\mathbb{S}^1\setminus\{w_1,w_2,w_3\}$, by \eqref{eq: Möbius-Characterization2} we have
		$$
		\begin{aligned}
			\Big|[F(w_i),F(w_j),F(w_k),F(w)]\Big|&=\big|[w_i, w_j,w_k, w]\big|\\
			&=\Big|[\Gamma(w_i),\Gamma(w_j),\Gamma(w_k),\Gamma(w)]\Big|\\
			&=\Big|[F(w_i),F(w_j),F(w_k),\Gamma(w)]\Big|,
		\end{aligned}  
		$$
		where $(i,j,k)$ is a permutation of $(1,2,3)$. By our choices of $F(w_l)$ for $l\in\{1,2,3\}$, it then follows that
		$$
		\frac{|F(w)-1|}{|F(w)+1|}=\frac{|\Gamma(w)-1|}{|\Gamma(w)+1|}\quad\text{and}\quad \frac{|F(w)-1|}{|F(w)-i|}=\frac{|\Gamma(w)-1|}{|\Gamma(w)-i|}.
		$$
		Let $c>0$. Note that the equation $\frac{|z-1|}{|z+1|}=c$ describes an Apollonius circle with its center on the real axis when $c\neq1$ and describes the imaginary axis when $c=1$. It follows that $F(w)$ must coincide with either $\Gamma(w)$ or its complex conjugate $\overline{\Gamma(w)}$. In either case, we have $|F(w)-1|=|\Gamma(w)-1|$. Combined with the identity $\frac{|F(w)-1|}{|F(w)-i|}=\frac{|\Gamma(w)-1|}{|\Gamma(w)-i|}$, we obtain that $|F(w)-i|=|\Gamma(w)-i|$. This implies that $F(w)=\Gamma(w)$. Thus, $F(w)=\Gamma(w)$ for all $w\in\mathbb{S}^1$, which completes the proof.
	\end{proof}

	\begin{theorem} 
		Let $X_1$ and $X_2$ be two closed, noncompact convex subsets of dimension at least $2$ in $\mathbb{H}^3_P$ such that for each $i\in\{1,2\}$, the set $X_i^P=\overline{X}_i\cap\partial \mathbb{H}^3_P$ consists of a single disk and a set of zero one-dimensional Hausdorff measure. Then any isometry between the boundaries of $X_1$ and $X_2$ extends to an isometry of $\mathbb{H}^3_P$.
	\end{theorem}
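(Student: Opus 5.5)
We assemble the single-disk theorem from the preceding results. First we normalize by isometries of $\mathbb{H}^3_P$ so that both $X_1^P$ and $X_2^P$ are $\overline{\mathbb{S}^2_{-}}$ together with an $\mathscr{H}^1$-null set. If the interior of $X_1$ (or of $X_2$, by symmetry with $F^{-1}$) misses $0$, \cref{subsec:single-half-sapce} already shows $\partial X_1=\partial X_2$ is the hyperbolic plane, and the isometry extends. So we assume both $X_i$ contain $0$ in their interiors.

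We then run the framework of \cref{sec: pogorelov} and obtain the locally convex extensions $\tilde S_i$ and compact convex bodies $K_i\subset\overline{\mathbb{R}^3_+}$ with $P_i=\partial K_i\cap\mathbb{R}^2$. By \cref{prop: Pi-contain-0-interior}, each $P_i$ contains $0$ in its interior. By \cref{cor: G-extends-closure-G}, $G$ extends to an isometry $\overline G:\overline{S_1}\to\overline{S_2}$. By \cref{lem: L2-formula-via-L1}, \cref{lem: f'-formula} and \cref{prop: f -is-Möbius}, the boundary correspondence $\partial P_1\to\partial P_2$ induced by $\overline G$ has angular part $f=\Gamma_0|_{\mathbb{S}^1}$ for a Möbius transformation $\Gamma_0$ of $\mathbb{D}$. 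We may precompose the whole setup with a reflection if needed so that $f$ preserves orientation.

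The key step is to use $\Gamma_0$ to make $P_1$ and $P_2$ congruent. Replace $F$ by $\Gamma_0^{-1}\circ F$, which is legitimate since $\Gamma_0^{-1}$ preserves $\mathbb{S}^2_-$ and we only need to extend the composite. We would first try to redo the computation of \cref{lem: L2-formula-via-L1} and \cref{lem: f'-formula} for this new map, where the angular map becomes $\gamma_0^{-1}\circ f=\mathrm{id}$. Then $f'\equiv 1$ forces $r_1\equiv 1$, so $P_1$ and $P_2$ are both the closed unit disk and $\overline G$ restricts to the identity on the unit circle; this is exactly condition \eqref{eq: condition-of-two-circle}. We must check that $0$ stays interior to the new convex set, which \cref{lem: Expression-of-L1-General-gamma} guarantees. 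Once this holds, define the closed surfaces $\Sigma_i=\overline{S_i}\cup P_i=\partial K_i$ and extend $\overline G$ to $\Sigma_1\to\Sigma_2$ by the identity on the flat disk $P_1=P_2$.

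Next we verify that this glued map is an intrinsic isometry. Paths in $\Sigma_i$ split into pieces in $\overline{S_i}$ and pieces in the disk. In the disk the map is Euclidean-isometric. For points of $\overline{S_i}$, we have $d_{\overline{S_i}}=d_{S_i}$ by \cref{lem: path-metric-boundary-removable} and \cref{thm: zero-H1-dont-change-metric}. A shortest path can be decomposed at its visits to the circle, with both sides preserving length, so $d_{\Sigma_2}(\tilde G x,\tilde G y)\le d_{\Sigma_1}(x,y)$, and symmetrically for the inverse. Pogorelov's \cref{thm: Pogorelov} then gives $T\in\operatorname{Isom}(\mathbb{R}^3)$ with $\tilde G=T$ on $\Sigma_1$. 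In particular $P_2(x,F(x))=T(P_1(x,F(x)))$ on $\partial X_1$, and \cref{prop: RnIsometry-to-HnIsometry} yields a hyperbolic isometry $A$ with $F=A$ on $\partial X_1$. Undoing the normalizations finishes the proof.

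The main obstacle is the normalization step: confirming that after replacing $F$ by $\Gamma_0^{-1}\circ F$ the boundary curves really become the identical unit circle. This is where \cref{lem: f'-formula} is reused; the worry is whether the derivative identity $f'=r_1/(2-r_1)$ transfers cleanly to the new pair. If it does not, we would fall back on the explicit formula \eqref{eq: L1gamma-formula} with $\gamma=f$, which directly gives $L_{1,\Gamma}(\alpha)=e^{if(\alpha)}$ up to relabeling.
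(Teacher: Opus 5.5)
Your argument is correct and is essentially the paper's proof. The paper normalizes via $\Phi(\Gamma_0(x),F(x))$ and reads off $L_{1,\Gamma_0}(\alpha)=e^{if(\alpha)}$ directly from \eqref{eq: L1gamma-formula}, which is your fallback. You instead postcompose $F$ with $\Gamma_0^{-1}$, so the angular map becomes $\gamma_0^{-1}\circ f=\mathrm{id}$, and \cref{lem: f'-formula} applied to the new pair forces $r_1\equiv 1$; this works equally well.

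The \cref{lem: Expression-of-L1-General-gamma} check is superfluous, since $0\in\operatorname{int}P_1$ for the new pair is automatic from $r_1\equiv 1$. The hypothesis you actually need is the one \cref{thm: PogorelovII} requires for the new pair, namely $0\in\operatorname{int}\Gamma_0^{-1}(X_2)$, i.e.\ $\Gamma_0(0)\in\operatorname{int}X_2$. This holds because every point of $\mathbb{H}^3_P\cap\{x_3=0\}$ is interior to $X_2$: the geodesic rays from a small ball around $0$ to an ideal point of the equator lie in $X_2$ and sweep out an open set.
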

	\begin{proof}
		At the beginning of this section, we dealt with the case where at least one of $X_1$ and $X_2$ does not contain $0$ in its interior. Now, assume that both $X_1$ and $X_2$ contain $0$ in their interiors. By \cref{prop: Pi-contain-0-interior}, $P_1$ and $P_2$ both contain $0$ in their interiors. It follows from \cref{lem: L2-formula-via-L1} that there exists a continuous and strictly monotone function $f: [0, 2\pi] \to \mathbb{R}$ such that 
		\begin{equation}\label{eq: proof-section-3}
		    L_2(\alpha) = (2 - r_1(\alpha)) e^{i f(\alpha)},\quad\alpha\in[0,2\pi),
		\end{equation}
		parametrizes the curve $\partial P_2$, where $r_1(\alpha)$ is the radial function of $\partial P_1$. Furthermore, by \cref{prop: f -is-Möbius}, $f$ is the restriction to the unit circle of some Möbius transformation $\Gamma_0$ on the unit open disk $\mathbb{D}$, i.e., $\Gamma_0(e^{i\alpha})=e^{if(\alpha)}$ for all $\alpha\in[0,2\pi)$. Consider the Pogorelov map $\Phi(\Gamma_0 (x),F(x))$. By the formula \eqref{eq: L1gamma-formula},
		$$
		L_{1,\Gamma_0}(\alpha)=\frac{2f'(\alpha)}{f'(\alpha)+f'(\alpha)}e^{if(\alpha)}=e^{if(\alpha)},\quad\alpha\in[0,2\pi),
		$$
		is a parametrization of the curve $\partial P_{1,\Gamma_0}$. It follows that $P_{1,\Gamma_0}$ is exactly the unit closed disk $\overline{\mathbb{D}}$ in $\mathbb{R}^2$. By the expression \eqref{eq: proof-section-3}, we have $P_{2,\Gamma_0}=P_{1,\Gamma_0}=\overline{\mathbb{D}}$. Consequently, the isometry $G_{\Gamma_0}$ extends naturally to an isometry $\hat{G}_{\Gamma_0}: \partial K_{1,\Gamma_0} \to \partial K_{2,\Gamma_0}$. It is notable that $\hat{G}_{\Gamma_0}|_{\mathbb{S}^1}=\operatorname{id}_{\mathbb{S}^1}$. By the Pogorelov theorem, $\hat{G}_{\Gamma_0}$ extends to an isometry $T\in\operatorname{Isom}(\mathbb{R}^3)$. It then follows from \cref{prop: RnIsometry-to-HnIsometry} that there exists an isometry $A\in\operatorname{Isom}(\mathbb{H}^3_P)$ such that $F(x)=A\circ\Gamma_0(x)$ for all $x\in \partial X_1$. This completes the proof.
	\end{proof}

\section{Proof of the main theorems}\label{sec: the-general-case}

\subsection{Half-space and lower-dimensional cases}\label{subsec:special-cases}
In this subsection, we consider the case where at least one of $X_1$ and $X_2$ is a hyperbolic half-space or is two-dimensional. We first show that the intrinsic isometry between $\partial X_1$ and $\partial X_2$ extends to a homeomorphism between their respective end spaces.

 \begin{lemma}\label{lem:component-compactification}
    Let $U\subset\mathbb S^2$ be a connected open subset and let $\mathcal C(U)$ denote
    the collection of connected components of $\mathbb S^2\setminus U$.
    Consider the equivalence relation $\sim_U$ on $\mathbb{S}^2$ given by
    $$
    x\sim_U y
    \quad\Longleftrightarrow\quad
    x=y
    \ \text{or}\
    x,y\in D
    \ \text{for some }D\in\mathcal C(U),
    $$
    and denote the quotient space by $\widehat U:=\mathbb S^2/\!\sim_U$.
    Then $\widehat U$ is homeomorphic to the sphere $\mathbb S^2$. Moreover, every homeomorphism $f:U_1\to U_2$ admits a unique
    extension to a homeomorphism $\widehat f:\widehat U_1\longrightarrow\widehat U_2$.
    \end{lemma}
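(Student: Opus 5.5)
Two classical tools will carry the argument. For the first assertion I will use Moore's theorem on upper semicontinuous decompositions of the sphere. For the extension assertion I will use the description of $\widehat U$ as the Freudenthal (end) compactification of $U$.

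\textbf{Step 1 ($\widehat U\cong\mathbb S^2$).} Each $D\in\mathcal C(U)$ is a compact connected set. Since $U$ is connected, $D$ is nonseparating: every complementary component of $D$ contains points of $U$ and is therefore the unique component containing $U$. By Moore's theorem, a decomposition of $\mathbb S^2$ into nonseparating continua and points is homeomorphic to $\mathbb S^2$ provided it is upper semicontinuous. So the main technical point is upper semicontinuity, which amounts to checking that $\sim_U$ is a closed relation.

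Suppose $x_n\sim_U y_n$ with $x_n\to x$, $y_n\to y$, and $x\neq y$. Then eventually $x_n,y_n\in D_n$ for components $D_n$ of the compact set $\mathbb S^2\setminus U$. Pass to a Hausdorff-convergent subsequence $D_n\to D_\infty$. The limit $D_\infty$ is a continuum contained in $\mathbb S^2\setminus U$ and containing $x$ and $y$, so $x$ and $y$ lie in the same component.

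The quotient is Hausdorff and compact, and by Moore it is $\mathbb S^2$. Also, $U$ embeds as an open dense subset, and $\widehat U\setminus U$ is totally disconnected: its points are the components, and these form the zero-dimensional space of components of a compact set.

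\textbf{Step 2 (identify $\widehat U$ with the end compactification).} Let $\varepsilon(U)$ denote the Freudenthal compactification of $U$. I will show $\widehat U\cong\varepsilon(U)$ via the identity on $U$. Since $\widehat U$ is a compactification of $U$ with zero-dimensional remainder, it suffices to show that it separates ends correctly. Two sequences in $U$ that converge to the same component $D$ eventually lie in the same component of $U\setminus K$ for every compact $K\subset U$. Conversely, distinct components $D\neq D'$ can be separated by a Jordan curve in $U$: the components of a compact planar set can be separated by finite unions of disjoint polygonal curves avoiding the set. Hence they determine distinct ends. This gives a continuous bijection $\varepsilon(U)\to\widehat U$ extending $\mathrm{id}_U$, and it is a homeomorphism by compactness.

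\textbf{Step 3 (extension).} Any homeomorphism $f:U_1\to U_2$ is proper, so it induces a homeomorphism of end spaces and hence a homeomorphism $\varepsilon(U_1)\to\varepsilon(U_2)$ extending $f$. Composing with Step 2 gives $\widehat f$.

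Uniqueness follows because $U_i$ is dense in $\widehat U_i$ and $\widehat U_2$ is Hausdorff.

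I expect the main obstacle to be the separation claim in Step 2, namely showing that distinct complementary components define distinct ends. I will handle it with the standard lemma: for a compact set $C\subset\mathbb S^2$ and two distinct components of $C$, there is a clopen partition of $C$ separating them, and a polygonal Jordan curve in $\mathbb S^2\setminus C$ separating the two pieces of that partition.
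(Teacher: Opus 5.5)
Your proposal is correct and follows the same route as the paper, whose proof is only a citation: Moore's decomposition theorem gives $\widehat U\cong\mathbb S^2$, and the canonical correspondence between complementary components and ends of planar domains (Ntalampekos--Younsi, Section~3) gives the extension. Your Steps~1--3 simply unpack these two references, and the one assertion you leave unproved (that points of $U$ near a component $D$ eventually lie in a single component of $U\setminus K$) is true and easy to supply from Step~1. Namely, $[D]$ has a disk neighborhood in $\widehat U\cong\mathbb S^2$ that misses the compact set $K$, and removing the relatively closed, zero-dimensional remainder from this disk leaves it connected.
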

    
    \begin{proof}
    This is a direct consequence of Moore's decomposition theorem and
    the canonical boundary-component correspondence under homeomorphisms
    of planar domains; see, e.g., \cite{Moore1925}
    and \cite[Section~3]{NtalampekosYounsi2020} for reference.
    \end{proof}

The main ingredient of this subsection is the following proposition concerning the invariance of disk components under an intrinsic isometry.

\begin{proposition}\label{prop:point-end-extremal}
    Let $X,Y\subset\mathbb H_P^3$ be closed noncompact convex sets with
    $\dim X\ge2$ and $\dim Y=3$. Assume that
    $$
    \mathscr{H}^1(X^P)=0
    \qquad\text{and}\qquad
    F:\partial X\longrightarrow\partial Y
    $$
    is an intrinsic isometry. Then every connected component of $Y^P$ is a
    point.
\end{proposition}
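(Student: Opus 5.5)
We argue by contradiction: suppose some connected component $D$ of $Y^P$ is not a point. Since $Y$ is convex, $Y^P$ is the ideal boundary of a convex set, so a non-point component contains a continuum. The idea is to show that near such a component the intrinsic geometry of $\partial Y$ has an end which is "large" (a funnel-type or flaring end), whereas every end of $\partial X$ is "small" because $\mathscr{H}^1(X^P)=0$. The intrinsic isometry $F$ matches ends, and this gives the contradiction.

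\textbf{Step 1: topology of ends.} By convexity and the radial projection from an interior point (after moving $0$ into $\operatorname{int}Y$, and into $\operatorname{int}X$ when $\dim X=3$), $\partial X$ is homeomorphic to $\mathbb{S}^2\setminus X^P$. When $\dim X=2$ we use the metric double, homeomorphic to the complement of $X^P$ in a sphere obtained by doubling. Likewise $\partial Y\cong \mathbb{S}^2\setminus Y^P$. Applying \cref{lem:component-compactification}, the homeomorphism $F$ extends to $\widehat F$ between the compactifications. Hence each component $D$ of $Y^P$ corresponds to a component $p$ of $X^P$. Every component of $X^P$ is a point because $\mathscr{H}^1(X^P)=0$. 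Neighborhood bases of $D$ in $\widehat{U}_Y$ correspond to neighborhood bases of $p$.

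\textbf{Step 2: metric invariant at the end.} For each end, consider
\[
\liminf_{r\to\infty}\ \inf\bigl\{\ell(c) : c \text{ a closed curve separating the end from a fixed base point at intrinsic distance}\ \ge r\bigr\}.
\]
This is an isometry invariant of the end.

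For $X$, we would show that this quantity is $0$ at every end. Since $\mathscr{H}^1(X^P)=0$, the point $p$ can be enclosed in $\mathbb{S}^2$ by curves whose Euclidean length is arbitrarily small and which avoid $X^P$; this uses the projection argument of \cref{thm: zero-H1-dont-change-metric}. Pushing these curves onto $\partial X$ through the nearest-point retraction from the horosphere/equidistant level gives curves of arbitrarily small hyperbolic length deep in the end. This is the cusp-like behaviour that underlies \cref{thm: LuoLuoRao}. Alternatively, one can use the convex hull $C_P(X^P)\subset X$ and the $1$-Lipschitz nearest point retraction onto $\partial X$, which shortens curves.

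For $Y$, we would show the quantity is bounded below, indeed infinite, at an end $D$ containing a nondegenerate continuum. A closed curve on $\partial Y$ separating $D$ from the base point projects radially to a curve in $\mathbb{S}^2$ enclosing $D$. Its hyperbolic length is at least the length of its nearest-point projection onto any fixed geodesic or plane. Choose two points $a\neq b$ of $D$. The separating curve far out must pass near the ideal boundary around both $a$ and $b$. Its image under the $1$-Lipschitz projection onto the geodesic $ab$ covers longer and longer segments, giving length $\to\infty$.

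\textbf{Main obstacle.} The hardest step is the lower bound for $Y$. Separating curves can approach $D$ only near a small arc, so the real content is showing that any curve enclosing $D$ within the end must travel between neighbourhoods of $a$ and $b$ while staying far from a fixed base point. I would first try this via the projection to the geodesic $ab$, using that points of $\partial Y$ far out project outside a compact part of $ab$ only near $a$ or $b$. Failing that, I would use the area/isoperimetric growth of the end, since the convex-hull side adjacent to $D$ contains a flaring region.
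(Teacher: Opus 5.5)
There is a genuine gap in Step 2, on the $X$ side. You claim that the $\liminf$ of lengths of deep separating curves is $0$ at every end of $\partial X$, and this is false. Take $X$ to be a closed horoball based at $p\in\partial\mathbb{H}^3_P$. Then $X^P=\{p\}$, so $\mathscr{H}^1(X^P)=0$, but $\partial X$ is a horosphere, which is intrinsically isometric to the Euclidean plane. Every closed curve separating its unique end from a base point $o$, while staying at intrinsic distance $\ge r$ from $o$, has length at least $2\pi r$. So your invariant equals $+\infty$ at this point end. That is the same value you predict at a disk end; for a half-space, $\partial Y\cong\mathbb{H}^2$ and the separating curves have length $\ge 2\pi\sinh r$. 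Your $Y$-side lower bound may well be true, but since the invariant cannot tell point ends from disk ends, no contradiction can come from it.

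Your heuristic breaks exactly here. Curves of small Euclidean length near $p$ say nothing about hyperbolic length, because the density $2/(1-\|x\|^2)$ blows up at the ideal boundary. Cusp-like ends occur only in special situations, such as isolated points of $X^P$ when $X=C_P(X^P)$, and \cref{thm: LuoLuoRao} does not use them. Your alternative also produces nothing. The nearest-point retraction onto $X$ is the identity on $C_P(X^P)\subset X$, and for $X^P=\{p\}$ the hull is degenerate.

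What distinguishes a point end from a disk end is conformal type, not the length of separating curves, and the missing ingredient is area control. The paper uses the extremal length of the family $\Gamma$ of rectifiable Jordan curves in $\partial X$ separating $p$ from a fixed closed disk $A\subset\partial X$. This can be computed with the Euclidean metric of the ball model, since that metric is conformal to the intrinsic one. The key input is the Luo--Wu bound $\operatorname{Area}_{\mathbb{E}}(\partial Z)\le 16\pi$ for every convex surface in $\mathbb{B}^3$. Applying it to $\Gamma_a(X)$ with $a=(1-r)p$, whose conformal factor is $\ge 1/(4r)$ on $B_r(p)$, gives quadratic growth $\operatorname{Area}_{\mathbb{E}}(\partial X\cap B_r(p))\le Cr^2$.

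The hypothesis $\mathscr{H}^1(X^P)=0$ is used only to ensure that, for a.e. $t$, the level set $\{\|x-p\|=t\}\cap\partial X$ misses $X^P$. By the coarea formula and Lytchak's separation result, such a level set then contains a rectifiable Jordan curve separating $p$ from $A$. Integrating $\rho$ over these level sets for $t\in[r,2r]$ and applying Cauchy--Schwarz with the quadratic area bound gives $l_{d_\rho}(\Gamma)\le 2\sqrt{C}\,\bigl(\int_{U_{2r}}\rho^2\bigr)^{1/2}\to 0$, hence $\operatorname{EL}_{\partial X}(\Gamma)=0$.

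On the $Y$ side, your radial-projection idea is essentially what is needed, but it must be measured in the Euclidean metric and paired with the same $16\pi$ area bound. A uniform positive lower bound on the Euclidean length of curves in $F(\Gamma)$, together with finite Euclidean area of $\partial Y$, gives $\operatorname{EL}_{\partial Y}(F(\Gamma))>0$. This contradicts the invariance of extremal length under the intrinsic isometry $F$.
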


\begin{proof}
Suppose that $D$ is a nondegenerate component of $Y^P$. 
Since $\mathscr{H}^1(X^P)=0$, every component of $X^P$ is a point. By \cref{lem:component-compactification}, the component $D$ corresponds to a unique point $p\in X^P$.

We recall the basic notions of extremal length. Let $(S,g)$ be a Riemann surface  and $\rho:S\to\mathbb{R}_{\geq 0}$ be a Borel measurable function. Then the conformal metric $d=\rho g$ is the metric such that the length of a rectifiable curve $\gamma$ is defined to be $l_d(\gamma)=\int_\gamma\rho\,ds_g$, where $ds_g$ is the length element of the metric $g$.
The area of the metric $d=\rho g$ is defined to be $A(d)=\int_S \rho^2\, d{A_g}$, where $dA_g$ is the area element of the Riemannian metric $g$. Let $\Gamma$ be a family of rectifiable curves in $(S,g)$, its length in $d$ is defined to be $l_d(\Gamma) = \inf\left\{l_d(\gamma) : \gamma\in \Gamma\right\}$.
Then the extremal length $\operatorname{EL}_S(\Gamma)$ of $\Gamma$ is defined as 
\begin{equation*}
    \operatorname{EL}_S(\Gamma) = \sup\left\{\frac{l_d^2(\Gamma)}{A(d)} : d \text{ is a finite area conformal metric on } X\right\}.
\end{equation*}
It is clear that the extremal length depends only on the conformal structure of $(S,g)$.
Now let $(S, d_S)$ be a convex surface in $\mathbb{R}^3$ or $\mathbb{H}^3$ equipped with its induced path metric. The work of Reshetnyak shows that there exists a smooth Riemannian metric $g$ and a nonnegative Borel measurable function $\rho$ such that $d_S$ is induced by $\rho g$. Thus, $(S, d_S)$ admits a unique conformal structure, which allows one to define the extremal length of curve families on $(S, d_S)$; see, e.g., \cite[Section 2.3]{LuoWu} for further details on conformal structures on convex surfaces.

Fix a closed topological disk $A \subset \partial X$, and let $\Gamma$ be the family of rectifiable Jordan curves in $\partial X$ separating $p$ from $A$. We claim that
\begin{equation}\label{eq:point-end-extremal-zero-positive}\operatorname{EL}_{\partial X}(\Gamma)=0,
\qquad
\operatorname{EL}_{\partial Y}(F(\Gamma))>0,\end{equation}
which contradicts the invariance of extremal length under the intrinsic isometry $F$. 
Instead of the induced hyperbolic metric, we will primarily employ the induced Euclidean metric in the proof. Every convex surface $S \subset \mathbb{H}^3$ carries an intrinsic path metric $d_S$ induced by the ambient hyperbolic metric. Under the Poincaré ball model $\mathbb{H}_P^3 \subset \mathbb{R}^3$ of $\mathbb{H}^3$, the surface $S$ can also be viewed as a surface in the unit ball $\mathbb{B}^3$ in $\mathbb{R}^3$; we denote by $d^{\mathbb{E}}_S$ its path metric induced from $\mathbb{R}^3$. Since the metrics $d_S$ and $d^{\mathbb{E}}_S$ are conformal, they determine the same extremal length; see, e.g., \cite[Appendix A]{LuoWu} for reference. We denote by $ds_S^{\mathbb{E}}$ and $dA_S^{\mathbb{E}}$ the line and area elements of $d^{\mathbb{E}}_S$, respectively.


(1). $\operatorname{EL}_{\partial X}(\Gamma)=0$. By
\cite[Theorem~4.1]{LuoWu}, for every hyperbolic convex surface $\partial Z$, its Euclidean area in $\mathbb{B}^3$ is at most $16\pi$. We first show that there is a universal $C>0$ such that for
every $0<r\le1/2$, the Euclidean area
\begin{equation}\label{eq:quadratic-area-point-end}
\operatorname{Area}_{\mathbb{E}}\bigl(\partial Z\cap B_r(p)\bigr)\le Cr^2,
\end{equation}
where $B_r(p)$ is the ball in $\mathbb{R}^3$ centered at $p$ with radius $r$. Let $a=(1-r)p$ and let $\Gamma_a$ be the M\"obius transformation of $\mathbb{B}^3$ defined in \eqref{eq:Ball-transformation-a-to-0} sending $a$ to $0$. The Euclidean conformal factor of $\Gamma_a$ is
$$
 \sigma_a(x)
 =\frac{1-\|a\|^2}{1-2a\cdot x+\|a\|^2\|x\|^2}
 =\frac{2r-r^2}{\|p-(1-r)x\|^2}.
$$
If $\|x-p\|<r$, then
$$
 \|p-(1-r)x\|\leq r+(1-r)\|x-p\|\leq2r,
$$
which implies that $\sigma_a(x)\geq\frac1{4r}$. Then, applying the Euclidean area upper bound to $\partial\Gamma_a( Z)$ gives
$$
    \frac1{16r^2}\operatorname{Area}_{\mathbb{E}}\bigl(\partial Z\cap B_r(p)\bigr)
    \leq\operatorname{Area}_{\mathbb{E}}(\partial\Gamma_a(Z))\leq16\pi,
$$
which proves \eqref{eq:quadratic-area-point-end}. The same estimate holds in the two-dimensional case, with area counted on both copies of the double.

Let $u:\partial X\cup X^P\to\mathbb{R}_{\geq 0}$ be the function defined by $u(x)=\|x-p\|$ for every $x\in\partial X\cup X^P$. Set
$$
U_t:=\{x\in\partial X:u(x)<t\},
$$
and choose $r_0>0$ with $2r_0<\min\{\operatorname{dist}_{\mathbb{E}}(p,A),1/2\}$. Since the distance map is Lipschitz, we have $\mathscr{H}^1(u(X^P))=0$. Hence, for almost every $0<t<2r_0$, the level set $u^{-1}(t)$ has empty intersection with $X^P$ and $u^{-1}(t)\subset \partial X$ separates $p$ and $A$.
Moreover, since $u(x)$ is $1$-Lipschitz, the coarea formula for rectifiable sets \cite[Theorem~3.2.22]{Federer} yields
$$
    \int_0^{2r_0} \mathscr{H}^1(u^{-1}(t)) \,dt\leq\int_{U_{2r_0}} dA_{\partial X}^{\mathbb{E}} =\operatorname{Area}_{\mathbb{E}}(U_{2r_0}),
$$
which implies that $\mathscr{H}^1(u^{-1}(t))<\infty$ for almost every $0<t<2r_0$. It then follows from \cite[Corollary~7.5]{LytchakSeparate} that for almost every $0<t<2r_0$, $u^{-1}(t)$ contains a rectifiable Jordan curve separating $p$ and $A$. Therefore, for almost every $0<t<2r_0$ and every conformal metric $d_{\rho}:=\rho d_{\partial X}^{\mathbb{E}}$ of finite positive area, we have $l_{d_\rho}(\Gamma)\le\int_{u^{-1}(t)}\rho\,ds_{\partial X}^{\mathbb{E}}$. Applying the coarea formula, the Cauchy--Schwarz inequality, and \eqref{eq:quadratic-area-point-end}, we obtain for every $r \in (0, r_0)$,
\begin{align*}
r\,l_{d_\rho}(\Gamma)
\le
\int_r^{2r}\int_{u^{-1}(t)}\rho\,ds_{\partial X}^{\mathbb{E}}\,dt
&\le
\int_{U_{2r}}\rho\,dA_{\partial X}^{\mathbb{E}}\\
&\le
\operatorname{Area}_{\mathbb{E}}(U_{2r})^{1/2}
\left(\int_{U_{2r}}\rho^2\,dA_{\partial X}^{\mathbb{E}}\right)^{1/2}\\
&\le
2\sqrt C\,r
\left(\int_{U_{2r}}\rho^2\,dA_{\partial X}^{\mathbb{E}}\right)^{1/2}.
\end{align*}
Dividing both sides by $r$ and letting $r \downarrow 0$, the integral on the right-hand side vanishes since $U_{2r}\downarrow\varnothing$ in $\partial X$. Hence $l_{d_\rho}(\Gamma)=0$ for every finite area conformal metric $d_\rho$, which implies that $\operatorname{EL}_{\partial X}(\Gamma)=0$.

(2). $\operatorname{EL}_{\partial Y}(F(\Gamma))>0$. Since $\operatorname{dim} Y=3$, after applying a hyperbolic isometry to $Y$ and postcomposing $F$ with that isometry, we may assume that $0\in\operatorname{int}Y$. Let
$$
R:\partial Y\longrightarrow\mathbb S^2\setminus Y^P,
\qquad
R(x)=\frac{x}{\|x\|},
$$
be the radial projection, which is a homeomorphism. For every $\gamma\in\Gamma$, since $\gamma$ separates $p$ and $A$, the Jordan curve
$R(F(\gamma))$ separates
$D$ and $R(F(A))$. Because $D$ and $R(F(A))$ both have positive diameter, the spherical length of $R(F(\gamma))$ is uniformly bounded below by some constant $c>0$. Moreover, since $0\in\operatorname{int}Y$ implies that the radial projection $R$ is $L$-Lipschitz for some $L>0$, the Euclidean length of $F(\gamma)$ has a uniform lower bound $L^{-1}c>0$ for each $\gamma\in\Gamma$. Taking the induced Euclidean metric $d_{\partial Y}^{\mathbb{E}}$ as a test metric in the definition of extremal length and using the bound $\operatorname{Area}_{\mathbb{E}}(\partial Y)\le 16\pi$, we obtain
$$
\operatorname{EL}_{\partial Y}(F(\Gamma))
\geq \frac{l_{d_{\partial Y}^{\mathbb{E}}}^2(F(\Gamma))}{\operatorname{Area}_{\mathbb{E}}(\partial Y)}\geq
\frac{L^{-2}c^2}{16\pi}
>0.
$$
This completes the proof.
\end{proof}

Now assume that at least one of $X_1$ and $X_2$ is a hyperbolic half-space or is two-dimensional. Suppose first that both $X_1$ and $X_2$ are three-dimensional and $X_2$ is a hyperbolic half-space. Since $\partial X_2\cong\mathbb H^2$ has only one end, $X_1^P$ has exactly one
component and hence is either a round disk or a point. The case where $X_1^P$ is a disk is treated in \cref{sec: single-disk}. If $X_1^P$ were a point, \cref{prop:point-end-extremal} would force every component of $X_2^P$ to be a point, contradicting the fact that $X_2^P$ is a round disk.
It remains to consider the case where $X_1$ is two-dimensional. Since $X_1^P$ lies in the ideal circle of
a totally geodesic plane, it has no disk component and therefore
$\mathscr{H}^1(X_1^P)=0$. If $\dim X_2=2$, the same argument gives
$\mathscr{H}^1(X_2^P)=0$, and \cref{thm: LuoLuoRao} applies. Suppose instead
that $\dim X_2=3$. Since $\partial X_1$ is intrinsically isometric to
$\partial X_2$, \cref{prop:point-end-extremal} implies that $X_2^P$ has no
disk components and hence $\mathscr{H}^1(X_2^P)=0$. This mixed-dimensional case is then ruled out by \cref{thm: LuoLuoRao}. Therefore, throughout the remainder of this section, we may assume that both $X_1$ and $X_2$ are three-dimensional and that neither of them is a hyperbolic half-space.

\subsection{The general case}
	We first consider the case where each set $X_i^P$ is a circle-type closed set $\mathscr{D}_i$ with countably many components.	Let $\{D_{i,j}\}_{{j\in J_i}}$ be the set of components of $\mathscr{D}_i$, $i\in\{1,2\}$. Then each $D_{i,j}$ is either a closed disk or a single point. The index set $J_i$ can be naturally identified with the quotient space obtained by collapsing each component $D_{i,j}$ in $\mathscr{D}_i$ to a single point, i.e., each $j\in J_i$ corresponds to the equivalence class $[D_{i,j}]$ in the quotient space. It is clear that $J_i$ is a compact Hausdorff space. Since $\mathscr{D}_i$ has countably many components, $J_i$ is a countable set. A classical result states that the continuous image of a compact metric space in a Hausdorff space is metrizable (see, e.g., \cite[Corollary 23.2]{MR264581} for a proof). Thus, the quotient space $J_i$ is completely metrizable, i.e., it admits a compatible metric $d$ such that $(J_i,d)$ is a complete metric space. Our arguments in this section employ ordinals and transfinite recursion. For a comprehensive background on these topics, we refer the reader to \cite{JechSetTheory}.
	
	\begin{definition}
		For any topological space $X$, we call
		$$
		X' = \{x \in X : x \text{ is a limit point of } X\}
		$$ 
		the \emph{Cantor--Bendixson derivative} of $X$. Clearly, $X'$ is closed and $X$ is perfect if and only if $X = X'$. Using transfinite recursion, for all $\lambda \in \text{ORD}$, we define the \emph{iterated Cantor--Bendixson derivative} $X^\lambda$ as follows:
		\begin{enumerate}
			\item $X^0 = X$,
			\item $X^{\lambda+1} = (X^\lambda)'$,
			\item $X^\lambda = \bigcap_{\xi < \lambda} X^\xi$, if $\lambda$ is a limit ordinal.
		\end{enumerate}
		Thus, $\{X^\lambda\}_{\lambda \in \text{ORD}}$ is a decreasing transfinite sequence of closed subsets of $X$. If there exists an ordinal $\lambda_0$ such that $X^\lambda = X^{\lambda_0}$ for all $\lambda \ge \lambda_0$, we call the smallest ordinal with this property the \emph{Cantor--Bendixson rank} of $X$.
	\end{definition}
    The following theorem is well known; see \cite[Section 6]{MR1321597} for a proof.
	\begin{theorem}\label{thm: Cantor-Bendixson derivative}
		Suppose that $X$ is a separable completely metrizable space. Then, $X$ is countable if and only if $X^{\lambda_0}=\varnothing$ for some countable ordinal $\lambda_0$.
	\end{theorem}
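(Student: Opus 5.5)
The plan is to prove both directions by analysing the pieces removed at each stage of the transfinite sequence $\{X^\lambda\}$, using only separability (hence second countability, since $X$ is metrizable) and completeness. Fix a countable base $\{B_n\}_{n\in\mathbb{N}}$ of $X$.

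\emph{Step 1: each layer is countable.} For every ordinal $\lambda$, the difference $X^\lambda\setminus X^{\lambda+1}$ is exactly the set of isolated points of the subspace $X^\lambda$. For each such point $x$ choose $n(x)$ with $B_{n(x)}\cap X^\lambda=\{x\}$. The map $x\mapsto n(x)$ is injective, so each layer $X^\lambda\setminus X^{\lambda+1}$ is countable.

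\emph{Step 2: the sequence stabilizes at a countable ordinal.} Take any ordinal $\lambda$ with $X^{\lambda+1}\neq X^\lambda$. Pick an isolated point $x_\lambda$ of $X^\lambda$ and an index $n_\lambda$ with $B_{n_\lambda}\cap X^\lambda=\{x_\lambda\}$. I claim $\lambda\mapsto n_\lambda$ is injective. Suppose $\lambda<\mu$ and $n_\lambda=n_\mu$. Then $x_\mu\in X^\mu\subset X^{\lambda+1}\subset X^\lambda$ and $x_\mu\in B_{n_\lambda}$, which forces $x_\mu=x_\lambda$. But $x_\lambda\notin X^{\lambda+1}\supset X^\mu$, a contradiction. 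Hence only countably many ordinals are "strict drop" stages. If $X^{\lambda+1}=X^\lambda$, then all later derivatives coincide with $X^\lambda$; this holds trivially at successor steps and by intersection at limit steps. Therefore the strict drops form an initial segment of ordinals, which is countable, and so the Cantor--Bendixson rank $\lambda_0$ is a countable ordinal. The kernel $K:=X^{\lambda_0}$ is closed and satisfies $K'=K$, so it is perfect.

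\emph{Step 3: the two implications.} If $X^{\lambda_0}=\varnothing$ for some countable $\lambda_0$, then
$$X=\bigcup_{\lambda<\lambda_0}\bigl(X^\lambda\setminus X^{\lambda+1}\bigr).$$
This is a countable union of countable sets by Step 1, so $X$ is countable. Conversely, suppose $X$ is countable. By Step 2 the kernel $K$ is reached at a countable ordinal, so it suffices to show $K=\varnothing$. Note that $K$ is closed in a completely metrizable space, hence itself completely metrizable, and it is perfect. I will show that a nonempty perfect complete metric space contains a copy of $2^{\mathbb{N}}$ via a Cantor scheme. To each finite binary word $s$ assign a nonempty open set $U_s\subset K$ such that:
\begin{enumerate}
\item $\overline{U_{s0}},\overline{U_{s1}}\subset U_s$;
\item $\overline{U_{s0}}\cap\overline{U_{s1}}=\varnothing$;
\item $\operatorname{diam}U_s<2^{-|s|}$.
\end{enumerate}
Splitting is always possible because $U_s$ is nonempty open in a perfect space and so contains two distinct points. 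Completeness then gives, for each branch in $2^{\mathbb{N}}$, a unique point of $\bigcap_k \overline{U_{s|k}}$. The resulting map $2^{\mathbb{N}}\to K$ is injective, so $K$ is uncountable, contradicting countability of $X$. Hence $K=\varnothing$.

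The main obstacle is Step 2, namely bounding the length of the strictly decreasing transfinite sequence. The injectivity argument via basic open sets is the key point, and care is needed at limit ordinals to see that stabilization at one stage persists forever. The perfect-set construction in Step 3 is standard; its only subtlety is that it needs completeness, which is exactly why the hypothesis is ``completely metrizable'' rather than merely metrizable.
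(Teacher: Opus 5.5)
Your proof is correct and is essentially the standard argument from Kechris, \emph{Classical Descriptive Set Theory}, Section 6, which the paper cites instead of giving a proof. Second countability makes each layer $X^\lambda\setminus X^{\lambda+1}$ countable and forces the strict-drop stages of the derivative sequence to form a countable initial segment. The kernel is therefore a perfect set reached at a countable stage, and completeness lets a Cantor scheme embed $2^{\mathbb{N}}$ into any nonempty perfect kernel. Your variations from the textbook treatment are cosmetic: you isolate a removed point with a basic open set rather than tracking which basic sets meet the closed sets $X^\lambda$. Your remark that completeness is needed only for the converse direction is apt, as $\mathbb{Q}$ shows.
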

	
	For each $\lambda \in \text{ORD}$, let $J_i^{\lambda}$ be the iterated Cantor--Bendixson derivative of the quotient space $J_i$. Since $J_i$ is countable and completely metrizable, by \cref{thm: Cantor-Bendixson derivative}, there exists a countable ordinal $\lambda_0$ such that $J_i^{\lambda_0}=\varnothing$ for each $i\in\{1,2\}$. Let $\mathscr{D}_i^{\lambda}=\bigcup_{j\in J_i^{\lambda}} D_{i,j}$. Clearly, $\mathscr{D}_i^{\lambda}$ is a compact subset of $\mathbb{S}^2$, and $\mathscr{D}_i^{\lambda}\setminus\mathscr{D}_i^{\lambda+1}$ is exactly the union of all isolated disks and isolated points in $\mathscr{D}_i^{\lambda}$.
	Denote by $\mathscr{C}_i^{\lambda}$ the complement of $\mathscr{D}_i^{\lambda}$ in $\mathbb{S}^2$ for all $\lambda\in\text{ORD}$ and $i\in\{1,2\}$. Then, $\mathscr{C}_i^{\lambda}$ is a circle domain in $\mathbb{S}^2$, and the radial projection $R:{S}_i\to\mathbb{S}^2$ is a homeomorphism from ${S}_i$ onto $\mathscr{C}_i^{0}$. 

    Recall from \cref{subsec:special-cases} that both $X_1$ and $X_2$ are assumed to be three-dimensional and that neither is a hyperbolic half-space. After applying suitable hyperbolic isometries to $X_1$ and $X_2$ separately, we may assume that $0\in\operatorname{int}X_i$ for $i=1,2$. For a disk component $D_{i,j}$, let $H_{i,j}=C_P(D_{i,j})\cap\mathbb H_P^3$ be the closed hyperbolic half-space corresponding to $D_{i,j}$. For every $0\leq\lambda\leq\lambda_0$, let 
    $$
    \mathscr{G}_i^{\lambda}:=\{j\in J_i\setminus J_i^\lambda: D_{i,j}\text{ is a disk component}\}.
    $$ 
    We call a pair of hyperbolic isometries $(\Gamma_1,\Gamma_2)$ \emph{$\lambda$-admissible} if
    \begin{equation}\label{eq: stage-admissible}
       \Gamma_i^{-1}(0)\in\operatorname{int}X_i\setminus
       \bigcup_{j\in\mathscr{G}_i^{\lambda}}H_{i,j},
       \qquad i=1,2.
    \end{equation}
    In other words, $0$-admissibility requires $0\in\operatorname{int} \Gamma_i(X_i)$ and $\lambda$-admissibility requires additionally that every closed disk $\Gamma_i(D_{i,j})$, $j\in\mathscr{G}_i^{\lambda}$, is contained in an open hemisphere. We claim that such admissible pairs exist for $\lambda=\lambda_0$ and therefore for all $0\leq\lambda\leq\lambda_0$. First note that the half-spaces $H_{i,j}\subset X_i$ are pairwise disjoint and locally finite. Indeed, for any compact subset $K\subset X_i$, there exists $c_K>0$ such that for every half-space $H$ having nonempty intersection with $K$, the radius of the disk $\overline{H}\cap\partial \mathbb{H}^3_P$ is at least $c_K$. Since the half-spaces $H_{i,j}$ are pairwise disjoint, it follows that there are only finitely many half-spaces $H_{i,j}$ having nonempty intersection with $K$. Suppose, for contradiction, that 
    $$
    \operatorname{int} X_i=\bigsqcup_{j \in \mathscr{G}_{i}^{\lambda_0}}  (H_{i,j}\cap\operatorname{int} X_i).
    $$
    Then since $H_{i,j}$ are closed and
    $\{H_{i,j}\cap\operatorname{int} X_i\}_j$ are locally finite, we see that 
    $$
    \operatorname{int} X_i=(H_{i,j_0}\cap\operatorname{int} X_i)\sqcup\bigsqcup_{j \in \mathscr{G}_{i}^{\lambda_0}\setminus\{j_0\}}  (H_{i,j}\cap\operatorname{int} X_i)
    $$
    is the disjoint union of two closed subsets, which contradicts the connectivity of $\operatorname{int} X_i$.
   
	Let $F:\partial X_1\to \partial X_2$ be an intrinsic isometry. For each $0$-admissible pair $(\Gamma_1,\Gamma_2)$ of isometries of $\mathbb{H}^3_P$, consider the Pogorelov map 
	$$
	\Phi(\Gamma_1(x),\Gamma_2 \circ
	F(x))=\left(\frac{2P(\Gamma_1(x))}{-\langle \Gamma_1(x)+ \Gamma_2 \circ F(x),e\rangle},\frac{2P(\Gamma_2 \circ F(x))}{-\langle\Gamma_1(x)+ \Gamma_2 \circ F(x),e\rangle}\right).
	$$
	Define
	$$
	S_i(\Gamma_1,\Gamma_2) = \{P_i\big(\Gamma_1(x),\Gamma_2 \circ
	F(x)\big) : x \in \partial X_1\}, \quad i\in\{1,2\}.
	$$
    By \cref{thm: PogorelovII}, there exists an isometry ${G}_{\Gamma_1,\Gamma_2}$ from $S_1(\Gamma_1,\Gamma_2)$ to $S_2(\Gamma_1,\Gamma_2)$ with respect to their intrinsic path metrics. Since $\Gamma_i$ is an isometry of $\mathbb{H}^3_P$, it induces a Möbius transformation of $\mathbb{S}^2$. The radial projection $R:{S}_i(\Gamma_1,\Gamma_2)\to\mathbb{S}^2$ is then a homeomorphism from ${S}_i(\Gamma_1,\Gamma_2)$ onto $\Gamma_i(\mathscr{C}_i^{0})$. Clearly, we have $(\Gamma_i(\mathscr{D}_i))^{\lambda}=\Gamma_i(\mathscr{D}_i^{\lambda})$ for all $\lambda\in \text{ORD}$.
	
	Now, for each $0\leq\lambda\leq\lambda_0$ and each $\lambda$-admissible pair $(\Gamma_1,\Gamma_2)$ of isometries of $\mathbb{H}^3_P$, we want to construct two locally convex surfaces ${S}_1^{\lambda}(\Gamma_1,\Gamma_2)$ and ${S}_2^{\lambda}(\Gamma_1,\Gamma_2)$ with respect to $0$ and an isometry ${G}^{\lambda}_{\Gamma_1,\Gamma_2}:{S}_1^{\lambda}(\Gamma_1,\Gamma_2)\to{S}_2^{\lambda}(\Gamma_1,\Gamma_2)$ with respect to their intrinsic metrics satisfying the following collection of properties:
	\begin{enumerate}
		\item[\condition{P1}{$\mathrm{P}_1$}] ${S}_i(\Gamma_1,\Gamma_2)\subset{S}_i^{\lambda}(\Gamma_1,\Gamma_2)$ and ${G}^{\lambda}_{\Gamma_1,\Gamma_2}\big|_{{S}_1(\Gamma_1,\Gamma_2)}={G}_{\Gamma_1,\Gamma_2}$.
		\item[\condition{P2}{$\mathrm{P}_2$}] ${S}_i^{\xi}(\Gamma_1,\Gamma_2)\subset{S}_i^{\lambda}(\Gamma_1,\Gamma_2)$ for all $\xi<\lambda$, and ${G}^{\lambda}_{\Gamma_1,\Gamma_2}\big|_{{S}_1^{\xi}(\Gamma_1,\Gamma_2)}={G}^{\xi}_{\Gamma_1,\Gamma_2}$ for all $\xi<\lambda$.
		\item[\condition{P3}{$\mathrm{P}_3$}] $0\notin {S}_i^{\lambda}(\Gamma_1,\Gamma_2)$ and the radial projection $R:{S}_i^{\lambda}(\Gamma_1,\Gamma_2)\to\mathbb{S}^2$ is a homeomorphism from ${S}_i^{\lambda}(\Gamma_1,\Gamma_2)$ onto $\Gamma_i(\mathscr{C}_i^{\lambda})$.
	\end{enumerate}
    
    \begin{remark}\label{rmk:boundary-limits-under-filling}
        Fix a stage $\lambda$ of the construction below and suppress the $\lambda$-admissible pair from the notation. The filling procedure preserves paired limits at all remaining unfilled boundary components. More precisely, any limit of pairs $(x_n, G^\lambda(x_n))$ whose radial projections $R(x_n)$ approach an unfilled component can be realized by pairs $(z_n, G(z_n))$ with $z_n \in S_1$, while preserving the limiting radial directions on both sides.
        To see this, we first reduce to the case where the sequence contains no points in the interiors of the inserted disks. Suppose $x_n$ belongs to the interior of an inserted planar elliptic disk $E_n \subset \overline{B_2(0)} \setminus \{0\}$. If $R(E_n)$ is a spherical disk of radius $\delta_n < \pi/2$, then the chord estimate yields $\operatorname{dist}_{\mathbb{E}}(x_n, \partial E_n) \leq 4\sin\delta_n$. Since distinct inserted disks have pairwise disjoint radial images, any sequence approaching an unfilled component encounters disks with spherical radii $\delta_n \to 0$. Moreover, since $G^\lambda$ restricts to a Euclidean isometry on each inserted disk, choosing a closest boundary point $y_n \in \partial E_n$ gives 
        $$
        \Vert G^\lambda(x_n) - G^\lambda(y_n)\Vert = \Vert x_n - y_n\Vert \to 0\quad\text{as}\quad n\to\infty.
        $$
        Consequently, replacing $x_n$ with $y_n$ preserves both the Euclidean limits and the limiting radial directions on both the domain and target surfaces. It therefore suffices to consider points in $\overline{S_1}$, which contains all added points and boundary curves of inserted disks. Since the inverse radial map is continuous at these points and $G^\lambda$ is continuous, every pair in this boundary set is directly attainable as a limit of pairs from the original surface $S_1$. The same reasoning applies at limit stages via increasing unions. Thus, the boundary transformation formulas and arguments of \cref{sec: single-disk} apply directly to all unfilled components at every stage, requiring no additional analysis of the disk interiors.
    \end{remark}
    
	We proceed by using transfinite recursion. Let ${S}_i^{0}(\Gamma_1,\Gamma_2)={S}_i(\Gamma_1,\Gamma_2)$ and ${G}^0_{\Gamma_1,\Gamma_2}={G}_{\Gamma_1,\Gamma_2}$. Clearly, ${S}_i^{0}(\Gamma_1,\Gamma_2)$ and ${G}^0_{\Gamma_1,\Gamma_2}$ satisfy the properties {\refcond{P1}{$\mathrm{P}_1$}}--\refcond{P3}{$\mathrm{P}_3$}. Suppose that for all $\xi<\lambda$ we have constructed two locally convex surfaces ${S}_i^{\xi}(\Gamma_1,\Gamma_2)$ with respect to $0$ and an isometry ${G}^{\xi}_{\Gamma_1,\Gamma_2}$ between them satisfying properties {\refcond{P1}{$\mathrm{P}_1$}}--\refcond{P3}{$\mathrm{P}_3$}. 

\medskip
    \noindent\textbf{Limit stages.} Let $\lambda$ be a limit ordinal and $(\Gamma_1,\Gamma_2)$ be a $\lambda$-admissible pair. For simplicity, we shall present the proof only for the case where $\Gamma_1 = \Gamma_2 = \mathrm{id}_{\mathbb{H}^3_P}$ and suppress the explicit dependence on $\Gamma_1$ and $\Gamma_2$ in our notation; the general case can be established by a completely analogous argument.  Define ${S}_i^{\lambda}=\bigcup_{\xi<\lambda}{S}_i^{\xi}$ and let ${G}^{\lambda}(q)=\lim_{\xi\to\lambda}{G}^{\xi}(q)$ for all $q\in{S}_1^{\lambda}$. Clearly, ${S}_i^{\lambda}$ is a locally convex surface with respect to $0$ such that $0\notin {S}_i^{\lambda}$, ${S}_i^{\xi}\subset{S}_i^{\lambda}$ for all $\xi<\lambda$, and ${G}^{\lambda}|_{{S}_1^{\xi}}={G}^{\xi}$ for all $\xi<\lambda$. The radial projection $R:{S}_i^{\lambda}\to\mathbb{S}^2$ is a homeomorphism from ${S}_i^{\lambda}$ to $\mathscr{C}_i^{\lambda}$ since $\mathscr{C}_i^{\lambda}=\bigcup_{\xi<\lambda}\mathscr{C}_i^{\xi}$. Note that since each surface ${S}_i^{\lambda}$ is locally convex and therefore locally Lipschitz, the intrinsic path metric on ${S}_i^{\lambda}$ is well defined. We only need to check that ${G}^{\lambda}$ is an isometry from ${S}_1^{\lambda}$ to ${S}_2^{\lambda}$ with respect to their intrinsic path metrics. It is clear that ${G}^{\lambda}$ is a bijection. Let $p_1,p_2$ be two arbitrary points in ${S}_1^{\lambda}$, and let $\sigma$ be a rectifiable path in ${S}_1^{\lambda}$ connecting them. Since $\sigma$ is a compact subset of ${S}_1^{\lambda}$ and $\{{S}_1^{\xi}\}_{\xi<\lambda}$ is an exhaustion of ${S}_1^{\lambda}$ by open sets, there exists $\xi<\lambda$ such that $\sigma\subset{S}_1^{\xi}$. Then $\sigma'={G}^{\lambda}(\sigma)={G}^{\xi}(\sigma)$ is a rectifiable path in ${S}_2^{\xi}$ connecting ${G}^{\lambda}(p_1)$ and ${G}^{\lambda}(p_2)$. Since $G^{\xi}$ is an isometry, we have $l(\sigma) = l(\sigma')$, where $l(\cdot)$ denotes the length of a rectifiable curve. Therefore,
	$$
	\begin{aligned}
		d_{{S}_1^{\lambda}}(p_1,p_2)&=\inf\{l(\sigma):\text{$\sigma $ connecting $p_1$ and $p_2$ in ${S}_1^{\lambda}$}\}\\
		&=\inf\{l\big({G}^{\lambda}(\sigma)\big):\text{$\sigma $ connecting $p_1$ and $p_2$ in ${S}_1^{\lambda}$}\}\\
		&\geq \inf\{l(\sigma'):\text{$\sigma'$ connecting ${G}^{\lambda}(p_1)$ and ${G}^{\lambda}(p_2)$ in ${S}_2^{\lambda}$}\}\\
		&=d_{{S}_2^{\lambda}}\left({G}^{\lambda}(p_1),{G}^{\lambda}(p_2)\right).
	\end{aligned}
	$$
	A similar argument on the inverse map $({G}^{\lambda})^{-1}$ shows that the converse of the above inequality is also true and therefore ${G}^{\lambda}$ is an isometry.

    \medskip
    \noindent\textbf{Successor stages.} Now assume that $\lambda$ is a successor ordinal, i.e., $\lambda=\xi+1$ for some $\xi\in\text{ORD}$. The point extension and boundary analysis below are carried out for every $\xi$-admissible pair, whereas the actual disk filling requires $(\xi+1)$-admissibility. Let $\mathscr{P}_i^{\xi}$ be the set of isolated point components in $\mathscr{D}_i^{\xi}$. Since ${S}_i^{\xi}(\Gamma_1,\Gamma_2)$ is locally convex with respect to $0$, by an argument similar to that in \cref{sec: pogorelov}, ${S}_i^{\xi}(\Gamma_1,\Gamma_2)$ can be extended to a locally convex surface $\tilde{S}_i^{\xi}(\Gamma_1,\Gamma_2)$ with respect to $0$ such that the radial projection ${R}:\tilde{S}_i^{\xi}(\Gamma_1,\Gamma_2)\to\mathbb{S}^2$ is a homeomorphism from $\tilde{S}_i^{\xi}(\Gamma_1,\Gamma_2)$ onto $\Gamma_i(\mathscr{C}_i^{\xi}\cup \mathscr{P}_i^{\xi})$. Now, we proceed to show that ${G}^{\xi}_{\Gamma_1,\Gamma_2}$ can be extended to an isometry between $\tilde{S}_1^{\xi}(\Gamma_1,\Gamma_2)$ and $\tilde{S}_2^{\xi}(\Gamma_1,\Gamma_2)$.

    \begin{lemma}\label{lem: bottom-is-convex}
		Let $S$ be a radial surface in $\mathbb{R}^3_+$ which is bounded and locally convex with respect to $0$. Let $R:S\to\mathbb{S}^2$ denote the radial projection and $Q: R(S) \to S$ be its inverse. Suppose there exists $\delta>0$ such that the set $\Lambda_{\delta}=\{x_3\leq\delta\}\cap\mathbb{S}^2_{+}$ is contained in $R(S)$. Then the following statement holds:
        \begin{enumerate}
            \item[{\rm(1)}] There exists a compact convex set $P\subset\mathbb{R}^2$ such that $S'=S\cup P$ is a locally convex surface in $\mathbb{R}^3$. 
        \end{enumerate}
        If we further assume that $P$ has nonempty interior in $\mathbb{R}^2$, then:
        \begin{enumerate}
            \item[{\rm(2)}] $d_{\hat{S}}(x,y)=d_{S}(x,y)$ for all $x,y\in S$, where $\hat{S}=S\cup\partial P$, $d_{\hat{S}}$ and $d_{S}$ are intrinsic path metrics on $\hat{S}$ and $S$, respectively.
            \item[{\rm(3)}] There exists a neighborhood $U$ of $P$ in $S'$ and a constant $C_U$ such that $d_{S}(x,y)\leq C_U\|x-y\|$ for all $x,y\in S\cap U$.
        \end{enumerate}
	\end{lemma}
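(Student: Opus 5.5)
Part (1) rests on the gauge function. Let $Z=\{kx:k\ge 0,\ x\in R(S)\}$ and define $q(\alpha)=\inf\{k>0:\alpha/k\in S\}$ on $Z$. By \cref{lem: equivalence-locally-convex-to-0}, $q$ is locally convex on $Z\setminus\{0\}$.

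The hypothesis $\Lambda_\delta\subset R(S)$ says that $Z$ contains the open cone $C_\delta=\{x\in\mathbb{R}^3_+: x_3<\delta'\|x\|\}$ over the thin band just above the equator, for a suitable $\delta'>0$. I would work on the region $\Omega_\delta=C_\delta\cup\{$a small cone around some point$\}$. More concretely, the plan is to argue exactly as in the half-space case of \cref{subsec:single-half-sapce} and in \cref{sec: single-disk}: the sublevel set $\{q\le 1\}$ restricted to the cone is locally the region under a convex surface.

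To define $P$, take the set of limit points of $S$ on $\mathbb{R}^2$. By radiality and boundedness, $\overline{S}\cap\mathbb{R}^2$ is the closed curve $\{\lim_{w\to e^{i\alpha}} Q(w)\}$, possibly degenerate. Let $P$ be the planar region it bounds, i.e. the set $\{te^{i\alpha}:0\le t\le r(\alpha)\}$ with $r(\alpha)=\limsup Q$. Convexity of $P$ comes from the convexity inequality \eqref{eq:lem47-midpoint}-type argument. I would take the tangent-plane picture of \cref{lem:Pi-not-segment-or-point}, but now use the plane $\{x_3=\epsilon\}$ slices. Each slice $\{x\in Z: x_3=\epsilon\}$ meets $S$ in a curve bounding the set $\{q\le1\}\cap\{x_3=\epsilon\}$, which is locally convex near the band and hence convex (as a planar set whose boundary has local supporting lines, by \cref{thm: The-Erhard-Schmidt-Theorem} in dimension two, or via \cref{lem: LocalToGlobalConvex} applied along lines). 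The slices converge in Hausdorff distance as $\epsilon\to0$ to $P$, so $P$ is convex and compact.

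Local convexity of $S'=S\cup P$ at points of $\partial P$ then follows because near such a point $S'$ is the boundary of the limit of the convex sets $\{q\le 1\}\cap\{x_3\ge0\}\cap B$. On the cone over the band this region is convex, since $q$ extended by $+\infty$ below the plane is convex there, again by \cref{lem: LocalToGlobalConvex} on convex subcones.

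Parts (2) and (3) come next. Once $P$ has nonempty interior, near $P$ the surface $S'$ is the boundary of a compact convex body $K=\{q\le1\}\cap\overline{\mathbb{R}^3_+}$ cut off appropriately. I would cap it: intersect with a half-space $\{x_3\le\eta\}$ and take the convex hull, to get a compact convex body $K'$ whose boundary agrees with $S\cup P$ in a neighborhood $U$ of $P$. Then (3) follows from \cref{prop: convex-surface-Lipschitz} applied to $\partial K'$, localized to $U$. Here one must compare $d_S$ with $d_{\partial K'\setminus P}$ on $U$, which requires a localization of the proof of \cref{lem: path-metric-boundary-removable}. For (2) I would repeat the homothety-and-projection argument of \cref{lem: path-metric-boundary-removable} verbatim near $P$. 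Curves in $\hat S$ touching $\partial P$ are pushed off via $h_{a,t}$ and projected by $P_{K_\lambda}$ onto the truncated body. Only the portion of a curve near $P$ needs modification, and that portion lies in $U$, where the body is genuinely convex.

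The main obstacle is the convexity of $P$ and the local convexity of $S'$ along $\partial P$, because $S$ is only locally convex and $q$ is convex only on convex subcones of $Z$. The key is that the cone over the band $\Lambda_\delta$ is not convex, but its intersections with cones over small convex spherical caps along the equator are. Gluing these local convex pieces with \cref{thm: The-Erhard-Schmidt-Theorem} is the step I expect to require care.
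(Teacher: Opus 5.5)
There is a genuine gap, and it sits exactly at the step you defer (``gluing these local convex pieces''): local convexity of $S'$ at the origin when $0\in\partial P$.

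\textbf{What works.} Away from $0$ your local argument is sound. At $y\in\partial P\setminus\{0\}$, a small ball $B$ satisfies $B\cap\mathbb{R}^3_+\subset Z$, and this set is convex. So $q$ is convex there by \cref{lem: LocalToGlobalConvex}, and $\overline{\{q\le 1\}\cap B\cap\mathbb{R}^3_+}$ is a convex set whose boundary contains $S'$ near $y$. Your slicing may also be completed to give convexity of $P$ itself.

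\textbf{Where it fails.} Near $0$ the domain of $q$ is essentially the cone over the band, which is not convex; so your sentence asserting convexity ``on the cone over the band'' via \cref{lem: LocalToGlobalConvex} is not valid. The convex pieces over small caps centred on the equator all share the apex $0$, and nothing in the proposal produces a single convex set whose boundary contains $S'$ near $0$.

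This case genuinely occurs. Let $P_0$ be a closed half-disk with $0$ on its diameter, $f(x')=\operatorname{dist}(x',P_0)^2$, and $S=\{(x',f(x')):0<f(x')<\varepsilon\}$. Then every hypothesis holds, $P=P_0$ has nonempty interior, and $0\in\partial P$. The paper later uses part (1) as an input to the argument that rules out such degenerate $P$, so the lemma must cover this case. In the same example, your description of $\overline S\cap\mathbb{R}^2$ as the curve $\lim_{w\to e^{i\alpha}}Q(w)$ fails: along $\alpha=0$ the limit points of $Q(w)$ fill the segment $[0,1]\times\{0\}$.

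The gap carries into (2) and (3). There you build $K'$ by capping and ``taking the convex hull''. The hull's boundary agrees with $S\cup P$ near $P$ only if the capped region is already convex, which is the unproved point.

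\textbf{The missing idea.} Apply \cref{thm: The-Erhard-Schmidt-Theorem} once, to a capped region defined over all of $\mathbb{S}^2_+$.
\begin{enumerate}
\item By compactness, choose $\varepsilon>0$ with $Q(\{w_3=\delta\}\cap\mathbb{S}^2_+)\subset\{x_3>\varepsilon\}$.
\item Boundedness of $S$ then shows that the set $\Sigma$ of $w\in\Lambda_\delta$ with $Q(w)\in\{x_3<\varepsilon\}$ contains a band $\Lambda_\tau$.
\item Set $\hat Q=Q$ on $\Sigma$, and let $\hat Q(w)$ be the point of the ray through $w$ on $\{x_3=\varepsilon\}$ otherwise. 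Then $\hat Q$ is continuous.
\item Hence $\Omega=\{tw:w\in\mathbb{S}^2_+,\ 0<t<\|\hat Q(w)\|\}$ is open and connected. Every boundary point has a local supporting plane: $\{x_3=0\}$, $\{x_3=\varepsilon\}$, or one given by local convexity of $S$.
\end{enumerate}
So $\Omega$ is convex, $P:=\overline\Omega\cap\mathbb{R}^2$ is convex, and $S\cup P$ coincides near $P$ with the convex surface $\partial\Omega$, including at $0$. Your slices $D_\epsilon$ are exactly the horizontal sections of this $\Omega$. The same $\partial\Omega$ is the convex surface that (2) and (3) need.

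\textbf{On (3).} Comparing $d_S$ with $d_{\partial\Omega\setminus P}$ is not a localization of \cref{lem: path-metric-boundary-removable}; it is a height count.
\begin{itemize}
\item A path in $\partial\Omega\setminus P$ joining two points of height $<\varepsilon/3$ that leaves $S$ must climb to $\{x_3=\varepsilon\}$ and return, so its length is at least $\tfrac43\varepsilon$.
\item This contradicts the bound $C\|x-y\|+\varepsilon/3$ from \cref{prop: convex-surface-Lipschitz} once $\|x-y\|<\varepsilon/(3C)$.
\item Pairs with $\|x-y\|$ bounded below are handled by boundedness of $d_S$ on the neighbourhood.
\end{itemize}
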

	\begin{proof}
		(1). Clearly, $Q: R(S) \to S$ is a homeomorphism. Let $C_\delta = \{x_3 = \delta\} \cap R(S)$, $C'_\delta = Q(C_\delta)$, and $\Lambda'_\delta = Q(\Lambda_\delta)$. Since $C'_\delta$ is a compact subset of the open half-space $\mathbb{R}^3_+$, there exists a sufficiently small $\varepsilon > 0$ such that $C'_\delta \subset \{x_3 > \varepsilon\}$. Let $\Sigma' = \{x_3 < \varepsilon\} \cap \Lambda'_\delta$ and $\Sigma = R(\Sigma')$. It follows that $\Sigma$ is an open subset of $\mathbb{S}^2$. 
		We claim that there exists $\tau > 0$ such that the set $\Lambda_{\tau} = \{x_3 \le \tau\} \cap \mathbb{S}^2_+$ is contained in $\Sigma$. Suppose for contradiction that no such $\tau$ exists. Then, we can find a sequence $\{w_n\} \subset \Lambda_\delta \setminus \Sigma$ that converges to some point $w$ in the equator $\mathbb{S}^1$. Since $S$ is bounded, there exists $N>0$ such that for all $n>N$, $Q(w_n)$ lies below the plane $\{x_3=\varepsilon\}$ and therefore is contained in $\Sigma'$. Thus, $w_n=R(Q(w_n))\in\Sigma$ for all $n>N$, which is a contradiction.
		
		For each $w \in \mathbb{S}^2_+$, let $H(w)$ denote the unique intersection point of the ray $\{tw : t \ge 0\}$ with the horizontal plane $\{x_3 = \varepsilon\}$. We define a radial map $\hat{Q}: \mathbb{S}^2_+ \to \mathbb{R}^3_+$ by
		\begin{equation}
			\hat{Q}(w)=\left\{
			\begin{aligned}
				&Q(w), w\in\Sigma\\
				&H(w), w\in\mathbb{S}^2_+\setminus\Sigma
			\end{aligned}\right.
		\end{equation}
		We now show that $\hat{Q}$ is continuous, which in turn guarantees that the set 
        $$
        \Omega=\{tw:w\in\mathbb{S}^2_+, 0<t<\|\hat{Q}(w)\|\}
        $$ 
        is open in $\mathbb{R}^3$. It suffices to prove that $Q(w)=H(w)$ for all $w\in\overline{\Sigma}\cap(\mathbb{S}^2_+\setminus\Sigma)$. By definition, this is equivalent to showing that $Q(w)\in\{x_3=\varepsilon\}$ for all $w\in\overline{\Sigma}\cap(\mathbb{S}^2_+\setminus\Sigma)$. Since 
        $$
        Q(\Sigma)=\Sigma'=\{x_3<\varepsilon\}\cap\Lambda'_{\delta},
        $$
        it follows that for each $w\in\overline{\Sigma}\cap\mathbb{S}^2_+$, $Q(w)\in\{x_3\leq\varepsilon\}\cap\Lambda_{\delta}'$. If $Q(w)\notin\{x_3=\varepsilon\}$, then $Q(w)\in\{x_3<\varepsilon\}\cap\Lambda'_\delta=\Sigma'$, which implies $w\in \Sigma$. This contradicts the fact that $w\in \mathbb{S}^2_+\setminus\Sigma$.

		Next, we show that $\Omega$ is a convex domain in $\mathbb{R}^3$. Clearly, we have 
        $$
        \partial\Omega \subset \{x_3 = 0\} \cup \{x_3 = \varepsilon\} \cup {\Sigma'}.
        $$
        Let $p\in\partial\Omega$. If $p\in\{x_3 = 0\} \cup \{x_3 = \varepsilon\}$, then the fact $\Omega\subset\{0<x_3<\varepsilon\}$ implies that either $\{x_3 = 0\}$ or $\{x_3 = \varepsilon\}$ serves as a local supporting plane of $\Omega$ at $p$. Now suppose $p\in\Sigma'$.
        Since $\Sigma'$ is an open subset of $S$, $\Sigma'$ is locally convex with respect to $0$. This guarantees the existence of a local supporting plane of $\Omega$ at $p$. Since each boundary point of $\Omega$ admits a local supporting plane, by \cref{thm: The-Erhard-Schmidt-Theorem}, $\Omega$ is a convex domain in $\mathbb{R}^3$. Let $P=\overline{\Omega}\cap\mathbb{R}^2$. Then $P$ is convex and $S\cup P$ is a locally convex surface in $\mathbb{R}^3$.

        (2). By the preceding argument, there exists a neighborhood $U$ of $\partial P$ in $\hat{S}$ such that $U$ is contained in the convex surface $\partial\Omega$, the result follows by an argument similar to that in \cref{lem: path-metric-boundary-removable}.
		
		(3). For each $a>0$, let 
        $$
        U_{a}=\{x\in\partial\Omega:d_{\partial\Omega}(x,P)<a\}\cap \{x_3<\varepsilon\}.
        $$
        It is clear that $U_a$ is a neighborhood of $P$ in $S'=S\cup P$ and there exists a constant $M_a>0$ such that 
        $$
        d_S(x,y)\leq d_{U_a\setminus P}(x,y)\leq M_a,\quad\forall x,y\in U_a\setminus P.
        $$
        Fix $\delta>0$. If $x,y\in U_a\setminus P$ satisfy $\|x-y\|\geq\delta$, we have 
        $$
        d_S(x,y)\leq M_a\leq\frac{M_a}{\delta}\|x-y\|.
        $$
        Then we only need to consider the case $\|x-y\|<\delta$.
        
        By \cref{prop: convex-surface-Lipschitz}, there exists a constant $C>0$ such that 
        \begin{equation}\label{eq: d-partial-Omega-setminus-P}
            d_{\partial\Omega\setminus P}(x,y)\leq C\|x-y\|\quad\forall x,y\in U_a\setminus P.
        \end{equation}
        It suffices to show that there exists $a>0$ such that $d_S(x,y)\leq d_{\partial\Omega\setminus P}(x,y)$ for all $x,y\in U_a\setminus P$ satisfying $\|x-y\|<\delta$. Let $a=\varepsilon/k$, where $k>0$ is a constant to be determined. Suppose that there exist $x,y\in U_a\setminus P$ such that $d_S(x,y)>d_{\partial\Omega\setminus P}(x,y)$. Then there exists a rectifiable curve $\sigma$ in $\partial\Omega\setminus P$ connecting $x$ and $y$ such that 
        $$
        l(\sigma)<\min\{d_S(x,y),d_{\partial\Omega\setminus P}(x,y)+\varepsilon/k\}.
        $$
        It follows that $\sigma\not\subset S$. Thus $\sigma\cap\{x_3=\varepsilon\}\ne\varnothing$. It follows that $l(\sigma)\geq2(1-1/k)\varepsilon$. On the other hand, by \eqref{eq: d-partial-Omega-setminus-P} we have
		$$
		   l(\sigma)< d_{\partial\Omega\setminus P}(x,y)+\varepsilon/k\leq C\|x-y\|+\varepsilon/k< C\delta +\varepsilon/k.
		$$
        Choose $k=3$ and $\delta=\varepsilon/(kC)$. It follows that
        $$
        \frac{4}{3}\varepsilon=2\left(1-\frac{1}{k}\right)\varepsilon\leq l(\sigma)\leq C\delta +\frac{\varepsilon}{k}=\frac{2}{3}\varepsilon,
        $$
        which is a contradiction. Therefore, we have $d_{S}(x,y)\leq \frac{kCM_a}{\varepsilon}\|x-y\|$ for all $x,y\in U_a\setminus P$, where $k=3$ and $a=\varepsilon/k$. This completes the proof.
	\end{proof}
    
    \begin{proposition}\label{prop: fill-isolated-points}
        The isometry ${G}^{\xi}_{\Gamma_1,\Gamma_2}$ can be extended to an isometry $$
        \tilde{G}^{\xi}_{\Gamma_1,\Gamma_2}: \tilde{S}_1^{\xi}(\Gamma_1,\Gamma_2)\to\tilde{S}_2^{\xi}(\Gamma_1,\Gamma_2).
        $$
    \end{proposition}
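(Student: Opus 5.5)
We intend to apply \cref{lem: Isometry-Extension} with $X=\tilde{S}_1^{\xi}$, $X_1=S_1^{\xi}$, $Y=\tilde{S}_2^{\xi}$, $Y_1=S_2^{\xi}$, $\overline{Y}=\overline{B_2(0)}$ with the Euclidean metric, $\iota$ the inclusion, and $f=G^{\xi}$ (suppressing $(\Gamma_1,\Gamma_2)$).

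\textbf{Step 1: metrics and density.} The added set $\tilde{S}_i^{\xi}\setminus S_i^{\xi}$ consists of the points $Q_i(p)$ with $p\in\Gamma_i(\mathscr{P}_i^{\xi})$. This set is countable and closed in $\tilde S_i^\xi$: the points of $\mathscr{P}_i^\xi$ are isolated in $\mathscr D_i^\xi$, so they accumulate only on $\mathscr{D}_i^{\xi+1}$, and that set lies outside $R(\tilde S_i^\xi)$. Hence the added set has $\mathscr{H}^1=0$ and meets every compact subset of $\tilde S_i^\xi$ in a compact set. The surface $\tilde{S}_i^{\xi}$ is locally convex, hence locally Lipschitz. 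So \cref{thm: zero-H1-dont-change-metric} gives $d_{\tilde S_i^\xi}=d_{S_i^\xi}$ on $S_i^\xi$, and $S_i^\xi$ is dense in $\tilde S_i^\xi$. Euclidean distance is at most intrinsic distance, so the contraction hypothesis on $\iota$ holds.

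\textbf{Step 2: the limit condition.} Let $x_n\in S_1^\xi$ converge to a filled point $x_0=Q_1(p)$ with $p\in\Gamma_1(\mathscr{P}_1^{\xi})$, and let $G^\xi(x_n)\to y_0\in\overline{B_2(0)}$. We must show $y_0\in\tilde S_2^\xi$, i.e. $y_0\neq 0$ and $R(y_0)\in\Gamma_2(\mathscr P_2^\xi)$ (or $R(y_0)$ lies in the domain of $S_2^\xi$).

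By \cref{rmk:boundary-limits-under-filling}, we may take $x_n=P_1(\Gamma_1(z_n),\Gamma_2F(z_n))$ with $z_n\in\partial X_1$ converging to the ideal point $\Gamma_1^{-1}(p)$. Since $\|x_0\|>0$, \cref{lem: SumIs2} gives $\|y_0\|=2-\|x_0\|\in(0,2)$, so $y_0\neq0$.

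The component structure is handled as follows. By \cref{lem:component-compactification} applied to $F$ (a homeomorphism $\partial X_1\to\partial X_2$, with $\partial X_i$ identified with $\mathscr C_i^0$ by radial projection), the isolated point component $p$ of $\mathscr D_1^\xi$ corresponds to a unique component $D$ of $\mathscr D_2$, which is isolated in $\mathscr D_2^\xi$. Then $F(z_n)$ accumulates only on $\Gamma_2(D)$. Since $R(G^\xi(x_n))=R(\Gamma_2F(z_n))$, the direction $R(y_0)$ lies in $\Gamma_2(D)$.

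\textbf{Step 3: $D$ is a point.} It remains to show that $D$ cannot be a disk, and this is the main obstacle. We first try a localized \cref{prop:point-end-extremal}. Near $p$, the surface $\partial X_1$ has a point end, so the extremal length of curves separating $p$ from a fixed disk $A$ vanishes. The estimate \eqref{eq:quadratic-area-point-end} and the coarea argument are local around $p$ and should go through unchanged, because $p$ is isolated in $\mathscr D_1^\xi$. On the $X_2$ side, the images of those curves separate the disk $\Gamma_2(D)$ from $F(A)$, so their lengths are bounded below and the extremal length is positive. This contradicts the invariance of extremal length under the isometry $F$. The subtlety is that points of $\mathscr D_1$ not in $\mathscr D_1^\xi$ may accumulate at $p$; these are removed only at earlier stages. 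We handle them by working on the completed surface $S_1^\xi$ through the Pogorelov correspondence, or by noting that their $\mathscr H^1$-null pieces do not affect the level-set argument.

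Once $D$ is known to be a point $p'$, we have $R(y_0)=\Gamma_2(p')\in\Gamma_2(\mathscr P_2^\xi)$. The limit is radial, so $y_0=Q_2(\Gamma_2(p'))\in\tilde S_2^\xi$. \cref{lem: Isometry-Extension} then yields an isometric embedding $\tilde G^\xi$. Applying the same argument to $(G^\xi)^{-1}$ shows that $\tilde G^\xi$ is surjective, so it is an isometry.
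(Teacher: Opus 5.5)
Your Steps~1 and~2 are sound and match the paper's reduction. You use \cref{thm: zero-H1-dont-change-metric} for density and equality of the intrinsic metrics, \cref{lem: Isometry-Extension} to reduce to the limit condition, and \cref{lem:component-compactification} to place the limiting direction in an isolated component $D$ of $\mathscr D_2^\xi$ (after applying $\Gamma_2$).

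The gap is Step~3, which is the whole content of the proposition. Your localized \cref{prop:point-end-extremal} runs on $\partial X_1$ with the original isometry $F$. Its level-set step needs $X_1^P$ to be $\mathscr H^1$-null near $p$: that is what makes almost every sphere $\{\|x-p\|=t\}$ miss $X_1^P$ and cut out of $\partial X_1$ a rectifiable Jordan curve separating $p$ from $A$. This is fine when $\xi=0$. In general, however, $p$ is isolated only in $\mathscr D_1^\xi$, and for $\xi\ge1$ components of $\mathscr D_1\setminus\mathscr D_1^\xi$ of lower Cantor--Bendixson rank can accumulate at $p$. These include \emph{disks}, which are not $\mathscr H^1$-null, so your second fix is false. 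The spheres around $p$ can then meet holes for a set of radii of positive measure in every interval $(r,2r)$. A separating curve in $\partial X_1$ must detour around those holes, and there you have no control of $\int\rho$.

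Nor is the vanishing to be expected. Take the planar model $\hat{\mathbb C}\setminus\mathscr D_1$ with $\infty\in\operatorname{int}A$, and suppose the holes cover all but a summable fraction of the area of each dyadic annulus around $p$. Then $\rho=|z-p|^{-1}$ on $\mathbb C\setminus(A\cup\mathscr D_1)$ has finite area, while every separating curve winds once around $p$ and so has $\rho$-length at least $2\pi$. So no contradiction can be extracted from $F$ and $\partial X_1$ alone; the lower-rank holes must be filled first.

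The paper therefore works on the filled surfaces and with $G^\xi$ instead of $F$. There the lower-rank holes have already been replaced by points and flat elliptic disks, and $x_0$ is a genuine puncture: since $R(x_0)$ is isolated in $\mathscr D_1^\xi$, there is an open disk $U\ni x_0$ in $\tilde S_1^\xi$ with $U\setminus\{x_0\}\subset S_1^\xi$. Your aside about working on $S_1^\xi$ points this way, but it is not carried out. It would need extremal length on the locally convex surfaces $\tilde S_i^\xi$, quadratic area growth at $x_0$, and a collar at the disk end. Once you are there, no conformal invariant is needed at all.

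Suppose $D$ were a disk, and normalize it to $\overline{\mathbb S^2_-}$. Then \cref{lem: bottom-is-convex}, together with the argument of \cref{prop: Pi-contain-0-interior}, gives a compact convex $P\subset\mathbb R^2$ with $0\in\operatorname{int}P$. So $\hat S=\tilde S_2^\xi\cup\partial P$ is a surface with boundary $\partial P$, and $y_0\in\partial P$. The map $G^\xi|_{U\setminus\{x_0\}}$ is Lipschitz, so it extends to a continuous map $G_0:U\to\hat S$ with $G_0(x_0)=y_0$. This extension is injective because $G^\xi$ is injective and $y_0\notin S_2^\xi$. Read in a half-plane chart of $\hat S$ at $y_0$, it is a continuous injection of an open disk into a closed half-plane whose image meets the boundary line. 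That contradicts invariance of domain, so $D$ is a point and $y_0\in\tilde S_2^\xi$.
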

    \begin{proof}
        Fix a $\xi$-admissible pair $(\Gamma_1,\Gamma_2)$ and suppress it from the notation. By \cref{thm: zero-H1-dont-change-metric}, ${S}_i^{\xi}$ is a dense metric subspace of $\tilde{S}_i^{\xi}$ with respect to their intrinsic path metrics. It suffices to show that for each sequence $\{x_n\}_n$ in ${S}_1^{\xi}$ such that $\{x_n\}_n$ converges to some point $x_0\in \tilde{S}_1^{\xi}$ and $\{G^{\xi}(x_n)\}_n$ converges to some point $y_0\in \overline{{S}_2^{\xi}}$, we have $y_0\in \tilde{S}_2^{\xi}$. Indeed, by \cref{lem: Isometry-Extension}, this implies that ${G}^{\xi}$ can be extended to an isometric embedding from $\tilde{S}_1^{\xi}$ to $\tilde{S}_2^{\xi}$. After a similar argument, the inverse map of ${G}^{\xi}$ can also be extended to an isometric embedding from $\tilde{S}_2^{\xi}$ to $\tilde{S}_1^{\xi}$, which establishes the claim. 
        
        Suppose, for contradiction, that there exists a sequence $\{x_n\}_n$ in ${S}_1^{\xi}$ such that $\{x_n\}_n$ converges to some point $x_0\in \tilde{S}_1^{\xi}$ but $\{{G}^{\xi}(x_n)\}_n$ converges to some point $y_0\notin \tilde{S}_2^{\xi}$ in $\mathbb{R}^3$. Then we have the radial projection $R(x_0)\in\mathscr{P}_1^{\xi}$. Passing to a subsequence if necessary, we may assume that $\{R(G^\xi(x_n))\}_n$ converges to a point $w_0\in\mathbb{S}^2$. By \cref{lem:component-compactification}, $w_0$ lies in some isolated component $D$ of $\mathscr{D}_2^{\xi}$. Since $y_0\notin \tilde{S}_2^{\xi}$, $D$ is an isolated disk component of $\mathscr{D}_2^{\xi}$. After composing an isometry of $\mathbb{H}^3_P$ if necessary, we may assume that $D=\overline{\mathbb{S}^2_{-}}$. By \cref{lem: bottom-is-convex}, there exists a compact convex set $P\subset\mathbb{R}^2$ such that $\tilde{S}_2^{\xi}\cup P$ is a locally convex surface in $\mathbb{R}^3$. Since $\tilde{S}_2^{\xi}(\Gamma_1,\Gamma_2)$ is locally convex with respect to $0$ for all $\xi$-admissible pairs $(\Gamma_1,\Gamma_2)$, by an argument similar to that in the proof of \cref{prop: Pi-contain-0-interior}, we see that $P$ contains $0$ as an interior point in $\mathbb{R}^2$. It follows that $\{y_0\}=\{tw_0:t\geq0\}\cap \partial P$.
        Let $\hat{S}=\tilde{S}_2^{\xi}\cup\partial P$. Then $\hat{S}$ is a surface with $\partial P$ as its boundary.       
        Since $R(x_0)\in\mathscr{P}_1^{\xi}$ is an isolated point component of $\mathscr{D}_1^{\xi}$, we can choose an open disk neighborhood $U$ of $x_0$ in $\tilde{S}_1^{\xi}$ such that $U\setminus\{x_0\}\subset{S}_1^{\xi}$. 
        Consider the map
        $$
        \left.{G}^{\xi}\right|_{U\setminus \{x_0\}}:U\setminus \{x_0\}\to \mathbb{R}^3.
        $$ 
        It is Lipschitz continuous and therefore extends uniquely to a continuous map $G_0: U\to \mathbb{R}^3$. Clearly, $G_0(x_0) = y_0$, and $G_0$ maps $U$ into $\hat{S}$. Since ${G}^{\xi}$ is injective and $y_0 \notin S_2^{\xi}$, it follows that $G_0$ is a continuous injection. By shrinking $U$ if necessary, we may assume that $G_0(U)$ lies in a half-plane chart of $\hat{S}$ at $y_0$. Composing $G_0$ with this chart gives a continuous injection from an open disk into the closed half-plane $\{(x,y):y\geq0\}$, whose image contains a point in the boundary-line $\{(x,0):x\in\mathbb{R}\}$. This contradicts Brouwer's invariance of domain theorem and thus completes the proof.
    \end{proof}

    In view of \cref{prop: fill-isolated-points}, it suffices to consider the isolated disk components. For the remainder of this successor step, we replace $S_i^\xi, G^\xi$, and $\mathscr{C}_i^\xi$ with their point-filled extensions, remove the corresponding point indices from $J_i^\xi$, and retain the same notation. Consequently, $J_i^\xi \setminus J_i^{\xi+1}$ indexes only isolated disks, while the subsequent sets $J_i^{\xi+1}$ and $\mathscr{C}_i^{\xi+1}$ remain unchanged. We also replace $\mathscr D_i^\xi$ by $\mathscr D_i^\xi\setminus\mathscr P_i^\xi$, so that $\mathscr D_i^\xi=\mathbb S^2\setminus\mathscr C_i^\xi$ continues to hold with the new notation. Our next goal is to show that the boundary of each isolated disk component $D_{i,j}$ corresponds to the boundary of a planar elliptical disk in $\mathbb R^3\setminus\{0\}$. To this end, we first establish two auxiliary lemmas.
	\begin{lemma}\label{lem: continuous-extend-of-G}
	    Let $G: \Sigma_1\to\Sigma_2$ be an intrinsic isometry between two locally convex surfaces in $\mathbb{R}^3$. Suppose that there exists a compact convex body $P$ in $\mathbb{R}^2$ such that 
        $\Sigma_1'=\Sigma_1\sqcup P$ is a locally convex surface in $\mathbb{R}^3$ and there exists a neighborhood $U$ of $P$ in $\Sigma_1'$ and a constant $C_U$ such that 
        \begin{equation}\label{eq: d-Sigma-leq-CU}
            d_{\Sigma_1}(x,y)\leq C_U\|x-y\|,\quad \forall x,y\in \Sigma_1\cap U.
        \end{equation}
        Denote by $\overline{\Sigma_2}$ the closure of $\Sigma_2$ in $\mathbb{R}^3$. Then, $G$ can be extended to a continuous map $\hat{G}$ from $\Sigma_1\cup\partial P$ to $\overline{\Sigma_2}$ such that $\hat{G}(\partial P)\subset \overline{\Sigma_2}\setminus\Sigma_2$.
	\end{lemma}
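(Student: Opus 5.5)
The plan is to build $\hat G$ pointwise by continuity and then rule out boundary points landing inside $\Sigma_2$ using invariance of domain, in the same spirit as the proof of \cref{prop: fill-isolated-points}.

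\textbf{Extension.} Since $\Sigma_1\sqcup P$ is a surface and $P$ lies in a plane, each point of $\Sigma_1$ near $P$ lies off $\mathbb{R}^2$. The set $\Sigma_1\cup\partial P$ is the surface-with-boundary obtained by removing the interior of $P$, and $\Sigma_1$ is dense in it. On $\Sigma_1\cap U$, the hypothesis \eqref{eq: d-Sigma-leq-CU} together with the fact that $G$ is an intrinsic isometry gives
$$
\|G(x)-G(y)\|\le d_{\Sigma_2}(G(x),G(y))=d_{\Sigma_1}(x,y)\le C_U\|x-y\|.
$$
Thus $G|_{\Sigma_1\cap U}$ is Euclidean Lipschitz. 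It therefore extends uniquely to a continuous map on the closure $\overline{\Sigma_1\cap U}$, which contains $\partial P$, with values in $\overline{\Sigma_2}$ because $\overline{\Sigma_2}$ is closed. We define $\hat G$ to be $G$ on $\Sigma_1$ and this limit on $\partial P$. Continuity of $\hat G$ on all of $\Sigma_1\cup\partial P$ is then clear, since $\hat G$ agrees locally with a uniformly continuous map near $\partial P$, and $G$ is continuous on $\Sigma_1$: intrinsic and extrinsic topologies coincide on locally convex surfaces.

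\textbf{Boundary lands outside $\Sigma_2$.} Suppose $p\in\partial P$ and $\hat G(p)=q\in\Sigma_2$. Consider $G^{-1}:\Sigma_2\to\Sigma_1$, which is continuous. Take $x_n\in\Sigma_1$ with $x_n\to p$. Then $G(x_n)\to q$, so $x_n=G^{-1}(G(x_n))\to G^{-1}(q)\in\Sigma_1$, and hence $p=G^{-1}(q)\in\Sigma_1$. This contradicts $p\in\partial P$, since $\Sigma_1\sqcup P$ is a disjoint union.

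This argument is shorter than invariance of domain. The only delicate point is that convergence $G(x_n)\to q$ in $\mathbb{R}^3$ implies convergence in $\Sigma_2$'s own topology. This holds because $\Sigma_2$ is a surface embedded in $\mathbb{R}^3$ with the subspace topology, and its intrinsic metric induces the same topology by local convexity, hence local Lipschitz charts.

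\textbf{Main obstacle.} The main step needing care is the first one: ensuring that the Lipschitz bound applies on a set whose closure contains all of $\partial P$, and that the limits are independent of the approximating sequence. Both follow from uniform continuity on $\Sigma_1\cap U$, since $U$ is a neighborhood of the compact set $P$ in $\Sigma_1'$, and from density of $\Sigma_1$ near $\partial P$. The latter holds because every point of $\partial P$ is a limit of points of $\Sigma_1$ in the surface $\Sigma_1'$: the boundary of $P$ in $\Sigma_1'$ is $\partial P$, and $P$ has empty interior in $\Sigma_1'$ near those points.
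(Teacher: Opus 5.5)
Your proof is correct and follows the paper's argument. The hypothesis $d_{\Sigma_1}\le C_U\|\cdot\|$ is transferred through $G$ to produce limits at $\partial P$. You package this as the Euclidean-Lipschitz extension of $G|_{\Sigma_1\cap U}$, which gives well-definedness and continuity on $\partial P$ at once, where the paper uses Cauchy sequences, an interlacing argument and a diagonal argument. Your exclusion of $\hat G(p)\in\Sigma_2$ via continuity of $G^{-1}$ is exactly the paper's.

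Two side remarks are never used and can be dropped. The first is the opening mention of invariance of domain. The second is the claim that points of $\Sigma_1$ near $P$ lie off $\mathbb{R}^2$, which can fail under the lemma's hypotheses: take $\Sigma_1'$ to be a planar disk strictly containing $P$.
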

    \begin{proof}
		Suppose that $\{u_n\}_n$ is a sequence in $\Sigma_1$ that converges to a point $q_1 \in \partial P$ in $\mathbb{R}^3$. By \eqref{eq: d-Sigma-leq-CU}, $\{u_n\}_n$ is a Cauchy sequence in the path metric space $\Sigma_1$. It follows that $v_{n}=G(u_{n})$ is also a Cauchy sequence in $\Sigma_2$. Therefore, $v_n$ converges to some point $q_2\in \overline{\Sigma_2}$ in $\mathbb{R}^3$. If $q_2\in\Sigma_2$, then
		$$
		q_1=\lim_{n\to\infty} u_n=\lim_{n\to\infty} G^{-1}(v_n)=G^{-1}(q_2)\in \Sigma_1,
		$$
		which is a contradiction to the fact that $\Sigma_1\cap P=\varnothing$.
		Now we show that the limit $q_2$ is unique. Suppose, for contradiction, that there are two sequences $\{u_n^1\}_n$ and $\{u_n^2\}_n$ in $\Sigma_1$ such that both $u_n^i$ converge to a point $q_1\in \partial P$, but $v_n^i=G(u_n^i)$ converges in $\mathbb{R}^3$ to different points $q_2^i\in \overline{\Sigma_2}\setminus \Sigma_2$. Let $\{w_n\}_n$ be the alternating sequence of $\{u_n^1\}_n$ and $\{u_n^2\}_n$, i.e., $w_{2n}=u_n^1$ and $w_{2n+1}=u_n^2$ for all $n\in\mathbb{N}$. By \eqref{eq: d-Sigma-leq-CU}, $w_n$ is a Cauchy sequence in $\Sigma_1$ and thus $G(w_n)$ is a Cauchy sequence in $\mathbb{R}^3$, a contradiction. For each $q_1\in \partial P$, we denote by $H(q_1)$ the corresponding limit $q_2\in \overline{\Sigma_2}\setminus\Sigma_2$.
		
		Define the function $\hat{G}:\Sigma_1\cup \partial P\to\overline{\Sigma_2}$ by 
		$$
		\hat{G}(q)=\left\{
		\begin{array}{ll}
			G(q),\quad &q\in \Sigma_1,\\
			H(q),\quad &q\in \partial P.
		\end{array}\right.
		$$
		We have already proved that for any sequence $\{u_n\}_n\subset\Sigma_1$ converging to a point $q_1 \in \partial P$, $\hat{G}(u_n)\to\hat{G}(q_1)$ as $n\to\infty$. To show $\hat{G}$ is continuous, we only need to prove that for all $\{u_n\}_n\subset\partial P$ converging to a point $q_1 \in \partial P$, $\hat{G}(u_n)\to \hat{G}(q_1)$ as $n\to\infty$. For each $u_n$, there exists a sequence $\{u_k^n\}_k \subset \Sigma_1$ converging to $u_n$ as $k \to \infty$. It follows that the sequence $\{ G(u_k^n)\}_k$ converges to the point $\hat{G}(u_n)\in \overline{\Sigma_2}\setminus \Sigma_2$. For each $n \in \mathbb{N}$, we choose an element $w_n \in \{u_k^n\}_{k \in \mathbb{N}}$ satisfying
		$$
		\max\{\|w_n - u_n\|, \|G(w_n) - \hat{G}(u_n)\|\} \leq \frac{1}{n}.
		$$
		Given $u_n \to q_1$, the construction ensures that $\{w_n\}_n$ is a sequence in $\Sigma_1$ converging to $q_1$, which in turn implies that $G(w_n) \to \hat{G}(q_1)$ as $n\to\infty$. Consequently, $\hat{G}(u_n) \to \hat{G}(q_1)$ since $\|G(w_n) - \hat{G}(u_n)\| \to 0$ as $n\to\infty$. This completes the proof.
	\end{proof}

    \begin{lemma}\label{lem: boundary-on-plane}
        Let $D$ be a closed round disk in $\mathbb{S}^2_+$, and let $U$ be a neighborhood of $D$ in $\mathbb{S}_+^2$. Let $\psi$ be a positive continuous function on $U\setminus\operatorname{int}D$. Suppose that
        $$
            S=\{\psi(w)w:w\in U\setminus D\}
        $$
        is locally convex with respect to $0$, and that
        $$
            \Lambda=\{\psi(w)w:w\in\partial D\}
        $$
        is the boundary of an open planar elliptical disk $\Omega$ contained in an affine plane $H$ with $0\notin H$. Then $S\cup\overline{\Omega}$ is locally convex with respect to $0$.
    \end{lemma}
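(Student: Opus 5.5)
We reduce the statement to the radial-function criterion of \cref{lem: equivalence-locally-convex-to-0}. Extend $\psi$ to all of $U$ by letting $\psi(w)$, for $w\in\operatorname{int}D$, be the parameter such that $\psi(w)w\in\Omega$. Since $0\notin H$ and the cone over $\partial D$ meets $H$ exactly in $\Lambda$, each ray through $\operatorname{int}D$ meets $\Omega$ in exactly one point. Hence the extended $\psi$ is positive and continuous on $U$, and $S\cup\overline{\Omega}=\{\psi(w)w:w\in U\}$ is a radial surface. Let $Z$ be the cone over $U$ and $q(\alpha)=\inf\{k>0:\alpha/k\in S\cup\overline\Omega\}$, so that $q(w)=1/\psi(w)$ on $U$. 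Then $q$ is continuous and positively homogeneous. It suffices to show that $q$ is locally convex on $Z\setminus\{0\}$. Away from the cone over $\partial D$ this is immediate. On the cone over $\operatorname{int}D$, $q$ is the restriction of the linear function $\ell$ with $H=\{\ell=1\}$. On the cone over $U\setminus D$, it follows from \cref{lem: equivalence-locally-convex-to-0} applied to $S$.

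The real work is at a point $\alpha_0$ on the cone over $\partial D$. I would prove there that
\begin{equation*}
q\geq \ell \text{ on a conical neighborhood } N \text{ of } \alpha_0, \qquad \text{with equality on } N\cap\overline{C_0(D)}.
\end{equation*}
To get this, pass to the affine chart $\{x_3=1\}$ used in \cref{lem:Pi-not-segment-or-point}. There $D$ becomes a closed convex set $B$ (an ellipse or disk), $\ell$ is affine, and $q$ restricted to the chart, call it $F$, is continuous. Moreover $F$ is locally convex on $V\setminus B$ and equal to $\ell$ on $B$.

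Take $y\in\partial B$ and $z\in V\setminus B$ near $y$, and let $y'$ be the point on $\partial B$ hit by the outward segment $[z,z+t\nu]$ direction, or more simply a point of $\partial B$ close to $z$. Pick a short chord through $z$ parallel to the tangent of $\partial B$ near the nearest boundary point, whose endpoints both lie in $V\setminus B$. Then $F$ is convex on this chord. However, the chord alone only gives an upper bound, and we need the lower bound $F\geq\ell$. For that I would use lines through $z$ crossing into $B$. Let $g=F-\ell$, so $g$ is continuous, $g=0$ on $B$, and $g$ is locally convex off $B$. Suppose $g(z)<0$ at some $z$ near $\partial B$. Consider the segment from $z$ to a nearby point $b\in\partial B$ along a direction that exits $B$ immediately on the far side; a tangent-type segment grazing $B$ at a single point $b$ works. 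On the part of this segment outside $B$, $g$ is convex. The function $g$ vanishes at $b$, and by continuity it stays near $0$ on the other side of $b$. Convexity along the segment on each side, combined with the continuity of $g$ at $b$, should force $g\geq0$.

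This local-to-global step across $\partial B$ is the main obstacle. I expect to handle it with the Erhard Schmidt theorem (\cref{thm: The-Erhard-Schmidt-Theorem}) rather than with one-dimensional arguments. Consider the open region $\Omega'=\{t w: w\in U',\,0<t<\psi(w)\}$ for a small disk neighborhood $U'$ of a point of $\partial D$, truncated by a plane near $0$ and by the cone boundaries as in the proof of \cref{lem: bottom-is-convex}. Boundary points on $S$ and on the truncating faces have local supporting planes by hypothesis. Points of $\Omega$ have the local supporting plane $H$, provided we check that $H$ does not cut into $\Omega'$ near $\partial\Omega$. Along $\Lambda$, this follows because near a point of $\Lambda$ the surface $S$ lies on the far side of $H$ from $0$. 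That in turn follows from local convexity of $S$: the tangent cone of $S$ along $\Lambda$ has supporting planes converging, and the region $\Omega'$ locally lies in the intersection of the half-space containing $0$ bounded by $H$ with the half-space of a supporting plane of $S$.

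A cleaner route at points of $\Lambda$ is to take the local supporting plane $H$ itself, after showing that $S$ near $\Lambda$ lies on the side of $H$ away from $0$. The last fact I would obtain from \cref{lem:Pi-not-segment-or-point}-type chord arguments, or from the fact that $\Lambda$ is a boundary curve of the convex pieces $K_p$. Once the Erhard Schmidt theorem gives convexity of $\Omega'$, its boundary contains a neighborhood of each point of $\Lambda$ in $S\cup\overline\Omega$, and $\Omega'$ contains points near $0$. Taking the convex hull with a small ball at $0$, as in the Lipschitz lemma of \cref{sec: pogorelov}, yields a convex set containing $0$. This gives local convexity with respect to $0$.
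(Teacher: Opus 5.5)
Your reduction is sound. Extending $\psi$ across $\operatorname{int}D$ by the plane $H$, reducing to local convexity of the gauge $q$ at points over $\partial D$, and isolating the inequality $q\ge \ell$ near the cone over $\partial D$ is exactly the structure of the paper's proof, where it appears as $f=F-L\ge 0$ on $V_0\setminus B$. Your Erhard Schmidt endgame would also work once that inequality is available, because a neighborhood of $p\in\Lambda$ in $H$ avoids $\Omega'$ precisely when $\psi\le 1/\ell$ near $R(p)$. Mind the sign: this says $S$ near $\Lambda$ lies in the closed half-space of $H$ \emph{containing} $0$, not on the far side as you write twice; on the far side, $H$ would cut into $\Omega'$.

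The gap is that this inequality, which is the whole content of the lemma, is never proved. Your one-dimensional attempt along a segment grazing $B$ at $b$ cannot work. Since $b\notin V\setminus B$, $g$ is convex only on each half of that segment separately, so the data on that line ($g(z)<0=g(b)$, convex on each side) are compatible with $g(t)=-|t|$. Your other suggestions do not supply the inequality either:
\begin{itemize}
\item The single-chord argument of \cref{lem:Pi-not-segment-or-point} only bounds $F$ from above, as you noticed.
\item The convex sets $K_p$ exist only at points $p\in S$, with neighborhoods that may shrink as $p\to\Lambda$.
\item Limits of local supporting planes at $x_n\to\Lambda$ carry no uniform information about which side of $H$ the nearby surface lies on.
\end{itemize}

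The missing idea is the paper's three-point argument. After an affine change making $B$ the unit disk, suppose $f(p_1)<0$ at $p_1=(r,0)$ with $r>1$ close to $1$.
\begin{itemize}
\item Along each of the two tangent segments from $p_1$ to $B$, $f$ is convex up to the tangency point, where it vanishes. Hence at the intersections $p_2,p_3$ of these segments with the tangent line at $p=(1,0)$ you get $f(p_2)<0$ and $f(p_3)<0$.
\item Translate the tangent line outward by $\varepsilon$. The segment $[p_2^\varepsilon,p_3^\varepsilon]$ then lies entirely in $V\setminus B$, where $f$ is genuinely convex, so its midpoint satisfies $f(p^\varepsilon)\le \tfrac12\bigl(f(p_2^\varepsilon)+f(p_3^\varepsilon)\bigr)$.
\item Letting $\varepsilon\to0$ and using continuity gives $0=f(p)\le \tfrac12\bigl(f(p_2)+f(p_3)\bigr)<0$, a contradiction.
\end{itemize}
The two tangent segments transfer negativity to both sides of $p$. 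The outward translation avoids exactly the lack of convexity across $\partial B$ that defeats your grazing-segment argument. With $f\ge 0$ near $B$ established, either the paper's extension of $f$ by zero or your Erhard Schmidt argument completes the proof.
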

    
    \begin{proof}
        Without loss of generality, we may assume that $D$ is centered at $e_3$. Identify the upper hemisphere with the tangent plane
        $$
            P=T_{e_3}\mathbb{S}^2=\{x_3=1\}
        $$
        by the radial map $w\mapsto w/w_3$. As in \cref{lem:Pi-not-segment-or-point}, we let $Z=\{kx:k\geq0,x\in S\}$ and let $q:Z\to\mathbb{R}$ be defined as
        \begin{equation*}
        		q(\alpha) = \inf\{k > 0 : \alpha/k \in S\}.
        \end{equation*}
        Let $F:=q|_{P\cap Z}$. Again, we see that $F$ is locally convex by \cref{lem: equivalence-locally-convex-to-0} and we have
        \begin{equation}\label{eq:expression-F-2}
            F\left(\frac{w}{w_3}\right)=\frac{1}{\psi(w)w_3}, \quad\forall w\in U \setminus D.
        \end{equation}
        Let $B, V \subset P$ be the subsets corresponding to $D$ and $U$ under the radial map, respectively. Note that $B$ is a Euclidean disk. Then $F$ can be continuously extended to $\partial B$ by the expression \eqref{eq:expression-F-2}.
        Let $L$ be the linear functional on $\mathbb{R}^3$ such that $H=\{x\in\mathbb{R}^3:Lx=1\}$.
        For each $y\in\partial B$, since the point $y/F(y)$ belongs to $\Lambda\subset H$, we see that $L(y)=F(y)>0$. Put $f=F-L|_{V\setminus \operatorname{int}B}$. Then $f$ is locally convex on $V\setminus B$ and vanishes on $\partial B$.
    
        We claim that there exists a neighborhood $V_0$ of $B$ such that $f\geq0$ on $V_0\setminus B$. Suppose, for contradiction, that there exists a sequence $\{q_n\}_{n=1}^{\infty}$ in $V\setminus B$ such that $f(q_n)<0$ for all $n\geq1$ and $d(q_n,B)\to 0$ as $n\to\infty$. After an affine change of coordinates in $P$, we may take $B$ to be the unit disk and choose a point $p_1=(r,0)$ with $f(p_1)<0$, where $r>1$ and $r-1$ is small enough that the following configuration lies in $V$. Denote by $p_2$ and $p_3$ the intersections of the tangent line at $p=(1,0)$ with the two tangent segments from $p_1$ to $B$. Since $f|_{\partial B}=0$ and $f(p_1)<0$, convexity of $f$ on the tangent segments implies that
        $$
            f(p_2)<0\quad\text{and}\quad f(p_3)<0.
        $$
        Translate the tangent line of $B$ at $p$ a small distance $\varepsilon>0$ outward, and let $p_2^\varepsilon,p_3^\varepsilon$ be its intersections with the two tangent segments emanating from $p_1$. It is clear that their midpoint $p^\varepsilon$ tends to $p$, while the segment $[p_2^\varepsilon,p_3^\varepsilon]$ lies outside $B$. By convexity of $f$, we have
        $$
            f(p^\varepsilon)
            \leq\frac{f(p_2^\varepsilon)+f(p_3^\varepsilon)}2.
        $$
        Letting $\varepsilon\to 0^{+}$ gives
        $$
            0=f(p)\leq\frac{f(p_2)+f(p_3)}2<0,
        $$
        which is a contradiction; see \cref{fig:lem4-6}.
    
        \begin{figure}[htbp]
            \centering
            \begin{tikzpicture}[
                x=2.15cm,y=2.15cm,
                point/.style={circle,fill=black,inner sep=1.05pt},
                every node/.style={font=\small}]
                \def\r{1.78}
                \def\eps{0.22}
                \pgfmathsetmacro{\ax}{1/\r}
                \pgfmathsetmacro{\ay}{sqrt(1-1/(\r*\r))}
                \pgfmathsetmacro{\py}{sqrt((\r-1)/(\r+1))}
                \pgfmathsetmacro{\pey}{(\r-1-\eps)/sqrt(\r*\r-1)}
    
                \fill[black!7] (0,0) circle (1);
                \draw[thick] (0,0) circle (1);
                \node at (-0.38,0.10) {$B$};
    
                \coordinate (Pone) at (\r,0);
                \coordinate (Aplus) at (\ax,\ay);
                \coordinate (Aminus) at (\ax,-\ay);
                \coordinate (P) at (1,0);
                \coordinate (Ptwo) at (1,\py);
                \coordinate (Pthree) at (1,-\py);
                \coordinate (Peps) at ({1+\eps},0);
                \coordinate (Ptwoeps) at ({1+\eps},\pey);
                \coordinate (Pthreeeps) at ({1+\eps},-\pey);
    
                \draw[thick] (Aplus)--(Pone)--(Aminus);
                \draw[dashed] (1,-0.82)--(1,0.82);
                \draw[very thick] (Pthreeeps)--(Ptwoeps);
    
                \node[point] at (Pone) {};
                \node[anchor=west] at ($(Pone)+(0.05,0.04)$) {$p_1$};
                \node[point] at (P) {};
                \node[anchor=east] at ($(P)+(-0.04,0)$) {$p$};
                \node[point] at (Ptwo) {};
                \node[anchor=south west] at ($(Ptwo)+(0.03,0.02)$) {$p_2$};
                \node[point] at (Pthree) {};
                \node[anchor=north west] at ($(Pthree)+(0.03,-0.02)$) {$p_3$};
                \node[point] at (Peps) {};
                \node[anchor=west] at ($(Peps)+(0.04,0)$) {$p^\varepsilon$};
                \node[point] at (Ptwoeps) {};
                \node[anchor=south west] at ($(Ptwoeps)+(0.03,0.02)$) {$p_2^\varepsilon$};
                \node[point] at (Pthreeeps) {};
                \node[anchor=north west] at ($(Pthreeeps)+(0.03,-0.02)$) {$p_3^\varepsilon$};
                \node[anchor=south] at (1,0.85) {$T_p\partial B$};
            \end{tikzpicture}
            \caption{The three-point argument in \cref{lem: boundary-on-plane}. The two tangent segments transfer the negativity of $f$ at $p_1$ to $p_2,p_3$, whereas convexity of $f$ contradicts $f(p)=0$.}
            \label{fig:lem4-6}
        \end{figure}
        Now, choose a convex neighborhood $V_0\subset V$ of $B$ such that $f\ge0$ on $V_0\setminus B$, and extend $f$ to $B$ by zero. It is easy to show that the extended function $\tilde{f}$ is convex on $V_0$. Since $L=F>0$ on $\partial B$, we have $L>0$ on $B$. Hence, the function $\tilde F:=\tilde{f}+ L|_{V_0}$ is convex and positive on $V_0$. Moreover, $\tilde{F}$ is equal to $L$ on $B$ and to $F$ on $V_0\setminus B$. Let $\tilde{q}$ be the positively homogeneous extension of $\tilde{F}$, that is,
        $$
            \tilde q(ty)=t\tilde F(y),
            \qquad \forall y\in V_0,\quad t>0.
        $$
        Then, $\tilde{q}$ is convex on the cone $C_0(V_0)$ over $V_0$. The level set $\{\tilde q=1\}$ is precisely $S\cup\overline{\Omega}$ near $\Lambda$. The conclusion then follows from \cref{lem: equivalence-locally-convex-to-0}.
    \end{proof}

	\begin{proposition}\label{prop: fill-isolated-disks}
	    Let $i\in\{1,2\}$. Suppose that $j\in J_i^{\xi}\setminus J_i^{\xi+1}$ and $(\Gamma_1,\Gamma_2)$ is a $\xi+1$-admissible pair of isometries of $\mathbb{H}^3_P$. Then there exists a compact planar elliptic disk $E_{i,j}^{\xi}(\Gamma_1,\Gamma_2)$ in $\overline{B_2(0)}\setminus\{0\}$ such that the inverse map $Q_{i; \, \Gamma_1, \Gamma_2}^{\xi}$ of the radial projection $R: {S}_i^{\xi}(\Gamma_1,\Gamma_2)\to\mathbb{S}^2$ extends to a homeomorphism 
        $$
        {Q}_{i}: \Gamma_i(\mathscr{C}_i^{\xi}\cup \partial D_{i,j})\to {S}_i^{\xi}(\Gamma_1,\Gamma_2)\cup \partial E_{i,j}^{\xi}(\Gamma_1,\Gamma_2).
        $$
        Furthermore, for each $j_1\in J_1^{\xi}\setminus J_1^{\xi+1}$, there exists a unique $j_2\in J_2^{\xi}\setminus J_2^{\xi+1}$ such that ${G}^{\xi}_{\Gamma_1,\Gamma_2}$ extends to an isometry between the surfaces
        $$
        {S}_1^{\xi}(\Gamma_1,\Gamma_2)\cup E_{1,j_1}^{\xi}(\Gamma_1,\Gamma_2)\quad\text{and}\quad {S}_2^{\xi}(\Gamma_1,\Gamma_2)\cup E_{2,j_2}^{\xi}(\Gamma_1,\Gamma_2),
        $$
        both of which are locally convex with respect to $0$, and the radial projection is a homeomorphism from ${S}_i^{\xi}(\Gamma_1,\Gamma_2)\cup E_{i,j_i}^{\xi}(\Gamma_1,\Gamma_2)$ onto $\Gamma_i(\mathscr{C}_i^{\xi}\cup D_{i,j_i})$.
	\end{proposition}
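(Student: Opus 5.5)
We reduce to the single-disk analysis of \cref{sec: single-disk}, carried out locally near the isolated disk. Fix $j_1\in J_1^{\xi}\setminus J_1^{\xi+1}$. Since $D_{1,j_1}$ is an isolated component of $\mathscr{D}_1^{\xi}$ (points already filled), \cref{lem:component-compactification} applied to the homeomorphism $R\circ G^{\xi}\circ Q$ between the circle domains $\Gamma_1(\mathscr{C}_1^{\xi})$ and $\Gamma_2(\mathscr{C}_2^{\xi})$ assigns to $D_{1,j_1}$ a unique isolated component of $\mathscr{D}_2^{\xi}$. Repeating the extremal-length argument of \cref{prop:point-end-extremal} locally near this end shows that this component cannot be a point. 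Hence it is a disk $D_{2,j_2}$, which determines $j_2$.

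Next we normalize and apply the single-disk machinery. Choose auxiliary isometries $\Gamma_1',\Gamma_2'$ that send $\Gamma_i(D_{i,j_i})$ to $\overline{\mathbb{S}^2_-}$ and keep $0$ in the interior. By \cref{lem: bottom-is-convex}(1) there are compact convex sets $P_i\subset\mathbb{R}^2$ such that $S_i^{\xi}(\Gamma_1',\Gamma_2')\cup P_i$ is locally convex near the equator. By \cref{rmk:boundary-limits-under-filling}, the boundary formulas of \cref{lem: Expression-of-L1-General-gamma} and \cref{lem: Expression-of-L1-gamma} remain valid at this unfilled component. The argument of \cref{prop: Pi-contain-0-interior} then goes through verbatim, because it only uses local convexity near the circle and the deformations $\Gamma$ preserving $\mathbb{S}^2_-$. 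It shows that $0\in\operatorname{int}P_i$.

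With this in hand, \cref{lem: bottom-is-convex}(2),(3) supply the local Lipschitz bound required by \cref{lem: continuous-extend-of-G}. Hence $G^{\xi}$ extends continuously to $\partial P_1\to\partial P_2$, and isometrically for the intrinsic metrics. The proofs of \cref{lem: L2-formula-via-L1}, \cref{lem: f'-formula} and \cref{prop: f -is-Möbius} are local to the circle, so they again yield a Möbius map $\Gamma_0$ of $\mathbb{D}$ with $\Gamma_0(e^{i\alpha})=e^{if(\alpha)}$. In the frame $(\Gamma_0\Gamma_1',\Gamma_2')$, both boundary curves become the unit circle, traced by the same parametrization.

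We then return to the original $(\xi+1)$-admissible frame $(\Gamma_1,\Gamma_2)$. Admissibility puts $\Gamma_i(D_{i,j_i})$ inside an open hemisphere. Along a sequence converging to $\partial D_{1,j_1}$ in that frame, the limiting boundary curves are exactly those described by \cref{lem: unit-circle-to-ellipse}. They are planar ellipses $L_1,L_2$ lying in planes avoiding $0$, since the disk lies in an open hemisphere and the ellipse sits on the cone over it. Moreover $L_2=T\circ L_1$ for some $T\in\operatorname{Isom}(\mathbb{R}^3)$. This defines $E_{i,j_i}^{\xi}$, the closed elliptic disks bounded by $L_i$, and $\|L_i\|<2$ gives $E_{i,j_i}^{\xi}\subset\overline{B_2(0)}\setminus\{0\}$. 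Continuity of $Q_i$ up to $\partial D_{i,j_i}$ follows from the existence and uniqueness of these limits. Local convexity of $S_i^{\xi}\cup E_{i,j_i}^{\xi}$ with respect to $0$ follows from \cref{lem: boundary-on-plane}. We extend $G^{\xi}$ by $T$ on $E_{1,j_1}^{\xi}$. This is continuous because $\hat G|_{\partial E_1}=T|_{\partial E_1}$ by the paired parametrization.

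It remains to show that the glued map is an intrinsic isometry. Curves crossing the disk are split into pieces lying in $S_1^{\xi}\cup\partial E$ and pieces in $E$. Lengths are preserved on both kinds of pieces, by \cref{lem: bottom-is-convex}(2) transported to this frame and because $T$ is Euclidean on $E$. The symmetric argument applied to the inverse gives equality of the intrinsic metrics. The radial projection is a homeomorphism onto $\Gamma_i(\mathscr{C}_i^{\xi}\cup D_{i,j_i})$ because $E_{i,j_i}^{\xi}$ is a planar disk over the cone of $\Gamma_i(D_{i,j_i})$.

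The main obstacle is transporting the Möbius conclusion from the normalized frame back to the given frame $(\Gamma_1,\Gamma_2)$. This requires checking that \eqref{eq: condition-of-two-circle} holds after composing with $\Gamma_0$, so that \cref{lem: unit-circle-to-ellipse} applies with the composite isometries. A second check is that the single-disk arguments really only use behaviour near the one circle when other holes are present. I would handle this through \cref{rmk:boundary-limits-under-filling} together with \cref{lem: bottom-is-convex}.
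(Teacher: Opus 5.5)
Your proposal is correct and follows essentially the paper's route. You normalize the isolated disk to $\overline{\mathbb{S}^2_-}$, then use \cref{lem: bottom-is-convex}, the argument of \cref{prop: Pi-contain-0-interior}, \cref{lem: continuous-extend-of-G} and the single-disk analysis to reach the unit circle with identity boundary map. You then return to the $(\xi+1)$-admissible frame via \cref{lem: unit-circle-to-ellipse} and get local convexity from \cref{lem: boundary-on-plane}.

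The one divergence is your extremal-length step, which is unnecessary. A local version of \cref{prop:point-end-extremal} on $\partial X_i$ would not directly apply for $\xi>0$ anyway, since lower-rank components may accumulate at the end. Because \cref{prop: fill-isolated-points} has already filled the isolated points and matches filled points with filled points, the partner of $D_{1,j_1}$ is automatically an isolated disk. The paper locates that partner through the limits of $G^{\xi}$ along $\partial P_1$ together with \cref{lem:component-compactification}.
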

	\begin{proof}
		We only consider the case $i=1$. The case $i=2$ follows from a completely parallel argument. Let $j_1\in J_1^{\xi}\setminus J_1^{\xi+1}$. Then for all isometries $\Gamma$ of $\mathbb{H}^3_P$, $\Gamma(D_{1,j_1})$ is an isolated disk in $\Gamma(\mathscr{D}_1^{\xi})$, i.e., there exists an open set $U$ in $\mathbb{S}^2$ such that $U\cap\Gamma(\mathscr{D}_1^{\xi})=\Gamma(D_{1,j_1})$. Choose $\Gamma_1'$ such that $\Gamma_1'(D_{1,j_1})=\overline{\mathbb{S}^2_{-}}$. It is clear that $(\Gamma_1',\Gamma_2)$ is $\xi$-admissible. Since $\Gamma_1'(D_{1,j_1})=\overline{\mathbb{S}^2_{-}}$ is isolated in $\Gamma_1'(\mathscr{D}_1^{\xi})$, there exists $\delta>0$ such that the set $\Lambda_{\delta}=\{x_3\leq\delta\}\cap\mathbb{S}^2_+$ is contained in the circle domain $\Gamma_1'(\mathscr{C}_1^{\xi})$. By \cref{lem: bottom-is-convex}, there exists a compact convex set $P_1(\Gamma_1',\Gamma_2)\subset\mathbb{R}^2$ such that ${S}_1^{\xi}(\Gamma_1',\Gamma_2)\cup P_1(\Gamma_1',\Gamma_2)$ is a locally convex surface in $\mathbb{R}^3$. Since ${S}_1^{\xi}(\Gamma_1,\Gamma_2)$ is locally convex with respect to $0$ for all $\xi$-admissible pairs $(\Gamma_1,\Gamma_2)$, by an argument similar to that in the proof of \cref{prop: Pi-contain-0-interior}, we see that $P_1(\Gamma_1',\Gamma_2)$ contains $0$ as an interior point in $\mathbb{R}^2$. It follows that $Q_{1;\, \Gamma_1',\Gamma_2}^{\xi}$ can be extended to a continuous injection from $\Gamma_1'(\mathscr{C}_1^{\xi}\cup \partial D_{1,j_1})$ to $\mathbb{R}^3\setminus\{0\}$. By \cref{rmk:boundary-limits-under-filling} and \cref{lem: Expression-of-L1-General-gamma}, $Q_{1;\, \Gamma_1,\Gamma_2}^{\xi}$ can also be extended to a continuous injection ${Q_1}$ from $\Gamma_1(\mathscr{C}_1^{\xi}\cup \partial D_{1,j_1})$ to $\mathbb{R}^3\setminus\{0\}$. Since $\Gamma_1(D_{1,j_1})$ is isolated in $\Gamma_1(\mathscr{D}_1^{\xi})$, it follows that ${Q_1}$ is a homeomorphism from $\Gamma_1(\mathscr{C}_1^{\xi}\cup \partial D_{1,j_1})$ to its image, which is the union of ${S}_1^{\xi}(\Gamma_1,\Gamma_2)$ and a simple closed curve in $\mathbb{R}^3$.
		
		Now we turn to the extension of the isometry. By \cref{lem: bottom-is-convex}, there exists a neighborhood $U$ of $P_1(\Gamma_1',\Gamma_2)$ in $S'={S}_1^{\xi}(\Gamma_1',\Gamma_2)\cup P_1(\Gamma_1',\Gamma_2)$ and a constant $C_U$ such that 
        $$
        d_{{S}_1^{\xi}(\Gamma_1',\Gamma_2)}(x,y)\leq C_U\|x-y\|,\qquad\forall x,y\in {S}_1^{\xi}(\Gamma_1',\Gamma_2)\cap U.
        $$
        Write $C_{1,j_1}^{\xi}(\Gamma_1',\Gamma_2)=\partial P_1(\Gamma_1',\Gamma_2)$. By \cref{lem: continuous-extend-of-G}, ${G}^{\xi}_{\Gamma_1',\Gamma_2}$ can be extended to a continuous map 
        $$
        {G}^{\xi}_{j_1;\,\Gamma_1',\Gamma_2}:{S}_1^{\xi}(\Gamma_1',\Gamma_2)\cup C_{1,j_1}^{\xi}(\Gamma_1',\Gamma_2)\to \overline{{S}_2^{\xi}(\Gamma_1',\Gamma_2)}
        $$ such that $C_{1,j_1}^{\xi}(\Gamma_1',\Gamma_2)$ is mapped into $\overline{{S}_2^{\xi}(\Gamma_1',\Gamma_2)}\setminus {S}_2^{\xi}(\Gamma_1',\Gamma_2)$. Choose a sequence $\{u_n\}_n\subset{S}_1^{\xi}(\Gamma_1',\Gamma_2)$ such that $u_n$ converges to a point $q_1 \in \partial P_1(\Gamma_1',\Gamma_2)$ and $v_n ={G}^{\xi}_{\Gamma_1',\Gamma_2}(u_n)$ converges to a point $q_2 \in \overline{{S}_2^{\xi}(\Gamma_1',\Gamma_2)}\setminus {S}_2^{\xi}(\Gamma_1',\Gamma_2)$. Passing to a subsequence if necessary, we may assume that $\{R(v_n)\}_n$ converges to a point $w_0\in\mathbb{S}^2$. By \cref{lem:component-compactification}, there exists a unique $j_2\in J_2^{\xi}\setminus J_2^{\xi+1}$ such that $w_0\in\partial\Gamma_2(D_{2,j_2})$. By a completely parallel argument, $Q_{2;\, \Gamma_1',\Gamma_2}^{\xi}$ can also be extended to a continuous injection ${Q_2}$ from $\Gamma_2(\mathscr{C}_2^{\xi}\cup \partial D_{2,j_2})$ to $\mathbb{R}^3\setminus\{0\}$. Let $C_{2,j_2}^{\xi}(\Gamma_1',\Gamma_2)={Q_2}(\partial\Gamma_2(D_{2,j_2}))$. It follows that $C_{2,j_2}^{\xi}(\Gamma_1',\Gamma_2)$ is a simple closed curve in $\mathbb{R}^3\setminus\{0\}$ and is a connected component of $\overline{{S}_2^{\xi}(\Gamma_1',\Gamma_2)}\setminus {S}_2^{\xi}(\Gamma_1',\Gamma_2)$. Therefore, the extended map ${G}_{j_1;\, \Gamma_1',\Gamma_2}^{\xi}$ maps $C_{1,j_1}^{\xi}(\Gamma_1',\Gamma_2)=\partial P_1(\Gamma_1',\Gamma_2)$ to $C_{2,j_2}^{\xi}(\Gamma_1',\Gamma_2)$. 
		
		Let $\Gamma_2'$ be an isometry of $\mathbb{H}^3_P$ such that $\Gamma_2'(D_{2,j_2})=\overline{\mathbb{S}^2_{-}}$. Clearly, $(\Gamma_1',\Gamma_2')$ is a $\xi$-admissible pair of isometries of $\mathbb{H}^3_P$. Then $C_{1,j_1}^{\xi}(\Gamma_1',\Gamma_2')$ and $C_{2,j_2}^{\xi}(\Gamma_1',\Gamma_2')$ are two simple closed convex curves in $\mathbb R^2$ enclosing the origin. By applying the argument from \cref{sec: single-disk} and composing with a Möbius transformation of $\mathbb{D}$ if necessary, we may assume that each $C_{i,j_i}^{\xi}(\Gamma_1',\Gamma_2')$, $i\in\{1,2\}$ coincides with the unit circle $\mathbb{S}^1$ and ${G}^{\xi}_{\Gamma_1',\Gamma_2'}$ can be extended to an isometry from ${S}_1^{\xi}(\Gamma_1',\Gamma_2')\cup \mathbb{S}^1$ to ${S}_2^{\xi}(\Gamma_1',\Gamma_2')\cup \mathbb{S}^1$ such that the restriction of ${G}^{\xi}_{\Gamma_1',\Gamma_2'}$ on $\mathbb{S}^1$ is the identity map $ \mathrm{id}_{\mathbb{S}^1}$. Then, by \cref{lem: unit-circle-to-ellipse}, we see that $C_{i,j_i}^{\xi}(\Gamma_1,\Gamma_2)$, $i\in\{1,2\}$ are the boundaries of a pair of flat elliptic disks $E_{i,j_i}^{\xi}(\Gamma_1,\Gamma_2)$ that differ only by an isometry of $\mathbb{R}^3$. Since $(\Gamma_1,\Gamma_2)$ is $(\xi+1)$-admissible, $\Gamma_i(D_{i,j_i})$ is strictly contained in an open hemisphere and the elliptic disk $E_{i,j_i}^{\xi}(\Gamma_1,\Gamma_2)$ projects homeomorphically onto $\Gamma_i(D_{i,j_i})$. It follows that for each $i\in\{1,2\}$, the radial projection is a homeomorphism from ${S}_i^{\xi}(\Gamma_1,\Gamma_2)\cup E_{i,j_i}^{\xi}(\Gamma_1,\Gamma_2)$ to $\Gamma_i(\mathscr{C}_i^{\xi}\cup D_{i,j_i})$. Moreover, ${G}^{\xi}_{\Gamma_1,\Gamma_2}$ can be extended to an isometry between surfaces ${S}_i^{\xi}(\Gamma_1,\Gamma_2)\cup E_{i,j_i}^{\xi}(\Gamma_1,\Gamma_2)$, $i\in\{1,2\}$. By \cref{lem: boundary-on-plane}, we see that each surface ${S}_i^{\xi}(\Gamma_1,\Gamma_2)\cup E_{i,j_i}^{\xi}(\Gamma_1,\Gamma_2)$ is locally convex with respect to $0$. This completes the proof.
	\end{proof}
    
	Recall that in view of \cref{prop: fill-isolated-points}, we have assumed that $\tilde{S}_1^{\xi}(\Gamma_1,\Gamma_2) = {S}_1^{\xi}(\Gamma_1,\Gamma_2)$. Then, for each $j_1\in J_1^{\xi}\setminus J_1^{\xi+1}$, $D_{1,j_1}$ is an isolated disk component of $\mathscr{D}_1^{\xi}$. By the above proposition, there exists a unique $j_2\in J_2^{\xi}\setminus J_2^{\xi+1}$ 
    and two locally convex surfaces ${S}_i^{\xi}(\Gamma_1,\Gamma_2)\cup E_{i,j_i}^{\xi}(\Gamma_1,\Gamma_2)$ with respect to $0$ such that each ${S}_i^{\xi}(\Gamma_1,\Gamma_2)\cup E_{i,j_i}^{\xi}(\Gamma_1,\Gamma_2)$ is homeomorphic to $\Gamma_i(\mathscr{C}_i^{\xi}\cup D_{i,j_i})$ under the radial projection. Furthermore, ${G}^{\xi}_{\Gamma_1,\Gamma_2}$ extends to an isometry from ${S}_1^{\xi}(\Gamma_1,\Gamma_2)\cup E_{1,j_1}^{\xi}(\Gamma_1,\Gamma_2)$ to ${S}_2^{\xi}(\Gamma_1,\Gamma_2)\cup E_{2,j_2}^{\xi}(\Gamma_1,\Gamma_2)$. By symmetry, each $j_2\in J_2^{\xi}\setminus J_2^{\xi+1}$ also uniquely determines a $j_1\in J_1^{\xi}\setminus J_1^{\xi+1}$, ensuring that the map
    $$
    J_1^{\xi}\setminus J_1^{\xi+1}\to J_2^{\xi}\setminus J_2^{\xi+1}, \quad j_1\mapsto j_2
    $$
    is a bijection. Now, let 
	$$
	{S}_i^{\xi+1}(\Gamma_1,\Gamma_2)={S}_i^{\xi}(\Gamma_1,\Gamma_2)\cup \bigcup _{j\in J_i^{\xi}\setminus J_i^{\xi+1}}E_{i,j}^{\xi}(\Gamma_1,\Gamma_2),\quad i\in\{1,2\}.
	$$
	Since each $j\in J_i^{\xi}\setminus J_i^{\xi+1}$ corresponds to an isolated disk in $\mathscr{D}_i^{\xi}$, it follows that ${S}_i^{\xi+1}(\Gamma_1,\Gamma_2)$ is a locally convex surface with respect to $0$ and the radial projection $R:{S}_i^{\xi+1}(\Gamma_1,\Gamma_2)\to\mathbb{S}^2$ is a homeomorphism from ${S}_i^{\xi+1}(\Gamma_1,\Gamma_2)$ onto $\Gamma_i(\mathscr{C}_i^{\xi+1})$. Furthermore, it is clear that ${G}^{\xi}_{\Gamma_1,\Gamma_2}$ can be extended to an isometry from ${S}_1^{\xi+1}(\Gamma_1,\Gamma_2)$ to ${S}_2^{\xi+1}(\Gamma_1,\Gamma_2)$. Therefore, we have constructed two locally convex surfaces ${S}_i^{\xi+1}(\Gamma_1,\Gamma_2)$ with respect to $0$ and an isometry ${G}^{\xi+1}_{\Gamma_1,\Gamma_2}$ satisfying the properties {\refcond{P1}{$\mathrm{P}_1$}}--\refcond{P3}{$\mathrm{P}_3$}.

	\begin{proof}[Proof of \cref{thm: MainThm1}]
        In view of \cref{subsec:special-cases}, we only need to consider the case where both $X_1$ and $X_2$ are three-dimensional and neither is a hyperbolic half-space.
		Fix one $\lambda_0$-admissible pair and suppress it from the notation. For each $0\leq\lambda\leq\lambda_0$, we have constructed two locally convex surfaces ${S}_i^{\lambda}$, $i\in\{1,2\}$ and an isometry ${G}^{\lambda}:{S}_1^{\lambda}\to{S}_2^{\lambda}$ with respect to their intrinsic path metrics satisfying the properties {\refcond{P1}{$\mathrm{P}_1$}}--\refcond{P3}{$\mathrm{P}_3$}. By the choice of $\lambda_0$, we have $J_i^{\lambda_0}=\varnothing$ for $i=1,2$. It follows that $\mathscr{C}_i^{\lambda_0}=\mathbb{S}^2$ and ${S}_i^{\lambda_0}$ is a locally convex surface homeomorphic to $\mathbb{S}^2$ for each $i\in\{1,2\}$. By \cref{cor: locallyconvex-convex}, each ${S}_i^{\lambda_0}$ is the boundary of a compact convex body in $\mathbb{R}^3$. It follows from \cref{thm: Pogorelov} that ${G}^{\lambda_0}$ and thereby $G$ can be extended to be an isometry of $\mathbb{R}^3$. Finally, \cref{prop: RnIsometry-to-HnIsometry} implies that $F$ can be extended to an isometry of $\mathbb{H}^3_P$, which completes the proof.
	\end{proof}

     \begin{remark}\label{rmk: cantor-rank-infinite}
        We note that the use of transfinite recursion is necessary. 
        Indeed, there exists a circle-type closed set with countably many components such that any finite number of operations of removing isolated components fails to eliminate all components.
        First, for each $n\in\mathbb{N}$, we construct a circle-type closed set $A_n$ whose component space has Cantor--Bendixson rank~$n+1$, by induction. Let $A_1$ be the union of a disk $D^0$ and a sequence of isolated disks converging to the boundary of $D^0$ on $\mathbb{S}^2$. 
        Suppose $A_n$ has been defined. Let $D^n$ be a closed disk, and let $\{D^n_{m}\}_m$ be a sequence of isolated disks converging to the boundary of $D^n$ on $\mathbb{S}^2$. 
        For each $m$, we shrink $A_n$ sufficiently so that it fits inside $D^n_m$. 
        Then $A_{n+1}$ is defined as the union of $D^n$ and the isolated copies of $A_n$ placed in the disks $D^n_{m}$. By construction, the Cantor--Bendixson rank of $A_n$ is $n+1$ for every $n \in \mathbb{N}$.
        Now, let $D^{\infty}$ be a closed disk, and let $\{D^{\infty}_{m}\}_m$ be a sequence of isolated disks converging to the boundary of $D^{\infty}$ on $\mathbb{S}^2$. For each $m$, we shrink $A_m$ sufficiently so that it fits inside $D^{\infty}_m$. Let $K$ be the union of $D^{\infty}$ and the isolated copies of $A_m$ contained in the disks $D^{\infty}_{m}$. It follows that $K$ is a circle-type closed set and its Cantor--Bendixson rank is $\omega+1$, where $\omega$ is the first infinite ordinal.
    \end{remark}
    
    \begin{proof}[Proof of \cref{thm: MainThm2}]
        Let $F$ be an intrinsic isometry from $\partial X_1$ to $\partial X_2$. Recall the definition of $\tilde{S}_i$ in \cref{sec: pogorelov}. Let $\hat{S}_i$ be the closure of $\tilde{S}_i$ in $\mathbb{R}^3$. By an argument similar to that in \cref{prop: Pi-contain-0-interior}, it is clear that each $\hat{S}_i$ is a compact surface in $\mathbb{R}^3\setminus\{0\}$ such that the boundary $\partial \hat{S}_i$ is the disjoint union of finitely many simple closed curves $\{C_{i,j}\}_{j=1}^{n_i}$.
        
        We now show that $G$ extends to an isometry $\tilde{G}:\tilde{S}_1\to\tilde{S}_2$. It suffices to prove that for each sequence $\{x_n\}_n$ in ${S}_1$ such that $\{x_n\}_n$ converges to some point $x_0\in \tilde{S}_1$ and $\{G(x_n)\}_n$ converges to some point $y_0\in \hat{S}_2$, we have $y_0\in \tilde{S}_2$. Indeed, by \cref{lem: Isometry-Extension}, this implies that ${G}$ can be extended to an isometric embedding from $\tilde{S}_1$ to $\tilde{S}_2$. A similar argument shows that ${G}^{-1}$ can also be extended to an isometric embedding from $\tilde{S}_2$ to $\tilde{S}_1$, which establishes the claim. 
        Suppose, for contradiction, that there exists a sequence $\{x_n\}_n$ in ${S}_1$ such that $\{x_n\}_n$ converges to some point $x_0\in \tilde{S}_1$ but $\{G(x_n)\}_n$ converges to some point $y_0\in\partial \hat{S}_2$ in $\mathbb{R}^3$. Let $C_{2,j}$ be the component of $\partial\hat{S}_2$ containing $y_0$. After composing $F$ with an isometry of $\mathbb{H}^3_P$, we may assume that $C_{2,j}$ lies in the plane $\mathbb{R}^2$. Let $\hat{S}_{2,j}=\tilde{S}_2\cup C_{2,j}$. Since $\tilde{S}_2$ is a locally convex surface with respect to $0$, by \cref{lem: bottom-is-convex}, $C_{2,j}$ is a closed convex curve in $\mathbb{R}^2$, $\hat{S}_{2,j}$ is a path metric space and
        \begin{equation}\label{eq: Mainthm2}
             d_{\hat{S}_{2,j}}(x,y)=d_{\tilde{S}_2}(x,y),\quad\forall x,y\in \tilde{S}_2,
        \end{equation}
        where $d_{\hat{S}_{2,j}}$ and $d_{\tilde{S}_2}$ are intrinsic path metrics on $\hat{S}_{2,j}$ and $\tilde{S}_2$, respectively. For each $a>0$, let $D_a$ be a closed disk neighborhood of $x_0$ in $\tilde{S}_1$ with diameter $\operatorname{diam} (D_a)<a$. Since the map $G|_{D_a\cap S_1}:D_a\cap S_1\to \mathbb{R}^3$ is Lipschitz continuous and $D_a\cap S_1$ is dense in $D_a$, it can be extended to a continuous map $G_0: D_a\to \mathbb{R}^3$. Clearly, $G_0(x_0)=y_0$, $G_0(D_a)\subset \hat{S}_2$, and $G_0(D_a)\subset B_a(y_0)$, where $B_a(y_0)=\{x\in\mathbb{R}^3:\|x-y_0\|\leq a\}$. It follows that we can choose $a$ small enough such that $G_0(D_a)\subset\hat{S}_{2,j}$.
        We now claim that $G_0: D_a\to\hat{S}_{2,j}$ is an injection. Suppose, for contradiction, that there exist two distinct points $x_1,x_2\in D_a$ such that $G_0(x_1)=G_0(x_2)$. Let $\{x_n^1\}_n$ and $\{x_n^2\}_n$ be two sequences in $D_a\cap S_1$ converging to $x_1$ and $x_2$, respectively. Then by \eqref{eq: Mainthm2} and \cref{thm: zero-H1-dont-change-metric}, we have
        \begin{equation*}
            \begin{aligned}
            0=d_{\hat{S}_{2,j}}(G_0(x_1),G_0(x_2))&=\lim_{n\to\infty} d_{\hat{S}_{2,j}}(G(x^1_n),G(x^2_n))\\
            &=\lim_{n\to\infty} 
            d_{\tilde{S}_2}(G(x^1_n),G(x^2_n))\\
            &=\lim_{n\to\infty}d_{{S}_2}(G(x^1_n),G(x^2_n))\\
            &=\lim_{n\to\infty}d_{{S}_1}(x^1_n,x^2_n)=\lim_{n\to\infty}d_{\tilde{S}_1}(x^1_n,x^2_n)=d_{\tilde{S}_1}(x_1,x_2)>0,
            \end{aligned}
        \end{equation*}
        which is a contradiction. By shrinking $D_a$ if necessary, we may assume that $G_0(\operatorname{int}D_a)$ lies in a half-plane chart of $\hat{S}_2$ at $y_0$. Composing $G_0$ with this chart gives a continuous injection from the open disk $\operatorname{int}D_a$ into the closed half-plane $\{(x,y):y\geq0\}$, whose image contains a point in the boundary-line $\{(x,0):x\in\mathbb{R}\}$. This contradicts Brouwer's invariance of domain theorem. Thus, $G$ extends to an isometry $\tilde{G}:\tilde{S}_1\to\tilde{S}_2$.

        Now consider the extended isometry $\tilde{G}:\tilde{S}_1\to\tilde{S}_2$. By an argument similar to that in \cref{prop: fill-isolated-disks}, each $\tilde{S}_i$ can be extended to the boundary of a compact convex body $K_i$ in $\mathbb{R}^3$, and $\tilde{G}$ extends to an intrinsic isometry between $\partial K_1$ and $\partial K_2$. Finally, by \cref{thm: Pogorelov} and \cref{prop: RnIsometry-to-HnIsometry}, $F$ extends to an isometry of $\mathbb{H}^3_P$, which completes the proof.
    \end{proof}

    \section*{Statements and Declarations}

    \noindent\textbf{Competing Interests.}
    The author has no relevant financial or non-financial interests to disclose.
    
    \medskip
    
    \noindent\textbf{Funding.}
    The author received no specific funding for this work.
    
    \medskip
    \noindent\textbf{Acknowledgements.} I am grateful to Professor Feng Luo for his helpful suggestions and enlightening discussions.
    I further thank Professor Feng Luo, Yanwen Luo, and Zhenghao Rao for their careful reading of the manuscript. Their valuable comments have significantly improved the presentation of this paper; in particular, they pointed out a simpler proof for \cref{lem:Pi-not-segment-or-point}, which led to a substantial simplification of this work. Finally, I would like to express my gratitude to my advisor, Professor Bobo Hua, for his continued support and invaluable guidance.
    
	\bibliographystyle{amsplain}
    \bibliography{main}
\end{document}